\newtheorem{teo}{Theorem}[section]
\newtheorem{thm}[teo]{Theorem}
\newtheorem{prop}[teo]{Proposition}
\newtheorem{lemma}[teo]{Lemma}
\newtheorem{cor}[teo]{Corollary}
\newtheorem{conj}[teo]{Conjecture}
\newtheorem{defn}[teo]{Definition}
\newtheorem{rmk}[teo]{Remark}
\newtheorem{fact}{Fact}
\newtheorem*{claim}{Claim}
\newtheorem{def-prop}[teo]{Definition-Proposition}
\newtheorem{notation}[teo]{Notation}
\newcommand\numberthis{\addtocounter{equation}{1}\tag{\theequation}}
\numberwithin{equation}{section}
  \newcommand{\A}{\mathbb{A}}
  \newcommand{\C}{\mathbb{C}}
  \newcommand{\G}{\mathbb{G}}
  \renewcommand{\H}{\mathbb{H}}  
  \newcommand{\N}{\mathbb{N}}
  \newcommand{\Q}{\mathbb{Q}}
  \newcommand{\R}{\mathbb{R}}  
  \renewcommand{\S}{\mathbb{S}}
  \newcommand{\V}{\mathbb{V}}
  \newcommand{\Z}{\mathbb{Z}}
  \renewcommand{\epsilon}{\varepsilon}
  \newcommand{\im}{\text{Im}}
  \renewcommand{\cong}{\simeq}
  \renewcommand{\bar}{\overline}
  \renewcommand{\tilde}{\widetilde}
  \renewcommand{\hat}{\widehat}
  \providecommand{\frac}[1]{\operatorname{Frac}(#1)}
  \renewcommand{\hom}{\operatorname{Hom}}
  \newcommand{\rank}{\operatorname{rank}}
  \newcommand{\gal}{\operatorname{Gal}}
  \newcommand{\stab}{\operatorname{Stab}}
  \newcommand{\MT}{\operatorname{MT}}
  \newcommand{\GL}{\operatorname{GL}}
  \renewcommand{\ker}{\operatorname{Ker}}
  \newcommand{\Max}{\operatorname{Max}}
  \newcommand{\codim}{\operatorname{codim}}
  \renewcommand{\lim}{\operatorname{lim}}
  \renewcommand{\deg}{\operatorname{deg}}
  \newcommand{\ord}{\operatorname{ord}}
  \newcommand{\End}{\operatorname{End}}
  \newcommand{\disc}{\operatorname{disc}}
  \newcommand{\lie}{\operatorname{Lie}}
  \newcommand{\Gr}{\operatorname{Gr}}
  \newcommand{\Alb}{\operatorname{Alb}}
  \newcommand{\alb}{\operatorname{alb}}
  \newcommand{\der}{\mathrm{der}}
  \newcommand{\ad}{\mathrm{ad}}
  \newcommand{\sm}{\mathrm{sm}}
  \newcommand{\de}{\mathrm{de}}
  \newcommand{\cp}{\mathrm{cp}}
  \newcommand{\mon}{\mathrm{mon}}
  \newcommand{\Zar}{\mathrm{Zar}}
  \newcommand{\GSp}{\mathrm{GSp}}
  \newcommand{\Sp}{\mathrm{Sp}}
  \renewcommand{\Re}{\mathfrak{R}\mathfrak{F}}
  \newcommand{\res}{\mathrm{Res}}
  \newcommand{\unif}{\mathrm{unif}}
\newcommand{\cA}{\mathcal{A}}
\newcommand{\cD}{\mathcal{D}}
\newcommand{\cF}{\mathcal{F}}
\newcommand{\cG}{\mathcal{G}}
\newcommand{\cO}{\mathcal{O}}
\newcommand{\cR}{\mathcal{R}}
\newcommand{\cT}{\mathcal{T}}
\newcommand{\cV}{\mathcal{V}}
\newcommand{\cX}{\mathcal{X}}
\newcommand{\cY}{\mathcal{Y}}
\newcommand{\cZ}{\mathcal{Z}}
\newcommand{\upperarrow}[1]{
\setlength{\unitlength}{0.03\DiagramCellWidth}
\begin{picture}(0,0)(0,0)
\linethickness{1pt} 
\bezier{15}(-46,9)(-24,23)(-2,9)
\put(-24,19){\makebox(0,0)[b]{$\scriptstyle {#1}$}}
\put(-46.6,8.7){\vector(-2,-1){0}}
\end{picture}
}
\newcommand{\upperarrowtwo}[1]{
\setlength{\unitlength}{0.03\DiagramCellWidth}
\begin{picture}(0,0)(0,0)
\linethickness{1pt} 
\bezier{15}(-40,9)(-21,23)(-2,9)
\put(-21,19){\makebox(0,0)[b]{$\scriptstyle {#1}$}}
\put(-40.6,8.7){\vector(-2,-1){0}}
\end{picture}
}
\newcommand\supervisor[1]{\def\@supervisor{#1}}
\newcounter{elno}
\renewcommand{\cong}{\simeq}
\begin{document}
\title[Andr\'{e}-Oort: Ax-Lindemann and lower bound for Galois orbits of CM points]{Towards the Andre-Oort conjecture for mixed Shimura varieties: the Ax-Lindemann theorem and lower bounds for Galois orbits of special points}
\author{Ziyang Gao}
\address{Institut des Hautes \'{E}tudes Scientifiques, 
Le Bois-Marie 35, route de Chartres, 
91440 Bures-sur-Yvette,
France}
\email{ziyang.gao@math.u-psud.fr}
\subjclass[2000]{11G18, 14G35}
\maketitle

\begin{abstract}
We prove in this paper the Ax-Lindemann-Weierstra{\ss} theorem for all mixed Shimura varieties and discuss the lower bounds for Galois orbits of special points of mixed Shimura varieties. In particular we reprove a result of Silverberg \cite{SilverbergTorsion-points-} in a different approach. Then combining these results we prove the Andr\'{e}-Oort conjecture unconditionally for any mixed Shimura variety whose pure part is a subvariety of $\cA_6^n$ and under GRH for all mixed Shimura varieties of abelian type.
\end{abstract}

\tableofcontents

\section{Introduction}
\subsection{Background}
Every connected Shimura variety, being the quotient of a Hermitian symmetric domain by an arithmetic group, can be realized as a moduli space for pure Hodge structures plus tensors. Unlike the Hermitian symmetric domains themselves, connected Shimura varieties are algebraic varieties. This was proved by Baily-Borel \cite{BailyCompactificatio}. The prototype for all Shimura varieties is the Siegel moduli space of principally polarized abelian varieties of dimension $g$ with a level structure. The points in this moduli space corresponding to CM abelian varieties, which are called \textit{special points}, play a particularly important role in the theory of Shimura varieties. A major reason is that the Galois actions on special points are fairly completely determined by the Shimura-Taniyama theorem \cite[Theorem 4.19]{DeligneTravaux-de-Shim} and its generalization by Milne-Shih \cite{DeligneHodge-cycles-mo} to Galois conjugates of CM abelian varieties. The concept of special points and the results concerning the Galois action on them have been generalized to arbitrary Shimura varieties. Every Shimura variety has a Zariski dense subset of special points (\cite[Proposition 5.2]{DeligneTravaux-de-Shim}), and hence the results above have led to the concept of the \textit{canonical model} of a Shimura variety over a number field: see Deligne \cite{DeligneVariete-de-Shim} and Milne \cite{MilneCanonical-model}.

It is of course a natural problem to look for suitable compactifications of a given Shimura variety. The first compactification is the Baily-Borel (or minimal) compactification \cite{BailyCompactificatio}, which is canonical. However this compactification has bad singularities along the boundary. Next we have the toroidal compactifications \cite{AshSmooth-compacti}, which are no longer canonical but provide smooth compactifications of Shimura varieties. To construct these compactifications one needs to study the boundary of a Shimura variety. As one approaches the boundary of a Hermitian symmetric domain, pure Hodge structures degenerate into mixed Hodge structures, and as one approaches the boundary of a Shimura variety, abelian varieties degenerate into 1-motives. This will lead to a new object, generalizing the notion of Shimura varieties and parametrizing variations of mixed Hodge structures (all of whose pure constituents are polarizable), which we shall call a mixed Shimura variety. In order to distinguish, we will use the term ``pure Shimura variety" to denote the Shimura varieties in the first paragraph. Here we copy a list of some objects attached to a Shimura variety and the corresponding object attached to a mixed Shimura variety from Milne \cite[Introduction]{MilneCanonical-model},

\begin{tabular}{c|c}
\textbf{(pure) Shimura variety} & \textbf{mixed Shimura variety}\\ \hline
Hermitian (or bounded) symmetric domain & Siegel domain (of the third kind) \\
pure Hodge structure & mixed Hoge structure \\
reductive group over $\Q$ & algebraic group over $\Q$ with 3-step filtration \\
abelian variety & 1-motive \\
motive & mixed motive
\end{tabular}

Here are several important examples of mixed Shimura varieties: 
\begin{itemize}
\item the universal family of abelian varieties of dimension $g$ with a level structure;
\item the $\G_m$-torsor over such a universal family which corresponds to an ample line bundle over this family;
\item Poincar\'{e} bi-extension;
\item products of the above.
\end{itemize}
There is also the concept of special points for mixed Shimura varieties, e.g. special points of the universal family of abelian varieties are those which correspond to torsion points on CM abelian varieties. Similar results about the Galois action on special points and the canonical models of pure Shimura varieties hold for mixed Shimura varieties: see Pink \cite{PinkThesis}.

An irreducible component of a mixed Shimura subvariety of a mixed Shimura variety, or its image under a Hecke operator, is called a special subvariety. In particular, special points are precisely special subvarieties of dimension 0. As for pure Shimura varieties, every special subvariety contains a Zariski dense subset of special points (\cite[11.7]{PinkThesis}).
The aim of this article is to study the following conjecture, which is the converse of this fact.
\begin{conj}[Generalized Andr\'{e}-Oort]\label{Andre-Oort}
Let $Y$ be a closed irreducible subvariety of a mixed Shimura variety. If $Y$ contains a Zariski-dense set of special points, then it is special.
\end{conj}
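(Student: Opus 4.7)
The plan is to follow the Pila--Zannier strategy, adapted to the mixed Shimura setting. Let $S$ be the ambient mixed Shimura variety and $\pi \colon \cX \to S$ its uniformization, where $\cX$ is the appropriate Siegel domain of the third kind. Fix a fundamental set $\cF \subset \cX$ for the arithmetic group action; a crucial preliminary input, to be established in the body of the paper, is that both $\cF$ and the restriction $\pi|_{\cF}$ are definable in the o-minimal structure $\R_{\mathrm{an},\exp}$. Let $Y \subset S$ be a closed irreducible subvariety containing a Zariski-dense set $\Sigma$ of special points. I argue by induction on $\dim Y$: the case $\dim Y = 0$ is trivial, and in general it suffices to produce a positive-dimensional special subvariety $W \subset Y$ through a point of $\Sigma$, because then the Zariski closure of the $W$-translates through points of $\Sigma$ gives a closed set which either equals $Y$ (done) or has smaller dimensional irreducible components still meeting $\Sigma$ densely.

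To produce such a $W$, consider the definable set $Z := \pi^{-1}(Y) \cap \cF$. Preimages of special points in $Z$ have coordinates that are algebraic with respect to the $\Q$-structure on $\cX$ coming from the underlying algebraic group, and the Galois action on $\Sigma$ combined with a height estimate moves each such preimage into a family of many conjugate preimages of bounded height. The lower bound for the size of the Galois orbit of a special point, expressed in terms of its complexity (roughly a power of the discriminant of its associated CM data together with the denominators of the unipotent coordinates, the latter handled via the Silverberg-type reproof promised in the abstract), forces more such rational points in $Z$ than the Pila--Wilkie counting theorem allows for the transcendental part of $Z$. Consequently $Z$ contains positive-dimensional semi-algebraic (hence, after complexification, complex algebraic) arcs.

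At this stage the Ax-Lindemann-Weierstrass theorem for mixed Shimura varieties --- the other main result of the paper --- applies: any maximal irreducible algebraic subvariety of $\cX$ whose image under $\pi$ lies in $Y$ is weakly special, meaning its image is a weakly special subvariety of $S$. Combined with the previous step one obtains a positive-dimensional weakly special subvariety $W \subset Y$, and because the translates of $W$ by the Galois orbit of a chosen special point of $\Sigma \cap W$ already force $W$ to contain a special point, $W$ is in fact special. This closes the induction.

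The main obstacle --- and the reason why the paper can only claim the full conjecture under restrictions --- is the lower bound for Galois orbits. Unconditionally it is available only when the pure part of $S$ sits inside $\cA_6^n$ (hence the statement in the abstract), and for general mixed Shimura varieties of abelian type one must invoke GRH to obtain Tsimerman-type bounds on the discriminants. The o-minimality and Ax-Lindemann inputs, while technically demanding because of the unipotent radical of the generic Mumford--Tate group and the resulting non-algebraicity of $\pi$ in the fibre direction, are self-contained within the paper and do not obstruct the unconditional cases; the truly arithmetic input is what is missing outside the listed ranges.
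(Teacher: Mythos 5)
Your outline reproduces the first half of the paper's strategy faithfully: a definable fundamental set with definable uniformization, height bounds on preimages of special points in $\cF$, the Galois lower bound reduced to the pure part via the fibre estimate of Proposition \ref{galois orbit for the fiber}, Pila--Wilkie to force positive-dimensional algebraic subsets of $\unif^{-1}(Y)\cap\cF$, and Ax--Lindemann--Weierstra{\ss} plus the criterion ``weakly special containing a special point implies special'' (Proposition \ref{relation between special and weakly special}) to produce a positive-dimensional special subvariety through each special point of large complexity. Up to that point you are in step with $\mathsection$\ref{From Ax-Lindemann to Andre-Oort}, and you correctly identify that the only conditional input is the pure Galois lower bound.

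The gap is in your concluding induction. Once every special point of $Y$ outside a finite (Northcott-bounded) set lies on a positive-dimensional special subvariety $W_s\subset Y$, the Zariski closure of $\bigcup_s W_s$ is automatically all of $Y$, since it contains a Zariski-dense subset of $\Sigma$; so your dichotomy always lands in the branch you label ``(done)''. But a variety that is the Zariski closure of a union of positive-dimensional special subvarieties need not be special, and no descent in dimension ever occurs, so the induction does not close. This is precisely where the paper needs $\mathsection$\ref{Consequence of Ax-Lindemann}: Proposition \ref{finiteness of strict special subvarieties} and Theorem \ref{consequenceOfAx-Lindemann} (which themselves rest on Ax--Lindemann, the definability of the union of weakly special subvarieties of each dimension, and the finiteness of the relevant real triples $(U',V',G')$ up to the group action) show that a Zariski-dense family of positive-dimensional weakly special subvarieties of a Hodge-generic $Y$ must be defined by a subgroup $N$ that is \emph{normal} in $P$ --- a non-normal $N$ only sweeps out finitely many proper special subvarieties, which cannot be dense in a Hodge-generic $Y$ --- whence $Y=[\rho]^{-1}\bigl(\overline{[\rho](Y)}\bigr)$ for the quotient $\rho$ by $N$. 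Iterating strictly decreases the dimension of the ambient Shimura variety and reduces to the case where the positive-dimensional weakly special locus of $Y$ is \emph{not} dense; only there does the counting argument plus Northcott yield finiteness of the remaining special points and the desired contradiction. You need to supply this invariance-under-a-normal-subgroup step (or an equivalent) for the argument to terminate.
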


The original Andr\'{e}-Oort conjecture, in which one replaces ``mixed Shimura variety'' by ``pure Shimura variety'', has been established in many cases (unconditionally or under GRH without using o-minimality) by Moonen \cite{MoonenLinearity-prope}, Andr\'{e} \cite{AndreFinitude-des-co}, Edixhoven \cite{EdixhovenSpecial-points-, EdixhovenOn-the-Andre-Oo}, Edixhoven-Yafaev \cite{EdixhovenSubvarieties-of} and Yafaev \cite{YafaevOn-a-result-of-, YafaevA-conjecture-of}. It was proved under GRH by Klingler, Ullmo and Yafaev \cite{UllmoGalois-orbits-a, KlinglerThe-Andre-Oort-}, where equidistribution results of Clozel-Ullmo \cite{ClozelEquidistributio1} were used. Later Daw \cite{DawDegrees-of-stro} removed the ergodic theory from Klingler-Yafaev's result. Our generalized version of the Andr\'{e}-Oort conjecture was suggested by Y.Andr\'{e} in \cite[Lecture 3]{AndreShimura-varieti}, where he also gave a proof of the case of the universal family of elliptic curves. Results for mixed Shimura varieties have been obtained by Habegger \cite{HabeggerSpecial-points-} for fibered powers of elliptic surfaces. Remark that Conjecture \ref{Andre-Oort} is not only a direct generalization of the original the Andr\'{e}-Oort conjecture, but also contains the Manin-Mumford conjecture for any complex semi-abelian variety whose abelian variety part is CM.

\subsection{Ax-Lindemann-Weierstra{\ss}}
A recent development of this conjecture was made by Pila-Zannier. Its origin was the proof of the Manin-Mumford conjecture \cite{PilaRational-points}. Afterwards using tools of o-minimality and Pila-Wilkie's counting theorem \cite{PilaThe-rational-po}, Pila proved the Andr\'{e}-Oort conjecture for $\cA_1^n$ (product of modular curves) unconditionally \cite{PilaO-minimality-an}. Daw-Yafaev later proved the Andr\'{e}-Oort conjecture unconditionally for Hilbert modular sufaces \cite{DawAn-unconditiona}. In this strategy of proving the Andr\'{e}-Oort conjecture, a key step is to establish the following generalization of the functional Lindemann-Weierstrass theorem \cite{AxOn-Schanuels-co}, which is the main result of this paper:

\begin{thm}[Ax-Lindemann-Weierstra{\ss} Theorem]\label{Ax-Lindemann for type star}
 Let $S$ be a connected mixed Shimura variety, let $\unif\colon \cX^+\rightarrow S$
 be its uniformization and let $Y$ be a closed irreducible subvariety of $S$. Let $\tilde{Z}$ be an irreducible algebraic subset of $\cX^+$ contained in $\unif^{-1}(Y)$, maximal for these properties. Then $\unif(\tilde{Z})$ is weakly special.
\end{thm}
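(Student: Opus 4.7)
The plan is to follow the Pila-Zannier strategy, extending the pure Shimura case (Klingler-Ullmo-Yafaev) to the mixed setting. A standard reduction allows us to assume that $Y$ is Hodge generic in $S$, so that any weakly special subvariety we produce must come from a $\Q$-subgroup of the generic Mumford-Tate group of the underlying mixed Shimura datum.

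First, using Pink's structure theorem for mixed Shimura data, one realizes $\cX^+$ as a semi-algebraic open subset of $\C^N$: the pure quotient embeds via its Harish-Chandra realization while the unipotent radical acts by affine transformations on the fibers. One then constructs a Siegel-type fundamental set $\mathcal{F}\subset \cX^+$ for $\Gamma$ such that the restriction $\unif|_{\mathcal{F}}$ is definable in the o-minimal structure $\R_{\mathrm{an},\mathrm{exp}}$, extending Peterzil-Starchenko's definability theorem from the pure case.

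Next, form the definable set
\[
\Theta \;=\; \bigl\{\, g \in \aut(\cX^+)^+ \,:\, \dim_\C\bigl(g\tilde{Z}\cap \unif^{-1}(Y)\cap \mathcal{F}\bigr) = \dim \tilde{Z} \bigr\}.
\]
Since $\unif^{-1}(Y)$ is $\Gamma$-invariant and $\tilde{Z}\subset \unif^{-1}(Y)$, every $\gamma\in\Gamma$ with $\gamma\tilde{Z}\cap \mathcal{F}\neq \emptyset$ lies in $\Theta$; a volume/lattice-point estimate then yields at least $T^c$ such $\gamma$ of height $\leq T$ for some $c>0$. The Pila-Wilkie counting theorem then forces the existence of a positive-dimensional connected semi-algebraic block $W_0\subset\Theta$ through the identity. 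From $W_0$ one extracts a non-trivial connected real algebraic subgroup $H\leq \aut(\cX^+)^+(\R)$ stabilizing $\unif^{-1}(Y)$; by the maximality of $\tilde{Z}$, the translate $H\cdot\tilde{Z}$ is complex algebraic and still contained in $\unif^{-1}(Y)$, hence equals $\tilde{Z}$, so $H$ in fact stabilizes $\tilde{Z}$.

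Finally one descends from $H$ to a $\Q$-subgroup: identifying the smallest $\Q$-subgroup $M$ of the generic Mumford-Tate group whose real points contain $H$, Pink's classification of admissible $\Q$-subgroups produces a weakly special subvariety containing $\unif(\tilde{Z})$; Hodge-genericity of $Y$ and maximality of $\tilde{Z}$ then force equality. The main obstacles are (i) the definability step, where the exponentials along the unipotent fibers of $\cX^+$ force a careful construction of mixed Siegel sets compatible with the three-step filtration of the Shimura datum, and (ii) the descent from $H$ to a $\Q$-subgroup: in the mixed setting the unipotent radical is non-reductive, and the argument must combine the pure Ax-Lindemann, applied to the image of $\tilde{Z}$ in the pure base, with a fibered Ax-type statement for families of semi-abelian varieties, glued together through the extension data of the mixed Shimura datum. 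This last synthesis is what I expect to be the hardest part.
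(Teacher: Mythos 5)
Your outline follows the same Pila--Zannier skeleton as the paper (semi-algebraic realization of $\cX^+$, a definable fundamental set extending Peterzil--Starchenko, point counting, Pila--Wilkie, descent to a $\Q$-group), and you correctly flag definability and the fibered descent as the hard points. But two steps, as written, have genuine gaps. The first is the counting step. You propose a single volume/lattice-point estimate on the whole mixed space producing $\gg T^{c}$ elements $\gamma\in\Gamma$ of height $\leq T$ in $\Theta$, followed by one application of Pila--Wilkie to extract a positive-dimensional block and hence a nontrivial real subgroup $H$ stabilizing $\tilde{Z}$. Nothing in this argument prevents all of the produced $\gamma$, and hence all of $H$, from lying in the unipotent radical $W(\R)$; a nontrivial $H$ is far from sufficient in the mixed case, since one must exclude the scenario where the pure projection $\pi(\tilde{Z})$ is positive-dimensional while the stabilizer is purely unipotent. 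The paper's resolution is a two-level count: the polynomial lower bound is obtained only on the pure quotient (via Klingler--Ullmo--Yafaev applied factor by factor to $G_N$), and one then needs the \emph{block-family} version of Pila--Wilkie, applied to the definable family $\pi_i^{-1}(B^j)\cap\Sigma^{(i)}$, to obtain a bound on the number of connected components of the fibers that is uniform in $T$; only then can enough elements of the stabilizer of the base be lifted to $\Gamma\cap\stab(\tilde{Z})$, yielding $G_{H_{\tilde{Z}}}=H_{\bar{\tilde{Z}_G}}$. With the non-family version this constant would depend on $T$ and the argument collapses.

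The second gap is the fibered statement. You invoke ``a fibered Ax-type statement for families of semi-abelian varieties,'' but the fiber of a general mixed Shimura variety over its pure part is a torus-torsor over an abelian variety that admits no compatible group structure unless $W$ is commutative, so Ax's theorem for semi-abelian varieties does not apply and an Ax--Lindemann statement for these non-group fibers must be proved from scratch. The genuinely new ingredient there is showing that the commutator form satisfies $\Psi(V_0(\R),\tilde{z}_V)=0$, which is extracted from the observation that the resulting section of the torus-torsor over an abelian subvariety is holomorphic, hence induces a constant map from an abelian variety to a torus. Finally, even once the stabilizer $H_{\tilde{Z}}$ is identified and $\tilde{Z}$ is shown to be its orbit, normality of $H_{\tilde{Z}}$ in $P$ is not a formal consequence of Pink's classification: in the non-reductive setting $H_{\tilde{Z}}\lhd N\lhd P$ does not imply $H_{\tilde{Z}}\lhd P$, and one needs the $G$-module decompositions $U=U_N\oplus U_N^{\bot}$ and $V=V_N\oplus V_N^{\bot}$ to conclude. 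Your proposal is a reasonable map of the territory, but these are precisely the points where the mixed case departs from the pure one, and they are not supplied.
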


We explain briefly the notions in this theorem. A connected mixed Shimura variety is defined to be a connected component of a mixed Shimura variety. As for the pure case, the uniformizing space $\cX^+$ can be realized as an open subset (w.r.t. the archimedean topology) of a complex algebraic variety $\cX^\vee$ ($\mathsection$\ref{Realization of X}), and an \textit{irreducible algebraic subset} of $\cX^+$ is defined to be an a complex analytic irreducible component of the intersection of a closed algebraic subvariety of $\cX^\vee$ and $\cX^+$ (Definition \ref{algebraicity on the top}). Consider Shimura morphisms of connected mixed Shimura varieties $T^\prime\xleftarrow{[\varphi]}T\xrightarrow{[i]}S$ and a point $t^\prime\in T^\prime$. Following Pink \cite{PinkA-Combination-o}, an irreducible component of $[i]([\varphi]^{-1}(t^\prime))$ is called a \textit{weakly special subvariety} of $S$ (Definition \ref{definition of weakly special subvariety}). In the case of pure Shimura varieties, Moonen \cite{MoonenLinearity-prope} proved that the weakly special subvarieties are precisely the totally geodesic subvarieties.

It is worth remarking that if we prove Conjecture \ref{Andre-Oort} via Theorem \ref{Ax-Lindemann for type star}, then we also prove the Manin-Mumford conjecture for all semi-abelian varieties (not only those whose abelian variety parts are CM). Theorem \ref{Ax-Lindemann for type star} was proved for (semi-)abelian varieties by Ax \cite{AxSome-topics-in-}, and then refound and reproved by Pila-Zannier \cite{PilaRational-points} and Peterzil-Starchenko \cite{PeterzilAround-Pila-Zan}, where proofs of Manin-Mumford via Ax-Lindemann-Weierstra{\ss} can be found. For the hyperbolic case (pure Shimura varieties), this theorem has also been established in several different cases by Pila (for $\cA_1^n$ \cite{PilaO-minimality-an} and later a product of universal families of elliptic curves \cite{PilaSpecial-point-p}\footnote{The definition of weakly special subvarieties in this paper looks quite different and a lot more complicated than the one we use here. They probably coincide but I did not check it.}), Ullmo-Yafaev (for projective pure Shimura varieties \cite{UllmoThe-Hyperbolic-}) and Pila-Tsimerman (for $\cA_2$ \cite{PilaAbelianSurfaces} and then $\cA_g$ \cite{PilaAxLindemannAg}). Klingler-Ullmo-Yafaev have recently proved this theorem for all pure Shimura varieties \cite{KlinglerThe-Hyperbolic-} using the idea of calculating the volumes of algebraic curves near the boundary, which was firstly executed in \cite{UllmoThe-Hyperbolic-} and then in \cite{PilaAxLindemannAg}. Our proof is based on the result of \cite{KlinglerThe-Hyperbolic-}. A main ingredient to prove all the  results above (including the whole Theorem \ref{Ax-Lindemann for type star}) is Pila-Wilkie's counting theorem; however unlike the pure case, the ``family version" of this counting theorem is crucially used in this paper. Some other difficulties to prove Theorem \ref{Ax-Lindemann for type star} for mixed Shimura varieties which we do not encounter in the pure case are listed in $\mathsection$\ref{Ax-Lindemann Part 1: Outline of the proof} before Lemma \ref{projection of maximal still maximal}. We hope that this may make the strategy of our proof more clear.

We close this subsection with the following comment about weakly special subvarieties. To study them, it is useful to describe the smallest weakly special subvariety containing a given subvariety $Y$ of a connected mixed Shimura variety $S$. We shall prove in Theorem \ref{smallest weakly special containing a subvariety} (sometimes called \textbf{\textit{Ax's theorem of log type}}\footnote{This is pointed to me by Daniel Bertrand.}), generalizing Moonen's result \cite[3.6, 3.7]{MoonenLinearity-prope} for pure Shimura varieties, that this smallest weakly special subvariety is precisely the one defined by the connected algebraic monodromy group of $Y^{\sm}$. The proof uses Andr\'{e}'s \cite{AndreMumford-Tate-gr} and Wildeshaus'\cite{WildeshausThe-canonical-c} earlier work about variations of mixed Hodge structure (over mixed Shimura varieties). As a consequence of this description, we shall prove a characterization of weakly special subvarieties in terms of ``bi-algebraicity" (Corollary \ref{weakly special iff algebraic on the top and on the bottom}), which is a direct generalization of the main result of Ullmo-Yafaev \cite{UllmoA-characterisat}.

\subsection{From Ax-Lindemann-Weierstra{\ss} to Andr\'{e}-Oort}
Ullmo and Pila-Tsimerman explained separately in \cite{UllmoQuelques-applic} \cite{PilaAxLindemannAg} how to deduce the Andr\'{e}-Oort conjecture from the Ax-Lindemann-Weierstra{\ss} theorem for pure Shimura varieties. The proof of Ullmo is generalized to mixed Shimura varieties in this paper ($\mathsection$\ref{Consequence of Ax-Lindemann}). They showed that in order to prove the Andr\'{e}-Oort conjecture for pure Shimura varieties of abelian type, the only ingredient (and obstacle) left is a suitable lower bound for the Galois orbit of a special point of a pure Shimura variety conjectured by Edixhoven \cite{EdixhovenOpen-problems-i}. We prove that what we need to prove Conjecture \ref{Andre-Oort} (for any mixed Shimura variety whose pure part is of abelian type) is the same lower bound. More explicitly, we prove that the naturally expected good lower bound for the Galois orbit of a special point, i.e. the product of the lower bounds of the base and the fiber, is fulfilled (Proposition \ref{galois orbit for the fiber}). As a special case, this provides a new proof for the result of Silverberg \cite{SilverbergTorsion-points-} (Corollary \ref{result of Silverberg})

\begin{thm}
Let $A$ be a complex abelian variety of CM type of dimension $g$. Its field of definition $k$ is then a number field by CM theory. Let $t$ be a torsion point of $A$ of order $N(t)$. If we denote by $k(t)$ the field of definition of $t$ over $k$, then $\forall\epsilon\in(0,1)$,
\[
[k(t):k]\gg_{g,\epsilon}N(t)^{1-\epsilon}.
\]
\end{thm}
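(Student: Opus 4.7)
The plan is to deduce the theorem from Proposition~\ref{galois orbit for the fiber}, which bounds the Galois orbit of a special point of a mixed Shimura variety from below by a product of a base and a fiber contribution. I would apply this to the connected mixed Shimura variety associated with the universal family of principally polarized abelian varieties of dimension $g$ (with a rigidifying level structure). The pair $(A,t)$ defines a special point whose image on the pure part $\cA_g$ is the CM point $[A]$.

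Since $A$ is defined over $k$, the orbit of $[A]$ (together with the chosen level structure) under $\gal(\bar k/k)$ has size bounded by a constant depending only on $g$; this is the base contribution. Proposition~\ref{galois orbit for the fiber} then reduces the proof to bounding the fiber contribution, that is, $|\gal(\bar k/k)\cdot t|$, from below by $N(t)^{1-\epsilon}$. This is in turn provided by Pink's reciprocity law for canonical models of mixed Shimura varieties: the Galois action on the torsion points of $A$ factors through the adelic points of the Mumford--Tate torus $T$ of $A$, and has image of finite index, bounded in terms of $g$, in $T(\hat{\Z})$ modulo its level-$N(t)$ subgroup. Since $A[N(t)]$ is, up to an index bounded by $g$, cyclic as a module over $\cO_E/N(t)$ (with $E$ the reflex CM field of $A$, of degree $\leq 2^g$) and $t$ is a generator, the stabilizer of $t$ in the Galois image is bounded by some $c(g)$. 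This yields
\[
[k(t):k]\gg_{g} |(\cO_E/N(t))^\times|,
\]
and the classical bound $\phi(N)\gg_\epsilon N^{1-\epsilon}$ concludes.

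The main obstacle is the fiber lower bound in Proposition~\ref{galois orbit for the fiber}. One must verify not only that the image of the reciprocity map is large, but also that the stabilizer of \emph{any} point of exact order $N(t)$ in $A[N(t)]$ is small, uniformly in $N(t)$. This is delicate when the CM type of $A$ is reducible: one has to decompose $A$ up to isogeny into its simple CM isotypic components and analyze the reflex torus action on each factor separately, so as to bound the stabilizer of $t$ by a quantity depending only on $g$. The subtlety is that orbits on different simple factors may combine to give a smaller global orbit than one would naively expect; this loss, together with the finite-index defect of the reciprocity image, is absorbed by the $\epsilon$ in the final estimate.
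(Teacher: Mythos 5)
Your first step is exactly the paper's: reduce via Zarhin's trick to the principally polarized case, realize $A$ as a fiber of $\mathfrak{A}_g(4)\to\cA_g(4)$, and note that $t$ defines a special point of the mixed Shimura variety. The problem is in what comes after.

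You read Proposition~\ref{galois orbit for the fiber} as \emph{reducing} the theorem to a fiber lower bound that still needs to be supplied; in fact it \emph{is} that fiber lower bound. The inequality $|\gal(\bar{\Q}/E)s|\geqslant C_{\epsilon}N(s)^{1-\epsilon}\,|\gal(\bar{\Q}/E)[\pi](s)|$ says precisely that the ratio of the full Galois orbit to the orbit of the base point is $\gg_{g,\epsilon}N(s)^{1-\epsilon}$, and this ratio is (up to a constant coming from the level-$4$ structure) $[k(t):k]$. So in the paper the corollary is a one-line consequence; there is no residual ``fiber contribution'' left to prove. The long second half of your proposal — Pink's reciprocity law, the structure of $A[N(t)]$ as a cyclic $\cO_E/N(t)$-module, the decomposition into simple CM isotypic components, bounding stabilizers of points of exact order $N(t)$ uniformly — is exactly the sort of argument the paper is \emph{avoiding}. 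The abstract and introduction explicitly say that Silverberg's result is being reproved ``in a different approach,'' and the approach in question is not an alternative CM-type bookkeeping: Proposition~\ref{galois orbit for the fiber} is obtained from the comparison inequality \eqref{ComparisonMixedPure} with the pure base, combined with Lemma~\ref{difference between KT and wKTww}, which computes the index $|K_T/K_T^\prime|$ by restricting to the one-dimensional weight subtorus $T^\prime\cong\G_m$ inside the Mumford--Tate torus. That computation gives the factor $\ord(w)\prod_{p\mid\ord(w)}(1-1/p)$ directly, with no case analysis of reducible CM types and no need to understand the reflex torus action factor by factor.

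So there are two issues: a logical one (the Proposition already does the work you think remains to be done) and a methodological one (the supplementary argument you sketch is Silverberg's route rather than the paper's, and its stated obstacles — decomposing up to isogeny, controlling stabilizers of generators of $A[N]$ over general CM orders — are real technical debts that your proposal does not discharge and that the paper's actual proof never incurs). If you want to reconstruct the paper's argument you should instead go back to how $|K_T/K_T^\prime|$ is bounded below using only the weight cocharacter, and how the factor $B^{i(T)}$ from \eqref{ComparisonMixedPure} is absorbed into $N(s)^{\epsilon}$ because $i(T)$ is controlled by the number of prime divisors of $N(s)$.
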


In Silverberg's work, the constant on the right hand side also depends on the field $k$.
The lower bound for pure Shimura varieties is known under GRH (\cite{TsimermanBrauer-Siegel-f}, \cite{UllmoNombre-de-class}). The best unconditional result is given by Tsimerman \cite{TsimermanBrauer-Siegel-f}. He established the lower bound unconditionally for $g\leqslant6$ (for $g\leqslant3$ this was also proved by Ullmo-Yafaev by a similar method \cite{UllmoNombre-de-class}). Therefore as a consequence we prove (Theorem \ref{Andre-Oort for L6})
\begin{thm}\label{Andre-Oort A6}
Under GRH, the generalized Andr\'{e}-Oort conjecture (Conjecture \eqref{Andre-Oort}) holds for any mixed Shimura variety whose pure part is a closed subvariety of $\cA_g^n$. This result is unconditional (i.e. we do not need GRH) if $g\leqslant 6$.
\end{thm}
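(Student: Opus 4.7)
The plan is to follow the Pila--Zannier method, combining the Ax-Lindemann-Weierstra{\ss} theorem (Theorem~\ref{Ax-Lindemann for type star}) with polynomial lower bounds on Galois orbits of special points, exactly as Ullmo \cite{UllmoQuelques-applic} and Pila--Tsimerman \cite{PilaAxLindemannAg} do in the pure case. The generalization of that reduction from the pure to the mixed setting is carried out in $\mathsection$\ref{Consequence of Ax-Lindemann}, so the remaining task is to verify that the Galois orbit bound required there is available whenever the pure part of $S$ lies in $\cA_g^n$; this is where the hypotheses on $g$ and GRH will enter.

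Concretely, let $S$ be a connected mixed Shimura variety whose pure part is a closed subvariety of $\cA_g^n$, and let $Y\subset S$ be a closed irreducible subvariety containing a Zariski-dense set $\Sigma$ of special points. After replacing $S$ by the smallest special subvariety containing $Y$, one may assume $Y$ is Hodge generic in $S$. Uniformizing by $\unif\colon\cX^+\to S$ and restricting to a suitable semi-algebraic fundamental set $\cF$, the map $\unif|_\cF$ should be definable in $\R_{\mathrm{an},\exp}$, so $\tilde Y:=\unif|_\cF^{-1}(Y)$ is definable; moreover each $\sigma\in\Sigma$ admits a preimage $\tilde\sigma\in\cF$ whose coordinates are algebraic numbers of height polynomially bounded in a complexity invariant $\Delta(\sigma)$ attached to $\sigma$.

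The crucial input is then a polynomial lower bound $|\gal(\bar\Q/k)\cdot\sigma|\gg\Delta(\sigma)^\delta$ for some $\delta>0$ and a common field of definition $k$. This is exactly the content of Proposition~\ref{galois orbit for the fiber}: the Galois orbit of a special point of a mixed Shimura variety factors multiplicatively into a pure contribution---the orbit of the image of $\sigma$ in the pure part, for which Tsimerman's estimate \cite{TsimermanBrauer-Siegel-f} applies unconditionally when $g\leqslant 6$ and under GRH otherwise---and a fibre contribution, controlled by Corollary~\ref{result of Silverberg}. Multiplying yields the required polynomial bound in precisely the two regimes stated in the theorem.

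With this estimate in hand, the family version of the Pila--Wilkie counting theorem applied to $\tilde Y$ produces, as soon as $\Sigma$ is infinite, positive-dimensional connected semi-algebraic blocks inside $\tilde Y$. Enlarging such a block to a maximal irreducible algebraic subset of $\cX^+$ contained in $\unif^{-1}(Y)$ and invoking the Ax-Lindemann theorem (Theorem~\ref{Ax-Lindemann for type star}), one obtains a positive-dimensional weakly special subvariety of $S$ contained in $Y$; since it passes through a special point it is in fact special, and a standard induction on $\dim Y$ then forces $Y$ itself to be special. The main obstacle, beyond orchestrating these pieces, will be the control of the mixed uniformization near the cusps: both the definability of $\unif|_\cF$ in $\R_{\mathrm{an},\exp}$ (the mixed analogue of what underlies \cite{KlinglerThe-Hyperbolic-}) and the polynomial bound on heights of preimages of special points must be made to survive through the degenerating 1-motives, which is precisely what forces the use of the family Pila--Wilkie theorem rather than the simpler point-counting version that sufficed in the pure case.
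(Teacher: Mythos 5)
Your proposal follows essentially the same route as the paper: reduce via Theorem~\ref{consequenceOfAx-Lindemann} to the case where the positive-dimensional weakly special subvarieties are not Zariski dense in $Y$, combine the Galois lower bound of Theorem~\ref{galois orbit for the fiber} (fibre contribution of order $N(s)^{1-\epsilon}$ times Tsimerman's pure bound, unconditional for $g\leqslant 6$) with Pila--Wilkie and Theorem~\ref{Ax-Lindemann for type star} to show that all but finitely many special points of $Y$ lie on positive-dimensional special subvarieties, and conclude by Northcott. One correction of detail: the family version of Pila--Wilkie is needed inside the proofs of Theorem~\ref{Ax-Lindemann for type star} and of Theorem~\ref{consequenceOfAx-Lindemann} (to bound uniformly the number of connected components of the fibres of definable families, cf.\ Remark~\ref{EssentialUseOfPilaWilkie}), not because of degeneration near the cusps --- the Andr\'e--Oort deduction itself uses only the ordinary counting theorem, and Corollary~\ref{result of Silverberg} is a consequence of Theorem~\ref{galois orbit for the fiber} rather than an input to it.
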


\subsection{Zilber-Pink}
Finally it is worth remarking that Conjecture \ref{Andre-Oort} is part of the more general Zilber-Pink Conjecture \cite{PinkA-Combination-o, ZilberExponential-sum, RemondAutour-de-la-co}. Some unlikely intersections results of type Andr\'{e}-Pink \cite[Conjecture 1.6]{PinkA-Combination-o} about pure Shimura varieties beyond the Andr\'{e}-Oort conjecture have been obtained by Pink \cite[Theorem 7.6]{PinkA-Combination-o} (Galois generic points in $\cA_g$), Habegger-Pila \cite{HabeggerSome-unlikely-i} (curves in $\cA_1^n$) and Orr \cite{OrrFamilies-of-abe} (curves in $\cA_g$). I shall not talk about the case of algebraic groups (see \cite{Chambert-LoirRelations-de-de} for a summary). As for mixed Shimura varieties, Bertrand, Bertrand-Edixhoven, Bertrand-Pillay and  Bertrand-Masser-Pillay-Zannier have recently been working on Poincar\'{e} biextensions \cite{BertrandSpecial-points-, BertrandA-Lindemann-Wei, BertrandUnlikely-inters, BertrandPinks-conjectur, BertrandRelative-Manin-}. They have got several interesting results, some of which provide good examples for this paper.

\subsection*{Structure of this paper} In $\mathsection$\ref{Connected mixed Shimura varieties} we recall some basic facts about mixed Shimura varieties following Pink \cite{PinkThesis}. $\mathsection$\ref{Variations of mixed (Z-)Hodge structure} is a summary of variations of mixed Hodge structure. In $\mathsection$\ref{Realization of X} we discuss the realization of the uniformizing space of any given mixed Shimura variety. In particular we give a realization of it which is at the same time semi-algebraic and complex analytic (Proposition \ref{realization of the uniformizing space}). $\mathsection$\ref{(Weakly) special subvarieties} is exploited to study (weakly) special subvarieties following Pink \cite{PinkA-Combination-o}. In $\mathsection$\ref{Algebraicity in the uniformizing space} we define algebraic subsets of the uniformizing space and prove the functoriality of the algebraicity. In $\mathsection$\ref{Results for the unipotent part} we list and prove some results for the unipotent part, with the statement of the Ax-Lindemann-Weierstra{\ss} Theorem for the unipotent part which we will eventually prove in $\mathsection$\ref{Ax-Lindemann Part 3: The unipotent part}. In $\mathsection$\ref{The smallest weakly special subvariety containing a given subvariety} we will have our first important results, i.e. the description of the smallest weakly special subvariety containing a given subvariety $Y$ of a connected mixed Shimura variety $S$ (Theorem \ref{smallest weakly special containing a subvariety}) and a criterion of weakly special subvarieties in terms of ``bi-algebraicity" (Corollary \ref{weakly special iff algebraic on the top and on the bottom}). The core of this paper is the proof of Theorem \ref{Ax-Lindemann for type star}, and it is executed in $\mathsection$\ref{Ax-Lindemann Part 1: Outline of the proof}, $\mathsection$\ref{Ax-Lindemann Part 2: Estimate} and $\mathsection$\ref{Ax-Lindemann Part 3: The unipotent part}. The proof is quite technical, and for readers' convenience we organize it as follows: the outline of the proof is presented in $\mathsection$\ref{Ax-Lindemann Part 1: Outline of the proof}, a key proposition leading to the theorem is proved in $\mathsection$\ref{Ax-Lindemann Part 2: Estimate} by using Pila-Wilkie's counting theorem and we shall prove Ax-Lindemann-Weierstra{\ss} for the unipotent part (the fiber) separately in $\mathsection$\ref{Ax-Lindemann Part 3: The unipotent part}. In $\mathsection$\ref{Consequence of Ax-Lindemann} we derive a corollary from Theorem \ref{Ax-Lindemann for type star}, which will be used to prove Theorem \ref{Andre-Oort A6} in $\mathsection$\ref{From Ax-Lindemann to Andre-Oort}.2 together with a suitable lower bound discussed in $\mathsection$\ref{From Ax-Lindemann to Andre-Oort}.1. In the Appendix we reprove the Ax-Lindemann-Weierstra{\ss} theorem for algebraic tori over $\C$ and complex abelian varieties by this method of calculating volumes and counting points.

\subsection*{Acknowledgement}
This topic was introduced to me by Emmanuel Ullmo. This first part of this paper ($\mathsection1\sim\mathsection8$) was done in Leiden University, while the two main theorems were proved when I was in Universit\'{e} Paris-Sud. I would like to express my gratitude to my supervisors Emmanuel Ullmo and Bas Edixhoven for weekly discussions and their valuable suggestions for the writing. I would like to thank Martin Orr for having pointed out a serious gap in $\mathsection$\ref{Ax-Lindemann Part 1: Outline of the proof} in a previous version as well as his several valuable remarks, especially for the last part of $\mathsection$\ref{Ax-Lindemann Part 2: Estimate}. Ya'acov Peterzil pointed out to me that the proof of the definability in $\mathsection$\ref{Ax-Lindemann Part 2: Estimate}.1 in a previous version was wrong. I have benefited a lot from the discussion with him and Sergei Starchenko for this definability problem. I also had some interesting discussion with Daniel Bertrand and Chao Zhang. Yves Andr\'{e}, Daniel Bertrand, Bruno Klingler and Martin Orr have read a previous version of the manuscript and gave me some suggestions to improve the writing of both math and language. I would also like to thank them here. Finally I thank the referee for his/her careful reading and helpful suggestions thanks to which this article has been improved.

\subsection*{Conventions}
For $x=(x_1,...,x_n)\in\Q^n$, we define the height of $x$ as $H(x)=\Max(H(x_1),...,H(x_n))$
where for $a,b\in\Z\backslash\{0\}$ coprime $H(a/b)=\Max(|a|,|b|)$, and $H(0)=0$.

For an algebraic group $P$ over a field $k$, when we talk about a subgroup of $P$, we always mean a $k$-subgroup unless it is claimed not to be.

For the theory of o-minimality, we refer to \cite[$\mathsection$3]{UllmoThe-Hyperbolic-} (for a concise version) and \cite[$\mathsection$2,3]{PilaO-minimality-an} (for a more detailed version). In this paper, ``semi-algebraic'' will always mean $\R$-semi-algebraic. The o-minimal structure we consider is $\R_{an,\exp}$, i.e. by saying a set $A$ is definable we mean that $A$ is definable in $\R_{an,\exp}$.

\section{Connected mixed Shimura varieties}\label{Connected mixed Shimura varieties}

\subsection{Definition and basic properties}(cf. \cite[Chapter 1,2,3]{PinkThesis}) Let $\S:=\res_{\C/\R}(\G_m)$ be the Deligne-torus.
\begin{defn}\label{connected mixed Shimura datum}
A \textbf{mixed Shimura datum} $(P,\cX,h)$ is a triple where
\begin{itemize}
\item $P$ is a connected linear algebraic group over $\Q$ with unipotent radical $W$ and with another algebraic subgroup $U\subset W$ which is normal in $P$ and uniquely determined by $\cX$ using condition \eqref{definition of U} below;
\item $\cX$ is a left homogeneous space under the subgroup $P(\R)U(\C)\subset P(\C)$, and $\cX\xrightarrow{h}\hom(\S_\C,P_\C)$ is a $P(\R)U(\C)$-equivariant map s.t. every fibre of $h$ consists of at most finitely many points,
\end{itemize}
such that for some (equivalently for all) $x\in\cX$,
\begin{enumerate}
\item the composite homomorphism $\S_\C\xrightarrow{h_x}P_\C\rightarrow(P/U)_\C$ is defined over $\R$,
\item the adjoint representation induces on $\lie P$ a rational mixed Hodge structure of type
\[
\{(-1,1),(0,0),(1,-1)\}\cup\{(-1,0),(0,-1)\}\cup\{(-1,-1)\},
\]
\item\label{definition of U}
the weight filtration on $\lie P$ is given by
\[
W_n(\lie P)=\begin{cases}
0 & \text{if }n<-2 \\
\lie U &\text{if }n=-2 \\
\lie W &\text{if }n=-1 \\
\lie P &\text{if }n\geqslant 0
\end{cases},
\]
\item the conjugation by $h_x(\sqrt{-1})$ induces a Cartan involution on $G_\R^\ad$ where $G:=P/W$, and $G^\ad$ possesses no $\Q$-factor $H$ s.t. $H(\R)$ is compact,
\item $P/P^\der=Z(G)$ is an almost direct product of a $\Q$-split torus with a torus of compact type defined over $\Q$.
\end{enumerate}
In practice, we often omit the map ``$h$'' and write a mixed Shimura datum as a pair $(P,\cX)$. If in addition $P$ is reductive, then $(P,\cX)$ is called a \textbf{pure Shimura datum}.
\end{defn}

\begin{rmk}\label{sufficiently small congruence subgroup cotained in derivative}
\begin{enumerate}
\item Let $\omega:\G_{m,\R}\hookrightarrow\S$ be $t\in\R^*\mapsto t\in\C^*$. Conditions (2) and (3) together imply that the composite homomorphism $\G_{m,\C}\xrightarrow{\omega}\S_\C\xrightarrow{h_x}P_\C\rightarrow(P/U)_\C$ is a co-character of the center of $P/W$ defined over $\R$. This map is called the weight. Furthermore, condition (5) implies that the weight is defined over $\Q$.
\item Condition (5) also implies that every sufficiently small congruence subgroup $\Gamma$ of $P(\Q)$ is contained in $P^{\der}(\Q)$ (cf \cite[the proof of 3.3(a)]{PinkThesis}). Fix a Levi decomposition $P=W\rtimes G$ (\cite[Theorem 2.3]{AlgebraicGroupBible}), then $P^\der=W\rtimes G^\der$, and hence for any congruence subgroup $\Gamma<P^\der(\Q)$, $\Gamma$ is Zariski dense in $P^\der$ by condition (4) (\cite[Theorem 4.10]{AlgebraicGroupBible}).
\item Condition (5) in the definition does not make the situation less general because we are only interested in a connected component of $\cX$ (\cite[1.29]{PinkThesis}).
\end{enumerate}
\end{rmk}

\begin{defn} Let $(P,\cX)$ be a mixed Shimura datum and let $K$ be an open compact subgroup of $P(\A_f)$ where $\A_f$ is the ring of finite ad\`{e}le of $\Q$. The corresponding \textbf{mixed Shimura variety} is defined as
\[
M_K(P,\cX):=P(\Q)\backslash\cX\times P(\A_f)/K,
\]
where $P(\Q)$ acts diagonally on both factors on the left and $K$ acts on $P(\A_f)$ on the right.
\end{defn}

In this article, we only consider connected mixed Shimura data and connected mixed Shimura varieties except in $\mathsection$\ref{From Ax-Lindemann to Andre-Oort}.
\begin{defn}
\begin{enumerate}
\item A \textbf{connected mixed Shimura datum} is a pair $(P,\cX^+)$ satisfying the conditions of Definition \ref{connected mixed Shimura datum}, where $\begin{diagram}
\cX^+ &\rInto^h &\hom(\S_\C,P_\C)
\end{diagram}$
is an orbit under the subgroup $P(\R)^+U(\C)\subset P(\C)$.
\item A \textbf{connected mixed Shimura variety} $S$ associated with $(P,\cX^+)$ is of the form $\Gamma\backslash\cX^+$ for some congruence subgroup $\Gamma\subset P(\Q)\cap P(\R)_+$, where $P(\R)_+$ is the stabilizer of $\cX^+\subset\hom_\C(\S_\C,P_\C)$.
\end{enumerate}
\end{defn}

Every connected mixed Shimura variety is a connected component of a mixed Shimura variety, and vice versa (\cite[3.2]{PinkThesis}). A connected mixed Shimura variety is a complex analytic space with at most finite quotient singularities, and if $\Gamma$ is sufficiently small (for example if $\Gamma$ is neat), then $\Gamma\backslash\cX^+$ is smooth. For details we refer to \cite[Fact 2.3]{PinkA-Combination-o} or \cite[1.18, 3.3, 9.24]{PinkThesis}.

Recall the following definition, which Pink calls ``irreducible" in \cite[2.13]{PinkThesis}.
\begin{defn}\label{irreducible mixed Shimura datum}
A connected mixed Shimura datum $(P,\cX^+)$ is said to \textbf{have generic Mumford-Tate group} if $P$ possesses no proper normal subgroup $P^\prime$ defined over $\Q$ s.t. for one (equivalently all) $x\in\cX^+$, $h_x$ factors through $P^\prime_\C\subset P_\C$. We shall denote this case by $P=\MT(\cX^+)$. (This terminology will be explained in Remark \ref{stabilizer of Hodge generic point}). 
\end{defn}

\begin{prop}\label{properties of irreducible mixed Shimura datum}
Let $(P,\cX^+)$ be a connected mixed Shimura datum, then
\begin{enumerate}
\item there exists a connected mixed Shimura datum $(P^\prime,\cX^{\prime+})\hookrightarrow(P,\cX^+)$ s.t. $P^\prime=\MT(\cX^{\prime+})$ and $\cX^{\prime+}\cong\cX^+$ under this embedding;
\item if $(P,\cX^+)$ has generic Mumford-Tate group, then $P$ acts on $U$ via a scalar. In particular, any subgroup of $U$ is normal in $P$.
\end{enumerate}
\begin{proof}
\cite[2.13, 2.14]{PinkThesis}.
\end{proof}
\end{prop}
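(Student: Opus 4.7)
For part (1), my plan is to take $P' \subset P$ to be the smallest normal $\Q$-subgroup such that $h_x(\S_\C) \subset P'_\C$ for some (equivalently every) $x \in \cX^+$. The equivalence of ``some'' and ``every'' uses that $\cX^+$ is a single $P(\R)^+U(\C)$-orbit together with the normality of $P'$, and the existence of a smallest such $P'$ follows since the collection of such subgroups is closed under intersection and satisfies the descending chain condition in the Noetherian algebraic group $P$. Setting $U' := U \cap P'$ and $\cX'^+ := \cX^+$ (now viewed as mapping into $\hom(\S_\C, P'_\C)$), I would verify the axioms of Definition \ref{connected mixed Shimura datum} for $(P', \cX'^+)$: since $h_x$ factors through $P'_\C$, the Lie subalgebra $\lie P'$ is stable under $\ad \circ h_x$ and inherits a sub-mixed-Hodge-structure of $\lie P$, so conditions (1)--(3) transfer directly (with $W_{-2}(\lie P') = \lie U \cap \lie P' = \lie U'$), while conditions (4)--(5) follow from the normality of the image $G' := P'/(P' \cap W)$ inside $G$. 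The essential content is that $\cX^+$ is already a single $P'(\R)^+U'(\C)$-orbit; I would derive this from a tangent-space computation at $h_x$, noting that the tangent space to $\cX^+$ inside $\hom(\S_\C, P_\C)$ is $\lie P_\C / F^0_{h_x}\lie P_\C$ and the analogue for $P'$ is $\lie P'_\C / F^0_{h_x}\lie P'_\C$; since the Hodge pieces of $\lie P_\C$ outside $F^0$ already lie in $\lie P'_\C$ (because $h_x$ factors through $P'_\C$), this inclusion is an isomorphism, whence the $P'(\R)^+U'(\C)$-orbit of $h_x$ is open in $\cX^+$ and by connectedness coincides with it. The identity $P' = \MT(\cX'^+)$ is then immediate from the minimality with which $P'$ was defined.

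For part (2), the key preliminary observation is that $W$ acts trivially on $U$ by conjugation: the Lie bracket on $\lie P$ is a morphism of mixed Hodge structures, so by condition (3) of Definition \ref{connected mixed Shimura datum}, $[\lie W, \lie U] \subset W_{-3}\lie P = 0$. Thus the conjugation representation $\rho: P \to \GL(U)$ factors through $G := P/W$, and I would write $\bar\rho$ for the composition $G \xrightarrow{\rho} \GL(U) \surj \operatorname{PGL}(U)$. For any $x \in \cX^+$, the induced Hodge structure on $\lie U$ is pure of type $(-1,-1)$, so $h_x(z)$ acts on $U_\C$ by the scalar $z\bar z$; consequently $\bar\rho \circ h_x$ is trivial, and $h_x$ factors through the preimage $N \subset P$ of $\ker\bar\rho$, a normal $\Q$-subgroup of $P$. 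The hypothesis $P = \MT(\cX^+)$ then forces $N = P$, equivalently $\rho$ takes values in the scalars $\G_m \subset \GL(U)$. Since scalar multiplication preserves every $\Q$-subspace of the vector group $U$, every algebraic $\Q$-subgroup of $U$ is normalised by $P$.

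I expect the main obstacle to sit in part (1): executing the tangent-space argument rigorously so that $\cX^+$ really is recovered as a $P'(\R)^+U'(\C)$-orbit, and verifying that conditions (4)--(5) of Definition \ref{connected mixed Shimura datum} transfer from $G$ to the reductive quotient $G'$ of $P'$ (not merely to an abstract normal subgroup of $G$). Part (2), by contrast, reduces essentially to the single identity $[\lie W, \lie U] = 0$ together with the Hodge-type constraint on $\lie U$, and should be straightforward.
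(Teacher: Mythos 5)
The paper does not actually prove this proposition; it simply cites Pink's thesis [2.13, 2.14], and your proposal is essentially a reconstruction of Pink's argument. Most of it is sound: in part (1) the construction of $P^\prime$ as the smallest normal $\Q$-subgroup through which $h_x$ factors, the tangent-space computation showing $\cX^+$ is a single $P^\prime(\R)^+U^\prime(\C)$-orbit (the key input being that $\mathrm{Ad}\circ h_x$ acts trivially on $\lie(P/P^\prime)$ because $h_x(\S_\C)\subset P^\prime_\C$ \emph{and} $P^\prime$ is normal, so all Hodge pieces of $\lie P_\C$ of type other than $(0,0)$ lie in $\lie P^\prime_\C$ and in particular $\lie P_\C=F^0_{h_x}\lie P_\C+\lie P^\prime_\C$), and in part (2) the reduction of the conjugation action to $G$ via $[\lie W,\lie U]\subset W_{-3}\lie P=0$ followed by the $\mathrm{PGL}(U)$ argument. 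Part (2) is complete as you state it.

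There is, however, one genuine gap in part (1): the final claim that $P^\prime=\MT(\cX^{\prime+})$ is \emph{not} immediate from the minimality of $P^\prime$. You defined $P^\prime$ as minimal among subgroups \emph{normal in $P$}, whereas $P^\prime=\MT(\cX^{\prime+})$ requires that $P^\prime$ have no proper subgroup $P^{\prime\prime}$ \emph{normal in $P^\prime$} through which $h_x$ factors, and such a $P^{\prime\prime}$ need not be normal in $P$, so it does not directly contradict your minimality. The standard repair: since every $x^\prime\in\cX^+$ satisfies $h_{x^\prime}=\mathrm{int}(p)\circ h_x$ for some $p\in P(\R)^+U(\C)$ and also factors through $P^{\prime\prime}_\C$, one gets $h_x(\S_\C)\subset p^{-1}P^{\prime\prime}_\C p$ for all such $p$; as $P(\R)^+U(\C)$ is Zariski dense in $P_\C$, the locus of $p$ with this property is all of $P_\C$, so $h_x$ factors through the normal core $\bigcap_{p}pP^{\prime\prime}p^{-1}$, which is a proper normal $\Q$-subgroup of $P$ contained in $P^{\prime\prime}$ — and \emph{this} contradicts the minimality of $P^\prime$. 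You should also take identity components where needed (intersections of connected normal subgroups need not be connected, but $h_x(\S_\C)$ is connected so lands in the identity component, which is still normal). With that supplement, and the routine verification you already flag for conditions (4)--(5) (using that $Z(G^\prime)^\circ$ is a normal, hence central, torus of $G$ and that a subtorus of a product of a $\Q$-split torus with a compact-type torus is again of that form), the proof is complete.
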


\begin{defn} A \textbf{(Shimura) morphism} of connected mixed Shimura data $(Q,\cY^+)\rightarrow(P,\cX^+)$ is a homomorphism $\varphi\colon Q\rightarrow P$ of algebraic groups over $\Q$ which induces a map $\cY^+\rightarrow\cX^+$, $y\mapsto\varphi\circ y$. A Shimura morphism of connected mixed Shimura varieties is a morphism of varieties induced by a Shimura morphism of connected mixed Shimura data.
\end{defn}

\begin{fact}\label{quotient}(cf \cite[2.9]{PinkThesis})
Let $(P,\cX^+)$ be a connected mixed Shimura datum and let $P_0$ be a normal subgroup of $P$. Then there exists a quotient connected mixed Shimura datum $(P,\cX^+)/P_0$ and a morphism $(P,\cX^+)\rightarrow(P,\cX^+)/P_0$ unique up to isomorphism s.t. every morphism $(P,\cX^+)\rightarrow(P^\prime,\cX^{+\prime})$, where the homomorphism $P\rightarrow P^\prime$ factors through $P/P_0$, factors in a unique way through $(P,\cX^+)/P_0$. Such a Shimura morphism $(P,\cX^+)\rightarrow(P,\cX^+)/P_0$ is called a quotient Shimura morphism.
\end{fact}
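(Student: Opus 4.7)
The strategy is to build the quotient datum explicitly and then verify the universal property. Write $\bar P := P/P_0$ with quotient homomorphism $\pi\colon P\to\bar P$, and set $\bar U := \pi(U)$ and $\bar W := \pi(W)$; the latter is the unipotent radical of $\bar P$, and $\bar U$ remains normal in $\bar P$ since $P_0\triangleleft P$. Define a map $\bar h\colon \cX^+\to \hom(\S_\C,\bar P_\C)$ by $x\mapsto \pi_\C\circ h(x)$. Let $\bar{\cX}^+$ be the quotient of $\cX^+$ under the equivalence relation $x\sim x'\Leftrightarrow \bar h(x)=\bar h(x')$, endowed with the induced $\bar P(\R)^+\bar U(\C)$-action and the induced map (still written $\bar h$) to $\hom(\S_\C,\bar P_\C)$. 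The pair $(\bar P,\bar{\cX}^+)$ together with the natural morphism $(\pi,\mathrm{pr})\colon(P,\cX^+)\to(\bar P,\bar{\cX}^+)$ is the candidate quotient datum.

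Verifying that $(\bar P,\bar{\cX}^+)$ satisfies the axioms of Definition \ref{connected mixed Shimura datum} is then essentially formal. Conditions (1)--(3) follow by pushing the rational mixed Hodge structure on $\lie P$ forward along $d\pi\colon \lie P\twoheadrightarrow \lie\bar P$: since $P_0$ is defined over $\Q$ and normal in $P$, the subspace $\lie P_0$ is a sub-mixed-Hodge-structure under each $h_x$, so the Hodge type and the weight filtration descend, giving $W_{-2}(\lie\bar P)=\lie\bar U$ and $W_{-1}(\lie\bar P)=\lie\bar W$. Condition (4) transfers because $(\bar P/\bar W)^{\ad}_\R$ is a quotient of $(P/W)^{\ad}_\R$; it still admits the Cartan involution induced by $h_x(\sqrt{-1})$, and no compact $\Q$-factor appears in a quotient of a group with none. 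Condition (5) is preserved, as the image of $P/P^\der$ in $\bar P/\bar P^\der$ remains an almost-direct product of a $\Q$-split torus with a torus of compact type. The universal property is then immediate: any $[\psi]\colon(P,\cX^+)\to(P',\cX'^{+})$ with $\psi$ factoring as $\psi'\circ\pi$ gives rise to the unique morphism $(\bar P,\bar{\cX}^+)\to(P',\cX'^{+})$ sending $\bar x\mapsto \psi'_\C\circ\bar x$, with uniqueness forced by the surjectivity of both $\cX^+\to\bar{\cX}^+$ and $P\to\bar P$.

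The main obstacle I anticipate is showing that the induced map $\bar h\colon \bar{\cX}^+\to \hom(\S_\C,\bar P_\C)$ has finite fibres, so that $\bar{\cX}^+$ carries the structure of a homogeneous space of the expected type rather than collapsing pathologically. This requires identifying the equivalence classes for $\sim$ with the orbits of $P_0(\R)(U\cap P_0)(\C)$ acting on $\cX^+$, which uses the normality of $P_0$ in $P$, the finiteness of fibres of the original $h$, and the fact that the restriction of the weight and Hodge filtrations on $\lie P_0$ inherited from $h_x$ is controlled by $\bar h(x)$ up to this group action. A secondary point is to verify that $\bar{\cX}^+$ is a single $\bar P(\R)^+\bar U(\C)$-orbit, which follows from the transitivity of $P(\R)^+U(\C)$ on $\cX^+$ and the surjectivity of $P(\R)^+U(\C)\to\bar P(\R)^+\bar U(\C)$ after the standard check for real points of a connected quotient.
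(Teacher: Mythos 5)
The paper gives no argument for this Fact beyond the citation of Pink's thesis, Proposition 2.9, and your construction --- taking $\bar P := P/P_0$ and $\bar{\cX}^+$ to be the image of $\cX^+$ under $x\mapsto\pi_\C\circ h_x$, then checking the axioms of Definition \ref{connected mixed Shimura datum} and the universal property --- is precisely the one carried out there; it is correct. One cosmetic remark: with $\bar{\cX}^+$ defined as that image, the map $\bar h$ is injective by construction, so finiteness of its fibres is automatic, and the ``main obstacle'' you describe really concerns the geometric identification of the fibres of $\cX^+\to\bar{\cX}^+$ with $P_0(\R)^+(U\cap P_0)(\C)$-orbits, which is useful later (e.g.\ in Remark \ref{discussions of weakly special on the top}) but is not needed for the existence and universal property asserted here.
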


\begin{notation}
For convenience, we fix some notation here. Given a connected mixed Shimura datum $(P,\cX^+)$, we always denote by $W=\cR_u(P)$ the unipotent radical of $P$, $G:=P/W$ the reductive part, $U\lhd P$ the weight $-2$ part, $V:=W/U$ the weight $-1$ part and $(P/U,\cX^+_{P/U}):=(P,\cX^+)/U$ (resp. $(G,\cX^+_G):=(P,\cX^+)/W$) the corresponding connected mixed Shimura datum whose weight $-2$ part is trivial (resp. pure Shimura datum). If we have several connected mixed Shimura data, say $(P,\cX^+)$ and $(Q,\cY^+)$, then we distinguish the different parts associated with them by adding subscript $W_P$, $W_Q$, $G_P$, $G_Q$, etc. For a connected mixed Shimura variety $S$, we denote by $S_{P/U}$ (resp. $S_G$) its image under the Shimura morphism induced by $(P,\cX^+)\rightarrow(P/U,\cX^+_{P/U})$ (resp. $(P,\cX^+)\rightarrow(G,\cX^+_G)$).
\end{notation}

\begin{prop}\label{WQ to WP GQ to GP}
Let $(Q,\cY)\xrightarrow{f}(P,\cX)$ be a Shimura morhpism, then $f(W_Q)\subset W_P$ (resp. $f(U_Q)\subset f(U_P)$), and hence $f$ induces
\[
\bar{f}:(G_Q,\cY_{G_Q})\rightarrow(G_P,\cX_{G_P})\qquad\text{(resp. }\bar{f}^\prime:(Q/Q_U,\cY_{Q/U_Q})\rightarrow(P/U_P,\cX_{P/U_P})).
\]
Furthermore, if the underlying homomorphism of algebraic groups $f$ is injective, then so are $\bar{f}$ and $\bar{f}^\prime$.
\begin{proof} Since
\[
\lie W_P=W_{-1}(\lie P)\qquad\text{and}\qquad\lie W_Q=W_{-1}(\lie Q),
\]
by the following commutative diagram
\[
\begin{diagram}
\lie W_Q &\rTo &\lie W_P \\
\dTo_\exp^\wr & &\dTo_\exp\\
W_Q &\rTo^f &P
\end{diagram}
\]
(here $\exp$ is algebraic and is an isomorphism as a morphism between algebraic varieties because $W_Q$ is unipotent), $f(W_Q)\subset W_P$.

Hence $f$ induces a map $G_Q\rightarrow G_P$. Now the existence of $\bar{f}$ follows from the universal property of the quotient Shimura datum (\cite[2.9]{PinkThesis}).

Furthermore, suppose now that $f$ is injective. By Levi decomposition, the exact sequence
\[
1\rightarrow W_Q\rightarrow Q\xrightarrow{\pi_Q}G_Q\rightarrow 1
\]
splits. Choose a splitting $s_Q\colon G_Q\rightarrow Q$, then we have the following diagram whose solid arrows commute:
\[
\begin{diagram}
1 &\rTo & W_Q &\rTo &Q &\rTo_{\pi_Q} \upperarrow{s_Q} &G_Q &\rTo &1\\
& &\dTo & &\dTo^{f} &\ldDotsto_{\lambda} & \dTo_{\bar{f}} & &\\
1 &\rTo & W_P &\rTo &P &\rTo_{\pi_P} &G_P &\rTo &1
\end{diagram},
\]
where $\lambda:=f\circ s_Q$. Then $\lambda$ is injective since $f$, $s_Q$ are. And $\pi_P\circ\lambda=\pi_P\circ f\circ s_Q=\bar{f}\circ\pi_Q\circ s_Q=\bar{f}$, so we have
\[
\ker(\bar{f})=G_Q\cap W_P
\]
where the intersection is taken in $P$. $(G_Q\cap W_P)^\circ$ is smooth (since we are in the characteristic $0$), connected unipotent (since it is in $W_P$) and normal in $G_Q$ (since $W_P$ is normal in $P$), so it is trivial since $G_Q$ is reductive. So $G_Q\cap W_P$ is finite, hence trivial because $W_P$ is unipotent over $\Q$. To sum it up, $\bar{f}$ is injective.

The proof for the statements with $U$'s is similar.
\end{proof}
\end{prop}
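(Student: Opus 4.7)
The plan is to reduce each claim to the level of Lie algebras using Hodge theory, then lift back via the exponential map. The point is that the subgroups $W$ and $U$ are determined by the weight filtration on $\lie P$, and the weight filtration is respected by any morphism of mixed Hodge structures.

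For the containment $f(W_Q)\subset W_P$ and $f(U_Q)\subset U_P$, note that $f$ being a Shimura morphism means that $\lie f\colon \lie Q\to\lie P$ intertwines the Hodge structures $h_y$ and $h_{f\circ y}$ on the adjoint representations, so $\lie f$ is a morphism of mixed Hodge structures. By condition (3) of Definition \ref{connected mixed Shimura datum}, $\lie W_Q = W_{-1}\lie Q$ and $\lie U_Q = W_{-2}\lie Q$, and similarly for $P$; since MHS morphisms preserve the weight filtration this gives $\lie f(\lie W_Q)\subset\lie W_P$ and $\lie f(\lie U_Q)\subset\lie U_P$. In characteristic zero the exponential map is a polynomial isomorphism of algebraic varieties for unipotent groups, so exponentiating yields $f(W_Q)\subset W_P$ and $f(U_Q)\subset U_P$. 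The induced $\Q$-group homomorphisms $G_Q\to G_P$ and $Q/U_Q\to P/U_P$ are then compatible with the various $h$, and the universal property recorded in Fact \ref{quotient} upgrades them to morphisms $\bar f$ and $\bar f'$ of connected mixed Shimura data.

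For injectivity of $\bar f$, I would fix a Levi decomposition $Q = W_Q\rtimes G_Q$ and a Levi section $s_Q\colon G_Q\to Q$, then form $\lambda := f\circ s_Q\colon G_Q\to P$. A short diagram chase identifies $\ker(\bar f)$ with $\lambda(G_Q)\cap W_P$; its identity component is a connected smooth unipotent subgroup normal in $\lambda(G_Q)\cong G_Q$, hence trivial because $G_Q$ is reductive, and the residual finite subgroup sits inside the unipotent $\Q$-group $W_P$ over a characteristic-zero field and is therefore also trivial.

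For injectivity of $\bar f'$, the same Levi argument shows that $\ker(\bar f')$ projects trivially to $G_Q$ (using injectivity of $\bar f$), hence lies inside the abelian subgroup $V_Q = W_Q/U_Q\subset Q/U_Q$. It remains to check that the induced linear map $V_Q\to V_P$ is injective, equivalently that $f^{-1}(U_P)\cap W_Q\subset U_Q$. This is the only step not implied by the Levi decomposition alone, and I expect it to be the main subtlety; it follows from the strictness of the MHS morphism $\lie f$ with respect to the weight filtration, which gives $\lie f^{-1}(W_{-2}\lie P) = W_{-2}\lie Q$ when $\lie f$ is injective, and exponentiating inside the unipotent $W_Q$ then finishes.
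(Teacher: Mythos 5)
Your argument matches the paper's for the containments (weight filtration on Lie algebras plus functoriality of $\exp$), the existence of $\bar{f}$ and $\bar{f}'$ (universal property of quotient data), and the injectivity of $\bar{f}$ (Levi decomposition, then a reductive-meets-unipotent argument). The genuine difference is in the injectivity of $\bar{f}'$, which the paper dismisses with ``similar.'' You are right that this step is not literally similar: the exact sequence $1\to U_Q\to Q\to Q/U_Q\to 1$ does not split in general (when $\Psi$ is nontrivial, $U_Q$ is not a semidirect factor of $W_Q$), so the Levi-style argument cannot simply be rerun with $U$ in place of $W$. Your route --- reduce, via injectivity of $\bar{f}$, to showing $\ker(V_Q\to V_P)=1$, then obtain $\lie f^{-1}(W_{-2}\lie P)=W_{-2}\lie Q$ from strictness of the morphism of mixed Hodge structures and exponentiate inside $W_Q$ --- is the correct way to fill this in, and it identifies the real content that the paper leaves implicit: $\lie Q$ with its induced MHS is a sub-MHS of $\lie P$, so its weight filtration is the restricted one. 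One small point of phrasing: strictness is a property of every morphism of MHS (not something that only holds ``when $\lie f$ is injective''); injectivity is only what lets you pass from $\lie f^{-1}(W_{-2}\lie P)\supset W_{-2}\lie Q$, which always holds, to equality.
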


\subsection{Structure of the underlying group}\label{structure of the underlying group}
(cf \cite[2.15]{PinkThesis}).

For a given connected mixed Shimura datum $(P,\cX^+)$, we can associate to $P$ a 4-tuple $(G,V,U,\Psi)$ which is defined as follows:
\begin{itemize}
\item $G:=P/\cR_u(P)$ is the reductive part of $P$;
\item $U$ is the normal subgroup of $P$ as in Definition \ref{connected mixed Shimura datum} and $V:=\cR_u(P)/U$. Both of them are vector groups with an action of $G$ induced by conjugation in $P$ (which factors through $G$ for reason of weight);
\item The commutator on $W:=\cR_u(P)$ induces a $G$-equivariant alternating form $\Psi\colon V\times V\rightarrow U$ by reason of weight as explained by Pink in \cite[2.15]{PinkThesis}. Moreover, $\Psi$ is given by a polynomial with coefficients in $\Q$.
\end{itemize}

On the other hand, $P$ is uniquely determined up to isomorphism by this 4-tuple in the following sense:
\begin{itemize}
\item let $W$ be the central extension of $V$ by $U$ defined by $\Psi$. More concretely, $W=U\times V$ as a $\Q$-variety and the group law on $W$ is (this can be proved using the Baker-Campbell-Hausdorff formula)
\[
(u,v)(u^\prime,v^\prime)=(u+u^\prime+\frac{1}{2}\Psi(v,v^\prime),v+v^\prime);
\]
\item define the action of $G$ on $W$ by $g((u,v)):=(gu,gv)$;
\item define $P:=W\rtimes G$.
\end{itemize}

\subsection{Siegel type}\label{SubsectionSiegelType}

(cf \cite[2.7, 2.25]{PinkThesis} for mixed Shimura data of Siegel type and \cite[10.1-10.14]{PinkThesis} for mixed Shimura varieties of Siegel type)

Let $g\in\N_{>0}$. Let $V_{2g}$ be a $\Q$-vector space of dimension $2g$ and let
\[
\Psi\colon V_{2g}\times V_{2g}\rightarrow U_{2g}:=\G_{a,\Q}
\]
be a non-degenerate alternating form. Define
\[
\GSp_{2g}:=\{g\in\GL(V_{2g})|\Psi(gv,gv^\prime)=\nu(g)\Psi(v,v^\prime)\text{ for some }\nu(g)\in\G_m\},
\]
and $\H_g$ the set of all homomorphisms
\[
\S\rightarrow\GSp_{2g,\R}
\]
which induce a pure Hodge structure of type $\{(-1,0),(0,-1)\}$ on $V_{2g}$ and for which either $\Psi$ or $-\Psi$ defines a polarization. Let $\H^+_g$ be the set of all such homomorphisms s.t. $\Psi$ defines a polarization.

$\GSp_{2g}$ acts on $U_{2g}$ by the scalar $\nu$, which induces a pure Hodge structure of type $(-1,-1)$ on $U_{2g}$. Let $W_{2g}$ be the central extension of $V_{2g}$ by $U_{2g}$ defined by $\Psi$, then the action of $\GSp_{2g}$ on $W_{2g}$ induces a Hodge structure of type $\{(-1,0),(0,-1),(-1,-1)\}$ on $W_{2g}$.

By \cite[2.16, 2.17]{PinkThesis}, there exist connected mixed Shimura data $(P_{2g,a},\cX^+_{2g,a})$ and $(P_{2g},\cX^+_{2g})$, where $P_{2g,a}:=V_{2g}\rtimes\GSp_{2g}$ and $P_{2g}:=W_{2g}\rtimes\GSp_{2g}$.

\begin{defn}
 The connected mixed Shimura data $(\GSp_{2g},\H^+_g)$, $(P_{2g,a},\cX^+_{2g,a})$ and $(P_{2g},\cX^+_{2g})$ are called \textbf{of Siegel type} (of genus $g$).
\end{defn}

Next we introduce connected mixed Shimura varieties of Siegel type. They have good modular interpretation (\cite[10.8-10.14]{PinkThesis}).

For $N\geqslant4$ and even, define
\begin{equation}\label{lattice of level N}
 \Gamma_{\GSp}(N):=\{g\in\GSp_{2g}(\Z)|g\equiv 1\text{ mod }N\}
\end{equation}
and
\[
 \Gamma_W(N):=(N\cdot U_{2g}(\Z))\times(N\cdot V_{2g}(\Z))
\]
under the identification $W\cong U\times V$ in $\mathsection$\ref{structure of the underlying group}. $\Gamma_W(N)$ is indeed a 
subgroup of $W(\Z)$ by the group operation (defined by $\Psi$). Let $\Gamma_V(N):=N\cdot V_{2g}(\Z)$, and write
\begin{align}
 \cA_g(N):=\Gamma_{\GSp}(N)\backslash\H^+_g \\
 \mathfrak{A}_g(N):=(\Gamma_V(N)\rtimes\Gamma_{\GSp}(N))\backslash\cX^+_{2g,a} \\
 \mathfrak{L}_g(N):=(\Gamma_W(N)\rtimes\Gamma_{\GSp}(N))\backslash\cX^+_{2g},
\end{align}
Then $\cA_g(N)$ is a moduli space of abelian varieties of dimension $g$ with a level structure, $\mathfrak{A}_g(N)\rightarrow\cA_g(N)$ is the universal 
family of abelian varieties (and hence a principally polarized abelian scheme of relative dimension $g$), 
and $\mathfrak{L}_g(N)\rightarrow\mathfrak{A}_g(N)$ is a $\G_m$-torsor which (up to replacing the $\G_m$-action by its inverse) corresponds to a relatively ample line bundle over $\mathfrak{A}_g(N)\rightarrow\cA_g(N)$. For more details see \cite[10.5, 10.9, 10.10]{PinkThesis}.

\begin{defn}\label{MixedShimuraVarietySiegelType}
 The connected mixed Shimura varieties $\cA_g(N)$, $\mathfrak{A}_g(N)$ and $\mathfrak{L}_g(N)$ are called \textbf{of Siegel type of level $N$} (and of genus $g$).
\end{defn}

Denote by $\GSp_0:=\G_m$ and $P_0:=\G_a\rtimes\G_m$ with the standard action of $\G_m$ on $\G_a$. Pink proved the following lemma (\cite[2.26]{PinkThesis})
\begin{lemma}[Reduction Lemma]\label{reduction lemma}
Let $(P,\cX^+)$ be a connected mixed Shimura datum with generic Mumford-Tate group.
\begin{enumerate}
\item If $V$ is trivial, then there exists an embedding
\[
(P,\cX^+)\hookrightarrow(G_0,\cD^+)\times\prod_{i=1}^r(P_0,\cX^+_0)
\]
where $r=\dim(U)$ (see \cite[2.8, 2.14]{PinkThesis} for definition of $(P_0,\cX^+_0)$);
\item If $V$ is not trivial, then there exist a connected pure Shimura datum $(G_0,\cD^+)$ and Shimura morphisms
\[
(P^\prime,\cX^{\prime+})\twoheadrightarrow(P,\cX^+)
\]
\[
\text{and }\begin{diagram}
(P^\prime,\cX^{\prime+}) &\rInto^{\lambda} &(G_0,\cD^+)\times\prod_{i=1}^r(P_{2g},\cX^+_{2g})
\end{diagram}
\]
s.t. $\ker(P^\prime\rightarrow P)$ is of dimension 1 and of weight -2. Moreover $\lambda|_V\colon V\cong V_{2g}\rightarrow\oplus_{i=1}^rV_{2g}$ is the diagonal map,  $\lambda|_{U^\prime}\colon U^\prime\cong\oplus_{i=1}^rU_{2g}$ and $G\xrightarrow{\lambda|_G}G_0\times\prod_{i=1}^r\GSp_{2g}\rightarrow\GSp_{2g}$ is non-trivial for each projection.
\end{enumerate}
\begin{proof}
The statement except the last claim of the ``Moreover" part is \cite[2.26 statement \& pp 45]{PinkThesis}. For the last part, call $p_i\colon G\rightarrow\GSp_{2g}$ the composite with the i-th projection. If $p_i$ is trivial, then $p_i(P^\prime,\cX^{\prime+})$ is trivial since a connected mixed Shimura datum is trivial if its pure part is trivial. This contradicts the dimension of $V$.
\end{proof}
\end{lemma}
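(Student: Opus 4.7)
The plan is to cite Pink's Lemma 2.26 in \cite{PinkThesis} for the structural content of both parts, and to supply the short extra argument for the one new claim: that each composition $G \xrightarrow{\lambda|_G} G_0 \times \prod_{i=1}^r \GSp_{2g} \to \GSp_{2g}$ is non-trivial.

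I first recall how Pink's construction delivers the two parts. In case (1), since $V$ is trivial the structure tuple $(G, V, U, \Psi)$ of $\mathsection$\ref{structure of the underlying group} reduces to $(G, 0, U, 0)$, and by Proposition \ref{properties of irreducible mixed Shimura datum}(2) the reductive part $G$ acts on $U$ via a scalar. A $\Q$-basis of $U$ then splits it equivariantly as $r$ copies of $\G_a$, each factoring through a copy of $(P_0, \cX^+_0)$; combined with the quotient to the pure part, which embeds into a suitable $(G_0, \cD^+)$, this assembles to the required embedding. In case (2) one passes to a central extension $(P', \cX'^+) \twoheadrightarrow (P, \cX^+)$ with one-dimensional weight-$(-2)$ kernel, chosen so that $U'$ decomposes as $\oplus_{i=1}^r U_{2g}$ and the alternating form $\Psi$ splits into $r$ copies of the standard symplectic form $\Psi_{2g} : V_{2g} \times V_{2g} \to U_{2g}$. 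The embedding $\lambda$ is then built factor by factor, with $\lambda|_V$ the diagonal and $\lambda|_{U'}$ the coordinate isomorphism.

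It remains to establish that each projection $p_i : G \to \GSp_{2g}$ is non-trivial. Suppose for contradiction that $p_i$ is trivial for some $i$, and let $\pi_i : (G_0, \cD^+) \times \prod_{j=1}^r (P_{2g}, \cX^+_{2g}) \to (P_{2g}, \cX^+_{2g})$ be the $i$-th Shimura projection. The image $(\pi_i \circ \lambda)(P', \cX'^+)$ is a connected mixed Shimura subdatum of $(P_{2g}, \cX^+_{2g})$ whose reductive part is trivial by hypothesis. However, any connected mixed Shimura datum with trivial reductive part must itself be trivial: condition (5) of Definition \ref{connected mixed Shimura datum} forces $P = P^{\der}$, while for a non-trivial unipotent group the derived subgroup $[P, P]$ has strictly smaller dimension than $P$. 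Consequently $(\pi_i \circ \lambda)|_V$ is identically zero; but by construction its image is the $i$-th component of the diagonal map $V_{2g} \hookrightarrow \oplus_{j=1}^r V_{2g}$, which is an isomorphism onto $V_{2g}$. Since $V \cong V_{2g}$ is non-trivial in case (2), this is the desired contradiction.

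The genuinely technical part of the lemma is Pink's construction of the central extension $(P', \cX'^+)$ together with a $\Q$-rational decomposition of $\Psi$ compatible with the Hodge filtration, which I cite from \cite{PinkThesis} rather than reprove. The only new ingredient is the non-triviality of each $p_i$, which reduces to the short structural observation that a connected mixed Shimura datum with trivial pure part is itself trivial; once this is in hand, the contradiction with the diagonal structure of $\lambda|_V$ is immediate.
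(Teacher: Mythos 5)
Your proposal is correct and follows essentially the same route as the paper: cite Pink's Lemma 2.26 for the structural content, then show each $p_i$ is non-trivial by noting that otherwise the $i$-th projected image of $(P',\cX'^+)$ would be a connected mixed Shimura datum with trivial pure part, hence trivial, contradicting the non-triviality of $V$ via the diagonal form of $\lambda|_V$. Your only addition is to actually justify the assertion that a connected mixed Shimura datum with trivial reductive part is trivial (via condition (5) forcing $P=P^{\der}$ together with nilpotence of unipotent groups), which the paper states without proof.
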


\subsection{Decomposition of $V$ and $U$ as $G$-modules}
We start this subsection by the following group theoretical proposition.

\begin{prop}\label{group theoretical big diagram proposition}
Let $1\rightarrow N\rightarrow Q\xrightarrow{\varphi}Q^\prime\rightarrow1$ be an exact sequence of algebraic groups over $\Q$.
Then the following diagram with solid arrows is commutative and all the lines and columns are exact:
\begin{equation*}\label{the big diagram}
\begin{diagram}
& & 1 & & 1 & & 1\\
& &\dTo & &\dTo & &\dTo \\
1 &\rTo &W_N:=\cR_u(N) &\rTo &N &\rTo_{\pi_N} \upperarrow{s_N} &G_N:=N/W_N &\rTo &1 \\
& &\dTo & &\dTo & &\dTo \\
1 &\rTo &W_Q:=\cR_u(Q) &\rTo &Q &\rTo_{\pi_Q} \upperarrow{s_Q} &G_Q:=Q/W_Q &\rTo &1 \\
& &\dTo & &\dTo^\varphi & &\dTo^{\bar{\varphi}} \\
1 &\rTo &W_{Q^\prime}:=\cR_u(Q^\prime) &\rTo &Q^\prime~~ &\rTo_{\pi_{Q^\prime}} \upperarrow{s_{Q^\prime}} &G_{Q^\prime}:=Q^\prime/W_{Q^\prime} &\rTo &1 \\
& &\dTo & &\dTo & &\dTo \\
& & 1 & & 1 & & 1
\end{diagram}.
\end{equation*}
Moreover, if we fix a morphism $s_Q$ which splits the middle line (such an $s_Q$ exists by Levi decomposition), then we can deduce $s_N$ and $s_{Q^\prime}$ which split the other two lines. Note that in this case, the action of $G_N$ on $W_{Q^\prime}$ induced by $s_Q$ is trivial.
\begin{proof} The two bottom lines are already exact. By group theory, we know $\varphi(W_Q(\bar{\Q}))=W_{Q^\prime}(\bar{\Q})$ (\cite[Corollary 14.11]{BorelLinear-Algebrai}), and since the set of closed points of $W_Q$ (resp. $W_{Q^\prime}$) is dense on $W_Q$ (resp. $W_{Q^\prime}$), we have $\varphi(W_Q)=W_{Q^\prime}$. In consequence, we have the map $\bar{\varphi}$, which is surjective since $\varphi$ is. Now we get the solid diagram by snake-lemma. $G_N$ is reductive (\cite[14.2 Corollary(b)]{BorelLinear-Algebrai}).

If we have an $s_Q$, then to get a desired $s_{Q^\prime}$ (and $s_N$) is equivalent to prove that $\varphi\circ s_Q(G_N)$ is trivial, i.e. the intersection of this image with $W_{Q^\prime}$ (in $Q^\prime$) is trivial and the projection of this image to $G_{Q^\prime}$ (under $\pi_{Q^\prime}$) is trivial. The projection is trivial by a simple diagram-chasing. The neutral component of the intersection is trivial since it is reductive and unipotent, and hence the intersection is trivial since $W_{Q^\prime}$ is unipotent over $\Q$. Now the triviality of the action of $G_N$ on $W_{Q^\prime}$ induced by $s_Q$ is automatic.
\end{proof}
\end{prop}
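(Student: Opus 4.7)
The plan is to verify the nine-diagram in two stages: exactness of all rows and columns, and then compatibility of splittings. The bottom two rows are exact by Levi decomposition applied to $Q$ and $Q^\prime$. For the top row, I observe that $W_N$ is characteristic in $N$ and $N$ is normal in $Q$, so $W_N$ is a connected normal unipotent subgroup of $Q$, whence $W_N \subseteq W_Q$; combined with $W_N \subseteq N$ this gives $W_N \subseteq W_Q \cap N$, while the reverse inclusion holds because $W_Q \cap N$ is itself a (connected, in characteristic zero) normal unipotent subgroup of $N$. For the middle column, $\varphi(W_Q) \subseteq W_{Q^\prime}$ since $\varphi(W_Q)$ is connected, unipotent, and normal in $Q^\prime$; equality follows because $Q^\prime/\varphi(W_Q)$ is a quotient of the reductive group $G_Q$, hence reductive, so that the unipotent normal subgroup $W_{Q^\prime}/\varphi(W_Q)$ must be trivial. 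A direct diagram chase then yields the third column exact with $G_N \to G_Q$ injective, producing the required nine-diagram.

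Given a Levi splitting $s_Q\colon G_Q \to Q$, the construction of both $s_{Q^\prime}$ and $s_N$ reduces to the single claim $\varphi\circ s_Q(G_N) = \{1\}$. To see this, compose with $\pi_{Q^\prime}$: since $\pi_{Q^\prime}\circ\varphi\circ s_Q = \bar\varphi \circ \pi_Q \circ s_Q = \bar\varphi$ vanishes on $G_N = \ker(\bar\varphi)$, we have $\varphi(s_Q(G_N)) \subseteq W_{Q^\prime}$. But $\varphi(s_Q(G_N))$ is the image of a reductive group under a morphism of algebraic groups, hence reductive; being contained in a unipotent group, it must be trivial. Consequently $\varphi \circ s_Q$ factors through $G_{Q^\prime}$, yielding $s_{Q^\prime}$, and the restriction $s_Q|_{G_N}$ lands in $\ker(\varphi) = N$, yielding $s_N$. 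Triviality of the $G_N$-action on $W_{Q^\prime}$ induced by $s_Q$ is then immediate: conjugation by elements of $s_Q(G_N) \subseteq N = \ker(\varphi)$ acts on $W_Q$, but descends trivially to $W_{Q^\prime} = \varphi(W_Q)$ since $\varphi$ kills $s_Q(G_N)$.

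The main subtlety lies in pinning down the surjection $\varphi(W_Q) = W_{Q^\prime}$ beyond the easy containment, together with the repeated use of the characteristic-zero principle that a reductive subgroup of a unipotent group---equivalently a connected normal unipotent subgroup of a reductive group---is trivial. Both facts are classical, so no essentially new input is required; the verification amounts to careful bookkeeping of Levi decompositions compatible with the given short exact sequence.
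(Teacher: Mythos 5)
Your proof is correct and follows essentially the same route as the paper's: the heart of both arguments is showing that $\varphi\circ s_Q(G_N)$ is trivial by first observing its image in $G_{Q^\prime}$ vanishes, so that it lands in $W_{Q^\prime}$, and then using that a reductive subgroup of a unipotent group in characteristic zero is trivial. The only divergences are cosmetic: you establish $\varphi(W_Q)=W_{Q^\prime}$ by noting $Q^\prime/\varphi(W_Q)$ is a reductive quotient of $G_Q$, whereas the paper cites Borel's Corollary 14.11 on $\bar{\Q}$-points plus density of closed points, and you verify the top row directly via $W_N=W_Q\cap N$ where the paper invokes the snake lemma.
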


\begin{cor}\label{decomposition of V,U as G-module}
Let $(P,\cX^+)$ be a connected mixed Shimura datum. Suppose $N\lhd P$. Then there are decompositions
\[
V=V_N\oplus V_N^\bot\qquad\text{(resp. }U=U_N\oplus U_N^\bot\text{)}
\]
as $G$-modules, where $V_N:=V\cap N$ (resp. $U_N:=U\cap N$), s.t. the action of $G_N:=N/V_N$ on $V_N^\bot$ (resp. $U_N^\bot$) is trivial.
\begin{proof} To prove the decomposition of $V$, apply Proposition \ref{group theoretical big diagram proposition} to the exact sequence
\[
1\rightarrow V_N\rtimes G_N\rightarrow V\rtimes G\rightarrow (V/V_N)\rtimes(G/G_N)\rightarrow 1,
\]
then since $G$ is reductive, the vertical line on the left (in the diagram of the proposition) splits. The conjugation by $P$ on $V$ factors through $G$ by reason of weights, and hence equals to the action of $G$ on $V$ induced by any Levi decomposition $s_P$. So the action of $G_N$ on $V_N^\bot$ is trivial by the last assertion of  Proposition \ref{group theoretical big diagram proposition}.

To prove the decomposition of $U$, it suffices to apply Proposition \ref{group theoretical big diagram proposition} to the exact sequence
\[
1\rightarrow U_N\rtimes G_N\rightarrow U\rtimes G\rightarrow (U/U_N)\rtimes(G/G_N)\rightarrow 1.
\]
\end{proof}
\end{cor}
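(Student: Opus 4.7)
The plan is to apply Proposition \ref{group theoretical big diagram proposition} to a suitable semidirect-product extension, and then invoke complete reducibility for the reductive group $G$ to extract the desired $G$-stable complements.

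For the $V$-case, the conjugation action of $P$ on $V$ factors through $G = P/W$ for weight reasons (since $V$ has Hodge weight $-1$), so $V_N = V \cap N$ is automatically a $G$-submodule of $V$. I would apply Proposition \ref{group theoretical big diagram proposition} to the exact sequence
\[
1 \to V_N \rtimes G_N \to V \rtimes G \to (V/V_N) \rtimes (G/G_N) \to 1,
\]
using the canonical Levi splitting $s_Q\colon g \mapsto (0,g)$ on the middle row. The proposition then produces a compatible Levi splitting $s_N$ of the top row and, crucially, yields that the conjugation action of $G_N$ (via $s_N$) on $V/V_N$ is trivial. Since $G$ is reductive and $V$ is a rational $G$-module, complete reducibility furnishes a $G$-stable complement $V_N^\bot \subset V$ with $V = V_N \oplus V_N^\bot$. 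The projection $V \to V/V_N$ restricts to a $G$-equivariant, hence $G_N$-equivariant, isomorphism $V_N^\bot \xrightarrow{\sim} V/V_N$, so the triviality of the $G_N$-action on $V/V_N$ transports to triviality on $V_N^\bot$.

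The $U$-case is formally identical: apply Proposition \ref{group theoretical big diagram proposition} to
\[
1 \to U_N \rtimes G_N \to U \rtimes G \to (U/U_N) \rtimes (G/G_N) \to 1,
\]
exploiting that $P$ acts on $U$ only through $G$ by weight reasons ($U$ has Hodge weight $-2$), and extracting $U_N^\bot$ by complete reducibility.

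The main point requiring care is verifying that the $G_N$-module structure on $V/V_N$ arising from the Levi splitting in Proposition \ref{group theoretical big diagram proposition} coincides with the one induced from the $P$-conjugation on $V$; this reduces to the same weight observation, since the unipotent part of any Levi lift of an element of $G_N$ acts trivially on $V$ (and on $U$), so the two actions agree canonically and the choice of Levi section $s_Q$ is immaterial.
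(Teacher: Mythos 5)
Your proposal is correct and follows essentially the same route as the paper: the same semidirect-product exact sequence fed into Proposition \ref{group theoretical big diagram proposition}, complete reducibility of $G$ to split off $V_N^\bot$ (resp. $U_N^\bot$), and the weight argument identifying the $P$-conjugation action on $V$ (resp. $U$) with the $G$-action from a Levi section so that the triviality of the $G_N$-action on $V/V_N$ transports to $V_N^\bot$. The only difference is that you spell out the transport step via the $G$-equivariant isomorphism $V_N^\bot\cong V/V_N$ slightly more explicitly than the paper does.
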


In fact we have a better result if $(P,\cX^+)$ has generic Mumford-Tate group.
\begin{prop} Let $(P,\cX^+)$ be a connected mixed Shimura datum s.t. $P=\MT(\cX^+)$. Suppose $N\lhd P$ s.t. $N$ possesses no non-trivial torus quotient. Then $G_N$ acts trivially on $U$.
\begin{proof}
By Reduction Lemma (Lemma \ref{reduction lemma}), we may assume that 
$(P,\cX^+)\hookrightarrow(G_0,\cD^+)\times\prod_{i=1}^r(P_{2g},\cX^+_{2g})$ ($g\geqslant0$). Since $N$ possesses no non-trivial torus quotient, $G_N$ is semi-simple (the last line of the proof of Proposition \ref{particular choices of i and varphi}). So
\[
G_N=G_N^{\der}<G^{\der}<(G_0\times\prod_{i=1}^r\GSp_{2g})^{\der}=G_0^{\der}\times\prod_{i=1}^r\Sp_{2g}
\]
where $\Sp_0:=1$. Hence $G_N$ acts trivially on $U$ since $G_0^{\der}\times\prod_{i=1}^r\Sp_{2g}$ acts trivially on $\oplus_{i=1}^rU_{2g}$.
\end{proof}
\end{prop}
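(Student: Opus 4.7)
\smallskip
\noindent\textbf{Proof plan.} The strategy is to use Pink's Reduction Lemma (Lemma~\ref{reduction lemma}) to realize $(P,\cX^+)$, after possibly passing to a central extension, inside a product of a pure datum and Siegel data, and then exploit the concrete fact that $\Sp_{2g}$ lies in the kernel of the multiplier $\nu:\GSp_{2g}\to\G_m$ through which $\GSp_{2g}$ acts on $U_{2g}$. Consequently every semi-simple subgroup of $G_0^{\der}\times\prod_{i=1}^r\Sp_{2g}$ acts trivially on $\oplus_{i=1}^rU_{2g}$.

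First, I would show that $G_N:=N/W_N$ is semi-simple. Since $G_N$ is reductive, its abelianization $G_N/G_N^{\der}$ is a torus; if it were non-trivial, then the composite $N\twoheadrightarrow G_N\twoheadrightarrow G_N/G_N^{\der}$ would exhibit a non-trivial torus quotient of $N$, contradicting the hypothesis. Hence $G_N=G_N^{\der}$.

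Next, assume $V\ne 0$; the case $V=0$ is analogous with $(P_0,\cX_0^+)$ replacing $(P_{2g},\cX_{2g}^+)$ (i.e.\ $g=0$). The Reduction Lemma supplies a surjection $f\colon(P^\prime,\cX^{\prime+})\twoheadrightarrow(P,\cX^+)$ with $\ker f$ of dimension $1$ and weight $-2$ (so $\ker f\subset U^\prime$), together with an embedding $\lambda\colon(P^\prime,\cX^{\prime+})\hookrightarrow(G_0,\cD^+)\times\prod_{i=1}^r(P_{2g},\cX_{2g}^+)$ with $\lambda|_{U^\prime}\colon U^\prime\xrightarrow{\sim}\oplus_{i=1}^rU_{2g}$. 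Set $N^\prime:=f^{-1}(N)\lhd P^\prime$. Since $\ker f$ is unipotent, any non-trivial torus quotient of $N^\prime$ would factor through $N$, so $N^\prime$ also has no non-trivial torus quotient; and since $\ker f\subset U^\prime\subset W_{N^\prime}$, the induced map $G_{N^\prime}\to G_N$ is an isomorphism. Applying the first step to $N^\prime$, the group $G_{N^\prime}$ is semi-simple.

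Finally, using Proposition~\ref{WQ to WP GQ to GP} to identify $G_{P^\prime}$ with a subgroup of $G_0\times\prod_{i=1}^r\GSp_{2g}$, I get
\[
G_{N^\prime}=G_{N^\prime}^{\der}\subset(G_0\times\prod_{i=1}^r\GSp_{2g})^{\der}=G_0^{\der}\times\prod_{i=1}^r\Sp_{2g}.
\]
Each $\Sp_{2g}$ factor acts on its $U_{2g}$ via $\nu|_{\Sp_{2g}}=1$, while $G_0^{\der}$ does not act on any $U_{2g}$ component at all; hence the right-hand side acts trivially on $\oplus_{i=1}^rU_{2g}\cong U^\prime$. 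Passing to the $G_{N^\prime}$-equivariant quotient $U^\prime\twoheadrightarrow U$ and using the isomorphism $G_{N^\prime}\cong G_N$, it follows that $G_N$ acts trivially on $U$. The one subtlety is the covering step---verifying both that $N^\prime$ inherits the no-torus-quotient hypothesis (which uses unipotence of $\ker f$) and that $G_{N^\prime}\cong G_N$ (which uses that $\ker f$ has weight $-2$); beyond this, everything reduces to the elementary observation that $\Sp_{2g}\subset\ker\nu$.
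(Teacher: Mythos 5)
Your proof is correct and follows essentially the same route as the paper: deduce semi-simplicity of $G_N$ from the absence of torus quotients, embed it into $G_0^{\der}\times\prod_{i=1}^r\Sp_{2g}$ via the Reduction Lemma, and observe that this group acts trivially on $\oplus_{i=1}^rU_{2g}$ because $\Sp_{2g}\subset\ker\nu$. The only difference is that you make explicit the covering step hidden in the paper's phrase ``we may assume'' --- namely that for $V\neq 0$ one must pass to $N'=f^{-1}(N)$ in the central extension $(P',\cX'^{+})$, check that the no-torus-quotient hypothesis lifts and that $G_{N'}\cong G_N$ --- which is a welcome clarification rather than a divergence.
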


\section{Variations of mixed ($\Z$-)Hodge structure}\label{Variations of mixed (Z-)Hodge structure}

\subsection{Arbitrary variation of mixed Hodge structure}
\begin{defn}(\cite[Definition 14.44]{PetersMixed-Hodge-Str}) Let $S$ be a complex manifold. A \textbf{variation of mixed Hodge structure} on $S$ is a triple $(\V,W_\cdot,\cF^\cdot)$ with
\begin{enumerate}
\item a local system $\V$ of free $\Z$-module of finite rank on $S$;
\item a finite increasing filtration $\{W_m\}$ of the local system $\V_\Q:=\V\otimes_\Z\Q$ by local sub-systems (this is called the weight filtration);
\item a finite decreasing filtration $\{\cF^p\}$ of the holomorphic vector bundle $\cV:=\V\otimes_\Z\cO_S$ by holomorphic sub-bundles (this is called the Hodge filtration).
\end{enumerate}
s.t. 
\begin{enumerate}
\item for each $s\in S$, the filtrations $\{\cF^p(s)\}$ and $\{W_m\}$ of $\V(s)\cong\V_s\otimes_\Z\C$ define a mixed Hodge structure on the $\Z$-module of finite rank $\V_s$;
\item the connection $\nabla\colon\cV\rightarrow\cV\otimes_{\cO_S}\Omega_S^1$ ($\cV:=\V\otimes_\Z\cO_S$) whose sheaf of horizontal sections is $\V_\C$ satisfies the Griffiths' transversality condition
\[
\nabla(\cF^p)\subset\cF^{p-1}\otimes\Omega_S^1.
\]
\end{enumerate}
\end{defn}

\begin{defn} A variation of mixed Hodge structure on $S$ is called \textbf{(graded-)polarizable} if the induced variations of pure Hodge structure $\Gr^W_k\V$ are all polarizable, i.e. for each $k$, there exists a flat morphism of variations
\[
Q_k\colon\Gr^W_k\V\otimes\Gr^W_k\V\rightarrow\Z(-k)_S
\]
which induces on each fibre a polarization of the corresponding Hodge structure of weight $k$.
\end{defn}

Let $\pi\colon\tilde{S}\rightarrow S$ be a universal covering and choose a trivialization $\pi^*\V\cong\tilde{S}\times V$. For $s\in S$,  $\MT_s\subset\GL(\cV_s)$ denote the Mumford-Tate group of its fibre. The choice of a point $\tilde{s}\in\tilde{S}$ with $\pi(\tilde{s})=s$ gives an identification $\cV_s\cong V$, whence an injective homomorphism $i_{\tilde{s}}\colon\MT_s\hookrightarrow\GL(V)$. By \cite[$\mathsection 4$, Lemma 4]{AndreMumford-Tate-gr}, on $S^\circ:=S\setminus\Sigma$ where $\Sigma$ is a meager subset of $S$, $M:=\im(i_{\tilde{s}})\subset\GL(V)$ does not depend on $s$, nor on the choice of $\tilde{s}$. We call $S^\circ$ the \textit{``Hodge-generic" locus} and the group $M$ the \textit{generic Mumford-Tate group} of $(\V,W_\cdot,\cF^\cdot)$.

On the other hand, if we choose a base-point $s\in S$ and a point $\tilde{s}\in\tilde{S}$ with $\pi(\tilde{s})=s$, then then local system $\V$ corresponds to a representation $\rho\colon\pi_1(S,s)\rightarrow\GL(V)$, called the monodromy representation. The algebraic monodromy group is defined as the smallest algebraic subgroup of $\GL(V)$ over $\Q$ which contains the image of $\rho$. We write $H^\mon_s$ for its connected component of the identity, called the \textit{connected algebraic monodromy group}. Given the trivialization of $\pi^*\V$, the group $H^\mon_s\subset\GL(V)$ is independent of the choice of $s$ and $\tilde{s}$.

Suppose now that $(\V,W_\cdot,\cF^\cdot)$ is (graded-)polarizable, then $H^\mon_s<M$ for any $s\in S^\circ$ by \cite[$\mathsection 4$, Lemma 4]{AndreMumford-Tate-gr}.

\subsection{Admissible variations of mixed Hodge structure}
We now recall the concept of ``admissible" variations of mixed Hodge structure which was introduced by Steenbrick-Zucker \cite{SteenbrinkVariation-of-mi} and studied by Kashiwara \cite{KashiwaraA-study-of-vari} and Hain-Zucker \cite{HainUnipotent-varia}. We give the definition here, but instead of the exact definition, we shall only use the notion of ``admissibility" and the fact that it can be defined using ``curve test". We will use $\Delta$ (resp. $\Delta^*$) to denote the unit disc (resp. punctured unit disc).
\begin{defn}(see \cite[Definition 14.49]{PetersMixed-Hodge-Str})
\begin{enumerate}
\item A variation of mixed Hodge structure $(\V,W_\cdot,\cF^\cdot)$ over the punctured unit disc $\Delta^*$ is called admissible if
\begin{itemize}
\item it is (graded-)polarizable;
\item the monodromy $T$ is unipotent and the weight filtration $M(N,W_\cdot)$ of $N:=\log T$ relative to $W_\cdot$ exists;
\item the filtration $\cF^\cdot$ extends to a filtration $\tilde{\cF}^\cdot$ of $\tilde{\cV}$ which induced $^k\tilde{\cF}$ on $\Gr^W_k\tilde{\cV}$ for each $k$.
\end{itemize}
\item Let $S$ be a smooth connected complex algebraic variety and let $\bar{S}$ be a compactification of $S$ s.t. $\bar{S}\setminus S$ is a normal crossing divisor. A (graded-)polarizable variation of mixed Hodge structure $(\V,W_\cdot,\cF^\cdot)$ on $S$ is called admissible if for every holomorphic map $i\colon\Delta\rightarrow\bar{S}$ which maps $\Delta^*$ to $S$ and s.t. $i^*\V$ has unipotent monodromy, the variation $i^*(\V,W_\cdot,\cF^\cdot)$ is admissible. (This definition is sometimes called the ``curve test" version).
\end{enumerate}
\end{defn}
\begin{rmk} This definition is equivalent to the one given in \cite[1.5]{HainUnipotent-varia}. See \cite[Properties 3.13 \& Appendix]{SteenbrinkVariation-of-mi}, \cite[$\mathsection 1$ \& Theorem 4.5.2]{KashiwaraA-study-of-vari} and \cite[1.5]{HainUnipotent-varia} for details.
\end{rmk}
The following lemma is an easy property of admissibility and is surely known by many people, but I cannot find any reference, so I give a proof here.
\begin{lemma}\label{admissible stable under restriction}
Let $S$ be a smooth connected complex algebraic variety and let $(\V,W_\cdot,\cF^\cdot)$ be an admissible variation of mixed Hodge structure on $S$. Then for any smooth connected (not necessarily closed) subvariety $j\colon Y\hookrightarrow S$, $j^*(\V,W_\cdot,\cF^\cdot)$ is also admissible on $Y$.
\begin{proof} Take smooth compactifications $\bar{Y}$ of $Y$ and $\bar{S}$ of $S$ s.t. $\bar{Y}\setminus Y$ and $\bar{S}\setminus S$ are normal crossing divisors and s.t. $j\colon Y\hookrightarrow S$ extends to a morphism $\bar{j}\colon\bar{Y}\rightarrow\bar{S}$. This can be done by
first choosing any compactifications of $Y^\cp$ of $Y$ and $S^\cp$ of $S$ with normal crossing divisors and then taking a suitable resolution of singularities of the closure of the graph of $j$ in $Y^\cp\times S^\cp$. Now the conclusion follows from our ``curve test'' version of the definition.
\end{proof}
\end{lemma}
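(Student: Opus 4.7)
The plan is to verify admissibility of $j^*(\V,W_\cdot,\cF^\cdot)$ via the curve test, and to do so by pulling back curve-tests on $Y$ to curve-tests on $S$. More precisely, unwinding the definition, I must show: for every holomorphic map $i\colon \Delta \rightarrow \bar Y$ (for some suitable NCD-compactification $\bar Y$ of $Y$) sending $\Delta^*$ into $Y$ and making $i^*(j^*\V)$ unipotent-monodromic, the variation $i^*(j^*\V) = (j\circ i)^*(\V,W_\cdot,\cF^\cdot)$ on $\Delta^*$ is admissible. Since $(j\circ i)^*\V$ has the same monodromy as $i^*(j^*\V)$, and admissibility of $\V$ on $S$ is already given, everything would follow the moment $j\circ i$ extends to a holomorphic map $\Delta \rightarrow \bar S$ for an NCD-compactification $\bar S$ of $S$ that is compatible with $\bar Y$.

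The real content is therefore producing compactifications $\bar Y \supset Y$ and $\bar S \supset S$, both smooth with normal crossing boundary, together with an extension $\bar j\colon \bar Y \rightarrow \bar S$ of $j$. I would do this in two steps. First, using Nagata compactification plus Hironaka's resolution of singularities, pick \emph{any} smooth NCD-compactifications $Y^{\cp}$ of $Y$ and $S^{\cp}$ of $S$. There is no reason for $j$ itself to extend to $Y^{\cp} \rightarrow S^{\cp}$, so next I would replace $Y^{\cp}$ by a suitable modification over it. Take the graph $\Gamma_j \subset Y \times S$ of $j$, form its closure $\overline{\Gamma_j} \subset Y^{\cp} \times S^{\cp}$, and apply Hironaka to obtain a smooth variety $\bar Y$ birational to $\overline{\Gamma_j}$, projecting properly and birationally onto $Y^{\cp}$ (hence containing $Y$ as an open dense subset whose complement is NCD), and projecting to $S^{\cp}$. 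Setting $\bar S := S^{\cp}$ and $\bar j$ equal to this second projection finishes the construction.

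With $\bar j$ in hand, the curve test is immediate: given any $i\colon \Delta \rightarrow \bar Y$ with $i(\Delta^*) \subset Y$ and $i^*(j^*\V)$ unipotent, the composite $\bar j \circ i\colon \Delta \rightarrow \bar S$ lands $\Delta^*$ inside $S$, and the pullback of $\V$ along $\bar j \circ i$ has unipotent monodromy. Admissibility of $\V$ on $S$ then directly yields admissibility of $(\bar j\circ i)^*(\V,W_\cdot,\cF^\cdot) = i^*(j^*\V, j^*W_\cdot, j^*\cF^\cdot)$ on $\Delta^*$. Graded-polarizability is preserved under arbitrary pullback by flat base change of the polarization forms $Q_k$, so that ingredient of admissibility is automatic.

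The only subtle step is the compactification construction; the rest is formal. The mild obstacle is just ensuring that after resolution, the complement $\bar Y \setminus Y$ remains a normal crossing divisor in a smooth variety, which is standard for Hironaka applied to a variety containing $Y$ as a Zariski-open smooth locus. Once that is granted, the curve-test definition does all the work, and no additional analytic input (such as redoing the relative weight filtration construction of Steenbrink--Zucker/Kashiwara) is needed.
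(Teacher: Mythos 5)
Your proposal is correct and follows essentially the same route as the paper: compactify $Y$ and $S$ with normal crossing boundary, resolve the closure of the graph of $j$ to obtain a morphism $\bar j\colon\bar Y\rightarrow\bar S$ extending $j$, and then reduce the curve test on $Y$ to the curve test on $S$ by composing with $\bar j$. Your write-up simply spells out the details (Nagata, Hironaka, independence of the chosen compactification) that the paper leaves implicit.
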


\subsection{Consequences of admissibility}
Y.Andr\'{e} proved that:
\begin{thm}\label{Y Andre monodromy normal}
Let $(\V,W_\cdot,\cF^\cdot)$ be an admissible variation of mixed Hodge structure over a smooth connected complex algebraic variety $S$, then for any $s\in S$, the connected monodromy group $H^\mon_s$ is a normal subgroup of the generic Mumford-Tate group $M$ and also its derived group $M^\der$.
\begin{proof} \cite[$\mathsection 5$, Theorem 1]{AndreMumford-Tate-gr} states that $H^\mon_s\lhd M^\der$, and in the proof he first proved that $H^\mon_s\lhd M$.
\end{proof}
\end{thm}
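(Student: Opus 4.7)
The plan is to reduce both claims to the statement that the Lie algebra $\mathfrak{h} := \lie H^\mon_s$, viewed as a $\Q$-subspace of $\mathfrak{gl}(V_s)$, underlies a sub-mixed Hodge structure of the canonical MHS on $\mathfrak{gl}(V_s) = V_s^\vee \otimes V_s$. Granting this, the Mumford--Tate group $M$---characterised as the $\Q$-Zariski closure of the image of the Deligne torus $h_s\colon \S \to \GL(V_s)_\R$---must stabilise $\mathfrak{h}$ under its adjoint action, since sub-MHS of $\mathfrak{gl}(V_s)$ are by definition preserved by $h_s(\S_\C)$. As $H^\mon_s$ is connected, this adjoint stability is equivalent to $M$ normalising $H^\mon_s$, which proves the first half. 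The refinement that $H^\mon_s$ is normal in $M^\der$ then reduces to the inclusion $H^\mon_s \subset M^\der$, which itself follows from the fact that the monodromy representation preserves the polarisation on each graded piece $\Gr^W_k \V$: its image on each graded piece lies in the derived group of the relevant orthogonal or symplectic group, and hence globally in $M^\der$.

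To prove the sub-MHS claim, first restrict to the Hodge-generic locus $S^\circ \subset S$ on which $\MT_s$ is constantly equal to $M$; by Lemma~\ref{admissible stable under restriction}, the VMHS remains admissible upon restriction to $S^\circ$. On $S^\circ$, the subspace $\mathfrak{h} \subset \mathfrak{gl}(V_s)$ extends to a $\Q$-sub-local-system $\underline{\mathfrak{h}} \subset \mathfrak{gl}(\V)$, because the image of the monodromy representation normalises $H^\mon_s$ (the latter being the identity component of its own $\Q$-Zariski closure), and therefore acts on $\mathfrak{gl}(V_s)$ preserving $\mathfrak{h}$. Thus $\underline{\mathfrak{h}}$ is a well-defined flat $\Q$-sub-bundle of $\mathfrak{gl}(\V)$.

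The decisive step is to upgrade $\underline{\mathfrak{h}}$ from a sub-local-system to a sub-VMHS. This is where admissibility enters: the variation $\mathfrak{gl}(\V)$ is admissible (admissibility is stable under the standard tensor constructions), and the theorem of the fixed part for admissible VMHS---due to Deligne in the pure case and extended to the mixed case by Kashiwara and Steenbrink--Zucker---asserts in particular that any flat morphism between admissible VMHS is automatically a morphism of VMHS. Applying this principle to the flat inclusion $\underline{\mathfrak{h}} \hookrightarrow \mathfrak{gl}(\V)$, once $\underline{\mathfrak{h}}$ has been equipped with the induced admissible VMHS structure (for instance via a flat monodromy-invariant splitting of $\mathfrak{gl}(\V)$ coming from the trace form on $\mathfrak{gl}(V_s)$), promotes $\underline{\mathfrak{h}}$ to a sub-VMHS, so $\mathfrak{h}$ is a sub-MHS of $\mathfrak{gl}(V_s)$.

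The main obstacle is precisely this admissibility step: in the pure case Deligne's semisimplicity theorem makes the splitting essentially automatic, whereas for genuinely mixed variations admissibility is both necessary and substantive, as the theorem of the fixed part is known to fail for non-admissible VMHS. It is the curve-test characterisation of admissibility together with Lemma~\ref{admissible stable under restriction} that makes the mixed case go through by the same pattern as the pure case once we have passed to the Hodge-generic locus.
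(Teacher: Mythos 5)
The paper itself does not prove this theorem; it simply cites Andr\'e's \cite[$\mathsection 5$, Theorem 1]{AndreMumford-Tate-gr}, so you are attempting a reconstruction rather than a comparison. Your global strategy---reduce normality to the claim that $\mathfrak{h}=\lie H^\mon_s$ is a sub-MHS of $\mathfrak{gl}(V_s)$, then invoke the characterisation of $M$ as the $\Q$-Zariski closure of $h_s(\S)$ to get $\mathrm{Ad}$-stability---is correct and is indeed the crux of Andr\'e's argument. Likewise, the reduction to the Hodge-generic locus and the observation that $\mathfrak{h}$ is a sub-local-system of $\mathfrak{gl}(\V)$ are fine.

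The genuine gap is in the ``decisive step''. First, the statement you attribute to the theorem of the fixed part---that any flat morphism between admissible VMHS is automatically a morphism of VMHS---is not correct. The rigidity theorem says a morphism of underlying local systems which is a morphism of MHS \emph{at one point} is a morphism of VMHS everywhere, and the theorem of the fixed part says the monodromy-\emph{invariant} part $\V^{\pi_1}$ is a constant sub-MHS; neither asserts that an arbitrary flat sub-local-system is automatically a sub-VMHS. So you need a VMHS structure on $\underline{\mathfrak{h}}$ compatible with the inclusion \emph{a priori}, which is exactly what you are trying to produce. Your suggested fix---a monodromy-invariant splitting from the trace form on $\mathfrak{gl}(V_s)$---does not work in the mixed setting: precisely because the VMHS is genuinely mixed, $H^\mon_s$ has in general a nontrivial unipotent radical, so $\mathfrak{h}$ contains nilpotent ideals on which the trace form vanishes identically (e.g.\ $\mathrm{tr}(AB)=0$ when $A,B$ are strictly upper-triangular in a weight-adapted basis). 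Then $\mathfrak{h}\cap\mathfrak{h}^\perp\neq 0$ and $\mathfrak{h}^\perp$ is not a complement at all. You correctly identify at the end that Deligne semisimplicity is what salvages the pure case, but that is exactly what fails here, and the trace form does not supply a substitute. This is where Andr\'e's proof does genuine work with the structure of admissible VMHS (weight filtrations, relative monodromy filtrations, and an induction over $W_\bullet$) rather than a single splitting argument, and your proposal has no replacement for it.

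A secondary imprecision: for $H^\mon_s\subset M^\der$, it is not that the monodromy on each $\Gr^W_k$ lands in the derived group of $\mathrm{O}$ or $\mathrm{Sp}$, but rather that it lands in $\mathrm{O}$ or $\mathrm{Sp}$, whose identity components are semisimple; combined with the unipotence of the kernel of $M\to\prod_k\GL(\Gr^W_k)$ this forces $H^\mon_s$ to have no nontrivial torus quotient, hence to map trivially to the torus $M/M^\der$. Your phrasing conflates the ambient classical group's derived group with $M^\der$, but the underlying idea is salvageable.
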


Now we state a theorem which roughly says that all the variations of mixed Hodge structure obtained from representations of the underlying group of a connected mixed Shimura datum are admissible. Explicitly, let $S$ be a connected mixed Shimura variety associated with the connected mixed Shimura datum $(P,\cX^+)$ and let $\unif\colon \cX^+\rightarrow S=\Gamma\backslash\cX^+$ be the uniformization. Suppose that $\Gamma$ is neat. Consider any $\Q$-representation $\xi\colon P\rightarrow\GL(V)$. By \cite[Proposition 4.2]{AlgebraicGroupBible}, there exists a $\Gamma$-invariant lattice $V_\Z$ of $V$. $\xi$ and $V_\Z$ together give rise to a VMHS on $S$ whose underlying local system is $\Gamma\backslash(\cX^+\times V_\Z)$. This variation is (graded-)polarizable by \cite[1.18(d)]{PinkThesis}.
\begin{thm}\label{all variations of mixed Shimura varieties are admissible}
Suppose that $S$, $(P,\cX^+)$, $\xi\colon P\rightarrow\GL(V)$ and $V_\Z$ are as in the previous paragraph, then the variation of mixed Hodge structure obtained as above is admissible.
\begin{proof}\cite[Theorem 2.2]{WildeshausThe-canonical-c} says that the corresponding $\Q$-variation is admissible, and $\Gamma$ gives a $\Z$-structure as in the discussion above.
\end{proof}
\end{thm}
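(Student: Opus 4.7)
The plan is to verify admissibility through the curve-test definition: for every holomorphic $i\colon \Delta\to \bar S$ with $i(\Delta^*)\subset S$ and $i^*\V$ of unipotent monodromy, I must check that $i^*(\V,W_\cdot,\cF^\cdot)$ is admissible on $\Delta^*$. Here $\bar S$ is a smooth toroidal compactification of $S$ with normal crossing boundary, which exists for arbitrary connected mixed Shimura varieties by Pink \cite{PinkThesis}. Graded-polarizability is automatic from \cite[1.18(d)]{PinkThesis}: each associated graded piece $\Gr^W_k(\xi,V_\Z)$ factors through $P/W=G$ and so descends to a pure variation on the pure Shimura variety $S_G$, where a polarization is constructed in the classical way.

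The local analysis near a boundary point is where the structure of mixed Shimura varieties is used. Pink's theory describes a neighbourhood of a cusp in terms of a rational boundary component, i.e. a sub-mixed Shimura datum $(P_1,\cX_1^+)\hookrightarrow(P,\cX^+)$ whose unipotent radical governs local monodromy; because $\Gamma$ is neat, the local monodromies around the boundary divisors are then unipotent and generated by elements of $W_1(\Z)$. Consequently, after pulling back by $i$, the log-monodromy $N=\log T$ is $\d\xi(n)$ for some nilpotent $n\in\lie W_1\subset\lie P$. Since $\xi$ is a $P$-representation and the weight filtration $W_\cdot V$ is induced by the cocharacter from condition (3) of Definition \ref{connected mixed Shimura datum}, the operator $N$ strictly preserves $W_\cdot V$ and shifts it by $-2$ on the graded pieces coming from $\lie U$ and by $-1$ on those from $\lie V$. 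From this structural compatibility the relative weight filtration $M(N,W_\cdot)$ is produced by the general criterion of Steenbrink--Zucker \cite{SteenbrinkVariation-of-mi}.

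The main obstacle is the third bullet: extending $\cF^\cdot$ to a filtration $\tilde{\cF}^\cdot$ of $\tilde{\cV}$ that induces the correct filtration $^k\tilde{\cF}$ on each $\Gr^W_k\tilde{\cV}$. I would extract this from the realization of $\cX^+$ as a semi-algebraic open subset of the flag-type variety $\cX^\vee$ (see $\mathsection$\ref{Realization of X}), on which the Hodge filtration attached to $\xi$ is algebraic and globally defined; the boundary strata in Pink's toroidal compactification are themselves described by compact duals of smaller mixed Shimura data, so the Hodge filtration extends algebraically across the boundary, giving $\tilde{\cF}^\cdot$. Compatibility with $\Gr^W$ reduces to Schmid's nilpotent orbit theorem on the pure quotient $S_G$, combined with a d\'evissage along the filtration $1\lhd U\lhd W\lhd P$ to treat the weight $-2$ and weight $-1$ extensions separately: the weight $-2$ step is an explicit $\G_m$-torsor calculation and the weight $-1$ step reduces, via the Reduction Lemma \ref{reduction lemma} and the Siegel picture of $\mathsection$\ref{SubsectionSiegelType}, to the admissibility of the variation of $1$-motives over $\cA_g(N)$. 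This d\'evissage is exactly the architecture of Wildeshaus \cite{WildeshausThe-canonical-c}, so in practice I would deduce the statement directly from his Theorem~2.2 applied to $\xi$, after noting that the $\Z$-structure supplied by $V_\Z$ and the neatness of $\Gamma$ upgrade his $\Q$-variation to a $\Z$-variation without affecting admissibility.
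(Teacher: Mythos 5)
Your proposal ultimately lands on exactly the paper's proof: the statement is deduced from \cite[Theorem 2.2]{WildeshausThe-canonical-c}, which gives admissibility of the underlying $\Q$-variation, together with the observation that the $\Gamma$-invariant lattice $V_\Z$ supplies the $\Z$-structure without affecting admissibility (which is a condition on the $\Q$-local system and the Hodge filtration). The lengthy curve-test sketch preceding that conclusion is an (informative but partly heuristic) unpacking of what Wildeshaus actually proves, not an independent argument, so the approach is essentially the same as the paper's.
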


\begin{rmk}\label{stabilizer of Hodge generic point}
In this language, we can rephrase Definition \ref{irreducible mixed Shimura datum} as: \textit{$P$ is the generic Mumford-Tate group (of the variation in Theorem \ref{all variations of mixed Shimura varieties are admissible})}. It is clear that for any Hodge generic point $x\in\cX^+$, the only $\Q$-subgroup $N$ of $P^{\der}$ s.t. $N(\R)^+U_N(\C)$, where $U_N:=U\cap N$, stabilizes $x$ is the trivial group.
\end{rmk}

\section{Realization of $\cX^+$}\label{Realization of X}
Let $(P,\cX^+)$ be a connected mixed Shimura datum. We first define the dual $\cX^\vee$ of $\cX^+$ (see \cite[1.7(a)]{PinkThesis} or \cite[Chapter VI, Proposition 1.3]{MilneCanonical-model}):

Let $M$ be a faithful representation of $P$ and take any $x_0\in\cX^+$. The weight filtration on $M$ is constant, so the Hodge filtration $x\mapsto\text{Fil}^\cdot_x(M_\C)$ gives an injective map $\cX^+\hookrightarrow\text{Grass}(M)(\C)$ to a certain flag variety. In fact, this injective map factors through
\[
\cX^+=P(\R)^+U(\C)/C(x_0)\hookrightarrow P(\C)/F^0_{x_0}P(\C)\hookrightarrow\text{Grass}(M)(\C)
\]
where $C(x_0)$ is the stabilizer of $x_0$ in $P(\R)^+U(\C)$. The first injection is an open immersion (\cite[1.7(a)]{PinkThesis} or \cite[Chapter VI, (1.2.1)]{MilneCanonical-model}). We define the dual $\cX^\vee$ of $\cX^+$ to be
\[
\cX^\vee:=P(\C)/F^0_{x_0}P(\C).
\]
Then $\cX^\vee$ is clearly a connected smooth complex algebraic variety.

\begin{prop} Under the open immersion $\cX^+\hookrightarrow\cX^\vee$, $\cX^+$ is realized as a semi-algebraic set which is also a complex manifold.
\begin{proof} $\cX^+$ is smooth since it is a homogeneous space, and the open immersion endows it with a complex structure. For semi-algebraicity, consider the diagram
\[
\begin{diagram}
\cX^+ &\rInto &\cX^\vee \\
\dTo^{\pi} & &\dTo^{\pi^\vee} \\
\cX^+_G &\rInto &\cX^\vee_G
\end{diagram}.
\]
Now $\cX^+=\{x\in\cX^\vee|\pi^\vee(x)\in\cX^+_G\}$ and $\pi^\vee$ is algebraic, so the conclusion follows from \cite[Lemme 2.1]{UllmoQuelques-applic}.
\end{proof}
\end{prop}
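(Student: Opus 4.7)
The plan is to establish the two assertions of the proposition separately. The complex manifold structure is essentially automatic: since $\cX^+ = P(\R)^+U(\C)/C(x_0)$ is a homogeneous space under the real Lie group $P(\R)^+U(\C)$, it is a smooth real manifold, and the open immersion $\cX^+ \hookrightarrow \cX^\vee$ pulls back the complex structure of the smooth complex algebraic variety $\cX^\vee$, equipping $\cX^+$ with a canonical complex structure.

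For semi-algebraicity, my strategy is to reduce to the already-established pure case. The quotient Shimura morphism $(P,\cX^+) \to (G,\cX^+_G)$ induces a commutative square in which the vertical maps $\pi\colon \cX^+ \to \cX^+_G$ and $\pi^\vee \colon \cX^\vee \to \cX^\vee_G$ are compatible with the horizontal open immersions. The key point is the set-theoretic identity
\[
\cX^+ \;=\; (\pi^\vee)^{-1}(\cX^+_G).
\]
Granting this, $\pi^\vee$ is a morphism of complex algebraic varieties and is therefore semi-algebraic on $\R$-points, while $\cX^+_G \subset \cX^\vee_G$ is semi-algebraic in the pure case by Lemme~2.1 of \cite{UllmoQuelques-applic}. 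The preimage $(\pi^\vee)^{-1}(\cX^+_G)$ is then semi-algebraic in $\cX^\vee$ as the preimage of a semi-algebraic set under a semi-algebraic map.

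The main point that needs care is thus justifying the displayed identity. The inclusion $\subseteq$ follows at once from the commutativity of the square. For the reverse inclusion I would argue fiberwise: over a point $\bar{x} \in \cX^+_G$, the fiber of $\pi^\vee$ is an orbit under $W(\C)$, while the fiber of $\pi$ is an orbit under $W(\R)U(\C)$; both have the same complex dimension $\dim W$, and since $\cX^+$ is open in $\cX^\vee$, the two fibers necessarily coincide above each point of $\cX^+_G$. This is the only subtle step of the argument; once it is granted, the semi-algebraic structure of $\cX^+$ follows by the preimage argument above, and the proof is complete.
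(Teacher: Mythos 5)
Your overall strategy is exactly the paper's: the proof in the text likewise reduces to the pure case via the identity $\cX^+=(\pi^\vee)^{-1}(\cX^+_G)$ and then invokes \cite[Lemme 2.1]{UllmoQuelques-applic}; the paper simply asserts that identity without further justification. The trouble is with the argument you give for it, which is precisely the step you flag as subtle. Knowing that $\pi^{-1}(\bar x)$ is a nonempty open subset of the connected fiber $(\pi^\vee)^{-1}(\bar x)$ of the same complex dimension does not force the two to coincide: the upper half plane is a nonempty open subset of $\C$ of the same dimension, and, closer to your setting, the $\G_m(\C)$-orbit of $1$ in $\A^1$ is open, dense and of full dimension yet misses the origin. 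So ``same dimension $+$ open $\Rightarrow$ equal'' is not a valid inference, even for a group orbit inside a connected fiber. (A smaller point: the common complex dimension of the fibers is $\dim U+\tfrac{1}{2}\dim V$, not $\dim W$.)

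The correct reason the fibers agree is Hodge-theoretic rather than dimensional. The fiber of $\pi^\vee$ over $\bar x$ is $W(\C)/F^0_xW(\C)$, and the orbit map $W(\R)U(\C)\to W(\C)/F^0_xW(\C)$ is \emph{surjective} because $F^0_xU(\C)=0$ (the Hodge structure on $U$ is of type $(-1,-1)$) and $V(\C)=V(\R)\oplus F^0_xV(\C)$ (type $\{(-1,0),(0,-1)\}$ gives $V(\R)\cap F^0_xV(\C)=0$, and the real dimensions add up); hence $W(\R)U(\C)\cdot F^0_xW(\C)=W(\C)$ and the $W(\R)U(\C)$-orbit fills the whole fiber. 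This is the content of \cite[1.7(a), 3.13--3.14]{PinkThesis}. With that substitution for your fiberwise step, the rest of your argument (preimage of a semi-algebraic set under an algebraic map) is complete and coincides with the paper's proof.
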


\begin{rmk} It is not hard to see that $\cX^\vee$ is a projective variety if and only if $(P,\cX^+)$ is pure. The argument is as follows: $\cX^\vee$ is a holomorphic vector bundle over $\cX_G^\vee$ where the fibre is homeomorphism to $W(\R)U(\C)$. But $\cX_G^\vee$ is projective, so $\cX^\vee$ is projective if and only if it is a trivial vector bundle over $\cX_G^\vee$, i.e. if and only if $W$ is trivial.
\end{rmk}

Let us take a closer look at the semi-algebraic structure of $\cX^+$. By \cite[pp 6]{WildeshausThe-canonical-c}, there exists a Shimura morphism $i\colon (G,\cX^+_G)\rightarrow (P,\cX^+)$ s.t. $\pi\circ i=\text{id}$. Then $i$ defines a Levi decomposition of $P=W\rtimes G$.
By definition $\cX^+\subset\hom(\S_\C,P_\C)$. Define a bijective map
\[
\begin{diagram}
W(\R)U(\C)\times\cX^+_G &\rTo &\cX^+ \\
(w,x) &\mapsto &\text{int}(w)\circ i(x)
\end{diagram}.
\]

Identify $P$ with the 4-tuple $(G,V,U,\Psi)$ as in $\mathsection$\ref{structure of the underlying group}.
Since $W\cong U\times V$ as $\Q$-varieties, we can define a bijection induced by the one above
\begin{equation}\label{identification of the realization}
\rho: U(\C)\times V(\R)\times\cX^+_G\xrightarrow{\sim}\cX^+
\end{equation}

$P(\R)^+U(\C)$ acts on $\cX^+$ by definition. There is also a natural action of $P(\R)^+U(\C)$ on 
$U(\C)\times V(\R)\times\cX^+_G$ which is defined as follows. 
Under the notation of $\mathsection$\ref{structure of the underlying group}, for any $(u,v,g)\in P(\R)^+U(\C)$ and $(u^\prime,v^\prime,x)\in U(\C)\times V(\R)\times\cX^+_G$,
\begin{equation}\label{action of P on X}
(u,v,g)\cdot(u^\prime,v^\prime,x):=(u+gu^\prime+\frac{1}{2}\Psi(v,v^\prime),v+gv^\prime,gx).
\end{equation}
This action is algebraic since $\Psi$ is a polynomial over $\Q$ (see $\mathsection$2.2). 
The morphism $\rho$ is $P(\R)^+U(\C)$-equivariant by an easy calculation.

\begin{prop}\label{realization of the uniformizing space}
The map $\rho$ is semi-algebraic. 
\begin{proof}
It is enough to prove that the graph of $\rho$ is semi-algebraic.
This is true since $\rho$ is $P(\R)^+U(\C)$-equivariant and the actions of $P(\R)^+U(\C)$ on both sides are algebraic and transitive.
Explicitly, fix a point $x_0\in U(\C)\times V(\R)\times\cX^+_G$, the graph of $\rho$
\begin{align*}
 Gr(\rho) &=\{(gx_0,\rho(gx_0))\in(U(\C)\times V(\R)\times\cX^+_G)\times\cX^+|~g\in P(\R)^+U(\C)\}\qquad\text{(transitivity)} \\
 &=\{(gx_0,g\rho(x_0))\in(U(\C)\times V(\R)\times\cX^+_G)\times\cX^+|~g\in P(\R)^+U(\C)\}\qquad\text{(equivariance)} \\
 &=P(\R)^+U(\C)\cdot (x_0,\rho(x_0))
\end{align*}
is semi-algebraic since the action of $P(\R)^+U(\C)$ on $(U(\C)\times V(\R)\times\cX^+_G)\times\cX^+$ is algebraic.
\end{proof}
\end{prop}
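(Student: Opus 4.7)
The plan is to show that $\rho$ is semi-algebraic by exhibiting its graph $\Gr(\rho) \subset (U(\C) \times V(\R) \times \cX^+_G) \times \cX^+$ as the orbit of a single point under a semi-algebraic group action. The two ingredients I would line up first are transitivity of the $P(\R)^+ U(\C)$-action on both sides, and equivariance of $\rho$ with respect to these actions. Transitivity on $\cX^+$ is the definition of a connected mixed Shimura datum. Transitivity on $U(\C) \times V(\R) \times \cX^+_G$ follows from the formula \eqref{action of P on X}: pairs $(u,v,1) \in W(\R) U(\C)$ act by translations on the $U$- and $V$-coordinates, while $G(\R)^+$ acts transitively on $\cX^+_G$ in the usual way.

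Equivariance of $\rho$ is a direct verification using the Levi decomposition $P = W \rtimes G$ furnished by $i$ and the Baker--Campbell--Hausdorff description of the group law on $W \cong U \times V$ recalled in $\mathsection$\ref{structure of the underlying group}; the cross-term $\frac{1}{2}\Psi(v,v')$ in \eqref{action of P on X} is precisely what appears when one conjugates $i(x)$ by the unipotent element $(u,v)$ and re-expresses the result through $\rho$. Once equivariance and transitivity are in hand, fix any $x_0 \in U(\C) \times V(\R) \times \cX^+_G$; then
\[
\Gr(\rho) = \{(g \cdot x_0,\, g \cdot \rho(x_0)) \,:\, g \in P(\R)^+ U(\C)\}
\]
is the single orbit of the point $(x_0, \rho(x_0))$ under the diagonal action of $P(\R)^+ U(\C)$.

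The main technical point, which I view as the crux, is that this diagonal action is semi-algebraic. On the first factor this is transparent from \eqref{action of P on X}: the only operations involved are addition on $U$ and $V$, the linear representations of $G$ on $U$ and on $V$, the polynomial pairing $\Psi$, and the action of $G(\R)^+$ on $\cX^+_G$, which is semi-algebraic because $\cX^+_G$ is realized as a semi-algebraic subset of the flag variety $\cX^\vee_G$ on which $G(\C)$ acts algebraically. On the second factor one uses that the action of $P(\R)^+ U(\C)$ on $\cX^+$ is the restriction of the algebraic action of $P(\C)$ on $\cX^\vee$ under the open immersion $\cX^+ \hookrightarrow \cX^\vee$, hence semi-algebraic. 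A semi-algebraic group acting semi-algebraically has semi-algebraic orbits, so $\Gr(\rho)$ is semi-algebraic and $\rho$ is semi-algebraic, as desired.
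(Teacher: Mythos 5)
Your proposal is correct and follows essentially the same route as the paper: the graph of $\rho$ is identified as the single orbit of $(x_0,\rho(x_0))$ under the diagonal action of $P(\R)^+U(\C)$, using transitivity and equivariance, and semi-algebraicity of the orbit follows from the (semi-)algebraicity of the action on both factors. The extra details you supply on why each factor's action is semi-algebraic are consistent with the paper's setup and do not change the argument.
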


\begin{rmk}\label{GoodFundamentalDomainForUniversalFamily}
If $U$ is trivial, then the complex structure of $\cX^+$ given by $\cX^\vee$ is the same as the one given by \cite[Construction 2.9]{PinkA-Combination-o} since for the projection $\cX^+\xrightarrow{\pi}\cX^+_G$, the complex structure of any fibre $\cX^+_{x_G}$ ($x_G\in\cX^+_G$) given by $\cX^\vee$ is the same as the one obtained from $\cX^+_{x_G}\cong V(\C)/F^0_{x_G}V(\C)$ (see \cite[3.13, 3.14]{PinkThesis}). In particular this holds for $\cX^+_{2g,a}$ (see $\mathsection$\ref{SubsectionSiegelType} for notation). Therefore for any $\mathfrak{A}_g(N)$, the fundamental set $[0,N)^{2g}\times\cF_G\subset V_{2g}(\R)\times\H^+_g\cong\cX^+_{2g,a}$ is the one considered in \cite{PeterzilDefinability-of}.
\end{rmk}

\section{(Weakly) special subvarieties}\label{(Weakly) special subvarieties}
\subsection{Weakly special subvarieties}
\begin{defn}(Pink, \cite[Definition 4.1(b)]{PinkA-Combination-o})\label{definition of weakly special subvariety}
Let $S$ be a connected mixed Shimura variety.
Consider any Shimura morphisms $T^\prime\xleftarrow{[\varphi]}T\xrightarrow{[i]}S$ and any point $t^\prime\in T^\prime$. Then any irreducible component of $[i]([\varphi]^{-1}(t^\prime))$ is called a \textbf{weakly special subvariety} of $S$. We will prove later in Remark \ref{weakly special closed} that weakly special subvarieties of $S$ are indeed closed subvarieties.
\end{defn}

Since any Shimura morphism is related to a Shimura morphism between Shimura data, we will try to rephrase this definition in the context of Shimura data:
\begin{defn}\label{definition of Pink}
Given a connected mixed Shimura datum $(P,\cX^+)$, a weakly special subset of $\cX^+$ is a connected component of $i(\varphi^{-1}(y^\prime))\subset\cX^+$ for a point $y^\prime\in\cY^{\prime +}$, where $i$, $\varphi$, $\cY^{\prime +}$ are in the following diagram of Shimura morphisms
\[
\begin{diagram}
& & (Q,\cY^+) & &\\
&\ldTo^{\varphi} & &\rdTo^{i} &\\
(Q^\prime,\cY^{\prime +}) & & & &(P,\cX^+)
\end{diagram}.
\]
\end{defn}

\begin{rmk}\label{discussions of weakly special on the top}
\begin{enumerate}
\item In the definition above, let $N:=\ker(Q\rightarrow Q^\prime)$ and let $U_N:=U_Q\cap N$, then $i(\varphi^{-1}(y^\prime))$ is a connected component of $N(\R)U_N(\C)y$ where $\varphi(y)=y^\prime$. So $i(\varphi^{-1}(y^\prime))$ is smooth as an analytic variety. In particular, its connected components and complex analytic irreducible components coincide. As a result, we can replace ``a connected component" by ``a complex analytic irreducible component" in Definition \ref{definition of Pink}.
\item If furthermore $N$ is connected, then $i(\varphi^{-1}(y^\prime))$ itself is connected (hence also complex analytic irreducible). The proof is as follows: Consider the image of $\varphi^{-1}(y^\prime)$ under the projection $(Q,\cY^+)\xrightarrow{\pi}(G_Q,\cY^+_{G_Q}):=(Q,\cY^+)/W_Q$. By the decomposition (\cite[3.6]{MoonenLinearity-prope})
\[
(G_Q^\ad,\cY^+_{G_Q})=(G_N^\ad,\cY^+_1)\times(G_2,\cY^+_2)
\]
where $G_N:=N/W\cap N$, $\pi(\varphi^{-1}(y^\prime))=\cY^+_1\times\{y_2\}$. So $\pi(\varphi^{-1}(y^\prime))=G_N(\R)^+\pi(y)$. But $W_N(\R)U_N(\C)$ ($W_N:=W\cap N$) is connected, hence $\varphi^{-1}(y^\prime)=N(\R)^+U_N(\C)y$, which is connected. In consequence, $i(\varphi^{-1}(y^\prime))$ also is connected.
\end{enumerate}
\end{rmk}

\begin{prop}\label{particular choices of i and varphi}
For any weakly special subvariety of $S$ (resp. weakly special subset of $\cX^+$), the Shimura morphisms in Definition \ref{definition of weakly special subvariety} (resp. Definition \ref{definition of Pink}) can be chosen such that
\begin{itemize}
\item the underlying homomorphism of algebraic groups $i$ is injective, and hence $i$ is an embedding in the sense of \cite[2.3]{PinkThesis};
\item the underlying homomorphism of algebraic groups $\varphi$ is surjective, and its kernel $N$ is connected. Moreover, $N$ possesses no non-trivial torus quotient (or equivalently, $G_N:=N/(W\cap N)$ is  semi-simple);
\item $\varphi$ is a quotient Shimura morphism.
\end{itemize}
\begin{proof} If $P=\MT(\cX^+)$, then the first two points except the statement in the bracket are proved by \cite[Proposition 4.4]{PinkA-Combination-o}. The general case follows directly from Proposition \ref{properties of irreducible mixed Shimura datum}(1). The third assertion can be proved by the universal property of quotient Shimura data (\cite[2.9]{PinkThesis}). Now we are left to prove the statement in the bracket.

$G_N\lhd G$ since $G_N=N/(W\cap N)\hookrightarrow G=P/W$ and $N\lhd P$, and hence $G_N$ is reductive (\cite[14.2, Corollary(b)]{BorelLinear-Algebrai}). By \cite[14.2 Proposition(2)]{BorelLinear-Algebrai}, $G_N$ is the almost-product of $G_N^{\der}$ and $Z(G_N)^\circ$, and $Z(G_N)^\circ$ equals the radical of $G_N$ which is a torus. So $N$ possesses no non-trivial torus quotient iff $G_N$ possesses no non-trivial torus quotient iff $G_N$ is semi-simple.
\end{proof}
\end{prop}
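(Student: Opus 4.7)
The plan is to reduce the first two bullets to Pink's \cite[Proposition 4.4]{PinkA-Combination-o}, then obtain the quotient-morphism assertion from the universal property of quotient Shimura data, and finally prove the parenthetical equivalence as a short structural argument about reductive groups. Starting from Shimura morphisms $(Q',\cY'^+) \xleftarrow{\varphi} (Q,\cY^+) \xrightarrow{i} (P,\cX^+)$ that realise the given weakly special subvariety (resp.\ weakly special subset), I would first apply Proposition \ref{properties of irreducible mixed Shimura datum}(1) to $(Q,\cY^+)$ to produce $(Q_1,\cY_1^+) \hookrightarrow (Q,\cY^+)$ with $Q_1 = \MT(\cY_1^+)$ and $\cY_1^+ \cong \cY^+$, and then replace $i$ and $\varphi$ by their restrictions to $(Q_1,\cY_1^+)$. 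Since $\cY_1^+$ is identified with $\cY^+$ as an orbit, this substitution does not alter the weakly special set. Now Pink's Proposition 4.4 applies (its hypothesis being exactly $Q_1 = \MT(\cY_1^+)$) and yields the first two bullets except the bracketed statement: after possibly modifying the data once more, $i$ becomes injective, hence an embedding in the sense of \cite[2.3]{PinkThesis}, and $\varphi$ becomes surjective with connected kernel $N$.

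For the third bullet, since $N \lhd Q_1$, Fact \ref{quotient} provides a quotient connected mixed Shimura datum $(Q_1,\cY_1^+)/N$. The universal property in \cite[2.9]{PinkThesis} forces the surjective $\varphi$ to factor through this quotient, and comparing kernels identifies the induced morphism with an isomorphism onto $(Q',\cY'^+)$; hence $\varphi$ is a quotient Shimura morphism.

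The bracket equivalence is purely group-theoretic. Writing $G_{Q_1} := Q_1/W_{Q_1}$, the normality $N \lhd Q_1$ implies that $G_N := N/(W_{Q_1} \cap N)$ embeds as a connected normal subgroup of the reductive group $G_{Q_1}$; by \cite[14.2, Corollary(b)]{BorelLinear-Algebrai} a connected normal subgroup of a reductive group in characteristic $0$ is reductive, so $G_N$ is reductive. The almost-direct-product decomposition $G_N = G_N^\der \cdot Z(G_N)^\circ$ from \cite[14.2 Proposition(2)]{BorelLinear-Algebrai} shows that the radical of $G_N$ is the torus $Z(G_N)^\circ$, so every torus quotient of $G_N$ factors through the cocenter $G_N / G_N^\der$, itself a torus. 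This cocenter vanishes exactly when $G_N = G_N^\der$, i.e.\ when $G_N$ is semi-simple. Finally, $W_{Q_1} \cap N$ is unipotent and maps trivially to any torus, so torus quotients of $N$ and of $G_N$ coincide, giving the stated equivalence. The only step I expect to need genuine care is the initial reduction to $Q_1 = \MT(\cY_1^+)$: one has to verify that restriction along $(Q_1,\cY_1^+) \hookrightarrow (Q,\cY^+)$ preserves both the image under $i$ and the fibre structure under $\varphi$, which rests on the connectedness observations recorded in Remark \ref{discussions of weakly special on the top}.
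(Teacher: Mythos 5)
Your proposal is correct and follows essentially the same route as the paper: reduce to the generic Mumford--Tate case via Proposition \ref{properties of irreducible mixed Shimura datum}(1), invoke Pink's Proposition 4.4 for the first two bullets, use the universal property of quotient Shimura data (\cite[2.9]{PinkThesis}) for the third, and deduce the bracketed equivalence from \cite[14.2]{BorelLinear-Algebrai} together with the fact that the unipotent group $W\cap N$ has no nontrivial torus quotients. The only cosmetic difference is that you apply the generic-MT reduction to the source datum $(Q,\cY^+)$ whereas the paper applies it to the ambient $(P,\cX^+)$; either normalization feeds correctly into Pink's proposition, and your extra remarks (the identification of $\varphi$ with the quotient morphism by comparing kernels, and the reduction of torus quotients of $N$ to those of $G_N$) only make explicit what the paper leaves implicit.
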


\begin{rmk}\label{weakly special closed}
We can now prove that weakly special subvarieties of $S$ are closed. By the proposition above, we can choose $i$ to be injective. Then $[i]$ is finite by \cite[3.8]{PinkThesis}. Hence $[i]([\varphi]^{-1}(t^\prime))$ is closed.
\end{rmk}

\begin{lemma}\label{weakly special preparation}
Suppose that the Shimura morphisms $T^\prime\xleftarrow{[\varphi]}T\xrightarrow{[i]}S$ are associated with the morphisms of mixed Shimura data
\[
(Q^\prime,\cY^{\prime +})\xleftarrow{\varphi}(Q,\cY^+)\xrightarrow{i}(P,\cX^+)
\]
so that we have the following commutative diagram
\[
\begin{diagram}
\cY^{\prime +} &\lTo^\varphi &\cY^+ &\rTo^i &\cX^+ \\
\dTo^{\unif_{\cY^{\prime +}}} & &\dTo^{\unif_{\cY^+}} & &\dTo^{\unif_{\cX^+}} \\
T^\prime=\Delta^\prime\backslash\cY^{\prime +} &\lTo^{[\varphi]} &T=\Delta\backslash\cY^+ &\rTo^{[i]} &S=\Gamma\backslash\cX^+
\end{diagram},
\]
then for any point $y^\prime\in\cY^{\prime +}$, any irreducible component of $\unif_{\cX^+}(i(\varphi^{-1}(y^\prime)))$ is also an irreducible component of $[i]([\varphi]^{-1}(\unif_{\cY^{\prime +}}(y^\prime)))$.
\begin{proof} Let $N:=\ker(\varphi)$ and let $U_Q$ be the weight $-2$ part of $Q$, then we have
\[
\unif_{\cX^+}(i(\varphi^{-1}(y^\prime)))\subset[i]([\varphi]^{-1}(\unif_{\cY^{\prime +}}(y^\prime))),
\]
and both of them are of constant dimension $d$, where $d$ is the dimension of any orbit of $N(\R)^+(U_Q\cap N)(\C)$. This allows us to conclude.
\end{proof}
\end{lemma}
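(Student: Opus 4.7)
My plan is to reduce the claim about irreducible components to the obvious inclusion combined with a pure-dimension count. The underlying principle is elementary: if $A \subseteq B$ are closed analytic sets both of pure complex dimension $d$, then every irreducible component of $A$ coincides with an irreducible component of $B$, since any irreducible closed analytic subvariety of $B$ of dimension $d$ must be dense in, and hence equal to, the irreducible component of $B$ containing it.

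The inclusion $\unif_{\cX^+}(i(\varphi^{-1}(y^\prime))) \subseteq [i]([\varphi]^{-1}(\unif_{\cY^{\prime +}}(y^\prime)))$ is immediate from commutativity of the given diagram: for any $y \in \varphi^{-1}(y^\prime)$ one has $[\varphi]([y]) = \unif_{\cY^{\prime +}}(\varphi(y)) = \unif_{\cY^{\prime +}}(y^\prime)$ and $[i]([y]) = \unif_{\cX^+}(i(y))$, so the left-hand side sits inside the right-hand side.

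Next I would verify that both sides have pure complex dimension $d := \dim_{\C} N(\R)^+ U_N(\C) \cdot y_0$, where $N := \ker(\varphi\colon Q \to Q^\prime)$, $U_N := U_Q \cap N$, and $y_0$ is any point in $\varphi^{-1}(y^\prime)$; this dimension is independent of $y_0$ by homogeneity. By Remark \ref{discussions of weakly special on the top}(1), $\varphi^{-1}(y^\prime)$ is a disjoint union of $N(\R)^+ U_N(\C)$-orbits on $\cY^+$, each smooth of constant dimension $d$, so $\varphi^{-1}(y^\prime)$ is of pure dimension $d$; consequently $\varphi^{-1}(\Delta^\prime \cdot y^\prime) = \bigcup_{\delta^\prime \in \Delta^\prime} \varphi^{-1}(\delta^\prime y^\prime)$ is of pure dimension $d$ as well. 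Since $\unif_{\cY^+}$ and $\unif_{\cX^+}$ are local biholomorphisms (up to finite stabiliser quotients) and $i$ has discrete fibres (by the finite-fibre condition on $h$ in Definition \ref{connected mixed Shimura datum}), both sides of the inclusion inherit pure dimension $d$.

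The main subtlety is ensuring that $i$ and hence $[i]$ genuinely preserve dimension rather than collapse it. This is cleanest when the underlying homomorphism $i\colon Q \to P$ is injective, for then $[i]$ is a finite map (Remark \ref{weakly special closed}); by Proposition \ref{particular choices of i and varphi} this is precisely the case relevant to the lemma's use in the weakly-special setting. In the general case one identifies $\ker(i)$ as a connected $\Q$-subgroup of $Q$ whose orbits on $\cY^+$ describe the fibres of $i$, again producing a constant-dimensional image. The inclusion combined with pure dimension $d$ on both sides then forces every irreducible component on the left to be an irreducible component on the right, completing the proof.
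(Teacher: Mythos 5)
Your proposal is correct and follows essentially the same route as the paper: the obvious inclusion from the commutative diagram plus the observation that both sides have pure dimension $d$ equal to that of an $N(\R)^+(U_Q\cap N)(\C)$-orbit, which forces components of the smaller set to be components of the larger. Your extra care about $i$ (and hence $[i]$) not collapsing dimension is a detail the paper glosses over, but it does not change the argument.
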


The following Proposition tells us that the two definitions of weak specialness are compatible.
\begin{prop}\label{definitions of weakly special match}
Let $S$ be a connected mixed Shimura variety associated with the connected mixed Shimura datum $(P,\cX^+)$ and let $\unif\colon \cX^+\rightarrow S=\Gamma\backslash\cX^+$ be the uniformization, then a subvariety $Z$ of $S$ is weakly special if and only if $Z$ is the image of some weakly special subset of $\cX^+$.
\begin{proof} The ``if" part is immediate by Lemma \ref{weakly special preparation}. We prove the ``only if" part. We assume that $i$, $\varphi$ are as in Proposition \ref{particular choices of i and varphi}. For any weakly special subvariety $Z\subset S$, suppose that we have a diagram as in Lemma \ref{weakly special preparation} and $Z$ is an irreducible component of $[i]([\varphi]^{-1}(t^\prime))$. Since
\[
[i]([\varphi]^{-1}(t^\prime))\subset\bigcup_{y^\prime\in \unif_{\cY^+}^{-1}(t^\prime)}\unif_{\cX^+}(i(\varphi^{-1}(y^\prime)))=\unif_{\cX^+}(i(\varphi^{-1}(\unif_{\cY^+}^{-1}(t^\prime)))),
\]
there exists a $y^\prime\in\cY^{\prime+}$ lying over $t^\prime$ s.t. $Z$ is an irreducible component of $\unif_{\cX^+}(i(\varphi^{-1}(y^\prime)))$ by Lemma \ref{weakly special preparation}. By Remark \ref{discussions of weakly special on the top}.2, $i(\varphi^{-1}(y^\prime))$ is complex analytic irreducible, so $\unif_{\cX^+}(i(\varphi^{-1}(y^\prime)))$ is also complex analytic irreducible when $S$ is regarded as an analytic variety. Hence $\unif_{\cX^+}(i(\varphi^{-1}(y^\prime)))$ is irreducible as an algebraic variety. So $Z=\unif_{\cX^+}(i(\varphi^{-1}(y^\prime)))$.
\end{proof}
\end{prop}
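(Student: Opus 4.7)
The plan is to deduce the equivalence from three preparatory results: Lemma \ref{weakly special preparation}, which transfers irreducible components between $\unif$-images of upstairs fibers and downstairs preimages of points; Proposition \ref{particular choices of i and varphi}, which allows the defining Shimura morphisms to be chosen so that $\varphi$ is a quotient Shimura morphism with connected kernel $N = \ker(\varphi)$; and Remark \ref{discussions of weakly special on the top}, which, under that choice, guarantees that each $i(\varphi^{-1}(y^\prime))$ is a single connected (and smooth) set rather than a disjoint union of components.

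For the ``if'' direction, let $\tilde{Z} \subset \cX^+$ be weakly special, so $\tilde{Z}$ is a connected component of $i(\varphi^{-1}(y^\prime))$ for some Shimura morphisms $(Q^\prime,\cY^{\prime +}) \xleftarrow{\varphi} (Q,\cY^+) \xrightarrow{i} (P,\cX^+)$ and some $y^\prime \in \cY^{\prime +}$. By Remark \ref{discussions of weakly special on the top}(1), $\tilde{Z}$ is smooth and connected, hence analytically irreducible, and so is $\unif(\tilde{Z})$; this is then also algebraically irreducible. Since $\unif(\tilde{Z})$ has the same dimension as $i(\varphi^{-1}(y^\prime))$, it is an irreducible component of $\unif_{\cX^+}(i(\varphi^{-1}(y^\prime)))$, and Lemma \ref{weakly special preparation} identifies it as an irreducible component of $[i]([\varphi]^{-1}(\unif_{\cY^{\prime +}}(y^\prime)))$, so $\unif(\tilde{Z})$ is weakly special.

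For the ``only if'' direction, write a weakly special $Z \subset S$ as an irreducible component of $[i]([\varphi]^{-1}(t^\prime))$ for some Shimura morphisms $T^\prime \xleftarrow{[\varphi]} T \xrightarrow{[i]} S$ and $t^\prime \in T^\prime$. By Proposition \ref{particular choices of i and varphi} I may assume $\varphi$ is a quotient Shimura morphism with connected kernel. Decomposing
\[
[i]([\varphi]^{-1}(t^\prime)) = \bigcup_{y^\prime \in \unif_{\cY^{\prime +}}^{-1}(t^\prime)} \unif_{\cX^+}(i(\varphi^{-1}(y^\prime))),
\]
and noting that by Remark \ref{discussions of weakly special on the top}(2) the connected kernel renders each fiber $i(\varphi^{-1}(y^\prime))$ connected and smooth, I see that each $\unif_{\cX^+}(i(\varphi^{-1}(y^\prime)))$ is analytically irreducible; Lemma \ref{weakly special preparation} then upgrades it to an irreducible algebraic subvariety of $S$ as an irreducible component of the algebraic set $[i]([\varphi]^{-1}(t^\prime))$. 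Since $Z$ is itself an irreducible component of that union, it must coincide with one such set, exhibiting $Z$ as the $\unif$-image of the weakly special subset $i(\varphi^{-1}(y^\prime))$ of $\cX^+$.

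The main technical point, which Proposition \ref{particular choices of i and varphi} is designed to handle, is controlling what happens when the upstairs fiber $i(\varphi^{-1}(y^\prime))$ decomposes: without the connectedness of $\ker(\varphi)$, an irreducible component of the downstairs union could correspond only to a single component of a fiber upstairs rather than to the whole fiber, and the symmetric matching between the two notions of weak specialness would have to be argued component-by-component rather than fiber-by-fiber.
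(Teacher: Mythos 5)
Your proof is correct and follows essentially the same route as the paper's: decompose $[i]([\varphi]^{-1}(t'))$ via Lemma \ref{weakly special preparation}, normalize $i,\varphi$ by Proposition \ref{particular choices of i and varphi} so that $\ker(\varphi)$ is connected, and invoke Remark \ref{discussions of weakly special on the top}(2) to get irreducibility of each $\unif_{\cX^+}(i(\varphi^{-1}(y')))$, forcing $Z$ to coincide with one of them. The only difference is cosmetic (you spell out the ``if'' direction, which the paper declares immediate, and reorder the irreducibility step), so nothing further is needed.
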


We close this subsection by proving that this definition of weakly special subvarieties is compatible with the one (which is already known) for pure Shimura varieties.

\begin{prop}\label{weakly special for pure}
A weakly special subvariety of a pure Shimura variety $S$ is a subvariety of the same form as in \cite[Definition 2.1]{UllmoA-characterisat}.
\begin{proof} This is pointed out in \cite[Remark 4.5]{PinkA-Combination-o}. We give a (relatively) detailed proof here. We prove the result for weakly special subsets. Assume that $S$ is associated with the connected pure Shimura datum $(P,\cX^+)$. For a subset of the same form as in \cite[Definition 2.1]{UllmoA-characterisat}, take $(Q,\cY^+)=(H,X_H^+)$ and $(Q^\prime,\cY^{\prime+})=(H_1,X_1^+)$ (same notation as \cite[Definition 2.1]{UllmoA-characterisat}). Then by definition such a subset is weakly special (as in Definition \ref{definition of Pink}).

On the other hand, suppose that we have a weakly special subset $\tilde{F}$ defined by a diagram as in Definition \ref{definition of Pink} satisfying Proposition \ref{particular choices of i and varphi}. Let $N:=\ker(\varphi)$, then the homogeneous spaces of the connected pure Shimura data $(Q^\prime,\cY^{\prime+})=(Q,\cY^+)/N$ and $(Q,\cY^+)/Z(Q)N=(Q^\ad,\cY^{\ad+})/N^\ad$ are canonically isomorphic to each other (\cite[Proposition 5.7]{MilneIntroduction-to}). Hence we may replace $(Q^\prime,\cY^{\prime+})$ by $(Q^\ad,\cY^{\ad+})/N^\ad$. But by \cite[3.6, 3.7]{MoonenLinearity-prope}, $(Q^\ad,\cY^{\ad+})=(N^\ad,\cY^+_1)\times(Q_2,\cY^+_2)$. So $\tilde{F}$ is of the same form as in \cite[Definition 2.1]{UllmoA-characterisat}. 
\end{proof}
\end{prop}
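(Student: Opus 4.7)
The plan is to establish the equivalence in both directions, working in the framework of weakly special subsets of $\cX^+$ (so that applying $\unif$ gives the statement for subvarieties of $S$, via Proposition \ref{definitions of weakly special match}). The forward direction, that every subset of the Ullmo-Yafaev form is weakly special in Pink's sense, will be essentially tautological: given data $(H, X_H^+) \hookrightarrow (P,\cX^+)$ and a decomposition of the adjoint $(H^{\ad}, X_H^{\ad+}) = (H_1, X_1^+) \times (H_2, X_2^+)$ together with a point $y_2 \in X_2^+$, I would take $(Q, \cY^+) = (H, X_H^+)$ with embedding $i$ into $(P,\cX^+)$, and set $(Q^\prime, \cY^{\prime+})$ to be the pure Shimura datum whose homogeneous space is $X_2^+$ (obtained as a quotient of $(H, X_H^+)$). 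Then the corresponding weakly special subset (a fiber of $\varphi$ projected via $i$) precisely coincides with the Ullmo-Yafaev subset $H_1(\R)^+ \cdot (x_1, y_2)$.

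For the reverse direction, I would start from a weakly special subset $\tilde F$ defined by a diagram satisfying Proposition \ref{particular choices of i and varphi}, so $\varphi : (Q,\cY^+) \twoheadrightarrow (Q',\cY'^+)$ is a quotient Shimura morphism with connected kernel $N$ having no non-trivial torus quotient, and $i$ is an embedding. Set $(Q,\cY^+)/(Z(Q)N)$; by the universal property of adjoint data (Milne \cite[Proposition 5.7]{MilneIntroduction-to}), its homogeneous space is canonically isomorphic to $\cY'^+$, so I may replace $(Q',\cY'^+)$ by $(Q^{\ad}, \cY^{\ad+})/N^{\ad}$ without changing $\tilde F$. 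Now I invoke Moonen's decomposition \cite[3.6, 3.7]{MoonenLinearity-prope}, which since $N$ (equivalently $N^{\ad}$) is semi-simple and normal in $Q^{\ad}$, yields a splitting $(Q^{\ad}, \cY^{\ad+}) = (N^{\ad}, \cY_1^+) \times (Q_2, \cY_2^+)$. The fiber of $\varphi$ above $y' = (y_1', y_2') \in \cY_1^+ \times \cY_2^+$ then becomes $\cY_1^+ \times \{y_2'\}$ on the adjoint side, and pulling this back to $\cY^+$ and pushing forward by $i$ produces exactly a set of the Ullmo-Yafaev form, with $(H, X_H^+) := i(Q,\cY^+)$.

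The main subtle point, and what I would identify as the principal obstacle, is justifying the replacement of the original quotient datum $(Q',\cY'^+)$ by the adjoint quotient $(Q^{\ad},\cY^{\ad+})/N^{\ad}$: one must check that the two quotient Shimura morphisms from $(Q,\cY^+)$ induce the same fibration on homogeneous spaces, so that a fiber $\varphi^{-1}(y')$ is genuinely the same subset of $\cY^+$ in either description. This follows because both morphisms have the same image on homogeneous spaces up to canonical isomorphism (a property of adjoint quotients that requires care in the mixed setting but only pure data is involved here, so Milne's result applies directly). Once this reduction is in place, the decomposition of Moonen reduces the structure of the fibers to a product form, and reading off $(H_1, X_1^+) = (N^{\ad}, \cY_1^+)$ (taken inside $H^{\ad}$) completes the identification with the Ullmo-Yafaev definition.
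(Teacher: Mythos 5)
Your proposal follows essentially the same route as the paper: same forward direction (realize the Ullmo--Yafaev form as a fiber of the quotient Shimura morphism by the factor supplying $X_2^+$), and the same reverse direction (reduce to the adjoint quotient via Milne's Proposition 5.7, then invoke Moonen's decomposition 3.6--3.7 to split $(Q^\ad,\cY^{\ad+})$ as $(N^\ad,\cY_1^+)\times(Q_2,\cY_2^+)$ and read off the Ullmo--Yafaev data). The only cosmetic difference is that in the forward direction the paper writes the target as $(H_1,X_1^+)$ whereas you identify it with the datum over $X_2^+$; your version is consistent with the backward direction's output, so this appears to be a harmless indexing slip in the paper rather than a gap in either argument.
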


\subsection{Special subvarieties}

\begin{defn} Let $S$ be a connected mixed Shimura variety associated with the connected mixed Shimura datum $(P,\cX^+)$.
\begin{enumerate}
\item A \textbf{special subvariety} of $S$ is the image of any Shimura morphism $T\rightarrow S$ of connected mixed Shimura varieties;
\item A point $x\in\cX^+$ and its image in $S$ are called special if the homomorphism $x:\S_\C\rightarrow P_\C$ factors through $T_\C$ for a torus $T\subset P$.
\end{enumerate}
\end{defn}

\begin{rmk} By definition, $x\in\cX^+$ is special if and only if it is the image of a Shimura morphism $(T,\cY^+)\hookrightarrow(P,\cX^+)$. Hence a special point is just a special subvariety of dimension $0$.
\end{rmk}

The following result is easy to prove.
\begin{lemma} Let $S$ be a connected mixed Shimura variety associated with the connected mixed Shimura datum $(P,\cX^+)$ and let $\unif\colon \cX^+\rightarrow S$ be the uniformizing map, then a subvariety of $S$ is special if and only if it is of the form $\unif(\cY^+)$ for some $(Q,\cY^+)\hookrightarrow(P,\cX^+)$.
\end{lemma}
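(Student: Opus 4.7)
My plan is to treat the two implications separately. The ``if'' direction follows almost immediately from the definition of special subvariety; the ``only if'' direction reduces to the embedded case by factoring through the image via the quotient construction of Fact~\ref{quotient}.

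For the ``if'' direction, suppose we are given a Shimura embedding $i : (Q, \cY^+) \hookrightarrow (P, \cX^+)$. I would choose any sufficiently small congruence subgroup $\Delta \subset Q(\Q) \cap Q(\R)_+$ with $i(\Delta) \subset \Gamma$, form the connected mixed Shimura variety $T := \Delta \backslash \cY^+$, and observe that the induced Shimura morphism $[i] : T \to S$ has image precisely $\unif(i(\cY^+))$. Identifying $\cY^+$ with its image in $\cX^+$ under the embedding, this is $\unif(\cY^+)$, which is therefore special by definition.

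For the ``only if'' direction, let $Z \subset S$ be a special subvariety, so $Z = [f](T)$ for some Shimura morphism $[f] : T \to S$ arising from a morphism $f : (Q, \cY^+) \to (P, \cX^+)$ of connected mixed Shimura data whose underlying $\Q$-homomorphism $\varphi : Q \to P$ need not be injective. To remedy this, I would apply Fact~\ref{quotient} to the normal subgroup $N := \ker\varphi \lhd Q$: form the quotient Shimura datum $(Q', \cY'^+) := (Q, \cY^+)/N$ together with the quotient Shimura morphism $\pi : (Q, \cY^+) \twoheadrightarrow (Q', \cY'^+)$. The universal property of the quotient then yields a canonical factorization
\[
(Q, \cY^+) \xrightarrow{\pi} (Q', \cY'^+) \xrightarrow{i'} (P, \cX^+),
\]
where the underlying homomorphism of $i'$ is the inclusion $Q/N \hookrightarrow P$; in particular $i'$ is a Shimura embedding. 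Since $\pi$ is a quotient Shimura morphism the induced map $\pi_* : \cY^+ \to \cY'^+$ is surjective, and therefore
\[
Z \;=\; \unif(f(\cY^+)) \;=\; \unif\bigl(i'(\pi_*(\cY^+))\bigr) \;=\; \unif(i'(\cY'^+)) \;=\; \unif(\cY'^+),
\]
which is of the required form.

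The only non-formal point in the argument is the surjectivity of $\pi_* : \cY^+ \to \cY'^+$ for a quotient Shimura morphism. This is essentially built into Pink's construction of the quotient Shimura datum in \cite[2.9]{PinkThesis}, via the universal property and the transitivity of the $P(\R)^+U(\C)$-action on $\cX^+$; I therefore expect this to be routine, and it is the only mild obstacle in an otherwise formal argument.
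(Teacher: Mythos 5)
Your proof is correct, and the route you take — reducing ``if'' to the definition by choosing a small enough congruence subgroup, and ``only if'' by factoring a Shimura morphism through the quotient $(Q,\cY^+)/\ker\varphi$ via Fact~\ref{quotient} — is exactly what one would expect; the paper itself offers no proof, stating only that the lemma is easy. The one point you flag, surjectivity of $\pi_*:\cY^+\to\cY'^+$ for the quotient Shimura morphism, does hold for the reason you anticipate: $\cY^+$ is a single $Q(\R)^+U_Q(\C)$-orbit, $\cY'^+$ is a single $Q'(\R)^+U_{Q'}(\C)$-orbit, and the induced homomorphism $Q(\R)^+U_Q(\C)\to Q'(\R)^+U_{Q'}(\C)$ is surjective because $\varphi(Q(\R))$ is an open subgroup of $Q'(\R)$ (so the image of $Q(\R)^+$ is $Q'(\R)^+$) while $U_Q(\C)\to U_{Q'}(\C)$ is surjective since $U_{Q'}$ is the image of $U_Q$ under $\varphi$ (the weight filtration of $\lie Q'$ is the image of that of $\lie Q$). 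Equivariance then gives $\pi_*(\cY^+)=\cY'^+$.
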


\begin{prop} Every special subvariety of $S$ contains a Zariski dense subset of special points.
\begin{proof}
\cite[Proposition 4.14]{PinkA-Combination-o}.
\end{proof}
\end{prop}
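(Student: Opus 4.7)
My plan is to reduce the statement to the case where the special subvariety is all of $S$, then project to the pure part and argue fiber-by-fiber.

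\textbf{Reduction to $Z=S$.} By the preceding lemma, a special subvariety $Z\subset S$ is of the form $\unif(\cY^+)$ for some Shimura sub-datum $(Q,\cY^+)\hookrightarrow(P,\cX^+)$. Any special point of the connected mixed Shimura variety $\Gamma_Q\backslash\cY^+$ associated to $(Q,\cY^+)$ maps (via the induced Shimura morphism) to a special point of $S$ lying on $Z$, and this map has finite fibers by Remark \ref{weakly special closed}. It therefore suffices to prove: \emph{every connected mixed Shimura variety contains a Zariski-dense set of special points}.

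\textbf{Projection to the pure part.} Let $\bar\pi\colon S\to S_G$ be the Shimura morphism associated to the quotient $(P,\cX^+)\to(G,\cX^+_G)$. Since $h_x$ factors through a torus of $P$ iff its composition with $P\to G$ factors through a torus of $G$ (tori map to tori), $\bar\pi$ sends special points to special points; conversely, to prove Zariski density of special points in $S$ it is enough to show that over a Zariski-dense set of special points in $S_G$, the fiber of $\bar\pi$ in $S$ contains a Zariski-dense set of special points. The pure case is classical (Deligne, cited in the Introduction): $S_G$ contains a Zariski-dense set of special points. Thus everything reduces to the following fiber statement.

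\textbf{Density in the fiber.} Fix a Levi decomposition $s\colon G\hookrightarrow P$ (as in $\mathsection$\ref{Realization of X}), a special point $x_G^\circ\in S_G$, and a lift $\tilde x_G^\circ\in\cX^+_G$. I claim first that $s(\tilde x_G^\circ)\in\cX^+$ is already a special point: by hypothesis $h_{\tilde x_G^\circ}$ factors through a $\Q$-torus $T\subset G$, hence $h_{s(\tilde x_G^\circ)}=s\circ h_{\tilde x_G^\circ}$ factors through the $\Q$-torus $s(T)\subset P$. Under the identification \eqref{identification of the realization}, the preimage of the fiber of $\bar\pi$ over $x_G^\circ$ corresponds to $U(\C)\times V(\R)\times\{\tilde x_G^\circ\}$, on which $P(\Q)\cap(W(\R)U(\C))$ acts by the formula in \eqref{action of P on X}. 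For any $(u,v)$ in this intersection, $(u,v)\cdot s(\tilde x_G^\circ)$ is a special point of $\cX^+$, since its associated homomorphism factors through the $\Q$-torus $\mathrm{int}(u,v)\bigl(s(T)\bigr)\subset P$. The problem is thus to produce enough such rational elements to get a set which is dense in $U(\C)\times V(\R)$ after taking the quotient by the congruence lattice $\Gamma\cap W(\R)U(\C)$. This density comes from the Hodge-theoretic $\Q$-structure: because $U$ has Hodge type $(-1,-1)$, the subspace $U(\Q)+2\pi i\,U(\Q)\subset U(\C)$ is dense in $U(\C)$ (both summands land in $P(\Q)\cap U(\C)$ via the exponential map on $U$, using the Tate twist), while $V(\Q)$ is dense in $V(\R)$. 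Taking the image under $\unif$, one obtains a set of special points in the fiber of $\bar\pi$ over $x_G^\circ$ whose closure is the whole fiber.

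The main obstacle is the last point: verifying that one really has enough rational translates to cover $U(\C)\times V(\R)$ densely, which amounts to identifying the special points of the fiber with the torsion points of the corresponding semi-abelian analytic group and invoking standard density of torsion. The remaining verifications (that conjugating a $\Q$-torus by a $\Q$-rational element of $P$ yields a $\Q$-torus, and that special points of $(Q,\cY^+)$ map to special points of $(P,\cX^+)$) are straightforward from the definitions.
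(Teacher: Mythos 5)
Your overall strategy — reduce to $Z=S$, invoke the classical pure case for $S_G$, then produce special points fiberwise by translating a Levi section by rational unipotent elements — is sound, and it is essentially the route taken in Pink's work, which the paper simply cites for this statement. The reduction steps are fine, and the observation that $(u,v)\cdot s(\tilde x_G^\circ)$ is special for $(u,v)\in W(\Q)$ is correct, since $\mathrm{int}((u,v))(s(T))$ is then a $\Q$-torus of $P$.

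The gap is in the final density step. First, $2\pi i\,U(\Q)$ does \emph{not} land in $P(\Q)$: since $U$ is a closed $\Q$-subgroup, $P(\Q)\cap U(\C)=U(\Q)$, so the only translates available to you are by $W(\Q)=U(\Q)\times V(\Q)$, and $\mathrm{int}(2\pi i\,u_0)(s(T))$ is a maximal torus of $P_\C$ that is not defined over $\Q$; the corresponding points are not special. (Test case $(P_0,\cX_0^+)$ with $S=\Z\backslash\C\cong\C^*$: the special points of $\C^*$ are exactly the roots of unity, i.e.\ the image of $U(\Q)=\Q$, whereas $2\pi i$ maps to $e^{-4\pi^2}$, which is not special.) Second, and consequently, archimedean density of special points in the fiber is simply false — the roots of unity are confined to the unit circle of each $\G_m$-direction — so the goal you set yourself ("dense in $U(\C)\times V(\R)$ after taking the quotient") is unattainable. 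What you actually need, and what the $W(\Q)$-orbit already provides, is \emph{Zariski} density: the image of $U(\Q)\times V(\Q)\times\{\tilde x_G^\circ\}$ in the fiber $E$ surjects onto the torsion of the abelian part $A$ (here $V(\Q)$ is genuinely dense in $V(\R)\cong V(\C)/F^0_bV(\C)$), and over each such torsion point it meets the fiber torus of $E\to A$ in an infinite, hence Zariski-dense, set of torsion points; a closed subvariety of $E$ containing all of this has full fiber dimension over a Zariski-dense subset of $A$ and therefore equals $E$. The same upper-semicontinuity-of-fiber-dimension argument then combines "Zariski dense in $S_G$" with "Zariski dense in each such fiber" to give Zariski density in $S$, which repairs the proof without the erroneous $2\pi i$-translates.
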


The relation between special and weakly special subvarieties is:
\begin{prop}\label{relation between special and weakly special}
A subvariety of $S$ is special if and only if it is weakly special and contains one special point.
\begin{proof}
\cite[Proposition 4.2, Proposition 4.14, Proposition 4.15]{PinkA-Combination-o}.
\end{proof}
\end{prop}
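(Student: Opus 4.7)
The plan is to handle the two directions separately, in each case through a bookkeeping of the Shimura (sub-)datum that produces $Z$.

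For the ``only if'' direction, suppose $Z\subset S$ is special, i.e.\ the image of a Shimura morphism $[i]\colon T\to S$. Take $(Q^\prime,\cY^{\prime+})$ to be the trivial connected mixed Shimura datum (a point), $\varphi$ the unique morphism $T\to\mathrm{pt}$, and $t^\prime$ the unique point; then $[\varphi]^{-1}(t^\prime)=T$, so Definition \ref{definition of weakly special subvariety} gives $Z=[i]([\varphi]^{-1}(t^\prime))$ as weakly special. The existence of at least one special point in $Z$ is immediate from the proposition immediately preceding.

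For the ``if'' direction, assume $Z$ is weakly special and contains a special point $s\in Z$. Apply Proposition \ref{particular choices of i and varphi} to present $Z$ by Shimura morphisms
\[
(Q^\prime,\cY^{\prime+})\xleftarrow{\varphi}(Q,\cY^+)\xrightarrow{i}(P,\cX^+)
\]
with $i$ an embedding, $\varphi$ a surjective quotient Shimura morphism, and $N:=\ker(\varphi)$ connected with semi-simple reductive quotient $G_N=N/W_N$. Proposition \ref{definitions of weakly special match} combined with Remark \ref{discussions of weakly special on the top}(2) then gives $Z=\unif_{\cX^+}(i(\varphi^{-1}(y^\prime)))$ with $\varphi^{-1}(y^\prime)=N(\R)^+U_N(\C)\cdot y$ for a suitable lift $y\in\cY^+$ of a point $t\in T$ mapping to $t^\prime$. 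Picking $y$ so that $i(y)$ lifts $s$, the specialness of $s$ forces $i(y)$ to be special in $\cX^+$, and the identity $\MT(i(y))=i(\MT(y))$, valid because $i$ is injective on underlying groups, forces $y$ itself to be special. Consequently $\MT(y)$ and $\MT(y^\prime)=\varphi(\MT(y))$ are $\Q$-tori.

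The crux is then to promote the orbit $\varphi^{-1}(y^\prime)$ to the homogeneous space of a genuine Shimura sub-datum. Set $H:=\varphi^{-1}(\MT(y^\prime))\subset Q$; it fits in an exact sequence $1\to N\to H\to\MT(y^\prime)\to1$, and since $\MT(y^\prime)$ is a torus one has $W_H=W_N$ and $U_H=U_N$. Let $\cY_H^+$ be the $H(\R)^+U_H(\C)$-orbit through $y$. The inclusions $N(\R)^+U_N(\C)\cdot y\subset\cY_H^+\subset\varphi^{-1}(y^\prime)$ combined with Remark \ref{discussions of weakly special on the top}(2) yield $\cY_H^+=\varphi^{-1}(y^\prime)$, the essential point being that $\MT(y)\subset H$ stabilizes $y$, so no new points are produced. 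One then verifies that $(H,\cY_H^+)$ satisfies the axioms of Definition \ref{connected mixed Shimura datum}: axioms (1)--(3) on the Hodge structure of $\lie H$ restrict from $(Q,\cY^+)$ at $y$; axiom (4) uses normality of $G_N$ in $G_Q$ (so the Cartan involution on $G_Q^{\ad}$ restricts) together with the fact that the torus $\MT(y^\prime)$ contributes trivially to the adjoint quotient; axiom (5) follows because $\MT(y^\prime)$ abelian and $G_N$ semi-simple force $G_H^\der=G_N$, whence $G_H/G_H^\der=\MT(y^\prime)$ is of the required type (almost direct product of a $\Q$-split torus with a torus of compact type) by CM theory for Mumford-Tate groups of special Hodge structures. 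The composite $(H,\cY_H^+)\hookrightarrow(Q,\cY^+)\xrightarrow{i}(P,\cX^+)$ is then a Shimura morphism whose image is exactly $Z$, so $Z$ is special.

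The hardest part is the axiomatic verification of $(H,\cY_H^+)$ as a Shimura sub-datum, especially the compact-type part of axiom (5); however, all of this bookkeeping is carried out by Pink in \cite[Propositions 4.2, 4.14, 4.15]{PinkA-Combination-o}, to which the cleanest proof defers directly.
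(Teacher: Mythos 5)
Your proposal is correct and is essentially a faithful reconstruction of the argument the paper defers to: the paper's proof is simply a citation of Pink's Propositions 4.2, 4.14 and 4.15, and your two directions (trivial quotient datum for "special $\Rightarrow$ weakly special", plus the group $H=\varphi^{-1}(\MT(y^\prime))$ generated by the kernel and the Mumford--Tate group of a special lift for the converse) are exactly the content of those propositions. The only delicate points — the axiom verification for $(H,\cY_H^+)$, in particular possible compact $\Q$-factors of $G_N$ and the compact-type condition in axiom (5) — are precisely the ones you, like the paper, leave to Pink, so nothing is missing relative to the paper's own treatment.
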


\section{Algebraicity in the uniformizing space}\label{Algebraicity in the uniformizing space}
\begin{defn}\label{algebraicity on the top}
Let $\tilde{Y}$ be an analytic subset of $\cX^+$, then
\begin{enumerate}
\item $\tilde{Y}$ is called an irreducible algebraic subset of $\cX^+$ if it is a complex analytic irreducible component of the intersection of its Zariski closure in $\cX^\vee$ and $\cX^+$;
\item $\tilde{Y}$ is called algebraic if it is a finite union of irreducible algebraic subsets of $\cX^+$.
\end{enumerate}
\end{defn}

\begin{lemma}\label{weakly special subsets are irreducible algebraic}
Any weakly special subset of $\cX^+$ is irreducible algebraic.
\begin{proof} Suppose that $\tilde{Z}$ is a weakly special subset of $\cX^+$. Use the notation of Definition \ref{definition of Pink} and assume that $i$ and $\varphi$ satisfy the properties in Proposition \ref{particular choices of i and varphi}. Let $N:=\ker(Q\rightarrow Q^\prime)$ and let $y$ be a point of the weakly special subset, then $\tilde{Z}=N(\R)^+U_N(\C)y$ is complex analytic irreducible by Remark \ref{discussions of weakly special on the top}.2. But $N(\R)^+U_N(\C)y=N(\C)y\cap\cX^+$ and $N(\C)y$ is algebraic, so $\tilde{Z}$ is irreducible algebraic by definition.
\end{proof}
\end{lemma}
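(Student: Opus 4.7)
The plan is to realize any weakly special subset $\tilde{Z} \subset \cX^+$ as (a component of) the intersection with $\cX^+$ of a naturally defined algebraic orbit in $\cX^\vee$. The key point is that if we take the Shimura data presentation carefully, then $\tilde{Z}$ is a real orbit of a subgroup, and its complex orbit inside $\cX^\vee$ is algebraic.

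First, I would reduce to a convenient presentation: by Proposition \ref{particular choices of i and varphi}, write $\tilde{Z}$ as a component of $i(\varphi^{-1}(y'))$ where $i\colon (Q, \cY^+) \hookrightarrow (P, \cX^+)$ has injective underlying homomorphism, $\varphi\colon (Q, \cY^+) \twoheadrightarrow (Q', \cY'^+)$ is a quotient Shimura morphism with connected kernel $N := \ker(\varphi)$. By Remark \ref{discussions of weakly special on the top}(2), with $N$ connected, $i(\varphi^{-1}(y'))$ is itself connected (and complex analytic irreducible), and for any $y \in \varphi^{-1}(y')$ it coincides with the real orbit $N(\R)^+ U_N(\C) y$, where $U_N := U_Q \cap N$.

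Next, I would pass to $\cX^\vee$. The action of $P(\C)$ on $\cX^\vee = P(\C)/F^0_{x_0} P(\C)$ is algebraic, so the $N(\C)$-orbit $N(\C) \cdot y$ is a locally closed algebraic subvariety of $\cX^\vee$. The containment $N(\R)^+ U_N(\C) y \subset (N(\C) \cdot y) \cap \cX^+$ is clear. A dimension count shows the real orbit $\tilde{Z}$ has the same \emph{complex} dimension at $y$ as $N(\C) \cdot y$: the Lie algebra $\lie N = W_{-1}(\lie N) \oplus (\lie N)^{\mathrm{red}}$ decomposes so that the $U_N(\C)$-factor contributes the remaining complex directions not already spanned by $N(\R)^+$, matching the standard semi-algebraic realization $\rho$ of \eqref{identification of the realization}. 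Hence $\tilde{Z}$ is an open subset of $(N(\C) \cdot y) \cap \cX^+$ of full local dimension.

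Finally, I would conclude by the definition of an irreducible algebraic subset (Definition \ref{algebraicity on the top}). Let $Z$ be the Zariski closure of $\tilde{Z}$ in $\cX^\vee$. Since $\tilde{Z} \subset N(\C) \cdot y$ and $\tilde{Z}$ has the same dimension as $N(\C) \cdot y$, one has $Z = \overline{N(\C) \cdot y}^{\mathrm{Zar}}$, which (up to passing to $N(\C) \cdot y$ which is locally closed) agrees with $N(\C) \cdot y$ near $\tilde{Z}$. Because $\tilde{Z}$ is connected (hence complex analytic irreducible by Remark \ref{discussions of weakly special on the top}(1)) and has maximal complex dimension in $Z \cap \cX^+$, it is a complex analytic irreducible component of $Z \cap \cX^+$, which is exactly the definition of irreducible algebraic. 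The only place that requires genuine care is the dimension-matching in the previous paragraph, which is the expected ``main obstacle''; once that is in place the rest is formal.
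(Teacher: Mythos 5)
Your proposal follows essentially the same route as the paper: reduce via Proposition~\ref{particular choices of i and varphi}, identify $\tilde{Z}=N(\R)^+U_N(\C)y$ using Remark~\ref{discussions of weakly special on the top}, and compare this real orbit to the algebraic orbit $N(\C)y\subset\cX^\vee$. The paper asserts the set-theoretic equality $N(\R)^+U_N(\C)y=N(\C)y\cap\cX^+$ directly, whereas you argue only the inclusion and then match dimensions; that is a reasonable stylistic variant, and you are right that the dimension-matching is where the real content lives.

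There is, however, a small logical gap in your closing step. From ``$\tilde{Z}$ is connected, open in $(N(\C)\cdot y)\cap\cX^+$, and of maximal complex dimension'' you cannot yet conclude that $\tilde{Z}$ is a complex analytic irreducible component of $Z\cap\cX^+$: an open ball in an irreducible set is also connected, open, and of full dimension without being a component. What you additionally need is that $\tilde{Z}$ is \emph{closed} in $\cX^+$; then $\tilde{Z}$ is both open and closed in the irreducible component $Y'$ of $Z\cap\cX^+$ containing it, and by connectedness of $Y'$ you get $\tilde{Z}=Y'$. This closedness does hold and follows from Remark~\ref{discussions of weakly special on the top}(2): the projection of $\tilde{Z}$ to $\cX^+_G$ is a full factor $\cY^+_1\times\{y_2\}$ of the Moonen decomposition of the symmetric domain (hence closed), and each fibre of $\tilde{Z}$ over its base image is a closed real-affine slice $W_N(\R)U_N(\C)\cdot(\text{pt})$. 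Inserting this observation repairs the argument; with it your proof is correct and agrees with the paper's.
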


\begin{lemma}[functoriality of algebraicity]\label{algebraicity functorial using Borel embedding}
Let $f\colon (Q,\cY^+)\rightarrow(P,\cX^+)$ be a Shimura morphism. Then there exists a unique morphism $f^\vee\colon \cY^{\vee}\rightarrow\cX^\vee$ of algebraic varieties such that the diagram commutes:
\[
\begin{diagram}
\cY^+ &\rInto &\cY^\vee \\
\dTo^f & &\dTo^{f^\vee} \\
\cX^+ &\rInto &\cX^\vee
\end{diagram}.
\]
Furthermore, for any irreducible algebraic subset $\tilde{Z}$ of $\cY^+$, the closure w.r.t the archimedean topology of $f(\tilde{Z})$ is irreducible algebraic in $\cX^+$ and $f(\tilde{Z})$ contains a dense open subset of this closure.

In particular, if $f$ is an embedding, then an irreducible algebraic subset of $\cY^+$ is an irreducible component of the intersection of an irreducible algebraic subset of $\cX^+$ with $\cY^+$.
\begin{proof}
Fix a point $x_0\in\cY^+$, then we have
\[
\begin{diagram}
\cY^+=Q(\R)^+U_Q(\C)/C(x_0) &\rInto &\cY^\vee=Q(\C)/F_{x_0}^0Q(\C) \\
\dTo^f & &\dTo^{f^\vee} \\
\cX^+=P(\R)^+U_P(\C)/C(f(x_0)) &\rInto &\cX^\vee=P(\C)/F_{f(x_0)}^0P(\C)
\end{diagram},
\]
where $C(x_0)$ (resp. $C(f(x_0))$) denotes the stabilizer of $x_0$ (resp. $f(x_0)$) in $Q(\R)U_Q(\C)$ (resp. $P(\R)U_P(\C)$). $f^\vee$ is unique since $Q(\R)U_Q(\C)/C(x_0)$ is dense in $\cY^\vee$.

To prove the second statement, it is enough to prove the result for $f^\vee(\bar{\tilde{Z}}^{\Zar})\subset\cX^\vee$  where $\bar{\tilde{Z}}^{\Zar}$ is the Zariski closure of $\tilde{Z}$ in $\cY^\vee$. This is then an algebro-geometric result, which follows easily from Chevalley's Theorem (\cite[Chapitre IV, 1.8.4]{EGA4.1}) and \cite[I.10, Theorem 1]{MumfordThe-red-book-of}.
\end{proof}
\end{lemma}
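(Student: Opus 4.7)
The plan is to construct $f^\vee$ by descending the complexified group homomorphism $f_\C\colon Q_\C\to P_\C$ to the flag quotients that define $\cY^\vee$ and $\cX^\vee$, and then to transfer the algebraicity statement across $f^\vee$ using constructibility of images under algebraic morphisms.

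First I fix a base point $x_0\in\cY^+$ so that $\cY^\vee = Q(\C)/F^0_{x_0}Q(\C)$ and $\cX^\vee = P(\C)/F^0_{f(x_0)}P(\C)$. Because $f$ is a Shimura morphism, the complexified homomorphism intertwines the two Hodge filtrations at $x_0$ and $f(x_0) = f\circ x_0$, giving $f_\C(F^0_{x_0}Q(\C))\subset F^0_{f(x_0)}P(\C)$, so $[q]\mapsto [f(q)]$ descends to a morphism of algebraic varieties $f^\vee\colon \cY^\vee\to\cX^\vee$ for which the square commutes. Uniqueness is automatic: $\cY^+$ is an archimedean open subset of $\cY^\vee$, hence Zariski dense, and two morphisms of algebraic varieties agreeing on a Zariski dense subset coincide.

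For the algebraicity statement, I set $A := \overline{\tilde{Z}}^{\Zar}\subset\cY^\vee$ (irreducible by definition) and $W := \overline{f^\vee(A)}^{\Zar}\subset\cX^\vee$ (irreducible as the Zariski closure of the image of an irreducible variety). By Chevalley's theorem $f^\vee(A)$ is constructible in $\cX^\vee$, and by the Mumford reference it contains a Zariski open dense subset $U$ of $W$. Since $\tilde{Z}$ is an analytic irreducible component of the archimedean open subset $A\cap\cY^+$ of $A$, it has dimension $\dim A$ and is Zariski dense in $A$; therefore $f(\tilde{Z})$ is Zariski dense in $W$, and $f(\tilde{Z})\cap U$ is nonempty and analytically open and dense both in $f(\tilde{Z})$ and in $W\cap\cX^+$. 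Since $W\cap\cX^+$ is analytically closed in $\cX^+$, the archimedean closure $\tilde{Z}'$ of $f(\tilde{Z})$ in $\cX^+$ lies inside $W\cap\cX^+$; a dimension count then forces $\tilde{Z}'$ to be a full analytic irreducible component of $W\cap\cX^+$, which is precisely the definition of being irreducible algebraic in $\cX^+$. The density claim follows because $f(\tilde{Z})\supset f(\tilde{Z})\cap U$ is open dense in $\tilde{Z}'$.

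For the ``in particular'' statement, when $f$ is an embedding the underlying homomorphism $Q\to P$ is a closed immersion of algebraic groups (characteristic zero), so $f^\vee$ is also a closed immersion; hence $W = f^\vee(A)$ is already closed, and $\tilde{Z}'$ is the archimedean closure of $\tilde{Z}$ in $\cX^+$. Identifying $\cY^+$ with its image in $\cX^+$, the inclusion $\tilde{Z}\subset \tilde{Z}'\cap\cY^+$ combined with $\dim\tilde{Z}=\dim\tilde{Z}'$ forces $\tilde{Z}$ to be an irreducible component of $\tilde{Z}'\cap\cY^+$. The main technical obstacle is verifying the Zariski density of $\tilde{Z}$ in $A$ and controlling dimensions through $f^\vee$; the rest is standard packaging of Chevalley's theorem with elementary facts about analytic versus algebraic subsets via the open immersion $\cX^+\hookrightarrow\cX^\vee$.
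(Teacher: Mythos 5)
Your proposal is correct and takes essentially the same route as the paper: $f^\vee$ is obtained by descending $f_\C$ to the quotients $Q(\C)/F^0_{x_0}Q(\C)\rightarrow P(\C)/F^0_{f(x_0)}P(\C)$, uniqueness follows from the density of $\cY^+$ in $\cY^\vee$, and the algebraicity claim is reduced, via the Zariski closure of $\tilde{Z}$ in $\cY^\vee$, to Chevalley's theorem together with the fact that the image of an irreducible variety contains a dense open subset of its closure --- exactly the two references the paper invokes. The only blemish is your assertion that $f(\tilde{Z})$ meets the dense open set in a subset dense in all of $W\cap\cX^+$ (that intersection may have several analytic irreducible components and $f(\tilde{Z})$ need only be dense in one of them), but your subsequent dimension count does not rely on this and delivers the correct conclusion.
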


\section{Results for the unipotent part}\label{Results for the unipotent part}

Given a connected mixed Shimura variety $S$, let $S_G$ be its pure part so that we have a projection $S\rightarrow S_G$. For any point $b\in S_G$, denote by $E$ the fiber $S_b$.
Suppose that $S$ is associated with the mixed Shimura datum $(P,\cX^+)$, which can be further assumed to be irreducible by Proposition \ref{properties of irreducible mixed Shimura datum}. Let $\unif\colon \cX^+\rightarrow S=\Gamma\backslash\cX^+$ be the uniformization. Now $E=S_b\cong\Gamma_W\backslash W(\R)U(\C)$ with the complex structure determined by $b\in S_G$ ($E=S_b=\Gamma_W\backslash W(\C)/F^0_bW(\C)$), where $\Gamma_W:=\Gamma\cap W(\Q)$. Write $T:=\Gamma_U\backslash U(\C)$ and $A:=\Gamma_A\backslash V(\C)/F^0_bV(\C)$ where $\Gamma_U:=\Gamma\cap U(\Q)$ and $\Gamma_V:=\Gamma_W/\Gamma_U$, then $T$ is an algebraic torus over $\C$, $A$ is a complex abelian variety and $E$ is an algebraic torus over $A$ whose fibers are isomorphic to $T$.

\begin{lemma}\label{group automatically commutative}
If $E$ can be given the structure of an algebraic group whose group law is compatible with that of $W$, then $W$ (hence $E$) is commutative. In this case $E$ is a semi-abelian variety.
\begin{proof} If $E$ is an algebraic group, then $T$ is a normal subgroup of $E$. Hence $E$ acts on $T$ by conjugation, and this action factors via $A$, and then it is trivial by \cite[8.10 Proposition]{BorelLinear-Algebrai}. Therefore $T$ is in the center of $E$. Now consider the commutator pairing $E\times E\rightarrow E$. This factors through a morphism $A\times A\xrightarrow{f} T$. But this morphism is then constant. So the commutator pairing $E\times E\rightarrow E$ is trivial, and hence $E$ is commutative.

The commutator pairing $W\times W\rightarrow W$ induces an alternating form $\Psi\colon V\times V\rightarrow U$ (same as $\mathsection$\ref{structure of the underlying group}) which induces the morphism $f$ above. We have proved in the last paragraph that $\Psi(V(\R),V(\R))\subset\Gamma_U$ with $\Gamma_U:=\Gamma\cap U(\Q)$. But $\Psi(V(\R),v)$ is continuous for any $v\in V(\R)$ and $\Psi(0,V(\R))=0$, so $\Psi(V(\R),V(\R))=0$. Hence the commutator pairing $W\times W\rightarrow W$ is trivial, and therefore $W$ is commutative.
\end{proof}
\end{lemma}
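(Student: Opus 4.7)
The plan is to first establish that $E$ itself is commutative using the algebraic group hypothesis, and then bootstrap from the fiber statement up to the full unipotent radical $W$. The key structural input is the exact sequence $1 \to T \to E \to A \to 1$, in which $T$ is an algebraic torus and $A$ is an abelian variety.

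For the first step, I would observe that $T$ sits in $E$ as the kernel of the projection to $A$, hence is a normal subgroup. Conjugation therefore gives a homomorphism $E \to \aut(T)$, and since $T$ is abelian this factors through $E/T \cong A$. Now one has a morphism from the abelian variety $A$ into the automorphism group scheme of an algebraic torus; the cited rigidity result (along the lines of Borel 8.10, saying connected affine-by-anti-affine arguments force triviality) makes this map trivial. Hence $T$ is central in $E$. The commutator pairing $E \times E \to E$ then drops to a morphism $c \colon A \times A \to T$, which vanishes on $A \vee A$ (the axes) because the commutator is alternating and trivial when one argument is the identity. A morphism from a projective variety to an affine variety vanishing on a large subset must be constant, hence zero; rigidity of morphisms from abelian varieties to affine groups finishes the job. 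Thus $E$ is commutative.

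For the second step, translate this back to $W$. The commutator on $W$ is, by the Baker--Campbell--Hausdorff description recalled in $\mathsection\ref{structure of the underlying group}$, encoded by the $\Q$-polynomial alternating form $\Psi \colon V \times V \to U$. Passing through the quotients defining $E$, the fiberwise commutator pairing $A \times A \to T$ is induced by $\Psi$ modulo $\Gamma_U$. Triviality of this induced pairing (from step one) gives $\Psi(V(\R), V(\R)) \subset \Gamma_U$. Since $\Gamma_U$ is discrete in $U(\R)$ while $\Psi$ is continuous and satisfies $\Psi(0, v) = 0$, a connectedness argument forces $\Psi \equiv 0$ on $V(\R) \times V(\R)$, hence $\Psi = 0$ identically (as it is polynomial over $\Q$). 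Consequently $W$ is commutative, and since $E$ is a commutative extension of an abelian variety by an algebraic torus, it is a semi-abelian variety.

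The part I expect to be the main conceptual obstacle is the centrality of $T$, namely the assertion that any morphism from an abelian variety $A$ into the automorphism group of an algebraic torus $T$ is trivial; this requires the right rigidity statement (no nonconstant morphism from a complete connected variety to an affine group scheme), which is precisely the content of the result cited from \cite{BorelLinear-Algebrai}. The remaining steps are a mechanical transfer between the Lie-theoretic/polynomial picture on $W$ and the algebraic-geometric picture on $E$, using the continuity and alternating nature of $\Psi$ together with the discreteness of $\Gamma_U$.
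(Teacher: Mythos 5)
Your proposal is correct and follows essentially the same route as the paper: centrality of $T$ via the conjugation action factoring through $A$ and rigidity, triviality of the induced commutator pairing $A\times A\to T$ by rigidity of morphisms from a complete variety to an affine one, and then the deduction $\Psi(V(\R),V(\R))\subset\Gamma_U$ upgraded to $\Psi\equiv 0$ by continuity, connectedness and discreteness of $\Gamma_U$. The only difference is that you spell out the rigidity and discreteness steps slightly more explicitly than the paper does, which is harmless.
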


\subsection{Weakly special subvarieties of a complex semi-abelian variety}
\begin{prop}\label{weakly special for semi-abelian}
Use the notation as at the beginning of the section. Weakly special subvarieties of $E$ are precisely the subsets of $E$ of the form
\[
\unif(W_0(\R)U_0(\C)\tilde{z})
\]
where $W_0$ is a $\MT(b)$-subgroup of $W$ (i.e. a subgroup of $W$ normalized by $\MT(b)$), $U_0:=W_0\cap U$, $\unif(\tilde{z}_G)=b$ and $\tilde{z}_V\in(N_W(W_0)/U)(\R)$ ($\tilde{z}=(\tilde{z}_U,\tilde{z}_V,\tilde{z}_G)$ under \eqref{identification of the realization}).

In particular, if $E$ can be given the structure of an algebraic group whose group law is compatible with that of $W$ (i.e. $W$ is commutative), then the weakly special subvarieties of $E$ are precisely the translates of subgroups of $E$.
\begin{proof}
Let $Z$ be a weakly special variety of $E$ and let $\tilde{Z}$ be a complex analytic irreducible component of $\unif^{-1}(Z)$, 
then there exists a diagram as in Definition \ref{definition of Pink} s.t. $\tilde{z}\colon \S_\C\rightarrow P_\C$ factors through $Q_\C$, $N\lhd Q$ and $\tilde{Z}=N(\R)^+U_N(\C)\tilde{z}$ for some $\tilde{z}\in\tilde{Z}$. As is explained in \cite[paragraph 2, pp 265]{PinkA-Combination-o}, $G_N=1$. We prove that $N=W_N$ satisfies the conditions which we require. Let $U_N:=W_N\cap U$, then $U_N$ is a $\MT(b)$-module by Proposition \ref{properties of irreducible mixed Shimura datum}(2). Denote by $V_N:=W_N/U_N$, $\pi_{P/U}\colon (P,\cX^+)\rightarrow (P/U,\cX_{P/U}^+)$ and $[\pi_{P/U}]\colon S\rightarrow S_{P/U}$. Then $[\pi_{P/U}](Z)$ is a subvariety of $A$ since $Z$ is a subvariety of $E$. So $\pi_{P/U}(\tilde{Z})=V_N(\R)+\pi_{P/U}(\tilde{z})$ is translate of a complex subspace of $V(\R)=V(\C)/F_b^0V(\C)$, and therefore $V_N$ is a $\MT(b)$-module. So $W_N$ is stable under the action of $\MT(b)$. Now $\tilde{z}_V\in(N_W(N)/U)(\R)$ since $\tilde{z}\colon \S_\C\rightarrow P_\C$ factors through $N_P(N)_\C$.

Conversely let $\tilde{Z}=W_0(\R)U_0(\C)\tilde{z}$ with $W_0$, $\tilde{z}$ as stated. Fix a Levi decomposition $P=W\rtimes G$. Let $G^\prime:=\MT(b)$, let $W^\prime:=N_W(W_0)$ and define $Q:=W^\prime\rtimes G^\prime$.
By definition of $Q$, $W_0\lhd Q$ and there exists a connected mixed Shimura datum $(Q,\cY^+)\hookrightarrow(P,\cX^+)$ with $b\in \unif(\cY^+)$. Now consider the morphisms of connected mixed Shimura data
\[
(Q,\cY^+)/W_0\xleftarrow{\varphi}(Q,\cY^+)\xrightarrow{i}(P,\cX^+).
\]
In the fibres above the point $b\in S_G$ these maps are simply
\[
S_{Q,b}/Z\twoheadleftarrow S_{Q,b}\hookrightarrow E=S_b.
\]
Hence $Z$ is a weakly special subvariety by definition.
\end{proof}
\end{prop}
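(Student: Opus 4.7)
The plan is to establish the two inclusions directly by translating between Definition \ref{definition of Pink} of a weakly special subset (via a diagram of Shimura morphisms) and the explicit description in terms of $(W_0, \tilde{z})$. The key leverage is that the containment $Z \subset E$ forces the ``horizontal'' part of any such diagram to collapse, so the normal subgroup $N \lhd Q$ becomes unipotent, and the structure of $(P, \cX^+)$ with irreducible Mumford-Tate group controls the rest.

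For the forward direction, I would take $Z$ weakly special in $E$ with lift $\tilde{Z} = N(\R)^+ U_N(\C) \tilde{z}$ coming from Shimura morphisms $(Q^\prime, \cY^{\prime+}) \xleftarrow{\varphi} (Q, \cY^+) \xrightarrow{i} (P, \cX^+)$ with $N = \ker\varphi$ as in Proposition \ref{particular choices of i and varphi}. Because $Z$ sits inside the single fiber $E = S_b$ of $S \to S_G$, the image of $\tilde{Z}$ in $\cX^+_G$ is the single point $\tilde{z}_G$ over $b$, which forces $G_N = 1$, hence $N = W_N$ is unipotent. Setting $W_0 := W_N$ and $U_0 := U_N$, I would verify the $\MT(b)$-stability of $W_0$ in two steps: the subgroup $U_0 \subset U$ is automatically stable because $P$ itself acts on $U$ by a scalar by Proposition \ref{properties of irreducible mixed Shimura datum}(2); and for $V_0 := W_0/U_0$ one uses that $[\pi_{P/U}](Z) \subset A$ is an analytic subvariety whose lift $\pi_{P/U}(\tilde{Z}) \subset V(\R)$ is a translate of $V_0(\R)$, so the complex structure on $A$ (coming from $h_b$) forces $V_0(\R)$ to be a complex subspace of $V(\R) \cong V(\C)/F^0_b V(\C)$, which is equivalent to $\MT(b)$-stability of $V_0$. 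The condition $\tilde{z}_V \in (N_W(W_0)/U)(\R)$ then follows because $\tilde{z} \colon \S_\C \to Q_\C$ and $Q$ normalizes $N = W_0$.

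For the converse, given a $\MT(b)$-subgroup $W_0 \subset W$ together with $\tilde{z}$ satisfying the listed constraints, I would build the diagram explicitly: put $G^\prime := \MT(b)$, $W^\prime := N_W(W_0)$, and $Q := W^\prime \rtimes G^\prime$. The required connected mixed Shimura datum $(Q, \cY^+) \hookrightarrow (P, \cX^+)$ containing $\tilde{z}$ exists because $\tilde{z}_G \in \cX^+_{G^\prime}$ by definition of $\MT(b)$ and $\tilde{z}_V$ lies in the $V$-part of $\cY^+$ by hypothesis. Since $W_0 \lhd Q$ by construction, the diagram $(Q, \cY^+)/W_0 \twoheadleftarrow (Q, \cY^+) \hookrightarrow (P, \cX^+)$ produces $\unif(W_0(\R)U_0(\C)\tilde{z})$ as a weakly special subvariety of $S$ (equivalently of $E$). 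For the ``in particular'' case, Lemma \ref{group automatically commutative} gives $W$ commutative, so $N_W(W_0) = W$ and the constraint on $\tilde{z}_V$ becomes vacuous, while $W_0(\R)U_0(\C)$ is a closed complex Lie subgroup of $W(\R)U(\C)$ (the complex subspace $V_0(\R) \oplus U_0(\C)$ with respect to $h_b$) whose image in $E$ is a closed algebraic subgroup of the semi-abelian variety.

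The main technical obstacle I anticipate is the passage in the forward direction from the analytic structure of $Z$ to genuine $\MT(b)$-stability of $V_0$: one must argue that $V_0(\R) \subset V(\R)$ is preserved by the Hodge complex structure attached to $h_b$ rather than being merely a real subspace, and this requires invoking the analyticity of $[\pi_{P/U}](Z) \subset A$ in an essential way. Once this identification is in place, everything else is a direct construction in the category of connected mixed Shimura data.
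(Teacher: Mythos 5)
Your proposal follows the paper's proof essentially step for step: the forward direction reduces to $G_N=1$ so that $N$ is unipotent, gets $\MT(b)$-stability of $U_N$ from the scalar action of Proposition \ref{properties of irreducible mixed Shimura datum}(2) and of $V_N$ from the fact that the image in $A$ forces $V_N(\R)$ to be a complex subspace of $V(\C)/F^0_bV(\C)$, and the converse builds exactly the same datum $Q=N_W(W_0)\rtimes\MT(b)$ with the quotient morphism by $W_0$. The only cosmetic difference is that you spell out why $G_N=1$ where the paper cites Pink, so this is the same argument and is correct.
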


\begin{cor}\label{complex subgroup for (semi-)abelian}
\begin{enumerate}
\item Weakly special subvarieties of a complex abelian variety are precisely the translates of abelian subvarieties;
\item Weakly special subvarieties of an algebraic torus over $\C$ are precisely the translates of subtori.
\end{enumerate}
\begin{proof}
This is a direct consequence of Proposition \ref{weakly special for semi-abelian}.
\end{proof}
\end{cor}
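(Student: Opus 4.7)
The plan is to read off both statements directly from Proposition \ref{weakly special for semi-abelian} by identifying, in each of the two pure cases, which subgroups of $E$ can arise as $\unif(W_0(\R)U_0(\C))$.

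For (1), a complex abelian variety $A$ fits into the framework of the proposition with $U=0$ and $W=V$, so that $A=V(\R)/\Gamma_V$ equipped with the complex structure coming from a polarizable rational Hodge structure of type $\{(-1,0),(0,-1)\}$ on $V$. Proposition \ref{weakly special for semi-abelian} then presents weakly special subvarieties as translates of $\unif(W_0(\R))$ for $\MT(b)$-stable $\Q$-subspaces $W_0\subset V$. I would invoke the standard dictionary between $\Q$-sub-Hodge-structures of $V$ and abelian subvarieties of $A$: by the definition of $\MT(b)$ as the smallest $\Q$-subgroup of $\GL(V)$ through which the Hodge cocharacter factors, a $\Q$-subspace is $\MT(b)$-stable if and only if it is a sub-Hodge-structure; such a sub-Hodge-structure inherits a polarization by restriction, so $\unif(W_0(\R))=W_0(\R)/(W_0(\R)\cap\Gamma_V)$ is an abelian subvariety of $A$. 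Conversely, every abelian subvariety $B\subset A$ yields the sub-$\Q$-Hodge-structure $W_0:=H_1(B,\Q)\subset H_1(A,\Q)=V$, which is automatically $\MT(b)$-stable, so we recover $B$ as a weakly special subvariety.

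For (2), an algebraic torus over $\C$ fits the framework with $V=0$ and $W=U$, so $T=U(\C)/\Gamma_U$ and the Hodge structure on $U$ is of pure Tate type $(-1,-1)$. Proposition \ref{weakly special for semi-abelian} gives weakly special subvarieties as translates of $\unif(U_0(\C))$ for $\MT(b)$-stable $\Q$-subspaces $U_0\subset U$. Since the Hodge structure on $U$ is Tate, $\MT(b)$ acts on $U$ by a scalar through $\G_m$, so every $\Q$-subspace is automatically stable; the map $U_0\mapsto\unif(U_0(\C))=U_0(\C)/(U_0(\C)\cap\Gamma_U)$ then identifies these with precisely the sub-tori of $T$.

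The only step beyond quoting Proposition \ref{weakly special for semi-abelian} is the Hodge-theoretic identification in case (1) of $\MT(b)$-stable $\Q$-subspaces of $V$ with abelian subvarieties of $A$; this is where the polarizability assumption built into the Shimura datum is used, and it is the only place where any care is required.
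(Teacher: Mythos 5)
Your proposal is correct and follows the same route as the paper, which simply reads the corollary off from Proposition \ref{weakly special for semi-abelian}; your Hodge-theoretic identification of $\MT(b)$-stable $\Q$-subspaces of $V$ with abelian subvarieties (and the observation that every $\Q$-subspace of $U$ is stable in the Tate case) correctly fills in the details left implicit there. Note only that the ``In particular'' clause of Proposition \ref{weakly special for semi-abelian} already describes weakly special subvarieties of a commutative $E$ as translates of algebraic subgroups, so in both cases one could conclude even more quickly by observing that connected algebraic subgroups of an abelian variety (resp.\ of a torus) are exactly the abelian subvarieties (resp.\ subtori).
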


\subsection{Smallest weakly special subvariety containing a given subvariety of an abelian variety or an algebraic torus over $\C$}

\begin{prop}\label{smallest weakly special containing subvariety semiabelian}
\begin{enumerate}
\item Let $X$ be a complex abelian variety and let $Z$ be a closed irreducible subvariety of $X$. Denote by
\[
\tilde{X}=\pi_1(X,z)\otimes_\Z\R=H_1(X,\R)\cong\C^n\xrightarrow{u}X
\]
the universal cover of $X$ ($z\in Z^{\sm}$), then the smallest weakly special subvariety of $X$ containing $Z$ is a translate of $u(\pi_1(Z^\sm,z)\otimes\R)$.
\item Let $X$ be an algebraic torus over $\C$ and let $Z$ be a closed irreducible subvariety of $X$. Denote by
\[
\tilde{X}=\pi_1(X,z)\otimes_\Z\C=H_1(X,\C)\cong\C^n\xrightarrow{u}X
\]
the universal cover of $X$ ($z\in Z^{\sm}$), then the smallest weakly special subvariety of $X$ containing $Z$ is a translate of $u(\pi_1(Z^\sm,z)\otimes\C)$.
\end{enumerate}
\begin{proof}
\begin{enumerate}
\item If $X$ is a complex abelian variety, then the result is due to Ullmo-Yafaev. Their proof of \cite[Proposition 5.1]{UllmoA-characterisat} has in fact revealed this property. Here we restate the proof with more details.

Let $Z^\de\xrightarrow{s}Z$ be a desingularization of $Z^\de$ s.t. there exists a Zariski open subset $Z^\de_0$ of $Z^\de$ s.t. $Z^\de_0\xrightarrow[s]{\sim}Z^\sm$. By the commutative diagram
\[
\begin{diagram}
\pi_1(Z^\de_0,z) &\rTo^{\sim} &\pi_1(Z^\sm,z) \\
\dOnto & &\dTo &\rdTo\\
\pi_1(Z^\de,z) &\rTo &\pi_1(Z,z) &\rTo &\pi_1(X,z) \\
\end{diagram},
\]
where $z\in Z^\sm$ (the surjectivity on the left is due to \cite[2.10.1]{KollarShafarevich-map}), we know that the image of $\pi_1(Z^\de,z)$ and the image of $\pi_1(Z^\sm,z)$ in $\pi_1(X,z)$ are the same.

Let $\Alb(Z^\de)$ be the Albanese variety of $Z^\de$ normalized by $z$, then the map $\tau\colon Z^\de\rightarrow Z\rightarrow X$ factors uniquely through the Albanese morphism(\cite[Theorem 12.15]{VoisinHodge-theory-an}):
\[
\begin{diagram}
Z^\de &\rOnto &Z &\rInto &X \\
&\rdTo_{\alb} & &\ruDashto_\Gamma & \\
& & \Alb(Z^\de)
\end{diagram}
\]
Let $A:=\Gamma(\Alb(Z^\de))$, then it is the smallest weakly special subvariety (i.e. the translate of an abelian subvariety) of $X$ containing $Z$ since $\alb(Z^\de)$ generates $\Alb(Z^\de)$ (\cite[Lemma 12.11]{VoisinHodge-theory-an}).

It suffices to prove that the image of $\pi_1(Z^\de,z)$ in $\pi_1(X,z)\cong H_1(X,\Z)$ is of finite index in $H_1(A,\Z)$. This is true since the image of $\pi_1(Z^\de,z)$ in $H_1(X,\Z)$ contains
\[
(\Gamma\circ \alb)_*H_1(Z^\de,\Z)\cong\Gamma_*H_1(\Alb(Z^\de),\Z)\cong\Gamma_*\pi_1(\Alb(Z^\de))
\]
(the first isomorphism is given by the definition of Albanese varieties via Hodge theory, see e.g. the proof of \cite[Lemma 12.11]{VoisinHodge-theory-an}), which is of finite index in $\pi_1(A,z)\cong H_1(A,\Z)$ by \cite[2.10.2]{KollarShafarevich-map}.

\item If $X$ is an algebraic torus over $\C$, then we can first of all translate $Z$ by a point s.t. the translate contains the origin of $X$. Now we are done if we can prove that the smallest subtorus containing this translate of $Z$ is $u(\pi_1(Z^\sm,z)\otimes_\Z\C)$.

Suppose $T\cong(\C^*)^m$ is the smallest sub-torus of $X$ containing $Z$ with $j\colon Z^\sm\hookrightarrow T$ the inclusion. We are done if we can prove $[\pi_1(T,z):j_*\pi_1(Z^\sm,z)]<\infty$. If not, then
\begin{equation}\label{nul image of pi1(Y)}
\begin{diagram}
j_*\pi_1(Z^\sm,z) &~\subset~ &\ker(\Z^m\rOnto^{\rho}\Z)
\end{diagram}
\end{equation}
for some map $\rho$.
Since the covariant functor $T\mapsto X_*(T)$ ($X_*(T)$ is the co-character group of $T$) is an equivalence between the category \{algebraic tori over $\C$ and their morphisms as algebraic groups\} and the category \{free $\Z$-modules of finite rank\}, the map $\rho$ corresponds to a surjective map (with connected kernel) of tori $p\colon T\twoheadrightarrow T^\prime$. The composition of the maps $Z^\sm\xrightarrow{j}T\xrightarrow{p}T^\prime=\G_{m,\C}$ should be dominant by the choice of $T$. But then we have
\[
[\pi_1(T^\prime,p(z)):(p\circ j)_*\pi_1(Z^\sm,z)]<\infty
\]
(\cite[2.10.2]{KollarShafarevich-map}), which contradicts \eqref{nul image of pi1(Y)} by the following lemma.

\begin{lemma} For any $\C$-split torus $T\cong(\C^*)^n$, we have a canonical isomorphism
\[
X_*(T)\xrightarrow[\sim]{\psi_T}\pi_1(T,1).
\]
Here ``canonical" means that for any morphism (between algebraic groups) $f\colon T\rightarrow T^\prime$ between two such $\C$-split tori, the following diagram commutes:
\[
\begin{diagram}
X_*(T) &\rTo^{\psi_T}_{\sim} &\pi_1(T,1) \\
\dTo^{X_*(f)} & &\dTo^{f_*} \\
X_*(T^\prime) &\rTo^{\psi_{T^\prime}}_{\sim} &\pi_1(T^\prime,1)
\end{diagram}
\]
\begin{proof} Denote by $U_1:=\{z\in\C~|~|z|=1\}$ and $i\colon U_1\hookrightarrow\C^*$ the inclusion. Then the map $\psi_T$ is defined by
\[
\begin{diagram}
X_*(T) &\rTo^{\psi_T} &\pi_1(T,1) \\
\nu &\mapsto &[\nu\circ i]
\end{diagram}.
\]
This is clearly a group homomorphism. It is surjective since a representative of the generators of $\pi_1(T,1)$ is given by the $n$ coordinate embeddings $U_1\hookrightarrow\C^*\hookrightarrow T=(\C^*)^n$. $\psi_T$ is injective since $X_*(T)\cong \pi_1(T,1)\cong\Z^n$ is torsion-free. The rest of the lemma is immediate by the construction of $\psi_T$.
\end{proof}
\end{lemma}
\end{enumerate}
\end{proof}
\end{prop}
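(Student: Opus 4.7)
The plan is to treat the two parts separately, using Corollary \ref{complex subgroup for (semi-)abelian} which tells us that weakly special subvarieties of a complex abelian variety are translates of abelian subvarieties, and weakly special subvarieties of a $\C$-torus are translates of subtori. In both cases I would first translate so that $z \in Z^{\sm}$ becomes the identity of $X$, reducing the problem to identifying the smallest algebraic subgroup of $X$ containing $Z$ through the origin, and then identifying its universal cover with $u(\pi_1(Z^{\sm},z) \otimes_\Z R)$ for $R = \R$ or $R = \C$.

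For part (1), my strategy is to realize the smallest abelian subvariety $A \subset X$ containing $Z$ via the Albanese variety. Choose a desingularization $s\colon Z^{\de} \to Z$ that restricts to an isomorphism over a Zariski open subset $Z^{\de}_0 \xrightarrow{\sim} Z^{\sm}$. By \cite[2.10.1]{KollarShafarevich-map}, the natural map $\pi_1(Z^{\de}_0,z) \twoheadrightarrow \pi_1(Z^{\de},z)$ is surjective, so the image of $\pi_1(Z^{\de},z)$ in $\pi_1(X,z)$ coincides with the image of $\pi_1(Z^{\sm},z)$. The universal property of the Albanese variety factors $Z^{\de} \to X$ as $\alb\colon Z^{\de} \to \Alb(Z^{\de})$ followed by a translate $\Gamma\colon \Alb(Z^{\de}) \to X$; since $\alb(Z^{\de})$ generates $\Alb(Z^{\de})$, the image $A := \Gamma(\Alb(Z^{\de}))$ is the smallest translate of an abelian subvariety containing $Z$. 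It then remains to check that $u(\pi_1(Z^{\sm},z) \otimes \R)$ coincides with the universal cover of $A$; equivalently, that the image of $\pi_1(Z^{\de},z)$ in $H_1(X,\Z)$ has finite index in $H_1(A,\Z)$. This will follow because the Hodge-theoretic construction of $\Alb$ gives a natural isomorphism $H_1(Z^{\de},\Z) \xrightarrow{\sim} H_1(\Alb(Z^{\de}),\Z)$, so $(\Gamma \circ \alb)_* H_1(Z^{\de},\Z) \cong \Gamma_* H_1(\Alb(Z^{\de}),\Z)$, which is of finite index in $H_1(A,\Z)$ by applying \cite[2.10.2]{KollarShafarevich-map} again.

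For part (2), the same translation argument reduces the question to showing that the smallest subtorus $T \subset X$ containing $Z$ has universal cover $u(\pi_1(Z^{\sm},z) \otimes_\Z \C)$. Here I would argue by contradiction: suppose $[\pi_1(T,z) : j_* \pi_1(Z^{\sm},z)] = \infty$, where $j\colon Z^{\sm} \hookrightarrow T$. Then $j_*\pi_1(Z^{\sm},z)$ sits inside the kernel of some surjection $\rho\colon \Z^m \twoheadrightarrow \Z$. Using the equivalence of categories between $\C$-split tori and free $\Z$-modules of finite rank via the cocharacter functor $X_*$, the map $\rho$ corresponds to a surjective homomorphism $p\colon T \twoheadrightarrow \G_{m,\C}$ with connected kernel. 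Minimality of $T$ forces the composition $p \circ j\colon Z^{\sm} \to \G_{m,\C}$ to be dominant, and then \cite[2.10.2]{KollarShafarevich-map} gives $[(p \circ j)_* \pi_1(Z^{\sm},z) : \pi_1(\G_{m,\C})] < \infty$, contradicting $j_*\pi_1(Z^{\sm},z) \subset \ker(\rho)$.

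The main obstacle I expect is ensuring the compatibility between the algebraic and topological sides in the torus case, specifically that the equivalence $X_*(T) \cong \pi_1(T,1)$ is functorial enough to translate cocharacter-level surjections to fundamental-group surjections. This will require a small auxiliary lemma constructing a natural isomorphism $\psi_T\colon X_*(T) \xrightarrow{\sim} \pi_1(T,1)$ sending a cocharacter $\nu$ to the loop $\nu \circ i$ for $i\colon U_1 \hookrightarrow \C^*$, and checking its naturality under morphisms of tori; the injectivity follows from torsion-freeness and the surjectivity from the $n$ coordinate embeddings generating $\pi_1((\C^*)^n,1)$.
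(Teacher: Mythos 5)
Your proposal is correct and follows essentially the same route as the paper: the Albanese factorization with the two Koll\'{a}r index/surjectivity results for the abelian case, and the contradiction via a cocharacter-induced surjection $T\twoheadrightarrow\G_{m,\C}$ together with the natural isomorphism $X_*(T)\cong\pi_1(T,1)$ for the torus case. No gaps beyond the auxiliary naturality lemma you already flag, which is exactly the lemma the paper proves.
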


\subsection{Ax-Lindemann-Weierstra{\ss} for the unipotent part}\label{Ax-Lindemann for the unipotent part}
By abuse of notation we denote the uniformization of $E$ by $\unif\colon W(\R)U(\C)=W(\C)/F^0_bW(\C)\rightarrow E$. It is then the restriction of $\unif\colon\cX^+\rightarrow S$.

\begin{thm}\label{Ax-Lindemann for complex semi-abelian varieties}
Let $Y$ be a closed irreducible subvariety of $E$ and let $\tilde{Z}$ be a maximal irreducible algebraic subvariety which is contained in $\unif^{-1}(Y)$. Then $\tilde{Z}$ is weakly special.
\begin{proof} If $E$ is an algebraic torus over $\C$, this is a consequence of the Ax-Schanuel theorem \cite[Corollary 3.6]{MartinThesis}. If $E$ is an abelian variety, this is Pila-Zannier \cite[pp9, Remark 1]{PilaRational-points}. A proof using volume calculation and points counting method for these two cases can be found in the Appendix. The general case will be proved in $\mathsection$\ref{Ax-Lindemann Part 3: The unipotent part}.
\end{proof}
\end{thm}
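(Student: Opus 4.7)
The plan is to combine the established abelian and torus cases of the theorem using the extension structure $1 \to T \to E \to A \to 1$ arising from the short exact sequence $0 \to U \to W \to V \to 0$. Let $\pi \colon E \to A$ denote the projection and $\pi_V \colon W(\R)U(\C) \to V(\R)$ its lift to universal covers; using Proposition \ref{properties of irreducible mixed Shimura datum}(1) I may assume $P = \MT(\cX^+)$, so by Proposition \ref{properties of irreducible mixed Shimura datum}(2) every $\Q$-subspace of $U$ is automatically $\MT(b)$-stable. This reduces the problem to producing a $\Q$-subgroup $W_0 \subseteq W$ with $U_0 := W_0 \cap U$, $V_0 := W_0/U_0$ such that $\tilde{Z}$ is a coset of $W_0(\R)U_0(\C)$ and such that $V_0$ is $\MT(b)$-stable, so that Proposition \ref{weakly special for semi-abelian} applies.

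The first step is to study the projection $\tilde{Z}_V := \pi_V(\tilde{Z})$. By Lemma \ref{algebraicity functorial using Borel embedding}, its closure is irreducible algebraic in $V(\R)$ and is contained in $\pi_V^{-1}(Y_A)$ where $Y_A := \overline{\pi(Y)} \subset A$. Enlarging $\tilde{Z}_V$ to a maximal such algebraic subset and applying the abelian case of the theorem, it is weakly special, hence a coset $v_0 + V_0(\R)$ for some $\Q$-subspace $V_0 \subseteq V$. Symmetrically, for a generic $v \in \tilde{Z}_V$, the fiber $\tilde{Z}^{(v)} := \pi_V^{-1}(v) \cap \tilde{Z}$ is irreducible algebraic in the ambient affine space $U(\C) \cdot w_v$ and maps into an irreducible subvariety of the torus fiber $T_{\pi(v)}$; after enlargement and applying the torus case it lies in a coset of $U_0(\C)$ for some $\Q$-subspace $U_0 \subseteq U$. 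A continuity argument, using that the fibers of $\tilde{Z}$ over $\tilde{Z}_V$ vary complex-analytically in $v$, forces this $U_0$ to be independent of $v$.

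The main obstacle will be the final synthesis. One must verify that the pair $(V_0, U_0)$ actually defines a $\Q$-subgroup $W_0 \subseteq W$, i.e.\ that $\Psi(V_0, V_0) \subseteq U_0$ via the explicit group law of $\mathsection$\ref{structure of the underlying group}, and that $V_0$ is $\MT(b)$-stable so that $W_0$ is. The condition on $\Psi$ should be forced by the algebraicity of $\tilde{Z}$ itself: the commutator $\tfrac{1}{2}\Psi$ measures the failure of two $V_0$-shifts to commute inside $W$, so constancy of the fiber subgroup $U_0$ as $v$ varies over $\tilde{Z}_V$ together with the algebraicity of $\tilde{Z}$ pins down $\Psi(V_0, V_0) \subseteq U_0$. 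For the $\MT(b)$-stability of $V_0$, I would invoke Ax's theorem of log type (Theorem \ref{smallest weakly special containing a subvariety}) applied to $Y_A \subset A$, combined with admissibility of the variations on $S$ (Theorem \ref{all variations of mixed Shimura varieties are admissible}) and Andr\'e's normality theorem (Theorem \ref{Y Andre monodromy normal}), to identify the smallest weakly special subvariety containing $Y_A$ through its connected algebraic monodromy group. Once $W_0$ is in hand, maximality of $\tilde{Z}$ forces $\tilde{Z} = W_0(\R)U_0(\C)\tilde{z}$, which Proposition \ref{weakly special for semi-abelian} identifies as weakly special, completing the proof.
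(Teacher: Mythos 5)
Your overall skeleton (project to $V$, analyze the torus fibers, synthesize via the group law of $W$) matches the paper's \S\ref{Ax-Lindemann Part 3: The unipotent part}, but two essential steps are missing, and both are precisely the points the paper identifies as the real difficulties. First, the projection step does not work as stated: enlarging $\overline{\tilde{Z}_V}$ to a \emph{maximal} irreducible algebraic subset of $\unif_V^{-1}(Y_A)$ and applying the abelian case only shows that $\tilde{Z}_V$ is \emph{contained in} a coset $v_0+V_1(\R)$; it gives no lower bound, and there is no a priori reason why $\overline{\tilde{Z}_V}$ should itself be maximal (the paper flags exactly this obstruction before Lemma \ref{projection of maximal still maximal}). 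If $\overline{\tilde{Z}_V}$ is a proper algebraic subset of that coset, your candidate $V_0$ is too large and $\tilde{Z}$ is not a $W_0(\R)U_0(\C)$-coset at all. The paper closes this gap in \textit{Step I} of \S\ref{Ax-Lindemann Part 3: The unipotent part} by a Pila--Wilkie counting argument (in its family/block version) applied to one-dimensional $\Q$-quotients $p'\colon V\to V'$ with $p'(V^\dagger)=V'$, which shows that the stabilizer $W_0$ of $\tilde{Z}$ surjects onto the minimal $V^\dagger$ with $\tilde{Z}_V\subset V^\dagger(\R)+\tilde{z}_V$; the two inclusions $V_0(\R)+\tilde{z}_V\subset\tilde{Z}_V\subset V^\dagger(\R)+\tilde{z}_V$ then collapse to equalities. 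Some such counting (or transcendence) input is unavoidable here; it cannot be extracted from the abelian case applied to an enlargement.

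Second, even granting that $\tilde{Z}=W_0(\R)U_0(\C)\tilde{z}$ for a $\MT(b)$-subgroup $W_0$, Proposition \ref{weakly special for semi-abelian} requires in addition that the base point satisfy $\tilde{z}_V\in(N_W(W_0)/U)(\R)$, i.e.\ (after quotienting by $U_0$) that $\Psi(V_0(\R),\tilde{z}_V)=0$. Your synthesis only addresses $\Psi(V_0,V_0)\subseteq U_0$, which makes $W_0$ a subgroup but says nothing about the normalizer condition on $\tilde{z}$; a coset of a genuine subgroup through a bad base point is not weakly special. The paper's \textit{Step IV} proves this condition by first showing $\Psi(\Gamma_V,\tilde{z}_V)\subset U(\Q)$ (an irrational coordinate would, by density of irrational rotations, force a whole $U_1(\C)$-coset into $\overline{Z_{z_V}}$, contradicting the maximality of the fiber $\tilde{Z}_v$ established in \eqref{fiber still maximal algebraic}), and then constructing a holomorphic section of $E|_{A_0+z_V}\to A_0+z_V$ whose associated map $f\colon A_0+z_V\to T$ is holomorphic from an abelian variety to a torus, hence constant. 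This last trick is the genuinely new ingredient of the mixed case and is absent from your proposal.
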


\section{The smallest weakly special subvariety containing a given subvariety}\label{The smallest weakly special subvariety containing a given subvariety}
In this section, our goal is to prove a theorem (Theorem \ref{smallest weakly special containing a subvariety}) which (in some sense) generalizes \cite[3.6, 3.7]{MoonenLinearity-prope}. In particular, we get a criterion of weak specialness as a corollary (Corollary \ref{weakly special iff algebraic on the top and on the bottom}) which generalizes \cite[Theorem 4.1]{UllmoA-characterisat}.
Before the proof, let us do some technical preparation at first. 

Let $S$ be a connected mixed Shimura variety associated with the connected mixed Shimura datum $(P,\cX^+)$ and let $\unif\colon\cX^+\rightarrow S=\Gamma\backslash\cX^+$ be the uniformization. We may assume $P=\MT(\cX^+)$ by Proposition \ref{properties of irreducible mixed Shimura datum}. There exists a $\Gamma^\prime\leqslant\Gamma$ of finite index s.t. $\Gamma^\prime$ is neat. Let $S^\prime:=\Gamma^\prime\backslash\cX^+$ and let  $\unif^\prime\colon \cX^+\rightarrow S^\prime$ be its uniformization. Choose any faithful $\Q$-representation $\xi\colon P\rightarrow\GL(M)$ of $P$, then Theorem \ref{all variations of mixed Shimura varieties are admissible} claims that $\xi$ (together with a choice of a $\Gamma^\prime$-invariant lattice of $M$) gives rise to an admissible variation of mixed Hodge structure on $S^\prime$. The generic Mumford-Tate group of this variation is $P$.

Suppose that $Y$ is a closed irreducible subvariety of $S$. Let $Y^\prime$ be an irreducible component of $p^{-1}(Y)$ under $p\colon S^\prime=\Gamma^\prime\backslash\cX^+\rightarrow S=\Gamma\backslash\cX^+$, then $Y^\prime$ is a closed irreducible subvariety of $S^\prime$ which maps surjectively to $Y$ under $p$. The variation we constructed above can be restricted to $Y^{\prime\sm}$, and this restriction is still admissible by Lemma \ref{admissible stable under restriction}. The \textit{connected algebraic monodromy group associated with $Y^\sm$} is defined to be the connected algebraic monodromy group of the restriction of the VMHS defined in the last paragraph to $Y^{\prime\sm}$, i.e. the neutral component of the Zariski closure of the image of $\pi_1(Y^{\prime\sm},y^\prime)\rightarrow\pi_1(S^\prime,y^\prime)\rightarrow P$.

Let us briefly prove that the connected algebraic monodromy group associated with $Y^\sm$ is well-defined. Suppose that we have another covering $S^{\prime\prime}\xrightarrow{p^\prime}S^\prime$ with $S^{\prime\prime}$ smooth. Let $Y^{\prime\prime}$ be an irreducible component of $p^{\prime-1}(Y^\prime)$. Let $Y^{\prime\prime\sm}_0:=Y^{\prime\prime\sm}\cap p^{\prime-1}(Y^{\prime\sm})$, then by the commutative diagram
\[
\begin{diagram}
\pi_1(Y^{\prime\prime\sm}_0,y^{\prime\prime})=\pi_1(Y^{\prime\prime\sm},y^{\prime\prime}) &\rTo &\pi_1(S^{\prime\prime},y^{\prime\prime}) &\rTo &P \\
\dTo & &\dTo & &\dTo^{=} \\
\pi_1(Y^{\prime\sm},y^\prime) &\rTo &\pi_1(S^\prime,y^\prime) &\rTo &P
\end{diagram},
\]
where the equality in the top-left cornor is given by \cite[2.10.1]{KollarShafarevich-map} and the morphism on the left is of finite index by \cite[2.10.2]{KollarShafarevich-map}, the neutral components of the Zariski closures of the images of $\pi_1(Y^{\prime\prime\sm},y^{\prime\prime})$ and $\pi_1(Y^{\prime\sm},y^\prime)$ in $P$ coincide.

\begin{thm}\label{smallest weakly special containing a subvariety}
Let $S$ be a connected mixed Shimura variety associated with the connected mixed Shimura datum $(P,\cX^+)$ and let $\unif\colon\cX^+\rightarrow S=\Gamma\backslash\cX^+$ be the uniformization. Let $Y$ be a closed irreducible subvariety of $S$ and
\begin{itemize}
\item let $\tilde{Y}$ be an irreducible component of $\unif^{-1}(Y)$;
\item take $\tilde{y}_0\in\tilde{Y}$;
\item let $N$ be the connected algebraic monodromy group associated with $Y^\sm$.
\end{itemize}
Then
\begin{enumerate}
\item The set $\tilde{F}:=N(\R)^+U_N(\C)\tilde{y}_0$, where $U_N:=U\cap N$, is a weakly special subset of $\cX^+$ (or equivalently, $F:=\unif(\tilde{F})$ is a weakly special subvariety of $S$). Moreover $N$ is the largest subgroup of $Q$ s.t. $N(\R)^+U_N(\C)$ stabilizes $\tilde{F}$, where $(Q,\cY^+)$ is the smallest connected sub-mixed Shimura datum with $\tilde{F}\subset\cY^+$;
\item The Zariski closure of $\tilde{Y}$ in $\cX^+$ (which means the complex analytic irreducible component of the intersection of the Zariski closure of $\tilde{Y}$ in $\cX^\vee$ and $\cX^+$ which contains $\tilde{Y}$) is $\tilde{F}$;
\item The smallest weakly special subset containing $\tilde{Y}$ is $\tilde{F}$ and $F$ is the smallest weakly special subvariety of $S$ containing $Y$.
\end{enumerate}
\begin{proof}
\begin{enumerate}
\item 
Let $S_Y$ be the smallest special subvariety containing $Y$. Such an $S_Y$ exists since the irreducible components of intersections of special subvarieties are special (which can easily be shown by means of generic Mumford-Tate group). By definition of special subvarieties, there exists a connected mixed Shimura subdatum $(Q,\cY^+)$ s.t. $S_Y$ is the image of $\Gamma_Q\backslash\cY^+$ in $S$ where $\Gamma_Q:=\Gamma\cap Q(\Q)$. We may furthermore assume $(Q,\cY^+)$ to have generic Mumford-Tate group by Proposition \ref{properties of irreducible mixed Shimura datum}.

Let $N$ be the connected algebraic monodromy group associated with $Y^\sm$, then $N\lhd Q$ (and also $N\lhd Q^\der$) by the discussion at the beginning of this section (which claims that the variation we use to define $N$ is admissible), Remark \ref{stabilizer of Hodge generic point} (which claims that the generic Mumford-Tate group of this variation is $Q$) and Theorem \ref{Y Andre monodromy normal}.

Then $\tilde{F}$ is a weakly special subset of $\cY^+$ since it is the inverse image of the point $\varphi(\tilde{y}_0)$ under the Shimura morphism $(Q,\cY^+)\xrightarrow{\varphi}(Q,\cY^+)/N$. Then $\tilde{F}$ is also a weakly special subset of $\cX^+$ by definition. By the choice of $(Q,\cY^+)$, $\tilde{F}$ is Hodge generic in $\cY^+$, and hence $\varphi(\tilde{F})$ is a Hodge generic point in $\cY^{\prime+}$. Now $\stab_{Q^\der(\Q)}(\tilde{F})^\circ=N(\Q)$ by Remark \ref{stabilizer of Hodge generic point}.

\item We prove that $\tilde{F}$ is the Zariski closure of $\tilde{Y}$ in $\cX^+$. We first show that it suffices to prove $\tilde{Y}\subset\tilde{F}$. 
Let $\bar{\tilde{Y}}$ be the Zariski closure of $\tilde{Y}$ in $\cX^+$, then $\bar{\tilde{Y}}\subset\tilde{F}$ since $\tilde{Y}\subset\tilde{F}$ and $\tilde{F}$ is algebraic (Lemma \ref{weakly special subsets are irreducible algebraic}).
On the other hand, $\Gamma_{Y^\sm}:=\im(\pi_1(Y^\sm)\rightarrow\pi_1(S)\rightarrow P)$ stabilizes $\tilde{Y}$, so $\Gamma_{Y^\sm}\tilde{y}_0\subset\tilde{Y}$. The group $\Gamma_{Y^\sm}$ is Zariski dense in $N$, and hence Zariski dense in $N_{\C}$. As $\tilde{F}=N(\C)\tilde{y}_0\cap\cX^+$, $\Gamma_{Y^\sm}\tilde{y}_0$ is Zariksi dense in $\tilde{F}$. Hence we have $\tilde{F}\subset\bar{\tilde{Y}}$. As a result, $\tilde{F}=\bar{\tilde{Y}}$.

Next we prove that $\tilde{Y}\subset\tilde{F}$ (or equivalently, $Y\subset F$).

The fact that $\tilde{Y}\subset\tilde{F}$ has nothing to do with the level structure. Hence we may assume $\Gamma=\Gamma_W\rtimes\Gamma_G$ with $\Gamma_W\subset W(\Z)$, $\Gamma_U:=\Gamma_W\cap U\subset U(\Z)$, $\Gamma_V:=\Gamma_W/\Gamma_U\subset V(\Z)$ and $\Gamma_G\subset G(\Z)$ small enough s.t. they are all neat and s.t. $\Gamma\subset P^\der(\Q)$ (Remark \ref{sufficiently small congruence subgroup cotained in derivative}(2)). We write $\Gamma_{P/U}:=\Gamma/\Gamma_U$.

We may replace $(P,\cX^+)$ by $(Q,\cY^+)$ and $S$ by $S_Y$ (same notation as in (1)) since $\tilde{Y}$, $\tilde{F}\subset\cY^+$ and $Y$, $F\subset S_Y$. In other words, we may assume that $Y$ is Hodge generic in $S$ and $(P,\cX^+)$ is irreducible.

Consider the following diagram:
\[
\begin{diagram}
\cX^+ &\rTo^{\pi_{P/U}} &\cX^+_{P/U} &\rTo^{\pi_G} &\cX^+_G \\
\dTo^{pr} & &\dTo^{\unif_{P/U}} & &\dTo^{\unif_G} \\
S=\Gamma\backslash\cX^+ &\rTo^{[\pi_{P/U}]} &S_{P/U}:=\Gamma_{P/U}\backslash\cX^+_{P/U} &\rTo^{[\pi_G]} &S_G:=\Gamma_G\backslash\cX^+_G
\end{diagram}
\]

Denote by $\pi$ and $[\pi]$ the composites of the maps in the two lines respectively. Denote by $\tilde{Y}_G:=\pi(\tilde{Y})$, $Y_G:=[\pi](Y)$ and $\tilde{Y}_{P/U}:=\pi_{P/U}(\tilde{Y})$, $Y_{P/U}:=[\pi_{P/U}](Y)$; $\tilde{F}_G:=\pi(\tilde{F})$, $F_G:=[\pi](F)$ and $\tilde{F}_{P/U}:=\pi_{P/U}(\tilde{F})$, $F_{P/U}:=[\pi_{P/U}](F)$. Denote by $\tilde{y}_{0,P/U}:=\pi_{P/U}(\tilde{y}_0)$ and $\tilde{y}_{0,G}:=\pi(\tilde{y}_0)$.

Now to make the proof more clear, we divide it into several steps.

\textit{\underline{Step I.}} Prove that $\tilde{Y}_G\subset\tilde{F}_G$.

We begin the proof with the following lemma:

\begin{lemma}\label{monodromy group for projection}
In the context above, the connected algebraic monodromy group associated with $\bar{Y_G}^\sm$ (resp. $\bar{Y_{P/U}}^\sm$) is $G_N$ (resp. $N/U_N$ where $U_N:=U\cap N$).
\begin{proof} We only prove the statement for $\bar{Y_G}^\sm$. The proof for $\bar{Y_{P/U}}^\sm$ is similar. Take $Y^\sm_0:=Y^\sm\cap\pi^{-1}(Y_G^\sm)$, then we have the commutative diagram below:
\[
\begin{diagram}
\pi_1(Y^\sm_0,y) &\rTo &\pi_1(Y_G^\sm,y_G) &\rOnto &\pi_1(\bar{Y_G}^\sm,\zeta_G) \\
\dOnto & & &\rdTo &\dTo \\
\pi_1(Y^\sm,y) &\rTo &\pi_1(S,y) &\rTo &\pi_1(S_G,y_G) \\
& &\dTo & &\dTo \\
& & P &\rTo &G
\end{diagram}.
\]
Here, the morphism on the left and the right morphism on the top are surjective since $\codim_{Y^\sm}(Y^\sm-Y^\sm_0)\geqslant1$ and $\codim_{\bar{Y_G}^\sm}(\bar{Y_G}^\sm-Y_G^\sm)\geqslant 1$ (\cite[2.10.1]{KollarShafarevich-map}).
Now \cite[2.10.2]{KollarShafarevich-map} shows that the image of $\pi_1(Y^\sm_0,y)$ is of finite index in $\pi_1(Y_G^\sm,y_G)$, so the neutral components of the Zariski closures of $\pi_1(Y^\sm,y)$ and $\pi_1(\bar{Y_G}^\sm,y_G)$ in $G$ coincide. Hence we are done.
\end{proof}
\end{lemma}

Let $\tilde{Z}$ be the closure (w.r.t. archimedean topology) of $\tilde{Y}_G$ in $\cX^+_G$, then $\tilde{Z}$ is a complex analytic irreducible component of $\unif_G^{-1}(\bar{Y_G})$. For the pure connected Shimura datum $(G^\ad,\cX_G^+)$, we have a decomposition (\cite[3.6]{MoonenLinearity-prope})
\begin{equation*}\label{decomposition of the base}
(G^\ad,\cX^+_G)=(G_N^\ad,\cX^+_{G,1})\times (G_2,\cX^+_{G,2}).
\end{equation*}

By \cite[3.6, 3.7]{MoonenLinearity-prope} and Lemma \ref{monodromy group for projection},
$\tilde{Z}\subset\cX^+_{G,1}\times\{\tilde{y_{G,2}}\}$, i.e.
$\tilde{Z}\subset G_N(\R)^+\tilde{x}_G$ for some $\tilde{x}_G\in\cX_G^+$. But $\tilde{y}_{0,G}\in\tilde{Y}_G\subset\tilde{Z}$, so $\tilde{F}_G=G_N(\R)^+\tilde{y}_{0,G}\subset G_N(\R)^+\tilde{x}_G$. This implies that $\tilde{F}_G=G_N(\R)^+\tilde{x}_G$. As a result, $\tilde{Y}_G\subset\tilde{Z}\subset\tilde{F}_G$.

\textit{\underline{Step II.}} Consider the Shimura morphism
\[
\begin{diagram}
(P,\cX^+) &\rOnto^{\rho} &(P^\prime,\cX^{+\prime}):=(P,\cX^+)/N.
\end{diagram}
\]
Then $\tilde{F}=\rho^{-1}(\rho(\tilde{F}))$ by definition of $\rho$. So in order to prove $\tilde{Y}\subset\tilde{F}$, it is enough to show that $\rho(\tilde{Y})\subset\rho(\tilde{F})$. Hence we may replace $(P,\cX^+)$ by $(P^\prime,\cX^{+\prime})$. In other words, we may assume $N=\textbf{1}$.

In this case $\tilde{F}$ is just a point $\tilde{x}\in\cX^+$. Call $\tilde{x}_{P/U}:=\pi_{P/U}(\tilde{x})$, $\tilde{x}_G:=\pi(\tilde{x})$ and $x:=\unif(\tilde{x})$, $x_{P/U}:=\unif_{P/U}(\tilde{x}_{P/U})$, $x_G:=\unif_G(\tilde{x}_G)$. Then since $Y_G\subset F_G$, we have $Y\subset E$ where $E$ is the fibre of $S\xrightarrow{[\pi]}S_G$ over $x_G$. Denote by $A$ the fibre of $S_{P/U}\xrightarrow{[\pi]_G}S_G$ over $x_G$ and $T$ the fibre of $S\xrightarrow{[\pi_{P/U}]}S_{P/U}$ over $x_{P/U}$, then by \cite[3.13, 3.14]{PinkThesis} $A$ is an abelian variety and $T$ is an algebraic torus.

\textit{\underline{Step III.}} Prove that $\tilde{Y}_{P/U}\subset\tilde{F}_{P/U}$, i.e. $\tilde{Y}_{P/U}=\{\tilde{x}_{P/U}\}$.

By \textit{\underline{Step I}}, $Y_{P/U}\subset A$.
We have the following morphisms
\[
\pi_1(Y_{P/U}^\sm)\rightarrow\pi_1(A)\rightarrow\pi_1(S_{P/U})=\Gamma_{P/U}\rightarrow P/U=V\rtimes G.
\]
The neutral component of the Zariski closure of $\pi_1(Y_{P/U}^\sm)$ (resp. $\pi_1(A)$) in $P/U=V\rtimes G$ is $\textbf{1}$ (resp. $V$), so the image of
\[
\pi_1(Y_{P/U}^\sm)\rightarrow\pi_1(A)
\]
is a finite group.

Now $Y_{P/U}$ is irreducible since $Y$ is irreducible. So by Proposition \ref{smallest weakly special containing subvariety semiabelian}, $Y_{P/U}\subset A$ is a point. Equivalently, $\tilde{Y}_{P/U}$ is a point. So $\tilde{Y}_{P/U}\subset\tilde{F}_{P/U}$ since $\tilde{Y}_{P/U}\cap\tilde{F}_{P/U}\neq\emptyset$ (both of them contain $\tilde{y}_{0,P/U}$).

\textit{\underline{Step IV.}} Prove that $\tilde{Y}\subset\tilde{F}$, i.e. $\tilde{Y}=\{\tilde{x}\}$.

By \textit{\underline{Step I}}, $Y\subset E$. By \textit{\underline{Step III}}, $Y_{P/U}=\{x_{P/U}\}$. So $Y\subset T$. We have the following morphisms
\[
\pi_1(Y^\sm)\rightarrow\pi_1(T)\rightarrow\pi_1(S)=\Gamma\rightarrow P=W\rtimes G.
\]
The neutral component of the Zariski closure of $\pi_1(Y^\sm)$ (resp. $\pi_1(T)$) in $P=W\rtimes G$ is $\textbf{1}$ (resp. $U$), so the image of
\[
\pi_1(Y^\sm)\rightarrow\pi_1(T)
\]
is a finite group.

Now since $Y$ is irreducible, by Proposition \ref{smallest weakly special containing subvariety semiabelian}, $Y\subset T$ is a point. Equivalently, $\tilde{Y}$ is a point. So $\tilde{Y}\subset\tilde{F}$ since $\tilde{Y}\cap\tilde{F}\neq\emptyset$ (both of them contain $\tilde{y}_0$).

\item 
Since every weakly special subset of $\cX^+$ is algebraic by Lemma \ref{weakly special subsets are irreducible algebraic}, $\tilde{F}$ is also the smallest weakly special subset which contains $\tilde{Y}$. Therefore $F$ is the smallest weakly special subvariety of $S$ which contains $Y$.
\end{enumerate}
\end{proof}
\end{thm}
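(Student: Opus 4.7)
My plan is to establish the three claims in the order stated, using admissibility of natural variations of mixed Hodge structure on $S$ as the main tool. After passing to a neat level so the VMHS on $Y^{\prime\sm}$ attached to a faithful representation of $P$ is defined, I may assume by Proposition \ref{properties of irreducible mixed Shimura datum} that $P = \MT(\cX^+)$, and choose the smallest connected sub-Shimura datum $(Q,\cY^+) \hookrightarrow (P,\cX^+)$ with $\tilde{y}_0 \in \cY^+$ and $Q = \MT(\cY^+)$; this $(Q,\cY^+)$ cuts out the smallest special subvariety $S_Y$ containing $Y$.

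For (1), the variation restricts to an admissible VMHS on $Y^{\prime\sm}$ by Lemma \ref{admissible stable under restriction}, whose generic Mumford--Tate group is $Q$ (by Hodge genericity of $Y$ in $S_Y$) and whose connected algebraic monodromy group is $N$. Then Theorem \ref{Y Andre monodromy normal} (Andr\'e) gives $N \lhd Q$, and Proposition \ref{properties of irreducible mixed Shimura datum}(2) ensures $U_N \lhd Q$. Hence $\tilde F = N(\R)^+ U_N(\C)\tilde{y}_0$ is, up to a connected component, the fiber of $\tilde{y}_0$ under the quotient Shimura morphism $\varphi \colon (Q,\cY^+) \to (Q,\cY^+)/N$, and so weakly special by Definition \ref{definition of Pink}. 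The maximality of $N$ inside $Q$ follows because $\varphi(\tilde F)$ is a Hodge-generic point in $\cY^{+\prime}$: by Remark \ref{stabilizer of Hodge generic point}, any subgroup of $Q^\der$ whose real points with $U$-complex factor stabilize $\tilde F$ must be contained in $N$.

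For (2), the inclusion $\tilde F \subset \bar{\tilde Y}^{\Zar}$ is automatic, since the monodromy image $\Gamma_{Y^\sm}$ stabilizes $\tilde Y$ and contains $\tilde{y}_0$ in its orbit, and $\Gamma_{Y^\sm}$ is Zariski dense in $N$ by definition; thus $\Gamma_{Y^\sm}\tilde{y}_0 \subset \tilde Y$ is Zariski dense in $N(\C)\tilde{y}_0 \cap \cX^+ = \tilde F$ (using Lemma \ref{weakly special subsets are irreducible algebraic}). The reverse inclusion $\tilde Y \subset \tilde F$ is the core content, and I would approach it by successively eliminating the three layers of the mixed Shimura datum. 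First, replacing $(P,\cX^+)$ by $(Q,\cY^+)$, project to the pure part $(G,\cX^+_G)$; by a $\pi_1$-comparison argument using \cite[2.10]{KollarShafarevich-map}, the connected algebraic monodromy of $\bar Y_G^\sm$ is $G_N$, whereupon Moonen's splitting theorem \cite[3.6, 3.7]{MoonenLinearity-prope} forces $\tilde Y_G \subset \tilde F_G$. Next quotient by $N$, so the target becomes a single point $\tilde x$; then $Y_{P/U}$ lies in the abelian variety fiber over $\tilde x_G$, and Proposition \ref{smallest weakly special containing subvariety semiabelian}(1) combined with the fact that $\pi_1(Y_{P/U}^\sm)$ has finite image in $V$ (the image has Zariski closure of identity component trivial) shows $Y_{P/U}$ is a point. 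Finally the same argument on the torus fiber, using Proposition \ref{smallest weakly special containing subvariety semiabelian}(2), collapses $Y$ to a point, which must equal $x$.

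For (3), any weakly special subset containing $\tilde Y$ is algebraic by Lemma \ref{weakly special subsets are irreducible algebraic}, hence contains the Zariski closure $\tilde F$; and $\tilde F$ is itself weakly special by (1), so it is minimal. The image $F = \unif(\tilde F)$ then gives the smallest weakly special subvariety of $S$ containing $Y$. The main obstacle I foresee is the bookkeeping in the layered reduction: one must verify that taking projections to $G$ and to $P/U$, and then quotienting by $N$, preserves both the algebraic monodromy identification and the admissibility/genericity hypotheses. The pure-part step leans heavily on Moonen's theorem, and linking his setup to ours requires the nontrivial comparison of monodromy groups across $Y \to Y_G$ via Koll\'ar's fundamental group surjectivity; once that is in place, the mixed-part collapses follow cleanly from the results already established for semi-abelian varieties.
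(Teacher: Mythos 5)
Your proposal is correct and follows essentially the same route as the paper: part (1) via admissibility, Andr\'e's normality theorem and Hodge genericity in the smallest special subvariety $S_Y$; part (2) by the same two inclusions, with $\tilde Y\subset\tilde F$ established through the identical layered reduction (monodromy comparison via Koll\'ar and Moonen's splitting for the pure part, quotient by $N$, then the abelian and torus fibers via Proposition \ref{smallest weakly special containing subvariety semiabelian}); and part (3) from algebraicity of weakly special subsets. No substantive differences to report.
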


\begin{cor}\label{weakly special iff algebraic on the top and on the bottom}
Let $S$ be a connected mixed Shimura variety associated with the connected mixed Shimura datum $(P,\cX^+)$ and let $\unif\colon \cX^+\rightarrow S=\Gamma\backslash\cX^+$ be the uniformization map. Let $Y$ be a closed irreducible subvariety of $S$, then $Y$ is weakly special if and only if one (equivalently any) irreducible component of $\unif^{-1}(Y)$ is algebraic.

If $Y$ is weakly special, then $Y=\unif(N(\R)^+U_N(\C)\tilde{y})$ where $N$ is the connected algebraic monodromy group associated with $Y^{\sm}$, $U_N:=U\cap N$ and $\tilde{y}$ is any point of $\unif^{-1}(Y)$.
\begin{proof} The ``only if" part is immediate by Lemma \ref{weakly special subsets are irreducible algebraic}. Now we prove the ``if" part.

We first of all quickly show that if one irreducible component of $\unif^{-1}(Y)$ is algebraic, so are the others. The proof is the same as \cite[first paragraph of the proof of Theorem 4.1]{UllmoA-characterisat}. Suppose that $\tilde{Y}$ is an irreducible component of $\unif^{-1}(Y)$ which is algebraic, i.e. $\tilde{Y}$ is an irreducible component of $\cX^+\cap Z$ for some algebraic subvariety $Z$ of $\cX^\vee$. Then for any $\gamma\in\Gamma\subset P(\R)U(\C)$,
\[
\gamma\tilde{Y}=\gamma(\cX^+\cap Z)\subset\cX^+\cap\gamma Z=\gamma\gamma^{-1}(\cX^+\cap\gamma Z)\subset\gamma\tilde{Y}.
\]
Hence it follows that $\gamma\tilde{Y}=\cX^+\cap\gamma Z$ is algebraic.

Next under the notation of Theorem \ref{smallest weakly special containing a subvariety}, $\tilde{Y}=\bar{\tilde{Y}}=\tilde{F}$ since $\tilde{Y}$ is algebraic. Hence $\tilde{Y}$ is weakly special, and so is $Y$.

Finally if $Y$ is weakly special, then for any $\tilde{y}\in \unif^{-1}(Y)$ and $\tilde{Y}$ the irreducible component of $\unif^{-1}(Y)$ which contains $\tilde{y}$, $\tilde{Y}=\tilde{F}=N(\R)^+U_N(\C)\tilde{y}$ by Theorem \ref{smallest weakly special containing a subvariety}, and hence $Y=\unif(N(\R)^+U_N(\C)\tilde{y})$.
\end{proof}
\end{cor}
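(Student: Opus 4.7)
The plan is to reduce this corollary almost entirely to Theorem \ref{smallest weakly special containing a subvariety}. The ``only if'' direction is immediate: if $Y$ is weakly special, then by Proposition \ref{definitions of weakly special match} any irreducible component $\tilde{Y}$ of $\unif^{-1}(Y)$ is a weakly special subset of $\cX^+$, and Lemma \ref{weakly special subsets are irreducible algebraic} says weakly special subsets are irreducible algebraic.

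For the ``if'' direction, I would first check the ``one equivalent to any'' clause by a transitivity argument using the $\Gamma$-action. Suppose some irreducible component $\tilde{Y}$ of $\unif^{-1}(Y)$ is algebraic, so $\tilde{Y}$ is a complex analytic irreducible component of $\cX^+ \cap Z$ for some closed algebraic subvariety $Z \subset \cX^\vee$. Any other irreducible component is of the form $\gamma \tilde{Y}$ for some $\gamma \in \Gamma \subset P(\R)^+U(\C) \subset P(\C)$, and $P(\C)$ acts algebraically on $\cX^\vee$; hence $\gamma Z$ is algebraic in $\cX^\vee$, and the inclusions
\[
\gamma\tilde{Y} = \gamma(\cX^+ \cap Z) \subset \cX^+ \cap \gamma Z = \gamma\gamma^{-1}(\cX^+ \cap \gamma Z) \subset \gamma\tilde{Y}
\]
show that $\gamma\tilde{Y}$ is a complex analytic irreducible component of $\cX^+ \cap \gamma Z$, hence algebraic.

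Now I would invoke Theorem \ref{smallest weakly special containing a subvariety}(2): for any such $\tilde{Y}$ and any point $\tilde{y}_0 \in \tilde{Y}$, the Zariski closure of $\tilde{Y}$ in $\cX^+$ equals $\tilde{F} := N(\R)^+ U_N(\C)\tilde{y}_0$, where $N$ is the connected algebraic monodromy group associated with $Y^{\sm}$ and $U_N := U \cap N$; moreover $\tilde{F}$ is weakly special by part (1) of that theorem. If $\tilde{Y}$ is itself algebraic, then its Zariski closure in $\cX^+$ is $\tilde{Y}$ (since $\tilde{Y}$, being an irreducible component of $\cX^+ \cap Z$, is already a component of its intersection with its Zariski closure in $\cX^\vee$), so $\tilde{Y} = \tilde{F}$ is weakly special, and hence $Y = \unif(\tilde{Y})$ is a weakly special subvariety of $S$ by Proposition \ref{definitions of weakly special match}. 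Finally, applied to the unique irreducible component of $\unif^{-1}(Y)$ containing a given $\tilde{y}$, this identification produces the stated formula $Y = \unif(N(\R)^+ U_N(\C) \tilde{y})$.

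Since the bulk of the work has already been packed into Theorem \ref{smallest weakly special containing a subvariety}, I do not expect any serious obstacle here; the only mildly subtle point is the $\Gamma$-equivariance step spreading algebraicity across all components of $\unif^{-1}(Y)$, which however is purely formal.
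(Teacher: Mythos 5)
Your proposal is correct and follows essentially the same route as the paper: the $\Gamma$-equivariance argument spreading algebraicity across components, followed by the identification $\tilde{Y}=\bar{\tilde{Y}}=\tilde{F}$ via Theorem \ref{smallest weakly special containing a subvariety}, and the final formula read off from the same theorem. No issues.
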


\section{Ax-Lindemann-Weierstra{\ss} Part 1: Outline of the proof}\label{Ax-Lindemann Part 1: Outline of the proof}

In the following three sections, we are going to prove Theorem \ref{Ax-Lindemann for type star}.
The organization of the proof is as follows:
the outline of the proof is given in this section. After some preparation, the key proposition (Proposition \ref{projection of stabilizer is stabilizer of the base}) leading to the theorem will be stated and exploited (together with Theorem \ref{Ax-Lindemann for complex semi-abelian varieties}) to finish the proof in Theorem \ref{conclusion}. We prove this key proposition in the next section using Pila-Wilkie's counting theorem and Theorem \ref{Ax-Lindemann for complex semi-abelian varieties} will be proved in $\mathsection$\ref{Ax-Lindemann Part 3: The unipotent part}, where a simple proof of Ax-Lindemann-Weierstra{\ss} for complex abelian varieties can be found.

Now let us fix some notation which will be used through the whole proof:

\begin{notation}\label{notations for the proof of Ax-Lindemann}
Consider the following diagram:
\[
\begin{diagram}
\cX^+ &\rTo^{\pi} &\cX^+_G \\
\dTo^{pr} & &\dTo^{\unif_G} \\
S=\Gamma\backslash\cX^+ &\rTo^{[\pi]} &S_G:=\Gamma_G\backslash\cX^+_G
\end{diagram}
\]
Denote by $\tilde{Y}_G:=\pi(\tilde{Y})$, $Y_G:=[\pi](Y)$ and $\tilde{Z}_G:=\pi(\tilde{Z})$.
\end{notation}

Now we begin the proof of Theorem~\ref{Ax-Lindemann for type star}.
Let us first of all do some reduction:
\begin{itemize}
\item Since every point of $\cX^+$ is weakly special, we may assume $\dim(\tilde{Z})>0$.
\item Let $(Q,\cY^+)$ be the smallest mixed Shimura subdatum of $(P,\cX^+)$ s.t $\tilde{Z}\subset\cY^+$ and let $S_Q$ be the corresponding special subvariety of $S$. Then $Q=\MT(\cY^+)$ by Proposition~\ref{properties of irreducible mixed Shimura datum}(1). If we replace $(P,\cX^+)$ by $(Q,\cY^+)$, $S$ by $S_Q$, $\unif\colon\cX^+\rightarrow S$ by $\unif_Q\colon\cY^+\rightarrow S_Q$ and $Y$ by an irreducible component $Y_0$ of $Y\cap S_Q$, then $\tilde{Z}$ is again a maximal irreducible algebraic subset of $\unif_Q^{-1}(Y_0)$. By definition, $\tilde{Z}$ is weakly special in $\cX^+$ iff it is weakly special in $\cY^+$. So we may assume $P=\MT(\cX^+)$ and that $\tilde{Z}$ is Hodge generic.
\item Furthermore, let $Y_0$ by the minimal irreducible subvariety of $S$ such that $\tilde{Z}\subset \unif^{-1}(Y_0)$, then $\tilde{Z}$ is still maximal irreducible algebraic in $\unif^{-1}(Y_0)$. Hence we may assume that $Y=Y_0$. In fact it is not hard to see that after this reduction, $Y=\bar{\unif(\tilde{Z})}$ and $\tilde{Z}$ is weakly special iff $Y$ is weakly special.
\item By the previous reduction, there is a unique complex analytic irreducible component of $\unif^{-1}(Y)$ which contains $\tilde{Z}$. Denote it by $\tilde{Y}$. Denote by $\tilde{Y}_G:=\pi(\tilde{Y})$, $Y_G:=[\pi](Y)$ and $\tilde{Z}_G:=\pi(\tilde{Z})$. Remark that by Lemma~\ref{algebraicity functorial using Borel embedding}, $\bar{\tilde{Z}_G}$ is an algebraic subset of $\cX^+_G$.
\item Replacing $\Gamma$ by a subgroup of finite index does not matter for this problem, so we may assume that $\Gamma$ is neat and $\Gamma\subset P^\der(\Q)$ 
(Remark~\ref{sufficiently small congruence subgroup cotained in derivative}(2)).
\end{itemize}

Let $\tilde{F}$ be the smallest weakly special subset containing $\tilde{Y}$. By Theorem~\ref{smallest weakly special containing a subvariety}, $\tilde{F}=N(\R)^+U_N(\C)\tilde{z}$ some $\tilde{z}\in\tilde{Z}\subset\tilde{Y}$, where $N$ is the connected algebraic monodromy group associated with $Y^\sm$ and $U_N:=U\cap N$. The set $\tilde{F}$ is Hodge generic in $(P,\cX^+)$ since $\tilde{Z}$ is, so $N\lhd P$ and $N\lhd P^\der$ by Theorem~\ref{Y Andre monodromy normal}.

Define
\[
\Gamma_{\tilde{Z}}:=\{\gamma\in\Gamma|\gamma\cdot\tilde{Z}=\tilde{Z}\}\qquad\text{(resp. }\Gamma_{G,\bar{\tilde{Z}_G}}:=\{\gamma_G\in\Gamma_G|\gamma_G\cdot\bar{\tilde{Z}_G}=\bar{\tilde{Z}_G}\})
\]
and
\[
H_{\tilde{Z}}:=(\bar{\Gamma_{\tilde{Z}}}^{\Zar})^\circ\qquad\text{(resp. }H_{\bar{\tilde{Z}_G}}:=(\bar{\Gamma_{G,\bar{\tilde{Z}_G}}}^{\Zar})^\circ).
\]
Define $U_{H_{\tilde{Z}}}:=U\cap H_{\tilde{Z}}$ and $W_{H_{\tilde{Z}}}:=W\cap H_{\tilde{Z}}$. Both of them are normal in $H_{\tilde{Z}}$. Then $H_{\tilde{Z}}$ (resp. $H_{\bar{\tilde{Z}_G}}$) is the largest connected subgroup of $P^{\der}$ (resp. $G^{\der}$) such that $H_{\tilde{Z}}(\R)^+U_{H_{\tilde{Z}}}(\C)$ (resp. $H_{\bar{\tilde{Z}_G}}(\R)^+$) stabilizes $\tilde{Z}$ (resp. $\bar{\tilde{Z}_G}$).

Define $V_{H_{\tilde{Z}}}:=W_{H_{\tilde{Z}}}/U_{H_{\tilde{Z}}}$ and $G_{H_{\tilde{Z}}}:=H_{\tilde{Z}}/W_{H_{\tilde{Z}}}\hookrightarrow P/W=G$.

The following two lemmas were proved for the pure case in \cite{PilaAxLindemannAg} and \cite{KlinglerThe-Hyperbolic-}.
\begin{lemma}\label{stabiliser of Z stabilises Y}
The set $\tilde{Y}$ is stable under $H_{\tilde{Z}}(\R)^+U_{H_{\tilde{Z}}}(\C)$.
\begin{proof} Every fiber of $\cX^+\rightarrow\cX^+_{P/U}$ can be canonically identified with $U(\C)$. So it is enough to prove that $\tilde{Y}$ is stable under $H_{\tilde{Z}}(\R)^+$: If $U_{H_{\tilde{Z}}}(\R)\tilde{y}\subset\tilde{Y}$ for $\tilde{y}\in\tilde{Y}$, then $U_{H_{\tilde{Z}}}(\C)\tilde{y}\subset\tilde{Y}$ because $\tilde{Y}$ is complex analytic and $U_{H_{\tilde{Z}}}(\C)\tilde{y}$ is the smallest complex analytic subset of $\cX^+$ containing $U_{H_{\tilde{Z}}}(\R)\tilde{y}$.

If not, then since $H_{\tilde{Z}}(\Q)$ is dense (w.r.t. the archimedean topology) in $H_{\tilde{Z}}(\R)^+$, there exists $h\in H_{\tilde{Z}}(\Q)$ such that $h\tilde{Y}\neq\tilde{Y}$. The set $\tilde{Z}$ is contained in $\tilde{Y}\cap h\tilde{Y}$ by definition of $H_{\tilde{Z}}$, and hence contained in a complex analytic irreducible component $\tilde{Y}^\prime$ of it.

Consider the Hecke operator $T_h$. Then $T_h(Y)=\unif(h\cdot \unif^{-1}(Y))$. 
Hence
\[
Y\cap T_h(Y)=\unif(\unif^{-1}(Y)\cap(h\cdot \unif^{-1}(Y))).
\]
On the other hand, $T_h(Y)$ is equidimensional of the same dimension as $Y$ by definition, hence by reason of dimension, $h\tilde{Y}$ is an irreducible component of $\unif^{-1}(T_h(Y))=\Gamma h\Gamma\tilde{Y}$. So $\unif(h\tilde{Y})$ is an irreducible component of $T_h(Y)$.

Since $\tilde{Y}^\prime$ is a complex analytic irreducible component of $\tilde{Y}\cap h\tilde{Y}$, it is also a complex analytic irreducible component of $\unif^{-1}(Y)\cap(h\tilde{Y})=\Gamma\tilde{Y}\cap h\tilde{Y}$. So $Y^\prime:=\unif(\tilde{Y}^\prime)$ is a complex analytic irreducible component of $Y\cap \unif(h\tilde{Y})$. So $Y^\prime$ is a complex analytic irreducible component of $Y\cap T_h(Y)$, and hence is algebraic since $Y\cap T_h(Y)$ is.

Since $h\tilde{Y}\neq\tilde{Y}$ and $Y$ is irreducible, $\dim(Y^\prime)<\dim(Y)$. But $\tilde{Z}\subset\tilde{Y}\cap h\tilde{Y}\subset \unif^{-1}(Y^\prime)$. This contradicts the minimality of $Y$.
\end{proof}
\end{lemma}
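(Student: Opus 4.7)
My plan is to reduce to a stability statement for the real group $H_{\tilde Z}(\R)^+$ and then derive a contradiction using Hecke correspondences and the minimality of $Y$ set up in the preceding reductions. For the first reduction, since the fibers of $\cX^+ \to \cX^+_{P/U}$ are identified with copies of $U(\C)$, stability of $\tilde Y$ under the real subgroup $U_{H_{\tilde Z}}(\R)$ will automatically extend to stability under its complexification $U_{H_{\tilde Z}}(\C)$: indeed $\tilde Y$ is closed complex analytic in $\cX^+$, and the smallest complex analytic subset of $\cX^+$ containing the $U_{H_{\tilde Z}}(\R)$-orbit of a point is its $U_{H_{\tilde Z}}(\C)$-orbit. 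Thus it suffices to show $H_{\tilde Z}(\R)^+ \tilde Y = \tilde Y$.

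For this, I would argue by contradiction and exploit the density of $H_{\tilde Z}(\Q)$ in $H_{\tilde Z}(\R)^+$ in the archimedean topology: were the statement false, we could find $h \in H_{\tilde Z}(\Q)$ with $h \tilde Y \neq \tilde Y$. By the very definition of $H_{\tilde Z}$ such an $h$ still preserves $\tilde Z$, so $\tilde Z \subset \tilde Y \cap h \tilde Y$, and in particular $\tilde Z$ lies inside some complex analytic irreducible component $\tilde Y'$ of this intersection.

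Next I would invoke the Hecke correspondence $T_h$: the image $Y' := \unif(\tilde Y')$ should be an irreducible component of $Y \cap T_h(Y)$, which is algebraic as an intersection of algebraic subvarieties of $S$. The key dimensional input is that $T_h(Y)$ is equidimensional of the same dimension as $Y$, so $h \tilde Y$ is an irreducible component of $\unif^{-1}(T_h(Y)) = \Gamma h \Gamma \tilde Y$; this forces $Y'$ to be an irreducible component of $Y \cap T_h(Y)$ and hence algebraic. Because $h \tilde Y \neq \tilde Y$ and $Y$ is irreducible, one gets $\dim Y' < \dim Y$. On the other hand $\tilde Z \subset \tilde Y' \subset \unif^{-1}(Y')$, so $Y'$ is a strictly smaller algebraic subvariety of $S$ containing $\unif(\tilde Z)$, contradicting the minimality reduction $Y = \overline{\unif(\tilde Z)}$.

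The main technical point I expect to need care with is the equidimensionality of $T_h(Y)$ and the transition from ``$\tilde Y'$ is a component of $\tilde Y \cap h\tilde Y$'' to ``$Y'$ is a component of $Y \cap T_h(Y)$''; the rest is a formal chain combining density of rational points in $H_{\tilde Z}(\R)^+$, the stabilization of $\tilde Z$ by $H_{\tilde Z}$, and the minimality of $Y$.
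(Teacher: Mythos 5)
Your proposal follows essentially the same route as the paper's proof: reduce to stability under $H_{\tilde Z}(\R)^+$ via the complex-analytic argument on $U$-orbits, use density of $H_{\tilde Z}(\Q)$ to find a rational $h$ with $h\tilde Y\neq\tilde Y$, pass through the Hecke correspondence $T_h$ and the equidimensionality of $T_h(Y)$ to realize $Y'=\unif(\tilde Y')$ as an algebraic component of $Y\cap T_h(Y)$ of strictly smaller dimension, and contradict the minimality of $Y$. The technical transitions you flag (equidimensionality of $T_h(Y)$ and promoting $\tilde Y'$ from a component of $\tilde Y\cap h\tilde Y$ to one of $\unif^{-1}(Y)\cap h\tilde Y=\Gamma\tilde Y\cap h\tilde Y$ and then to a component of $Y\cap T_h(Y)$) are exactly the ones the paper spells out, and your sketch is consistent with how they are handled there.
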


\begin{lemma}\label{stabilizer normal in N}
$H_{\tilde{Z}}\lhd N$.
\begin{proof} We have $\tilde{Z}\subset\tilde{F}=N(\R)^+U_N(\C)\tilde{z}$ for some $\tilde{z}\in\tilde{Z}$, so the image of $\tilde{Z}$ under the morphism
\[
(P,\cX^+)\rightarrow(P,\cX^+)/N
\]
is a point. But $H_{\tilde{Z}}/(H_{\tilde{Z}}\cap N)$ stabilizes this point which is Hodge generic (since $\tilde{F}$ is Hodge generic in $\cX^+$), and therefore is trivial by Remark~\ref{stabilizer of Hodge generic point}. So $H_{\tilde{Z}}<N$.

Let $H^\prime$ be the algebraic group generated by $\gamma^{-1}H_{\tilde{Z}}\gamma$ for all $\gamma\in\Gamma_{Y^\sm}$, where $\Gamma_{Y^\sm}$ is the monodromy group of $Y^\sm$. Since $H^\prime$ is invariant under conjugation by $\Gamma_{Y^\sm}$, it is invariant under $\bar{\Gamma_{Y^\sm}}^{\Zar}$, therefore invariant under conjugation by $N$.

By Lemma~\ref{stabiliser of Z stabilises Y}, $\tilde{Y}$ is invariant under $H_{\tilde{Z}}(\R)^+U_{H_{\tilde{Z}}}(\C)$. On the other hand, 
$\tilde{Y}$ is also invariant under $\Gamma_{Y^\sm}$ by definition. So $\tilde{Y}$ is invariant under the action of $H^\prime(\R)^+U_{H^\prime}(\C)$ where $U_{H^\prime}:=U\cap H^\prime$. Since $H^\prime(\R)^+U_{H^\prime}(\C)\tilde{Z}$ is semi-algebraic, there exists an irreducible algebraic subset of $\cX^+$, say $\tilde{E}$, which contains $H^\prime(\R)^+U_{H^\prime}(\C)\tilde{Z}$ and is contained in $\tilde{Y}$ by \cite[Lemma~4.1]{PilaAbelianSurfaces}.
Now $\tilde{Z}\subset\tilde{E}\subset\tilde{Y}$, so 
$\tilde{Z}=\tilde{E}=H^\prime(\R)^+U_{H^\prime}(\C)\tilde{Z}$ by maximality of $\tilde{Z}$, and therefore 
$H^\prime=H_{\tilde{Z}}$ by definition of $H_{\tilde{Z}}$. So $H_{\tilde{Z}}$ is invariant under conjugation by $N$. Since $H_{\tilde{Z}}<N$, $H_{\tilde{Z}}$ is normal in $N$.
\end{proof}
\end{lemma}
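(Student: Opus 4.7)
The plan has two steps: first show the inclusion $H_{\tilde{Z}}\subset N$, and then show normality by a symmetrization trick using the monodromy.

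For the inclusion, I would use the quotient Shimura morphism $(P,\cX^+)\twoheadrightarrow(P,\cX^+)/N$. Since $\tilde{F}=N(\R)^+U_N(\C)\tilde{z}$, the image of $\tilde{F}$ (hence of $\tilde{Z}$) in the quotient is a single point $\bar{z}$. Because $(P,\cX^+)$ has generic Mumford-Tate group and $\tilde{F}$ is Hodge generic in $\cX^+$, the point $\bar{z}$ is Hodge generic in $\cX^+/N$. The group $H_{\tilde{Z}}/(H_{\tilde{Z}}\cap N)$ stabilizes $\bar{z}$, so Remark \ref{stabilizer of Hodge generic point} forces $H_{\tilde{Z}}/(H_{\tilde{Z}}\cap N)=1$, i.e.\ $H_{\tilde{Z}}\subset N$.

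For the normality, I would introduce the algebraic subgroup $H^\prime\leqslant P^{\der}$ generated by $\{\gamma^{-1}H_{\tilde{Z}}\gamma\mid\gamma\in\Gamma_{Y^\sm}\}$, where $\Gamma_{Y^\sm}$ is the monodromy group of $Y^\sm$. By construction $H^\prime$ is normalized by $\Gamma_{Y^\sm}$, and therefore by its Zariski closure, whose neutral component is $N$; so $H^\prime\lhd N$. Provided we can show $H^\prime=H_{\tilde{Z}}$, the lemma follows. The containment $H_{\tilde{Z}}\subset H^\prime$ is trivial. For the reverse, the idea is to check that $H^\prime(\R)^+U_{H^\prime}(\C)$ stabilizes $\tilde{Z}$, which by maximality of $H_{\tilde{Z}}$ will give $H^\prime\subset H_{\tilde{Z}}$.

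To see this, note that $\tilde{Y}$ is stabilized by $\Gamma_{Y^\sm}$ by definition of the monodromy, and by $H_{\tilde{Z}}(\R)^+U_{H_{\tilde{Z}}}(\C)$ by Lemma \ref{stabiliser of Z stabilises Y}; hence $\tilde{Y}$ is stabilized by $H^\prime(\R)^+U_{H^\prime}(\C)$. Therefore the set $H^\prime(\R)^+U_{H^\prime}(\C)\cdot\tilde{Z}$ is a semi-algebraic, irreducible subset of $\cX^+$ contained in $\unif^{-1}(Y)$ and containing $\tilde{Z}$. Applying \cite[Lemma 4.1]{PilaAbelianSurfaces} (or a direct Zariski-closure argument inside $\cX^\vee$) produces an irreducible algebraic subset $\tilde{E}$ of $\cX^+$ with $H^\prime(\R)^+U_{H^\prime}(\C)\cdot\tilde{Z}\subset\tilde{E}\subset\unif^{-1}(Y)$. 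By the maximality assumption on $\tilde{Z}$ we conclude $\tilde{Z}=\tilde{E}$, so $H^\prime(\R)^+U_{H^\prime}(\C)$ stabilizes $\tilde{Z}$, finishing the proof.

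The main obstacle is the step where the orbit of $\tilde{Z}$ under a real/complex group acting through algebraic operations is contained in an \emph{irreducible algebraic} subset of $\cX^+$ still living in $\unif^{-1}(Y)$; this is the place where one really uses that irreducible algebraic subsets of $\cX^+$ are defined via the Borel-type embedding $\cX^+\hookrightarrow\cX^\vee$, and that semi-algebraic orbits land in such closures. Once this is granted, the argument is purely group-theoretic.
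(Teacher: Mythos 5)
Your proposal is correct and follows essentially the same route as the paper: the quotient-by-$N$ argument with Remark \ref{stabilizer of Hodge generic point} for the inclusion $H_{\tilde{Z}}<N$, and the symmetrized group $H^\prime$ together with Lemma \ref{stabiliser of Z stabilises Y}, \cite[Lemma 4.1]{PilaAbelianSurfaces} and the maximality of $\tilde{Z}$ for normality. The only slight imprecision is that before you have shown $H^\prime=H_{\tilde{Z}}$ you may only assert that $N$ normalizes $H^\prime$ (not yet $H^\prime\lhd N$); normality follows at the end once $H^\prime=H_{\tilde{Z}}<N$ is established, exactly as in the paper.
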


\begin{cor}\label{normal subgroups}
\[
G_{H_{\tilde{Z}}},~H_{\bar{\tilde{Z}_G}}\lhd G^\der\text{ and }G_{H_{\tilde{Z}}}\lhd H_{\bar{\tilde{Z}_G}}.
\]
\begin{proof} We have $G_{H_{\tilde{Z}}}\lhd G_N\lhd G^\der$, and so $G_{H_{\tilde{Z}}}\lhd G^\der$ since all the three groups are reductive.

Working with $((G,\cX^+_G),~\bar{Y_G},~\bar{\tilde{Z}_G})$ instead of $((P,\cX^+),~Y,~\tilde{Z})$, we can prove (similar to Lemma~\ref{stabilizer normal in N}) that $H_{\bar{\tilde{Z}_G}}\lhd G_N$. Hence $H_{\bar{\tilde{Z}_G}}\lhd G^\der$ by the same reason for $G_{H_{\tilde{Z}}}$.

By definition $G_{H_{\tilde{Z}}}<H_{\bar{\tilde{Z}_G}}$. So $G_{H_{\tilde{Z}}}\lhd H_{\bar{\tilde{Z}_G}}$ since $G_{H_{\tilde{Z}}}\lhd G^\der$.
\end{proof}
\end{cor}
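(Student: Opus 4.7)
The strategy is to establish the three normality claims in order, each reducing to Lemma~\ref{stabilizer normal in N} together with the following structural fact: in a chain $A \lhd B \lhd C$ of connected algebraic $\Q$-groups with $C$ semisimple and $A$, $B$ connected reductive, one has $A \lhd C$. Indeed, any connected normal reductive subgroup of a connected semisimple group is semisimple (its radical, being characteristic, is a connected normal solvable subgroup of a semisimple group, hence trivial), and every connected normal semisimple subgroup of a connected semisimple group is an almost-direct product of a subfamily of its almost-simple factors, a decomposition that is transitive under further normal inclusions.

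To prove $G_{H_{\tilde Z}} \lhd G^{\der}$, I would start from $H_{\tilde Z} \lhd N$ (Lemma~\ref{stabilizer normal in N}). Since $W \lhd P$, the intersection $W_{H_{\tilde Z}} = W \cap H_{\tilde Z}$ is a unipotent normal subgroup of $H_{\tilde Z}$, so it coincides with the unipotent radical and $G_{H_{\tilde Z}}$ is reductive and embeds into $G_N$. The relation $H_{\tilde Z} \lhd N$ then descends to $G_{H_{\tilde Z}} \lhd G_N$. Combined with $G_N$ semisimple (Proposition~\ref{particular choices of i and varphi}) and $G_N \lhd G^{\der}$ (pushing $N \lhd P^{\der}$ from Theorem~\ref{Y Andre monodromy normal} through the quotient by $W$), the structural fact above yields $G_{H_{\tilde Z}} \lhd G^{\der}$.

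For $H_{\bar{\tilde Z_G}} \lhd G^{\der}$, I would rerun the proofs of Lemma~\ref{stabiliser of Z stabilises Y} and Lemma~\ref{stabilizer normal in N} verbatim inside the pure connected Shimura datum $(G,\cX^+_G)$, with the triple $(\bar{Y_G}, \bar{\tilde Z_G}, G_N)$ taking the place of $(Y, \tilde Z, N)$. By Lemma~\ref{monodromy group for projection} the connected algebraic monodromy group of $\bar{Y_G}^{\sm}$ is exactly $G_N$; by Lemma~\ref{algebraicity functorial using Borel embedding} the set $\bar{\tilde Z_G}$ is algebraic in $\cX^+_G$; and the Hecke-correspondence, algebraic-closure and Hodge-generic-stabilizer arguments transpose without essential change to give $H_{\bar{\tilde Z_G}} \lhd G_N$. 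The structural fact then yields $H_{\bar{\tilde Z_G}} \lhd G^{\der}$. The remaining containment $G_{H_{\tilde Z}} \subset H_{\bar{\tilde Z_G}}$ is automatic: the projection $\pi$ intertwines the $H_{\tilde Z}(\R)^+$-action on $\tilde Z$ with the $G_{H_{\tilde Z}}(\R)^+$-action on $\pi(\tilde Z) = \tilde Z_G$, so $G_{H_{\tilde Z}}(\R)^+$ stabilizes $\tilde Z_G$ and hence its closure $\bar{\tilde Z_G}$, putting $G_{H_{\tilde Z}}$ inside $H_{\bar{\tilde Z_G}}$. Combined with the two $G^{\der}$-normalities just proved, this forces $G_{H_{\tilde Z}} \lhd H_{\bar{\tilde Z_G}}$.

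The main obstacle is the pure-case rerun of Lemma~\ref{stabilizer normal in N}: one must verify that the reductions made at the start of Section~\ref{Ax-Lindemann Part 1: Outline of the proof}---Hodge-genericity of $\tilde Z$, minimality of $Y$ with respect to containing $\unif(\tilde Z)$, neatness of $\Gamma$, and $\Gamma \subset P^{\der}(\Q)$---descend compatibly so that the Hecke-correspondence and maximality-of-$\tilde Z$ steps transfer to the pure triple $(\bar{Y_G}, \bar{\tilde Z_G})$. Everything else is routine diagram chasing together with the structural fact about normal subgroups of connected semisimple groups.
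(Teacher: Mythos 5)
Your proposal is correct and follows essentially the same route as the paper: deduce $G_{H_{\tilde Z}}\lhd G_N\lhd G^{\der}$ and conclude by transitivity of normality among reductive (here semisimple) normal subgroups, rerun Lemma~\ref{stabilizer normal in N} for the pure triple $((G,\cX^+_G),\bar{Y_G},\bar{\tilde Z_G})$ to get $H_{\bar{\tilde Z_G}}\lhd G_N$, and obtain the last normality from the inclusion $G_{H_{\tilde Z}}<H_{\bar{\tilde Z_G}}$ together with $G_{H_{\tilde Z}}\lhd G^{\der}$. The extra details you supply (the descent of $H_{\tilde Z}\lhd N$ through the quotient by $W$, and the verification that $\pi$ carries the stabilizer of $\tilde Z$ into the stabilizer of $\bar{\tilde Z_G}$) are exactly what the paper leaves implicit.
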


So far the proof looks similar to the pure case. From now on it will be quite different. For the readers' convenience, we list here some differences between the proof of Ax-Lindemann-Weierstra{\ss} for mixed Shimura varieties and for the pure case:
\begin{itemize}
\item We shall prove that $\tilde{Z}$ is an $H_{\tilde{Z}}(\R)^+U_{H_{\tilde{Z}}}(\C)$-orbit. To prove this, it suffices to prove $\dim H_{\tilde{Z}}>0$ when $S$ is a pure Shimura variety. However this is far from enough for the mixed case, since this does not exclude the naive counterexample when $\dim\tilde{Z}_G>0$ but $H_{\tilde{Z}}$ is unipotent. To overcome it, we should at least prove $\dim G_{H_{\tilde{Z}}}>0$. In fact we shall directly prove $G_{H_{\tilde{Z}}}=H_{\bar{\tilde{Z}_G}}$ (Proposition~\ref{projection of stabilizer is stabilizer of the base}). This equality is not obvious because, as appears in the proof of Lemma~\ref{projection of maximal still maximal}, there is no reason a priori why $\bar{\tilde{Z}_G}$, which is obviously algebraic in $\unif^{-1}(Y_G)$, should be maximal for this property. If one could prove direcly this is the case, then Klingler-Ullmo-Yafaev \cite[Theorem~1.3]{KlinglerThe-Hyperbolic-} would give directly the result.
\item As mentioned in the Introduction, we shall make essential use of the ``family'' version of Pila-Wilkie's theorem (Remark~\ref{EssentialUseOfPilaWilkie});
\item If $P=G$ is reductive, then $H_{\tilde{Z}}\lhd N\lhd P$ implies directly $H_{\tilde{Z}}\lhd P$. This is obviously false when $P$ is not reductive.
\item For a general mixed Shimura variety $S$, the fiber of $S\xrightarrow{[\pi]}S_G$ is not necessarily an algebraic group (Lemma~\ref{group automatically commutative}), hence not a semi-abelian variety. We do not have Ax-Lindemann-Weierstra{\ss} for the fiber for this case. Thus we should execute a proof of Ax-Lindemann-Weierstra{\ss} for the fiber. As the readers will see in $\mathsection$\ref{Ax-Lindemann Part 3: The unipotent part}, the proof of this case calls for much more careful study of $\tilde{Z}$. First of all, when doing the estimate and using the family version of Pila-Wilkie for the fiber (\textit{Step~I}), we should introduce a seemingly strange subgroup which serves as $G_N$ in the section. The reason for this will be explained in Remark~\ref{WhyWeShouldDoBothSteps}. 
Secondly, to prove that $W_{H_{\tilde{Z}}}$ is normal in $W$ is not trivial, and the key to the solution (\textit{Step~IV}) is a well-known fact: any holomorphic morphism from a complex abelian variety to an algebraic torus over $\C$ is trivial.
\end{itemize}

Before proceeding, we prove the following lemma:
\begin{lemma}\label{projection of maximal still maximal}
\begin{enumerate}
\item 
$\bar{\tilde{Y}_G}$ is weakly special. Hence $\bar{\tilde{Y}_G}=G_N(\R)^+\tilde{z}_G$ for any point $\tilde{z}_G\in\tilde{Z}_G$;
\item
$\bar{\unif_G({\tilde{Z}_G}})=\bar{Y_G}$.
\end{enumerate}
\begin{proof}
\begin{enumerate}
\item 
Let $\tilde{Z}^\prime$ be an irreducible algebraic subset of $\cX^+_G$ which contains $\bar{\tilde{Z}_G}$ and is contained in $\unif^{-1}(\bar{Y_G})$, maximal for these properties. By \cite[Theorem~1.3]{KlinglerThe-Hyperbolic-}, $Z^\prime:=\unif_G(\tilde{Z}^\prime)$ is weakly special, and therefore Zariski closed by definition. Now $\tilde{Z}\subset\pi^{-1}(\tilde{Z}^\prime)\cap \unif^{-1}(Y)$. However,
\[
\unif(\pi^{-1}(\tilde{Z}^\prime)\cap \unif^{-1}(Y))=\unif(\pi^{-1}(\tilde{Z}^\prime))\cap Y=[\pi]^{-1}(Z^\prime)\cap Y.
\]
Then we must have $Y\subset[\pi]^{-1}(Z^\prime)$ since $Y$ is the minimal irreducible closed subvariety of $S$ such that $\tilde{Z}\subset \unif^{-1}(Y)$. Therefore $\bar{Y_G}\subset Z^\prime$. But $Z^\prime\subset\bar{Y_G}$ by definition of $Z^\prime$, so $Z^\prime=\bar{Y_G}$. This means that $\bar{Y_G}$ is weakly special.

\item
Let $Y^\prime:=\bar{\unif_G(\tilde{Z}_G)}$, then $\bar{\tilde{Z}_G}\subset \unif_G^{-1}(Y^\prime)$. So $\tilde{Z}\subset\pi^{-1}(\unif_G^{-1}(Y^\prime))=\unif^{-1}([\pi]^{-1}(Y^\prime))$, and so
\[
\tilde{Z}\subset \unif^{-1}([\pi]^{-1}(Y^\prime))\cap \unif^{-1}(Y)=\unif^{-1}([\pi]^{-1}(Y^\prime)\cap Y).
\]
Hence there exists an irreducible component $Y^{\prime\prime}$ of $[\pi]^{-1}(Y^\prime)\cap Y$ such that  $\tilde{Z}\subset \unif^{-1}(Y^{\prime\prime})$. But
\[
[\pi](Y^{\prime\prime})\subset[\pi]([\pi]^{-1}(Y^\prime)\cap Y)=Y^\prime\cap Y_G,
\]
so $\dim([\pi](Y^{\prime\prime}))\leqslant\dim(Y^\prime\cap Y_G)$. If $Y^\prime\neq\bar{Y_G}$, then $\dim(Y^\prime\cap Y_G)<\dim(Y_G)$ and therefore $\dim(Y^{\prime\prime})<\dim(Y)$, which contradicts the minimality of $Y$. So $Y^\prime=\bar{Y_G}$.
\end{enumerate}
\end{proof}
\end{lemma}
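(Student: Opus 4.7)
The plan is to reduce both assertions to the pure case via the projection $\pi$, relying on two already-established inputs: the pure Ax-Lindemann-Weierstrass theorem of Klingler-Ullmo-Yafaev (for Part~(1)) and the minimality of $Y$ as the smallest irreducible closed subvariety of $S$ whose preimage contains $\tilde{Z}$ (for both parts).

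For Part~(1), I would first choose $\tilde{Z}'$ to be an irreducible algebraic subset of $\cX^+_G$ that contains $\bar{\tilde{Z}_G}$, is contained in $\unif_G^{-1}(\bar{Y_G})$, and is maximal for these properties; note that $\bar{\tilde{Z}_G}$ is algebraic by Lemma~\ref{algebraicity functorial using Borel embedding}, so such a $\tilde{Z}'$ exists. The pure Ax-Lindemann theorem \cite{KlinglerThe-Hyperbolic-} then yields that $Z':=\unif_G(\tilde{Z}')$ is weakly special, in particular closed (Remark~\ref{weakly special closed}). The key point is now to promote $Z'$ to all of $\bar{Y_G}$. For this I use minimality of $Y$: since $\tilde{Z}\subset\pi^{-1}(\tilde{Z}')\cap\unif^{-1}(Y)$, applying $\unif$ gives $\unif(\tilde{Z})\subset Y\cap[\pi]^{-1}(Z')$, and $\unif(\tilde{Z})$ is contained in some irreducible component of this closed intersection. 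The minimality of $Y$ forces this component to equal $Y$, hence $Y\subset[\pi]^{-1}(Z')$ and $\bar{Y_G}\subset Z'$. The reverse inclusion is automatic, so $\bar{Y_G}=Z'$ is weakly special. To deduce the explicit shape $\bar{\tilde{Y}_G}=G_N(\R)^+\tilde{z}_G$, I would appeal to Corollary~\ref{weakly special iff algebraic on the top and on the bottom} applied to the weakly special $\bar{Y_G}$, combined with Lemma~\ref{monodromy group for projection} identifying the connected algebraic monodromy group of $\bar{Y_G}^{\sm}$ with $G_N$.

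For Part~(2), I would argue by contradiction. Let $Y':=\bar{\unif_G(\tilde{Z}_G)}$, which is closed in $\bar{Y_G}$, and suppose $Y'\subsetneq\bar{Y_G}$. Since $\tilde{Z}_G\subset\unif_G^{-1}(Y')$, we get $\tilde{Z}\subset\unif^{-1}([\pi]^{-1}(Y')\cap Y)$, so $\unif(\tilde{Z})$ is contained in some irreducible component $Y''$ of $[\pi]^{-1}(Y')\cap Y$. The restriction $[\pi]|_Y\colon Y\to\bar{Y_G}$ is dominant between irreducibles, with generic fiber dimension $\dim Y-\dim Y_G$. A standard semicontinuity argument (the locus of $\bar{Y_G}$ where fibers have exactly the generic dimension is open dense, and components of $Y$ lying above the complement have strictly smaller dimension than $Y$) yields $\dim Y''\leqslant\dim(Y'\cap Y_G)+(\dim Y-\dim Y_G)<\dim Y$. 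This contradicts the minimality of $Y$.

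The main obstacle I foresee is the passage in Part~(1) from $\unif_G(\tilde{Z}')$ being weakly special to its equality with $\bar{Y_G}$. It is essential to have reduced to the case where $Y$ is the \emph{minimal} irreducible closed subvariety containing $\unif(\tilde{Z})$; without that reduction one would only obtain $\bar{Y_G}\subset Z'$ up to an irreducible component, and the conclusion about $\bar{Y_G}$ being weakly special would fail. All the other steps are standard once the Klingler-Ullmo-Yafaev theorem and the monodromy identification of Lemma~\ref{monodromy group for projection} are in hand.
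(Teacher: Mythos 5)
Your Part~(1) is essentially identical to the paper's argument, and your remark at the end about how to extract the explicit shape $\bar{\tilde{Y}_G}=G_N(\R)^+\tilde{z}_G$ via Corollary~\ref{weakly special iff algebraic on the top and on the bottom} and Lemma~\ref{monodromy group for projection} is a reasonable (and slightly more explicit) unpacking of the paper's terse ``Hence''.

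In Part~(2), you reach the right conclusion, but one intermediate inequality you claim is not justified. You assert $\dim Y''\leqslant\dim(Y'\cap Y_G)+(\dim Y-\dim Y_G)$ by appealing to semicontinuity of fiber dimension. That bound would require the fibers of $[\pi]|_Y$ over $Y'\cap Y_G$ to have dimension at most $\dim Y-\dim Y_G$, which is exactly what can fail at special points (for instance, in the blow-up $\mathrm{Bl}_0(\A^2)\to\A^2$ the generic fiber dimension is $0$ but the fiber over the origin is $\P^1$). Upper semicontinuity gives a bound on the locus where fibers jump, not a uniform bound on fiber dimensions over the bad locus. The correct and simpler route — which is what the paper uses — is purely topological: since $[\pi](Y'')\subset Y'\cap Y_G$ has dimension strictly smaller than $\dim Y_G=\dim([\pi](Y))$, one has $Y''\neq Y$; as $Y''$ is a closed irreducible subvariety of the irreducible $Y$, it follows immediately that $\dim Y''<\dim Y$, contradicting minimality. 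No fiber-dimension estimate is needed. With that substitution the argument is exactly the paper's.
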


\begin{prop}[key proposition]\label{projection of stabilizer is stabilizer of the base}
The set $\bar{\tilde{Z}_G}$ is weakly special and $G_{H_{\tilde{Z}}}=H_{\bar{\tilde{Z}_G}}$. In other words,
\[
\bar{\tilde{Z}_G}=G_{H_{\tilde{Z}}}(\R)^+\tilde{z}_G
\]
for any point $\tilde{z}_G\in\tilde{Z}_G$.
\end{prop}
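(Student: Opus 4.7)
My strategy is to establish the equality $G_{H_{\tilde{Z}}}=H_{\bar{\tilde{Z}_G}}$ first, using a family version of Pila--Wilkie counting, and then to derive the weak specialness of $\bar{\tilde{Z}_G}$ as an easy corollary.

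The containment $G_{H_{\tilde{Z}}}\subset H_{\bar{\tilde{Z}_G}}$ is formal: if $h\in H_{\tilde{Z}}(\R)^+U_{H_{\tilde{Z}}}(\C)$, then $h\tilde{Z}=\tilde{Z}$, so $\pi(h)\tilde{Z}_G=\tilde{Z}_G$, hence $\pi(h)$ fixes the Zariski closure $\bar{\tilde{Z}_G}$. Because we arranged $\Gamma\subset P^\der(\Q)$, the connected algebraic group $G_{H_{\tilde{Z}}}=\pi(H_{\tilde{Z}})$ is contained in $G^\der$, so $G_{H_{\tilde{Z}}}(\R)^+$ stabilizes $\bar{\tilde{Z}_G}$ and this gives $G_{H_{\tilde{Z}}}\subset H_{\bar{\tilde{Z}_G}}$.

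For the reverse inclusion, the plan is a contradiction argument via Pila--Wilkie. Realize $\cX^+$ as a semi-algebraic subset of $\cX^\vee$ via Proposition~\ref{realization of the uniformizing space}, and fix $\R_{an,\exp}$-definable fundamental sets $\cF\subset\cX^+$ and $\cF_G\subset\cX^+_G$ for $\Gamma$ and $\Gamma_G$ respectively, compatible with the projection $\pi$. Consider the definable set
\[
\Sigma:=\bigl\{(g,p)\in G(\R)^+\times P(\R)^+U(\C) : \pi(p)=g,\ g\cdot\bar{\tilde{Z}_G}\cap\cF_G\neq\emptyset,\ p\tilde{Z}\cap\cF\neq\emptyset\bigr\}.
\]
Its rational points (with respect to the $\Q$-structures coming from $P$ and $G$) parametrize the $\Gamma_G$-translates of $\bar{\tilde{Z}_G}$ meeting $\cF_G$ together with a chosen lift to a $\Gamma$-translate of $\tilde{Z}$ meeting $\cF$. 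If $G_{H_{\tilde{Z}}}\subsetneq H_{\bar{\tilde{Z}_G}}$, the positive-dimensional excess in the base forces the number of rational points in $\Sigma$ of height $\leqslant T$ to grow polynomially, by a volume/lattice point counting estimate applied to the weakly special subset $\bar{\tilde{Y}_G}=G_N(\R)^+\tilde{z}_G$ (Lemma~\ref{projection of maximal still maximal}). The family version of Pila--Wilkie then produces a positive-dimensional real semi-algebraic arc $\sigma\colon(0,1)\to\Sigma$, uniformly across a definable family of base points in $\tilde{Z}_G\cap\cF_G$.

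Projecting $\sigma$ to the $P$-coordinate and taking the Zariski closure yields an algebraic subgroup $H'\subset P$ whose image $G_{H'}\subset G$ strictly contains $G_{H_{\tilde{Z}}}$ and still sits inside $H_{\bar{\tilde{Z}_G}}$; moreover each element of $H'(\R)^+U_{H'}(\C)$ sends $\tilde{Z}$ to a $\Gamma$-translate of $\tilde{Z}$ meeting $\cF$. Exactly as in the proof of Lemma~\ref{stabilizer normal in N}, one combines Lemma~\ref{stabiliser of Z stabilises Y} with the minimality of $Y$ and the maximality of $\tilde{Z}$ to upgrade ``sends $\tilde{Z}$ to a $\Gamma$-translate of itself'' to ``stabilizes $\tilde{Z}$''. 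This forces $H'\subset H_{\tilde{Z}}$, contradicting $G_{H'}\supsetneq G_{H_{\tilde{Z}}}$. Thus $G_{H_{\tilde{Z}}}=H_{\bar{\tilde{Z}_G}}$.

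Finally, the weak specialness of $\bar{\tilde{Z}_G}$ follows. By Corollary~\ref{normal subgroups}, $G_{H_{\tilde{Z}}}\lhd G^\der$, so for any $\tilde{z}_G\in\tilde{Z}_G$, the subset $G_{H_{\tilde{Z}}}(\R)^+\tilde{z}_G\subset\cX^+_G$ is weakly special (it is the fibre of the quotient Shimura morphism by $G_{H_{\tilde{Z}}}$ through $\tilde{z}_G$). By the proved equality this set equals $H_{\bar{\tilde{Z}_G}}(\R)^+\tilde{z}_G$, which contains $\bar{\tilde{Z}_G}$ (as $H_{\bar{\tilde{Z}_G}}(\R)^+$ stabilizes $\bar{\tilde{Z}_G}$) and is contained in $\bar{\tilde{Z}_G}$ (as it is a weakly special, hence algebraic, set containing $\tilde{Z}_G$). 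So $\bar{\tilde{Z}_G}=G_{H_{\tilde{Z}}}(\R)^+\tilde{z}_G$, proving both claims.

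The principal obstacle is the middle step: constructing $\Sigma$ so that (a) it is definable in $\R_{an,\exp}$, (b) the family version of Pila--Wilkie applies with a lower bound for rational points that is strictly polynomial when $\dim H_{\bar{\tilde{Z}_G}}>\dim G_{H_{\tilde{Z}}}$, and (c) the resulting semi-algebraic arc can be promoted to an actual subgroup stabilizing $\tilde{Z}$ itself, rather than only the $\Gamma$-orbit of $\tilde{Z}$. The volume/counting estimate in (b) and the maximality argument in (c) are the two places where the mixed (as opposed to pure) nature of $S$ creates genuine new difficulties over the Klingler--Ullmo--Yafaev argument.
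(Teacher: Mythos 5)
Your overall architecture (compatible definable fundamental sets, a counting lower bound in the base, the family version of Pila--Wilkie, and maximality of $\tilde{Z}$ to produce stabilizer elements) is the same as the paper's, and your formal inclusion $G_{H_{\tilde{Z}}}\subset H_{\bar{\tilde{Z}_G}}$ and the final deduction of weak specialness from the group equality are fine. But the middle of the argument has two genuine gaps.

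First, the step from the Pila--Wilkie output to the group-theoretic conclusion does not work as written. The Zariski closure of the projection of a semi-algebraic arc $\sigma$ is not an algebraic subgroup, so your $H'$ is not constructed; and even if you produce some positive-dimensional semi-algebraic piece of the stabilizer, ``strictly contains $G_{H_{\tilde{Z}}}$, contradiction'' does not terminate the argument, because a single application of Pila--Wilkie only yields positive dimension, not strict containment of a prescribed subgroup. What the paper actually does is translate the block $B$ by one of its $\Gamma$-points into $\stab(\tilde{Z})$ (using the maximality of $\tilde{Z}$ and \cite[Lemma~4.1]{PilaAbelianSurfaces}), count the resulting $\Gamma$-points of bounded height in $\Gamma_{\tilde{Z}}$, and conclude that $H_{\tilde{Z}}$ \emph{itself} has positive-dimensional image in each $\Q$-simple factor $H_i^{\ad}$ of $G_N^{\ad}$; it then uses Corollary~\ref{normal subgroups} ($p_i(G_{H_{\tilde{Z}}})\cap H_i^{\ad}$ is normal in the simple group $H_i^{\ad}$) to upgrade ``positive-dimensional'' to ``all of $H_i$''. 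This normality-plus-simplicity step, carried out factor by factor, is what closes the argument, and it is absent from your proposal.

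Second, your source for the polynomial lower bound is misattributed. The count does not come from the ``positive-dimensional excess'' of $H_{\bar{\tilde{Z}_G}}$ over $G_{H_{\tilde{Z}}}$; it comes from Klingler--Ullmo--Yafaev applied in the base to an algebraic curve $C_{G,i}$ inside $\bar{\tilde{Z}_{G,i}}$, which first requires checking $\dim\tilde{Z}_{G,i}>0$ for each relevant factor. Transferring that count to your two-level set $\Sigma$ requires showing that every counted $\gamma_{G,i}$ actually lifts to some $\gamma\in\Gamma$ with $\gamma^{-1}\cF\cap\tilde{Z}\neq\emptyset$ (Lemma~\ref{projection of theta is theta}), which is exactly why the fundamental set must be built so that $\pi(\cF)=\prod_i\cF_{H,i}$. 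Finally, the family version of Pila--Wilkie is needed for a more specific reason than the uniformity you invoke: one pulls the block family back along $\pi_i$ and needs the number $n_0$ of connected components of the fibers to be bounded independently of $T$, so that one connected component still carries $\gg T^{\delta'}$ lattice points; without this the argument degrades to the non-family version and fails.
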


Now let us show how this proposition together with Theorem~\ref{Ax-Lindemann for complex semi-abelian varieties} implies Theorem~\ref{Ax-Lindemann for type star}. Before proceeding to the final argument, we shall prove the following group theoretical lemma:
\begin{lemma}\label{compatible Levi}
Fixing a Levi decomposition $H_{\tilde{Z}}=W_{H_{\tilde{Z}}}\rtimes G_{H_{\tilde{Z}}}$, there exists a compatible Levi decomposition $P=W\rtimes G$.
\begin{proof} Suppose that the fixed Levi decomposition of $H_{\tilde{Z}}$ is given by $s_1\colon G_{H_{\tilde{Z}}}\rightarrow H_{\tilde{Z}}$. Define $P_*:=\pi^{-1}(G_{H_{\tilde{Z}}})$, then $H_{\tilde{Z}}<P_*$. Now choose any Levi decomposition $P=W\rtimes G$ defined by $s_2\colon G\rightarrow P$. Then $G_{H_{\tilde{Z}}}$, being a subgroup of $G$, is realized as a subgroup of $P$ via $s_2$. Hence $s_2$ induces a Levi-decomposition $P_*=W\rtimes^{s_2} G_{H_{\tilde{Z}}}$. We have thus a diagram
\[
\begin{diagram}
1 &\rTo &W_{H_{\tilde{Z}}} &\rTo &H_{\tilde{Z}} &\rTo\upperarrowtwo{s_1} &G_{H_{\tilde{Z}}} &\rTo &1 \\
& &\dInto & &\dInto & &\dTo^{=} & \\
1 &\rTo &W &\rTo &P_* &\rTo\upperarrowtwo{s_1} &G_{H_{\tilde{Z}}} &\rTo &1
\end{diagram},
\]
where the morphism $s_1$ in the second line is induced by the one in the first line. Now $s_1$, $s_2$ define two Levi decompositions of $P_*$. They differ by the conjugation by an element $w_0$ of $W(\Q)$ by \cite[Theorem~2.3]{AlgebraicGroupBible}. So replacing $s_2$ by its conjugation by $w_0$ we can find a Levi decomposition of $P$ which is compatible with the fixed $H_{\tilde{Z}}=W_{H_{\tilde{Z}}}\rtimes G_{H_{\tilde{Z}}}$.
\end{proof}
\end{lemma}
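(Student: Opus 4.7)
The plan is to realize the two desired decompositions as giving Levi subgroups of a common intermediate group and then invoke the conjugacy of Levi decompositions in characteristic zero. Concretely, start from any Levi decomposition $P = W \rtimes G$ given by some splitting $s_2 \colon G \to P$ of the projection $\pi \colon P \twoheadrightarrow G$. The subgroup $G_{H_{\tilde Z}} < G$ then pulls back to an intermediate group
\[
P_* := \pi^{-1}(G_{H_{\tilde Z}}), \qquad 1 \to W \to P_* \to G_{H_{\tilde Z}} \to 1,
\]
which sits between $H_{\tilde Z}$ and $P$ (note $W_{H_{\tilde Z}} \subset W$ and $G_{H_{\tilde Z}} < G$, so $H_{\tilde Z} \subset P_*$). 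The restriction $s_2|_{G_{H_{\tilde Z}}} \colon G_{H_{\tilde Z}} \hookrightarrow P_*$ is a Levi splitting of the sequence above, and the prescribed splitting $s_1 \colon G_{H_{\tilde Z}} \hookrightarrow H_{\tilde Z} \hookrightarrow P_*$ is another one.

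Next, apply the conjugacy theorem for Levi decompositions over $\mathbb Q$ (Mostow/Malcev, as stated e.g.\ in \cite[Theorem 2.3]{AlgebraicGroupBible}) to $P_*$: any two Levi subgroups of $P_*$ are conjugate by an element of $W(\mathbb Q)$, the unipotent radical of $P_*$. Thus there exists $w_0 \in W(\mathbb Q)$ with $\mathrm{int}(w_0) \circ s_2|_{G_{H_{\tilde Z}}} = s_1$. Replace $s_2$ by $s_2' := \mathrm{int}(w_0) \circ s_2 \colon G \to P$; since $w_0 \in W \lhd P$, this is still a Levi splitting of $P$, and by construction its restriction to $G_{H_{\tilde Z}}$ coincides with $s_1$. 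The resulting decomposition $P = W \rtimes_{s_2'} G$ is compatible with the fixed $H_{\tilde Z} = W_{H_{\tilde Z}} \rtimes_{s_1} G_{H_{\tilde Z}}$ in the sense required.

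The only genuine content is the conjugacy step: one must know that the two Levi subgroups $s_1(G_{H_{\tilde Z}})$ and $s_2(G_{H_{\tilde Z}})$ of $P_*$ are conjugate by an element of the unipotent radical, and moreover that such an element can be taken in $W(\mathbb Q)$ (not merely $W(\overline{\mathbb Q})$). This is exactly the content of the Mostow–Malcev theorem over a field of characteristic zero, so the obstacle is purely a citation issue rather than a conceptual one. Everything else is bookkeeping: the intermediate group $P_*$ is forced by the constraint that $s_2$ must pass through $G_{H_{\tilde Z}}$ on the nose, and conjugating the ambient splitting by $w_0 \in W$ is harmless because $W$ acts trivially on the reductive quotient $G$.
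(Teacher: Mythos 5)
Your proof is correct and follows essentially the same route as the paper's: you form the same intermediate group $P_*=\pi^{-1}(G_{H_{\tilde{Z}}})$, compare the two splittings $s_1$ and $s_2|_{G_{H_{\tilde{Z}}}}$ of $1\to W\to P_*\to G_{H_{\tilde{Z}}}\to 1$ via the conjugacy of Levi decompositions over $\Q$, and then conjugate the ambient splitting $s_2$ by the resulting $w_0\in W(\Q)$. No gaps; the citation to the Mostow--Malcev conjugacy theorem is exactly the one the paper uses.
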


\begin{thm}\label{conclusion}
\begin{enumerate}
\item $\tilde{Z}=H_{\tilde{Z}}(\R)^+U_{H_{\tilde{Z}}}(\C)\tilde{z}$ for any $\tilde{z}\in\tilde{Z}$;
\item $H_{\tilde{Z}}\lhd P$.
\end{enumerate}
Hence $\tilde{Z}$ is weakly special by definition.
\begin{proof}
\begin{enumerate}
\item Consider a fibre of $\tilde{Z}$ over a Hodge-generic point $\tilde{z}_G\in \tilde{Z}_G$ such that $\pi|_{\tilde{Z}}$ is flat at $\tilde{z}_G$ (such a point exists by \cite[$\mathsection 4$, Lemma~1.4]{AndreMumford-Tate-gr} and generic flatness). Suppose that $\tilde{W}$ is an irreducible algebraic component of $\tilde{Z}_{\tilde{z}_G}$ such that $\dim(\tilde{Z}_{\tilde{z}_G})=\dim(\tilde{W})$, then since $\pi|_{\tilde{Z}}$ is flat at $\tilde{z}_G$, 
\[
\dim(\tilde{Z})=\dim(\tilde{Z}_G)+\dim(\tilde{Z}_{\tilde{z}_G})=\dim(\tilde{Z}_G)+\dim(\tilde{W}).
\]

Consider the set $\tilde{E}:=H_{\tilde{Z}}(\R)^+U_{H_{\tilde{Z}}}(\C)\tilde{W}$. It is semi-algebraic (since $\tilde{W}$ is algebraic and the action of $P(\R)^+U(\C)$ on $\cX^+$ is algebraic). The fact $\tilde{W}\subset\tilde{Z}$ implies that $\tilde{E}\subset\tilde{Z}$. By \cite[Lemma~4.1]{PilaAbelianSurfaces}, there exists an irreducible algebraic subset of $\cX^+$, say $\tilde{E}_{\mathrm{alg}}$, which contains $\tilde{E}$ and is contained in $\tilde{Z}$.
Now we have by Proposition~\ref{projection of stabilizer is stabilizer of the base}
\[
\pi(\tilde{E})=G_{H_{\tilde{Z}}}(\R)^+\tilde{z}_G=H_{\bar{\tilde{Z}_G}}(\R)^+\tilde{z}_G=\tilde{Z}_G
\]
and that the $\R$-dimension of every fiber of $\pi|_{\tilde{E}}$ is at least $\dim_\R(\tilde{W})$. So
\[
\dim(\tilde{E}_{\mathrm{alg}})\geqslant\dim(\pi(\tilde{E}))+\dim(\tilde{W})=\dim(\tilde{Z}_G)+\dim(\tilde{W})=\dim(\tilde{Z}).
\]
So $\tilde{E}=\tilde{Z}$ since $\tilde{Z}$ is irreducible.

Next let $\tilde{W}^\prime$ be an irreducible algebraic subset which contains $\tilde{Z}_{\tilde{z}_G}$ and is contained in $\unif^{-1}(Y)_{\tilde{z}_G}$, maximal for these properties. Then $\tilde{W}^\prime$ is weakly special by Theorem~\ref{Ax-Lindemann for complex semi-abelian varieties}. We have $\tilde{W}^\prime\subset\tilde{Y}$ since $\tilde{Y}$ is an irreducible component of $\pi^{-1}(Y)$. Consider $\tilde{E}^\prime:=H_{\tilde{Z}}(\R)^+U_{H_{\tilde{Z}}}(\C)\tilde{W}^\prime$. Then $\tilde{E}^\prime\subset\tilde{Y}$ 
by Lemma~\ref{stabiliser of Z stabilises Y}. But $\tilde{E}^\prime$ is semi-algebraic, so by \cite[Lemma~4.1]{PilaAbelianSurfaces}, there exists an irreducible algebraic subset of $\cX^+$, say $\tilde{E}^\prime_{\mathrm{alg}}$ which contains $\tilde{E}^\prime$ and is contained in $\tilde{Y}$. So $\tilde{Z}=\tilde{E}\subset\tilde{E}^\prime_{\mathrm{alg}}\subset\tilde{Y}$, and hence $\tilde{Z}=\tilde{E}^\prime_{\mathrm{alg}}=\tilde{E}^\prime$ by the maximality of $\tilde{Z}$. So $\tilde{Z}_{\tilde{z}_G}=\tilde{W}^\prime$ is weakly special.

Write $\tilde{Z}_{\tilde{z}_G}=W^\prime(\R)U^\prime(\C)\tilde{z}$ with $W^\prime<W$, $U^\prime=W^\prime\cap U$ and $\tilde{z}\in\tilde{Z}_{\tilde{z}_G}$. Then $W_{H_{\tilde{Z}}}<W^\prime$. The complex structure of $\pi^{-1}(\tilde{z}_G)$ comes from $W(\R)U(\C)\cong W(\C)/F^0_{\tilde{z}_G}W(\C)$, where $F^0_{\tilde{z}_G}W(\C)=\exp(F^0_{\tilde{z}_G}\lie W_{\C})$. So the fact that $\tilde{Z}_{\tilde{z}_G}$ is a complex subspace of $\pi^{-1}(\tilde{z}_G)$ implies that $W^\prime/U^\prime$ is a $\MT(\tilde{z}_G)=G$-module. Hence $W^\prime$ is a $G$-group. 

Define $P^\prime:=W^\prime H_{\tilde{Z}}$, then $P^\prime$ is a subgroup of $P$ since $W^\prime>W_{H_{\tilde{Z}}}$ and $G_{H_{\tilde{Z}}}W^\prime=W^\prime$. Now we have
\[
\tilde{Z}=H_{\tilde{Z}}(\R)^+U_{H_{\tilde{Z}}}(\C)\tilde{Z}_{\tilde{z}_G}=H_{\tilde{Z}}(\R)^+U_{H_{\tilde{Z}}}(\C)W^\prime(\R)U^\prime(\C)\tilde{z}=P^\prime(\R)^+U^\prime(\C)\tilde{z}.
\]
So $H_{\tilde{Z}}=P^\prime$ because $H_{\tilde{Z}}$ is the largest subgroup of $P^{\der}$ such that $H_{\tilde{Z}}(\R)^+U_{H_{\tilde{Z}}}(\C)$ stabilizes $\tilde{Z}$. So we have $\tilde{Z}=H_{\tilde{Z}}(\R)^+U_{H_{\tilde{Z}}}(\C)\tilde{z}$.

\item First of all, $U_{H_{\tilde{Z}}}\lhd P$ by Proposition~\ref{properties of irreducible mixed Shimura datum}(2).

Next consider the complex structure of $\pi^{-1}(\tilde{z}_G)$ which comes from $W(\R)U(\C)$ $\cong W(\C)/F^0_{\tilde{z}_G}W(\C)$. So the fact that $\tilde{Z}_{\tilde{z}_G}$ is a complex subspace of $\pi^{-1}(\tilde{z}_G)$ implies that $V_{H_{\tilde{Z}}}$ is a $\MT(\tilde{z}_G)=G$-module. Hence $W_{H_{\tilde{Z}}}$ is a $G$-group. Besides, $G_{H_{\tilde{Z}}}\lhd G$ by Proposition~\ref{projection of stabilizer is stabilizer of the base}. In particular, $G_{H_{\tilde{Z}}}$ is reductive.

Then let us prove $W_{H_{\tilde{Z}}}\lhd P$. It suffices to prove $W_{H_{\tilde{Z}}}\lhd W$. For any $\tilde{z}\in\tilde{Z}$, we have proved in (1) that $\tilde{Z}_{\tilde{z}_G}=W_{H_{\tilde{Z}}}(\R)U_{H_{\tilde{Z}}}(\C)\tilde{z}$ is weakly special. Hence by Proposition~\ref{particular choices of i and varphi}, there is a connected mixed Shimura subdatum $(Q,\cY^+)\hookrightarrow (P,\cX^+)$ such that $\tilde{z}\in\cY^+$ and $W_{H_{\tilde{Z}}}\lhd Q$. Define $W^*$ to be the $G$-subgroup (of $W$) generated by $W_Q:=\cR_u(Q)$, then $W_{H_{\tilde{Z}}}\lhd W^*$ since $W_{H_{\tilde{Z}}}$ is a $G$-group.

Fix a Levi decomposition $H_{\tilde{Z}}=W_{H_{\tilde{Z}}}\rtimes G_{H_{\tilde{Z}}}$ and choose a compatible Levi decomposition $P=W\rtimes G$ (as is shown in Lemma~\ref{compatible Levi}). Let $P^*$ be the group generated by $GQ$, then $\cR_u(P^*)=W^*$ and $P^*/W^*=G$. The group $P^*$ defines a connected mixed Shimura datum $(P^*,\cX^{*+})$ with $\cX^{*+}=P^*(\R)^+U^*(\C)\tilde{z}$. Now $\tilde{Z}=H_{\tilde{Z}}(\R)^+U_{H_{\tilde{Z}}}(\C)\tilde{z}\subset\cX^{*+}$. But $\tilde{Z}$ is Hodge generic in $\cX^+$ by assumption, hence $P=P^*$ and $W=W^*$. So $W_{H_{\tilde{Z}}}\lhd W$ and hence $W_{H_{\tilde{Z}}}\lhd P$.

Use the notation in $\mathsection$\ref{structure of the underlying group}. We are done if we can prove:
\[
\forall u\in U,~\forall v\in V,\text{ \textit{and} }\forall g\in G_{H_{\tilde{Z}}},~(u,v,1)(0,0,g)(-u,-v,1)\in H_{\tilde{Z}}.
\]
By Corollary \ref{decomposition of V,U as G-module}, there exist decompositions
\[
U=U_N\oplus U_N^\bot\qquad V=V_N\oplus V_N^\bot
\]
as $G$-modules such that $G_N$ acts trivially on $U_N^\bot$ and $V_N^\bot$. Now 
\begin{align*}
(u,v,1)(0,0,g)(-u,-v,1) &
=(u,v,g)(-u,-v,1) \\
&=(u-g\cdot u,v-g\cdot v,g) \\
&=((u_N+u_N^\bot)-g\cdot(u_N+u_N^\bot),(v_N+v_N^\bot)-g\cdot(v_N+v_N^\bot),g) \\
&=(u_N-g\cdot u_N,v_N-g\cdot v_N,g) \\
&=(u_N,v_N,1)(0,0,g)(-u_N,-v_N,1)\in H_{\tilde{Z}},
\end{align*}
where the last inclusion follows from Lemma~\ref{stabilizer normal in N}.
\end{enumerate}
\end{proof}
\end{thm}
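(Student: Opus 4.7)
The plan is to combine Proposition \ref{projection of stabilizer is stabilizer of the base}, which controls the image $\tilde{Z}_G$, with Theorem \ref{Ax-Lindemann for complex semi-abelian varieties}, which controls the generic fibre of $\pi|_{\tilde{Z}}$, to first establish (1), and then derive (2) by handling $U_{H_{\tilde{Z}}}$, $W_{H_{\tilde{Z}}}$ and the action of $G_{H_{\tilde{Z}}}$ on $W$ separately.

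For (1), I would fix a Hodge-generic $\tilde{z}_G\in\tilde{Z}_G$ at which $\pi|_{\tilde{Z}}$ is flat (which exists by generic flatness together with \cite[$\mathsection 4$, Lemma~1.4]{AndreMumford-Tate-gr}) and pick a top-dimensional irreducible algebraic component $\tilde{W}$ of the fibre $\tilde{Z}_{\tilde{z}_G}$. The saturation $\tilde{E}:=H_{\tilde{Z}}(\R)^+U_{H_{\tilde{Z}}}(\C)\tilde{W}$ is semi-algebraic, contained in $\tilde{Z}$, and its image under $\pi$ is all of $\tilde{Z}_G$ by Proposition \ref{projection of stabilizer is stabilizer of the base}. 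Engulfing $\tilde{E}$ in an irreducible algebraic subset of $\tilde{Z}$ via \cite[Lemma 4.1]{PilaAbelianSurfaces} and a dimension count (using flatness) forces $\tilde{Z}=\tilde{E}$. Repeating the argument after replacing $\tilde{W}$ by the maximal weakly special $\tilde{W}'\supseteq\tilde{Z}_{\tilde{z}_G}$ supplied by Theorem \ref{Ax-Lindemann for complex semi-abelian varieties} (inside the fibre $\unif^{-1}(Y)_{\tilde{z}_G}$, staying inside $\tilde{Y}$ by Lemma \ref{stabiliser of Z stabilises Y}) identifies $\tilde{Z}_{\tilde{z}_G}$ itself as a weakly special subset $W'(\R)U'(\C)\tilde{z}$ of the semi-abelian fibre. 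Since $W'/U'$ must be a $\MT(\tilde{z}_G)=G$-module, $W'$ is a $G$-group, and the subgroup $P':=W'H_{\tilde{Z}}\subseteq P$ then satisfies $\tilde{Z}=P'(\R)^+U'(\C)\tilde{z}$; maximality of $H_{\tilde{Z}}$ among stabilising subgroups of $P^\der$ forces $P'=H_{\tilde{Z}}$, yielding (1).

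For (2), $U_{H_{\tilde{Z}}}\lhd P$ is immediate from Proposition \ref{properties of irreducible mixed Shimura datum}(2). The quotient $V_{H_{\tilde{Z}}}=W_{H_{\tilde{Z}}}/U_{H_{\tilde{Z}}}$ is a $G$-module because $\tilde{Z}_{\tilde{z}_G}$ is a complex submanifold of the fibre, so $W_{H_{\tilde{Z}}}$ is a $G$-group. Applying Proposition \ref{particular choices of i and varphi} to the weakly special $\tilde{Z}_{\tilde{z}_G}$ produces a sub-Shimura datum $(Q,\cY^+)\hookrightarrow(P,\cX^+)$ with $W_{H_{\tilde{Z}}}\lhd Q$. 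Using compatible Levi decompositions provided by Lemma \ref{compatible Levi}, the $G$-group $W^*\subseteq W$ generated by $\cR_u(Q)$ and the group $P^*:=GQ$ then define a connected mixed Shimura subdatum containing the Hodge-generic $\tilde{Z}$, forcing $P^*=P$ and $W^*=W$, whence $W_{H_{\tilde{Z}}}\lhd W$ and hence $W_{H_{\tilde{Z}}}\lhd P$. Finally, for $g\in G_{H_{\tilde{Z}}}$ and $(u,v)\in W$ the decompositions $U=U_N\oplus U_N^\bot$, $V=V_N\oplus V_N^\bot$ of Corollary \ref{decomposition of V,U as G-module} (on which $G_N\supseteq G_{H_{\tilde{Z}}}$ acts trivially on the $\bot$-parts) reduce the conjugate $(u,v,1)(0,0,g)(-u,-v,1)$ to $(u_N,v_N,1)(0,0,g)(-u_N,-v_N,1)$, which lies in $N$ and therefore in $H_{\tilde{Z}}$ by Lemma \ref{stabilizer normal in N}.

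The main obstacle is the fibration step in (1): passing from the image-level control of $\bar{\tilde{Z}_G}$ provided by Proposition \ref{projection of stabilizer is stabilizer of the base} to the global fibrewise structure is delicate, because $\pi|_{\tilde{Z}}$ is not flat everywhere and because one must simultaneously invoke Pila-type semi-algebraic enlargement and the non-trivial unipotent Ax-Lindemann theorem (Theorem \ref{Ax-Lindemann for complex semi-abelian varieties}, still to be proved) in order to both identify $\tilde{Z}$ as a group orbit and verify that the generic fibre is weakly special. A secondary subtlety is the normality $W_{H_{\tilde{Z}}}\lhd W$, which in contrast to the pure case is not automatic from $H_{\tilde{Z}}\lhd N\lhd P$ and instead requires the explicit construction of an ambient Shimura subdatum together with compatible Levi decompositions.
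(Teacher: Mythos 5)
Your proposal is correct and follows essentially the same route as the paper's proof: the same flatness/dimension-count argument with the saturation $\tilde{E}$ and \cite[Lemma~4.1]{PilaAbelianSurfaces}, the same identification of the generic fibre as weakly special via Theorem \ref{Ax-Lindemann for complex semi-abelian varieties}, the same construction of $P'=W'H_{\tilde{Z}}$ and of $(P^*,\cX^{*+})$ for the normality of $W_{H_{\tilde{Z}}}$, and the same final conjugation computation using Corollary \ref{decomposition of V,U as G-module} and $H_{\tilde{Z}}\lhd N$. The only quibble is the phrasing of the last step: the displayed element lies in $H_{\tilde{Z}}$ not because it lies in $N$, but because it is the conjugate of $g\in H_{\tilde{Z}}$ by $(u_N,v_N,1)\in N$ and $H_{\tilde{Z}}\lhd N$ by Lemma \ref{stabilizer normal in N}, which is clearly what you intended.
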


\section{Ax-Lindemann-Weierstra{\ss} Part 2: Estimate}\label{Ax-Lindemann Part 2: Estimate}

This section is devoted to prove Proposition~\ref{projection of stabilizer is stabilizer of the base}. The proof uses essentially the ``block family'' version of Pila-Wilkie's counting theorem \cite[Theorem~3.6]{PilaO-minimality-an}.

Keep notation and assumptions as in the last section and denote by $\pi\colon(P,\cX^+)$ $\rightarrow(G,\cX^+_G)$. The group $G=Z(G)^\circ H_1...H_r$ is an almost direct product, where $H_i$'s are non-trivial simple groups and are normal in $G$. We have a decomposition
\[
(G^{\ad},\cX_G^+)\cong\prod_{i=1}^r(H_i^{\ad},\cX_{H,i}^+)
\]
by \cite[3.6]{MoonenLinearity-prope}. Let $S_G^{\ad}:=\Gamma_G^{\ad}\backslash\cX_G^+$. Shrinking $\Gamma_G^{\ad}$ if necessary, we may assume $S_G^{\ad}\cong\prod_{i=1}^rS_{H,i}$, where $S_{H,i}$ is a connected pure Shimura variety associated with $(H_i^{\ad},\cX_{H,i}^+)$.

Without loss of generality we may assume $G_N=H_1...H_l$. It suffices to prove $H_i<G_{H_{\tilde{Z}}}$ for each $i=1,...,l$. The case $l=0$ is trivial, so we assume that $l\geqslant 1$. Define $Q_i:=\pi^{-1}(H_i)$.

\subsection{Fundamental set and definability}
The goal of this subsection is to prove that there exists $\cF\subset\cX^+$ a fundamental set for the action of $\Gamma$ on $\cX^+$ such that $\unif|_{\cF}$ is definable.

First of all, by the Reduction Lemma~(Lemma~\ref{reduction lemma}), it suffices to prove the existence of such a fundamental set for $(P,\cX^+)$ pure and $(P,\cX^+)=(P_{2g},\cX^+_{2g})$. The case where $(P,\cX^+)$ is pure is guaranteed by Klingler-Ullmo-Yafaev \cite[Theorem~4.1]{KlinglerThe-Hyperbolic-}. Now we prove the case $(P,\cX^+)=(P_{2g},\cX^+_{2g})$.

We draw the following diagram to make the notation more clear:
\[
\begin{diagram}
\cX_{2g}^+ &\rOnto^{\pi_{P/U}} &\cX_{2g,\mathrm{a}}^+\\
\dTo^{\unif} & &\dTo^{\unif_{P/U}} \\
S &\rOnto^{[\pi_{P/U}]} &S_{P/U}
\end{diagram}.
\]

In this case, $[\pi_{P/U}]\colon S\rightarrow S_{P/U}$ is an algebraic $\G_m$-torsor. By Peterzil-Starchenko \cite[Theorem~1.3]{PeterzilDefinability-of}, there exists a fundamental set $\cF_{P/U}$ for the action of $\Gamma/\Gamma_U$ on $\cX^+_{2g,\mathrm{a}}$ such that $\unif_{P/U}|_{\cF_{P/U}}$ is definable (recall that if $g=0$, then $\cX^+_{2g}=\C$, $S=\C^*$, $\unif=\exp$ and $S_{P/U}$ is a point). Let us now construct a fundamental set for the action of $\Gamma$ on $\cX^+_{2g}$ such that $\unif|_{\cF}$ is definable and $\pi_{P/U}(\cF)=\cF_{P/U}$.

Since any variety over a field is quasi-compact in the Zariski topology, there exists a finite Zariski open covering $\{V_\alpha\}_{\alpha\in\Lambda}$ of $S_{P/U}$ such that $S|_{V_\alpha}\cong \C^*\times V_\alpha$ and these isomorphisms are algebraic. Define $U_\alpha:=S|_{V_\alpha}=[\pi_{P/U}]^{-1}(V_\alpha)$ for every $\alpha\in\Lambda$.
Then we have
\[
\unif|_{\unif^{-1}(U_\alpha)}\colon \unif^{-1}(U_\alpha)\xrightarrow[\varphi]{\sim} U_{2g}(\C)\times \unif_{P/U}^{-1}(V_\alpha)\rightarrow (\C^*)\times V_{\alpha}\cong U_\alpha,
\]
where $\varphi$ is semi-algebraic (Proposition~\ref{realization of the uniformizing space}), the last isomorphism is algebraic and the middle morphism is $(\exp,\unif_{P/U}|_{\unif_{P/U}^{-1}}(V_\alpha))$. Let $\cF_U:=\{s\in\C|$ $-1<\Re(s)<1\}$ and let $\cF_{\alpha}:=\varphi^{-1}(\cF_U\times\cF_{P/U,\alpha})$. Then $\unif|_{\cF_{\alpha}}$ is definable.
Now $\cF:=\cup\cF_{\alpha}$ (remember that this is a finite union) satisfies the conditions we want.

Now we return to arbitrary $(P,\cX^+)$. We have proved the existence of an $\cF$ as stated at the beginning of this subsection. Let us choose such an $\cF$ more carefully. First of all replace $\cF$ by $\gamma\cF$ if necessary to make sure $\cF\cap\tilde{Z}\neq\emptyset$. Next define $\cF_G:=\pi(\cF)\subset\cX_G^+\cong\prod_{i=1}^r\cX_{H,i}^+$. Denote by $q_i$ the $i$-th projection and $\cF_{H,i}:=q_i(\cF_G)$. There exist some $\gamma_{1}=1,...,\gamma_{s}\in\Gamma_G<\Gamma$ such that $\prod_{i=1}^r\cF_{H,i}\subset\cup_{j=1}^s\gamma_{j}\cF_G$. Consider
\[
\cF^\prime:=(\bigcup_{j=1}^s\gamma_j\cF)\cap\pi^{-1}(\prod_{i=1}^r\cF_{H,i}),
\]
then $\cF^\prime$ is a fundamental set for the action of $\Gamma$ on $\cX^+$ and $\unif|_{\cF^\prime}$ is definable. Furthermore, $\pi(\cF^\prime)=\prod_{i=1}^r\cF_{H,i}$ and $\cF_{H,i}=q_i\pi(\cF^\prime)$. We still have $\cF^\prime\cap\tilde{Z}\neq\emptyset$ since $\cF\subset\cF^\prime$. Now replace $\cF$ by $\cF^\prime$.

\subsection{Counting points and conclusion}
We shall work from now on with an $\cF$ satisfying the conditions in the last paragraph of the previous subsection.
By Lemma~\ref{projection of maximal still maximal}, $\bar{\tilde{Y}_G}=\prod_{i=1}^lH_i(\R)^+\tilde{z}_G$. Fix a point 
$\tilde{z}\in\cF\cap\tilde{Z}$. Define the following Shimura morphisms for each $i=1,...,l$
\[
\begin{diagram}
(G,\cX_G^+) &\rTo^{p_i} &(G_i,\cX_{G,i}^+):=(G^{\ad},\cX_G^+)/\prod_{j\neq i}H_j^{\ad} \\
\dTo^{\unif_G} & &\dTo^{\unif_{G,i}} \\
S_G &\rTo^{[p_i]} &S_{G,i}
\end{diagram}.
\]
Fix $i\in\{1,...,l\}$. Define $\tilde{Y}_{G,i}:=p_i(\tilde{Y}_G)=H_i^{\ad}(\R)^+\pi_i(\tilde{z}_G)$, $\tilde{Z}_{G,i}:=p_i(\tilde{Z}_G)$ and $Y_{G,i}:=[p_i](Y_G)$, then $\unif_{G,i}(\tilde{Z}_{G,i})$ is Zariski dense in $\bar{Y_{G,i}}$ by Lemma~\ref{projection of maximal still maximal}. If $\dim(\tilde{Z}_{G,i})=0$, then $\tilde{Z}_{G,i}$ is a finite set of points since it is algebraic. But then $\unif_{G,i}(\tilde{Z}_{G,i})$, and hence $\bar{Y_{G,i}}=\bar{\unif_{G,i}(\tilde{Z}_{G,i})}$ is also a finite set of points. So $\dim(Y_{G,i})=0$, which contradicts $\tilde{Y}_{G,i}=H_i^{\ad}(\R)^+\pi_i(\tilde{z}_G)$. To sum it up, $\dim(\tilde{Z}_{G,i})>0$. For further convenience, we will denote by $\pi_i:=p_i\circ\pi$.

Take an algebraic curve $C_{G,i}\subset \tilde{Z}_{G,i}$ passing through $\pi_i(\tilde{z})$. Now $\pi_i(\tilde{Z}\cap\pi_i^{-1}(C_{G,i}))=\tilde{Z}_{G,i}\cap C_{G,i}=C_{G,i}$, and hence there exists an algebraic curve $C\subset\tilde{Z}\cap\pi_i^{-1}(C_{G,i})$ passing through $\tilde{z}$ such that $\dim(\pi_i(C))=1$.

Let $\cF_{G,i}:=p_i(\cF_G)$, then it is a fundamental set of $\unif_{G,i}$ and $\unif_{G,i}|_{\cF_{G,i}}$ is definable. We define for any irreducible semi-algebraic subvariety $A$ (resp. $A_{G,i}$) of $\unif^{-1}(Y)$ (resp. $\unif_{G,i}^{-1}(\bar{Y_{G,i}})$) the following sets:
define
\small
\[
\begin{array}{cc}
\Sigma^{(i)}(A):=\{g\in Q_i(\R)|\dim(gA\cap \unif^{-1}(Y)\cap\cF)=\dim(A)\} \\
(\text{resp. }\Sigma^{(i)}_G(A_{G,i}):=\{g\in H_i^{\ad}(\R)|\dim(gA_{G,i}\cap \unif_{G,i}^{-1}(\bar{Y_{G,i}})\cap\cF_{G,i})=\dim(A_{G,i})\})
\end{array}
\]
\normalsize
and
\small
\[
\begin{array}{cc}
\Sigma^{\prime(i)}(A):=\{g\in Q_i(\R)|g^{-1}\cF\cap A\neq\emptyset\} \\
(\text{resp. }\Sigma^{\prime(i)}_G(A_{G,i}):=\{g\in H_i^{\ad}(\R)|g^{-1}\cF_{G,i}\cap A_{G,i}\neq\emptyset\}).
\end{array}.
\]
\normalsize
Then $\Sigma^{(i)}(A)$ and $\Sigma^{(i)}_G(A_{G,i})$ are by definition definable. Let $\Gamma_{G,i}^{\ad}:=p_i(\Gamma_G^{\ad})$.

\begin{lemma}\label{the two Sigma-sets coincide}
$\Sigma^{\prime(i)}(A)\cap\Gamma=\Sigma^{(i)}(A)\cap\Gamma$ (resp. $\Sigma^{\prime(i)}_G(A_{G,i})\cap\Gamma_{G,i}^{\ad}=\Sigma^{(i)}_G(A_{G,i})\cap\Gamma_{G,i}^{\ad}$).
\begin{proof} The proof, which we include for completeness, is the same as \cite[Lemma~5.2]{UllmoThe-Hyperbolic-}. First of all $\Sigma^{(i)}(A)\cap\Gamma\subset\Sigma^{\prime(i)}(A)\cap\Gamma$ by definition. Conversely for any $\gamma\in\Sigma^{\prime(i)}(A)\cap\Gamma$, $\gamma^{-1}\cF\cap A$ contains an open subspace of $A$ since $\cF$ is by choice open in $\cX^+$. Hence $\gamma A\cap \unif^{-1}(Y)\cap\cF=\gamma A\cap\cF$ contains an open subspace of $\gamma A$ which must be of dimension $\dim(A)$. Hence $\gamma\in\Sigma^{(i)}(A)\cap\Gamma$. The proof for $A_{G,i}$ is the same.
\end{proof}
\end{lemma}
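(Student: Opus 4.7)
The plan is to establish the two inclusions $\Sigma^{(i)}(A)\cap\Gamma\subseteq\Sigma^{\prime(i)}(A)\cap\Gamma$ and its converse separately. The forward direction follows immediately from the definitions, while the converse relies on the openness of the fundamental set $\cF$ built in the previous subsection together with the $\Gamma$-invariance of $\unif^{-1}(Y)$.

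For the forward inclusion, I would observe that if $\gamma\in\Sigma^{(i)}(A)\cap\Gamma$ then $\gamma A\cap\unif^{-1}(Y)\cap\cF$ has dimension $\dim(A)\geqslant 0$, so in particular it is nonempty. Hence $\gamma A\cap\cF\neq\emptyset$, which after translating by $\gamma^{-1}$ rewrites as $A\cap\gamma^{-1}\cF\neq\emptyset$, i.e. $\gamma\in\Sigma^{\prime(i)}(A)$.

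For the converse, take $\gamma\in\Sigma^{\prime(i)}(A)\cap\Gamma$. The first step is to remove the constraint $\unif^{-1}(Y)$: since $\gamma\in\Gamma$ preserves the fibres of $\unif$ and $A\subseteq\unif^{-1}(Y)$, we have $\gamma A\subseteq\unif^{-1}(Y)$, so $\gamma A\cap\unif^{-1}(Y)\cap\cF=\gamma A\cap\cF$. The second step is to note that the construction of $\cF$ carried out just above exhibits it as a finite union of preimages under semi-algebraic trivializations of products of open strips in $\C$ and an open base fundamental set, whence $\cF$, and therefore $\gamma^{-1}\cF$, is open in $\cX^+$. Thus the nonempty intersection $A\cap\gamma^{-1}\cF$ is open in $A$; since $A$ is irreducible semi-algebraic, any nonempty open subset of $A$ has dimension $\dim(A)$. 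Translating by $\gamma$ (a homeomorphism of $\cX^+$) preserves dimensions, giving $\dim(\gamma A\cap\cF)=\dim(A)$ and hence $\gamma\in\Sigma^{(i)}(A)$.

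The proof of the variant for $A_{G,i}$ will be formally identical: $\cF_{G,i}=p_i(\cF_G)$ is open in $\cX_{G,i}^+$, the group $\Gamma_{G,i}^{\ad}$ preserves $\unif_{G,i}^{-1}(\overline{Y_{G,i}})$ for the same reason, and the rest of the argument carries over verbatim. The only point requiring any care is verifying openness of $\cF$ (and of $\cF_{G,i}$), which is routine from the explicit construction but is the mild obstacle on which the whole argument hinges.
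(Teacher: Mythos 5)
Your proposal is correct and follows essentially the same argument as the paper: the forward inclusion is definitional, and the converse uses that $\gamma$ preserves $\unif^{-1}(Y)$ so that $\gamma A\cap\unif^{-1}(Y)\cap\cF=\gamma A\cap\cF$, together with the openness of $\cF$ to conclude that the nonempty intersection has full dimension $\dim(A)$. The extra care you take over the openness of $\cF$ and the $\Gamma$-invariance step is exactly what the paper uses implicitly.
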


This lemma implies
\vspace{-2mm}
\small
\begin{equation}\label{relation between Sigma of curve and Sigma of Z}
\begin{array}{cc}
\Sigma^{(i)}(C)\cap\Gamma=\Sigma^{\prime(i)}(C)\cap\Gamma\subset
\Sigma^{\prime(i)}(\tilde{Z})\cap\Gamma=\Sigma^{(i)}(\tilde{Z})\cap\Gamma \\
(\text{resp. }\Sigma^{(i)}_G(C_{G,i})\cap\Gamma_{G,i}^{\ad}=\Sigma^{\prime(i)}_G(C_{G_i})\cap\Gamma_{G,i}^{\ad}\subset\Sigma^{\prime(i)}_G(\bar{\tilde{Z}_{G,i}})\cap\Gamma_{G,i}^{\ad}=\Sigma^{(i)}(\bar{\tilde{Z}_{G,i}})\cap\Gamma_{G,i}^{\ad})
\end{array}.
\end{equation}
\normalsize

\begin{lemma}\label{projection of theta is theta}
$\pi_i(\Gamma\cap\Sigma^{(i)}(C))=\Gamma_{G,i}^{\ad}\cap\Sigma^{(i)}_G(C_{G,i})$.
\begin{proof}
By Lemma~\ref{the two Sigma-sets coincide}, it suffices to prove
$\pi_i(\Gamma\cap\Sigma^{\prime(i)}(C))=\Gamma_{G,i}^{\ad}\cap\Sigma^{\prime(i)}_G(C_{G,i})$.
The inclusion $\subset$ is clear by definition. For the other inclusion, $\forall\gamma_{G,i}\in\Gamma_{G,i}^{\ad}\cap\Sigma^{\prime(i)}_G(C_{G,i})$, $\exists c_{G,i}\in C_{G,i}$ such that $\gamma_{G,i}\cdot c_{G,i}\in\cF_{G,i}$.

Take a point $c\in C$ such that $\pi_i(c)=c_{G,i}$ and define $c_G:=\pi(c)\in\cX_G^+$. Suppose that under the decomposition
\[
(G^{\ad},\cX_G^+)\cong\prod_{i=1}^r(H_i^{\ad},\cX_{H,i}^+)
\]
of \cite[3.6]{MoonenLinearity-prope}, $c_G=(c_{G,1},...,c_{G,r})$. Then by choice of $\cF_G$, there exists $\gamma_G^\prime\in\Gamma_G^{\ad}$ whose $i$-th coordinate is precisely the $\gamma_{G,i}$ in the last paragraph such that $\gamma_G^\prime\cdot c_G\in\cF_G$.

Let $\gamma_G\in\Gamma_G$ be such that its image under $\Gamma_G\rightarrow\Gamma_G^{\ad}$ is $\gamma_G^\prime$, then $\gamma_G\cdot c\in\pi^{-1}(\cF_G)$.
Therefore there exist $\gamma_V\in\Gamma_V,~\gamma_U\in\Gamma_U$ such that $(\gamma_U,\gamma_V,\gamma_G)c\in\cF$. 
Denote by $\gamma=(\gamma_U,\gamma_V,\gamma_G)$, then $\gamma\in\Gamma\cap\Sigma^{\prime(i)}(C)$ and $\pi_i(\gamma)=\gamma_{G,i}$.
\end{proof}
\end{lemma}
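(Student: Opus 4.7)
The plan is to reduce to the primed variants via Lemma \ref{the two Sigma-sets coincide}: it suffices to prove $\pi_i(\Gamma\cap\Sigma^{\prime(i)}(C))=\Gamma_{G,i}^{\ad}\cap\Sigma^{\prime(i)}_G(C_{G,i})$. This is preferable because the primed condition is a concrete non-emptiness statement about translates intersecting the fundamental sets. The inclusion $\subseteq$ is essentially automatic from the careful construction of $\cF$: if $\gamma\in\Gamma$ and $c\in\gamma^{-1}\cF\cap C$, then $\pi_i(c)$ witnesses $\pi_i(\gamma)\in\Sigma^{\prime(i)}_G(C_{G,i})$, since by construction $\pi(\cF)=\prod_j\cF_{H,j}$, hence $\pi_i(\cF)=\cF_{G,i}$, and $\pi_i(C)=C_{G,i}$ by choice of $C$.

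For the inclusion $\supseteq$, I would lift in three successive stages, adding one layer at each step. Starting from $\gamma_{G,i}\in\Gamma_{G,i}^{\ad}$ and a witness $c_{G,i}\in C_{G,i}$ with $\gamma_{G,i}\,c_{G,i}\in\cF_{G,i}$, first lift to the pure base: pick $c\in C$ with $\pi_i(c)=c_{G,i}$ (possible since $\pi_i(C)=C_{G,i}$), decompose $\pi(c)=(c_{G,1},\dots,c_{G,r})$ under the product decomposition of \cite{MoonenLinearity-prope}, and use that each $\cF_{H,j}$ is a fundamental set for $\Gamma_{G,j}^{\ad}$ acting on $\cX_{H,j}^+$ to pick $\gamma_{G,j}\in\Gamma_{G,j}^{\ad}$ sending $c_{G,j}$ into $\cF_{H,j}$ for every $j\neq i$. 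The assembled element $\gamma_G^\prime:=(\gamma_{G,1},\dots,\gamma_{G,r})$ sits in $\Gamma_G^{\ad}$ and carries $\pi(c)$ into $\cF_G$.

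The delicate final stage is moving up to $\cX^+$ itself. I would lift $\gamma_G^\prime$ to an element $\gamma_G\in\Gamma_G$, so that $\gamma_G\cdot c\in\pi^{-1}(\cF_G)$. The crucial structural point is that because $\pi(\cF)=\cF_G$, the slice of $\cF$ over any point of $\cF_G$ is itself a fundamental set for the residual action of $\Gamma_W=\Gamma_U\rtimes\Gamma_V$ on the unipotent fiber $W(\R)U(\C)$; so there exist $\gamma_U\in\Gamma_U$ and $\gamma_V\in\Gamma_V$ with $(\gamma_U,\gamma_V,\gamma_G)\cdot c\in\cF$. The element $\gamma:=(\gamma_U,\gamma_V,\gamma_G)$ then lies in $\Gamma\cap\Sigma^{\prime(i)}(C)$, and by construction $\pi_i(\gamma)=\gamma_{G,i}$.

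The main obstacle is bookkeeping: ensuring the fundamental sets and congruence subgroups at the three levels $\cX^+\to\cX_G^+\to\cX_{G,i}^+$ are compatible. This was taken care of in the preceding subsection by engineering $\cF$ so that $\pi(\cF)$ is exactly $\prod_j\cF_{H,j}$ and by shrinking $\Gamma_G^{\ad}$ to a product $\prod_j\Gamma_{G,j}^{\ad}$; without this preparation, neither the projection argument in the forward direction nor the coordinate-wise lifting in the reverse direction would be available.
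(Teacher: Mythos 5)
Your proof is correct and follows essentially the same route as the paper's: reduce to the primed sets via Lemma~\ref{the two Sigma-sets coincide}, note the forward inclusion is definitional, then for the reverse inclusion lift $\gamma_{G,i}$ coordinate-wise to $\gamma_G^\prime\in\Gamma_G^{\ad}$ using the product structure of $\cF_G=\prod_j\cF_{H,j}$, lift to $\Gamma_G$, and finally correct by an element of $\Gamma_W$ using $\pi(\cF)=\cF_G$. The one imprecision, which you share with the paper itself, is the tacit assumption that $c_{G,i}$ lies in $\pi_i(C)$ (the construction of $C$ only guarantees $\pi_i(C)\subset C_{G,i}$ with $\dim\pi_i(C)=1$, hence density rather than equality), so this is not a defect of your argument relative to the original.
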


For $T>0$, define
\[
\Theta^{(i)}_G(C_{G,i},T):=\{\gamma_G\in\Gamma_{G,i}^{\ad}\cap\Sigma^{(i)}_G(C_{G,i})|H(\gamma_G)\leqslant T\}.
\]

\begin{prop}\label{theta for the base big enough}
There exists a constant $\delta>0$ s.t. for all $T\gg0$, $|\Theta^{(i)}_G(C_{G,i},T)|\geqslant T^\delta$.
\begin{proof} This follows directly from \cite[Theorem~1.3]{KlinglerThe-Hyperbolic-} applied to \small$((G_i,\cX_{G,i}^+), S_{G,i}, \bar{\tilde{Z}_{G,i}})$.
\end{proof}
\end{prop}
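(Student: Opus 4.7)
The plan is to reduce the statement to the pure Shimura variety setting and then invoke the hyperbolic counting estimate of Klingler-Ullmo-Yafaev \cite{KlinglerThe-Hyperbolic-}. Since $(G_i,\cX_{G,i}^+)$ is a connected pure (adjoint) Shimura datum and $\cF_{G,i}=p_i(\cF_G)$ is, by the construction in the previous subsection, a fundamental set for $\Gamma_{G,i}^{\ad}$ on $\cX_{G,i}^+$ on which $\unif_{G,i}$ is definable, we are in exactly the setup where the pure hyperbolic counting theorem \cite[Theorem~1.3]{KlinglerThe-Hyperbolic-} applies. Thus the strategy is: verify the input hypotheses carefully, then cite.

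First I would check the geometric input. The subvariety $\bar{Y_{G,i}}$ is closed and irreducible in $S_{G,i}$, and $\bar{\tilde{Z}_{G,i}}$ is an algebraic subset of $\cX_{G,i}^+$ contained in $\unif_{G,i}^{-1}(\bar{Y_{G,i}})$ (by Lemma~\ref{algebraicity functorial using Borel embedding}); it is positive-dimensional by the argument immediately preceding the proposition, which itself rests on Lemma~\ref{projection of maximal still maximal} and on $H_i$ being a nontrivial simple factor of $G_N$. The algebraic curve $C_{G,i}\subset\bar{\tilde{Z}_{G,i}}$ through $\pi_i(\tilde z)$ then supplies the test curve required by the pure theorem, and the discreteness and neatness properties of $\Gamma_{G,i}^{\ad}$ are inherited from those of $\Gamma_G$.

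Second, I would unpack the substance of \cite[Theorem~1.3]{KlinglerThe-Hyperbolic-}, which is a volume-and-counting argument in the sense of Ullmo-Yafaev \cite{UllmoThe-Hyperbolic-}. Equip $\cX_{G,i}^+$ with its Bergman metric; for $\gamma_G\in\Gamma_{G,i}^{\ad}$ of height at most $T$, the hyperbolic displacement of a fixed basepoint is controlled polynomially in $T$. One then lower-bounds the hyperbolic volume swept out inside $\unif_{G,i}^{-1}(\bar{Y_{G,i}})\cap\cF_{G,i}$ by translates $\gamma_G C_{G,i}$ with $H(\gamma_G)\leqslant T$, using that $\bar{Y_{G,i}}$ cannot be exhausted by finitely many such translates (otherwise $\unif_{G,i}^{-1}(\bar{Y_{G,i}})$ would be algebraic in $\cX_{G,i}^\vee$, making $\bar{Y_{G,i}}$ weakly special by Corollary~\ref{weakly special iff algebraic on the top and on the bottom}, contrary to $\dim(\tilde{Z}_{G,i})>0$ together with $H_i^{\ad}\tilde{z}_G=\tilde{Y}_{G,i}$). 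Since each individual $\gamma_G C_{G,i}\cap\cF_{G,i}$ contributes at most $O(1)$ volume, pigeonhole yields $|\Theta^{(i)}_G(C_{G,i},T)|\gg T^\delta$ for some $\delta>0$.

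The main obstacle I would anticipate is purely in the bookkeeping of the reduction: one has to confirm that the fundamental set $\cF_{G,i}$ inherited from $\cF$ satisfies the regularity required by \cite{KlinglerThe-Hyperbolic-} (definability and appropriate behavior near the Baily-Borel boundary), and that the height function on $\Gamma_{G,i}^{\ad}$ obtained by projecting from $\Gamma_G^{\ad}$ is comparable to the one used there. Both points follow from the explicit construction of $\cF$ in the previous subsection together with the standard comparison of height functions on arithmetic subgroups of semisimple $\Q$-groups, so beyond that verification no new input is needed.
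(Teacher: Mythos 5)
Your proposal is correct and takes the same route as the paper: the proof is exactly an application of \cite[Theorem~1.3]{KlinglerThe-Hyperbolic-} to the triple $((G_i,\cX_{G,i}^+),S_{G,i},\bar{\tilde{Z}_{G,i}})$, with the hypotheses (positive dimension of $\tilde{Z}_{G,i}$, the curve $C_{G,i}$, the definable fundamental set $\cF_{G,i}$) supplied by the preceding reductions. Your additional paragraphs unpacking the volume-counting mechanism inside Klingler--Ullmo--Yafaev are expository rather than a different argument, so there is nothing further to reconcile.
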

\normalsize
Let us prove how these facts imply $H_i<G_{H_{\tilde{Z}}}$.

Take a faithful representation $G^{\ad}\hookrightarrow\GL_n$ which sends $\Gamma_G^{\ad}$ to $\GL_n(\Z)$. Consider the definable set $\Sigma^{(i)}_G(C_{G,i})$. By the theorem of Pila-Wilkie (\cite[Theorem~3.6]{PilaO-minimality-an}), there exist $J=J(\delta)$ definable block families
\[
B^j\subset\Sigma^{(i)}_G(C_{G,i})\times\R^l,\qquad j=1,...,J
\]
and $c=c(\delta)>0$ such that for all $T\gg0$, $\Theta^{(i)}_G(C_{G,i},T^{1/2n})$ is contained in the union of at most $cT^{\delta/4n}$ definable blocks of the form $B^j_y$ ($y\in\R^l$). By Proposition~\ref{theta for the base big enough}, there exist a $j\in\{1,...,J\}$ and a block $B_{G,i}:=B^j_{y_0}$ of $\Sigma^{(i)}_G(C_{G,i})$
containing at least $T^{\delta/4n}$ elements of $\Theta^{(i)}_G(C_{G,i},T^{1/2n})$.

Let $\Sigma^{(i)}:=\Sigma^{(i)}(C)\cap\Sigma^{(i)}(\tilde{Z})$, which is by definition a definable set. Consider $X^j:=(\pi_i\times 1_{\R^l})^{-1}(B^j)\cap(\Sigma^{(i)}\times\R^l)$, which is a definable family since $\pi_i$ is algebraic.

By \cite[Ch. 3, 3.6]{DriesTame-Topology-a}, there exists a number $n_0>0$ such that each fibre $X^j_y$ has at most $n_0$ connected components.
So the definable set $\pi_i^{-1}(B_{G,i})\cap\Sigma^{(i)}$ has at most $n_0$ connected components. 
Now
\small
\[
\pi_i(\pi_i^{-1}(B_{G,i})\cap\Sigma^{(i)}\cap\Gamma)=B_{G,i}\cap\pi_i(\Sigma^{(i)}(C)\cap\Gamma)=B_{G,i}\cap\Sigma^{(i)}_G(C_{G,i})\cap\Gamma_{G,i}^{\ad}=B_{G,i}\cap\Gamma_{G,i}^{\ad}
\]
\normalsize
by \eqref{relation between Sigma of curve and Sigma of Z} and Lemma~\ref{projection of theta is theta}. So there exists a connected component $B$ of $\pi_i^{-1}(B_{G,i})\cap\Sigma^{(i)}$ such that $\pi_i(B\cap\Gamma)$ contains at least $T^{\delta/4n}/n_0$ elements of $\Theta^{(i)}_G(C_{G,i},T^{1/2n})$.

We have $B\tilde{Z}\subset \unif^{-1}(Y)$ since $\Sigma^{(i)}(\tilde{Z})\tilde{Z}\subset \unif^{-1}(Y)$ by analytic continuation, and $\tilde{Z}\subset\sigma^{-1}B\tilde{Z}$ for any $\sigma\in B\cap\Gamma$. But $B$ is connected, and therefore $\sigma^{-1}B\tilde{Z}=\tilde{Z}$ by maximality of $\tilde{Z}$ and \cite[Lemma~4.1]{PilaAbelianSurfaces}. So $\forall\sigma\in B\cap\Gamma$, 
\[
 B\subset\sigma\stab_{Q_i(\R)}(\tilde{Z}).
\]

Fix a $\gamma_0\in B\cap\Gamma$ such that $\pi_i(\gamma_0)\in\Theta^{(i)}_G(C_{G,i},T^{1/2n})$. We have already shown that $\pi_i(B\cap\Gamma)$ contains at least $T^{\delta/4n}/n_0$ elements of $\Theta^{(i)}_G(C_{G,i},T^{1/2n})$. For any $\gamma^\prime_{G,i}\in\pi_i(B\cap\Gamma)\cap\Theta^{(i)}_G(C_{G,i},T^{1/2n})$, let $\gamma^\prime$ be one of its pre-images in $B\cap\Gamma$. Then $\gamma:=\gamma^{\prime-1}\gamma_0$ is an element of $\Gamma\cap\stab_{Q_i(\R)}(\tilde{Z})=\Gamma_{\tilde{Z}}\cap Q_i(\R)$ such that $H(\pi_i(\gamma))\ll T^{1/2}$. Therefore for $T\gg0$, $\pi_i(\Gamma_{\tilde{Z}})\cap H_i^{\ad}(\R)$ contains at least $T^{\delta/4n}/n_0$ elements $\gamma_{G,i}$ such that $H(\gamma_{G,i})\leqslant T$. Hence $\dim(\pi_i(H_{\tilde{Z}})\cap H_i^{\ad})>0$ since $\pi_i(H_{\tilde{Z}})\cap H_i^{\ad}$ contains infinitely many rational points. But $\pi_i(H_{\tilde{Z}})=p_i\pi(H_{\tilde{Z}})=p_i(G_{H_{\tilde{Z}}})$ by definition. So $H_i^{\ad}<p_i(G_{H_{\tilde{Z}}})$ since $H_i^{\ad}$ is simple and $p_i(G_{H_{\tilde{Z}}})\cap H_i^{\ad}\lhd H_i^{\ad}$ by Corollary \ref{normal subgroups}.

As a normal subgroup of $G_N$, $G_{H_{\tilde{Z}}}$ is the almost direct product of some $H_j$'s $(j=1,...,l)$. So $H_i^{\ad}<p_i(G_{H_{\tilde{Z}}})$ implies $H_i<G_{H_{\tilde{Z}}}$. Now we are done.

\begin{rmk}\label{EssentialUseOfPilaWilkie}
In the proof of the pure case by Klingler-Ullmo-Yafaev \cite{KlinglerThe-Hyperbolic-}, it suffices to use a non-family version of Pila-Wilkie (\cite[Theorem~6.1]{KlinglerThe-Hyperbolic-}). However this is not enough for our proof, since otherwise the $n_0$ would depend on $T$. Hence it is important to use a family version of Pila-Wilkie (\cite[Theorem~3.6]{PilaO-minimality-an}).
\end{rmk}

\section{Ax-Lindemann-Weierstra{\ss} Part 3: The unipotent part}\label{Ax-Lindemann Part 3: The unipotent part}

We prove in this section Theorem~\ref{Ax-Lindemann for complex semi-abelian varieties}. We use the same notation as the first paragraph of $\mathsection$\ref{Results for the unipotent part} and $\mathsection$\ref{Ax-Lindemann for the unipotent part}. Assume $\dim_\C T=m$ and $\dim_\C A=n$.

\begin{proof}[Proof of Theorem~\ref{Ax-Lindemann for complex semi-abelian varieties}]
First of all we may assume that $\tilde{Z}$ is of positive dimension since every point is a weakly special subvariety of dimension 0. For any fundamental set $\cF$ of the action of $\Gamma_W$ on $W(\R)U(\C)$, define
\[
\Sigma(\tilde{Z}):=\{g\in W(\R)|\dim(g\tilde{Z}\cap \unif^{-1}(Y)\cap\cF)=\dim(\tilde{Z})\}
\]
and
\[
\Sigma^\prime(\tilde{Z}):=\{g\in W(\R)|g^{-1}\cF\cap\tilde{Z}\neq\emptyset\},
\]
then by Lemma~\ref{the two Sigma-sets coincide}, \begin{equation}\label{two Sigma coincide Z semi-abelian}
\Sigma(\tilde{Z})\cap\Gamma_W=\Sigma^\prime(\tilde{Z})\cap\Gamma_W
\end{equation}

Let $\Gamma_U:=\Gamma\cap U(\Q)$ and let $\Gamma_V:=\Gamma_W/\Gamma_U$.

\textit{\boxed{Case~i:~$E=A$.}} This is \cite[Theorem~2.1 and pp9 Remark 1]{PilaRational-points}. A proof can be found in Appendix. In this case, $W=V$ and $\Gamma_V=\oplus_{i=1}^{2n}\Z e_i\subset\lie(A)=\C^n=\R^{2n}$ is a lattice. Denote by $\unif\colon\lie(A)\rightarrow A$. Let $\cF_V:=\Sigma_{i=1}^{2n}(-1,1)e_i$, then $\cF_V$ is a fundamental set for the action of $\Gamma_V$ on $\lie(A)$ such that $\unif|_{\cF_V}$ is definable.

\textit{\boxed{Case~ii:~$E=T$.}} This is a consequence of Ax's theorem \cite{AxOn-Schanuels-co} \cite[Corollary 3.6]{MartinThesis}. A proof of this can be found in Appendix. In this case, $W=U$. Let $\cF_U:=\{s\in\C|-1<\Re(s)<1\}^m$, then $\cF_U$ is a fundamental set for the action of $\Gamma_U$ on $U(\C)$ such that $\unif|_{\cF_U}$ is definable.

\textit{\boxed{Case~iii:~general~$E$.}} Unlike the rest of the paper, the symbol $\pi$ in this section denotes the map
\begin{equation}\label{diagram for the fiber}
\begin{diagram}
W(\R)U(\C) &\rTo^{\pi} &V(\R) \\
\dTo_{\unif} & &\dTo_{\unif_V} \\
E &\rTo^{[\pi]} &A
\end{diagram}.
\end{equation}

Take $\cF_V\subset V(\R)$ any fundamental set for the action of $\Gamma_V$ on $V(\R)$ such that $\unif_V|_{\cF_V}$ is definable. We claim that:
\begin{equation}\label{FundamentalSetForW}
\begin{array}{c}
\text{There exists a fundamental set }\cF\text{ for the action of }\Gamma_W\text{ on }W(\R)U(\C)\\ \text{ such that }\unif|_{\cF}\text{ is definable and }\pi(\cF)=\cF_V.
\end{array}
\end{equation}
By Reduction Lemma~(Lemma~\ref{reduction lemma}), it suffices to prove this for $E=E_1\times_A...\times_AE_m$ where $E_i$'s are $\G_m$-torsors over $A$. But then it suffices to prove for the case $m=1$. For this case, the proof is similar to $\mathsection$\ref{Ax-Lindemann Part 2: Estimate}.1.

Let $Y_0$ be the minimal closed irreducible subvariety of $E$ such that $\tilde{Z}\subset \unif^{-1}(Y_0)$, then $\tilde{Z}$ is maximal irreducible algebraic in $\unif^{-1}(Y_0)$. Hence we may assume that $Y=Y_0$. Let $N$ be the connected algebraic monodromy group of $Y^{\sm}$ and let $V_N:=(N\cap W)/(N\cap U)$. Let $\tilde{Y}$ be the complex analytic irreducible component of $\unif^{-1}(Y)$ which contains $\tilde{Z}$. For further convenience, we will denote by $\tilde{Z}_V:=\pi(\tilde{Z})$, $\tilde{Y}_V:=\pi(\tilde{Y})$ and $Y_V:=[\pi](Y)$.

Repeating the proof of Lemma~\ref{projection of maximal still maximal} (but using the conclusion of \textit{Case i} instead of \cite[Theorem~1.1]{KlinglerThe-Hyperbolic-}), we get that $\bar{\tilde{Y}_V}=V_N(\R)+\tilde{z}_V$ for some $\tilde{z}_V\in\tilde{Z}_V$ is weakly special, and $\bar{\unif_V(\tilde{Z}_V)}=\bar{Y_V}$. Remark that by GAGA, these closures could be taken in the complex analytic topology (i.e. the topology whose closed sets are complex analytic sets) or the Zariski topology. If $V_N$ is trivial, then we are actually in the situation of \textit{Case ii}, and therefore $\tilde{Z}$ is weakly special. From now on, suppose that $\dim(V_N)>0$. Replace $S$ by its smallest special subvariety containing $Y_0$, then $N\lhd P$ by Theorem~\ref{all variations of mixed Shimura varieties are admissible}. Hence $V_N$ is a $G=\MT(b)$-submodule of $V$.

Define $W_0:=(\bar{\Gamma_W\cap\stab_{W(\R)U(\C)}(\tilde{Z})}^{\Zar})^{\circ}$, $U_0:=W_0\cap U$ and $V_0:=\pi(W_0)=W_0/U_0$. The proof is somehow technical, so we will divide it into several steps.

\textbf{\textit{\underline{Step~I.}}} Let $V^\dagger$ be the smallest subgroup of $V_N$ such that $\tilde{Z}_V\subset V^\dagger(\R)+\tilde{z}_V$. In Step~I, we will prove $V^\dagger<V_0$.

\textit{\underline{Step~I(i).}}
We know that $A=\Gamma_V\backslash V(\R)$ and $V(\Q)\cong\Gamma_V\otimes_{\Z}\Q$. Consider any $\Q$-quotient group $V^\prime$ of $V$ of dimension $1$
\[
p^\prime\colon V\rightarrow V^\prime
\]
such that $\dim(p^\prime(V^\dagger))=1$. By abuse of notation, we shall denote its induced map $V(\R)\rightarrow V^\prime(\R)$ also by $p^\prime$. Now let $\Gamma_{V^\prime}:=p^\prime(\Gamma_V)$, then $\Gamma_{V^\prime}\cong\Z$ since $p^\prime$ is defined over $\Q$. Write $\Gamma_{V^\prime}=\Z e^\prime$, and let $\cF_{V^\prime}:=(-1,1)e^\prime$. Then $\cF_{V^\prime}$ is a fundamental set for the action of $\Gamma_{V^\prime}$ on $V^\prime(\R)$. Define $A^\prime=\Gamma_{V^\prime}\backslash V^\prime(\R)\cong\Z\backslash\R$, $\unif_{V^\prime}\colon V^\prime(\R)\rightarrow A^\prime$ the uniformization and $[p^\prime]\colon A\rightarrow A^\prime$ the map induced by $p^\prime$. Then $\unif_{V^\prime}|_{\cF_{V^\prime}}$ is definable (even in $\R_{an}$). Define $Y_{V^\prime}:=[p^\prime](Y_V)$ and $\tilde{Y}_{V^\prime}:=p^\prime(\tilde{Y}_V)$.

Let $V^{\prime\prime}:=\ker(p^\prime)$. The exact sequence of free $\Z$-modules
\[
1\rightarrow\Gamma_{V^{\prime\prime}}:=\Gamma_V\cap V^{\prime\prime}(\Q)\cong\Z^{2n-1}\rightarrow\Gamma_V\cong\Z^{2n}\rightarrow\Gamma_{V^\prime}\cong\Z\rightarrow1
\]
splits, and hence $\Gamma_V\cong\Gamma_{V^{\prime\prime}}\oplus\Gamma_{V^\prime}$. This induces $V\cong V^{\prime\prime}\oplus  V^\prime$. Write $\Gamma_{V^{\prime\prime}}=\sum_{i=2}^{2n}\Z e_i^{\prime\prime}$ and take $\cF_{V^{\prime\prime}}:=\sum_{i=2}^n(-1,1)e_i^{\prime\prime}$. Define $\cF_V:=\cF_{V^{\prime\prime}}\oplus\cF_{V^\prime}$. Then $\cF_V$ is a fundamental set for the action of $\Gamma_V$ on $V(\R)$ such that $\unif_V|_{\cF_V}$ is definable (even in $\R_{an}$). Define $\cF$ as in \eqref{FundamentalSetForW}.

Since $p(V^\dagger)=V^\prime$ by choice of $V^\prime$, $\dim_\R p^\prime(\tilde{Z}_V)>0$ by minimality of $V^\dagger$. Hence $p^\prime(\tilde{Z}_V)=V^\prime(\R)$ since $p^\prime(\tilde{Z}_V)$ is connected.

\begin{rmk}\label{WhyWeShouldDoBothSteps}
If we only request $(V^\prime,p^\prime)$ to satisfy $p^\prime(V_N)=1$, then we do not know whether $\dim_\R(p^\prime(\tilde{Z}_V))>0$. This is because we are considering the real analytic topology (i.e. the topology whose closed sets are real analytic sets) on $A^\prime$ and the complex analytic topology (i.e. the topology whose closed sets are complex analytic sets) on $A$, and hence $\bar{\unif_V(\tilde{Z}_V)}=\bar{Y_V}$ does NOT imply $\bar{\unif_{V^\prime}(\tilde{Z}_{V^\prime})}=\bar{Y_{V^\prime}}$. To overcome this problem, we introduce the seemingly strange subgroup $V^\dagger$ of $V_N$. We will prove (\textit{Step~II}) that $V_0$ is a $\MT(b)$-module with the help of $V^\dagger$. Then we prove the comparable result of Theorem~\ref{conclusion}(1) for the unipotent part in \textit{Step~III}.
\end{rmk}

Let $C$ be an $\R$-algebraic subvariety of $\tilde{Z}$ of $\R$-dimension 1 such that $p^\prime\pi(C)=V^\prime(\R)$. Define furthermore
\[
\Sigma(C):=\{g\in W(\R)|\dim_\R(gC\cap \unif^{-1}(Y)\cap\cF)=1\}
\]
and
\[
\Sigma^\prime(C):=\{g\in W(\R)|g^{-1}\cF\cap C\neq\emptyset\}.
\]
The set $\Sigma(C)$ is by definition definable. 
By Lemma~\ref{the two Sigma-sets coincide},
\begin{equation}\label{the two Sigma-sets coincide semi-abelian}
\Sigma^\prime(C)\cap\Gamma_W=\Sigma(C)\cap\Gamma_W \\
\end{equation}
For $M>0$, define
\[
\Theta_{V^\prime}(V^\prime(\R),M)=\{\gamma_{V^\prime}\in\Gamma_{V^\prime}|H(\gamma_{V^\prime})\leqslant M\}.
\]
Then
\begin{equation}\label{the theta is big enough semi-abelian}
|\Theta_{V^\prime}(V^\prime(\R),M)|\gg M.
\end{equation}

\textit{\underline{Step~I(ii)}} is quite similar to the end of $\mathsection$\ref{Ax-Lindemann Part 2: Estimate}. 
Consider the definable set $V^\prime(\R)$. By the theorem of Pila-Wilkie (\cite[Theorem~3.6]{PilaO-minimality-an}), there exist $J$ definable block families
\[
B^j\subset V^\prime(\R)\times\R^l,\qquad j=1,...,J
\]
and $c>0$ such that for all $M\gg0$, $\Theta_{V^\prime}(V^\prime(\R),M^{1/4})$ is contained in the union of at most $cM^{\delta/8}$ definable blocks of the form $B^j_y$ ($y\in\R^l$). By \eqref{the theta is big enough semi-abelian}, there exist a $j\in\{1,...,J\}$ and a block $B_{V^\prime}:=B^j_{y_0}$ of $V^\prime(\R)$
containing at least $M^{\delta/8}$ elements of $\Theta_{V^\prime}(V^\prime(\R),M^{1/4})$.

Let $\Sigma:=\Sigma(C)\cap\Sigma(\tilde{Z})$, which is by definition a definable set. Consider $X^j:=((p^\prime\pi)\times 1_{\R^l})^{-1}(B^j)\cap(\Sigma\times\R^l)$, which is a definable family since $p^\prime\pi$ is $\R$-algebraic.

By \cite[Ch. 3, 3.6]{DriesTame-Topology-a}, there exists a number $n_0>0$ such that each fibre $X^j_y$ has at most $n_0$ connected components.
So the definable set $\pi^{-1}(B_{V^\prime})\cap\Sigma$ has at most $n_0$ connected components. Now
\small
\[
p^\prime\pi((p^\prime\pi)^{-1}(B_{V^\prime})\cap\Sigma\cap\Gamma_W) =B_{V^\prime}\cap p^\prime\pi(\Sigma(C)\cap\Gamma_W)=B_{V^\prime}\cap(V^\prime(\R)\cap\Gamma_{V^\prime}) =B_{V^\prime}\cap\Gamma_{V^\prime}
\]
\normalsize
by \eqref{two Sigma coincide Z semi-abelian}, \eqref{the two Sigma-sets coincide semi-abelian} and the choice of $\cF$ (remember that $\Gamma_V=\Gamma_{V^{\prime\prime}}\oplus\Gamma_{V^\prime}$ and $\cF_V=\cF_{V^{\prime\prime}}\oplus\cF_{V^\prime}$). So there exists a connected component $B$ of $(p^\prime\pi)^{-1}(B_{V^\prime})\cap\Sigma$ such that $p^\prime\pi(B\cap\Gamma_W)$ contains at least $M^{\delta/8}/n_0$ elements of $\Theta_{V^\prime}(V^\prime(\R),M^{1/4})$.

We have $B\tilde{Z}\subset \unif^{-1}(Y)$ since $B\subset\Sigma(\tilde{Z})$ by (complex) analytic continuation, and $\tilde{Z}\subset\sigma_W^{-1}B\tilde{Z}$ for any $\sigma_W\in B\cap\Gamma_W$. But $B$ is connected, and therefore $\sigma_W^{-1}B\tilde{Z}=\tilde{Z}$ by maximality of $\tilde{Z}$ and \cite[Lemma~4.1]{PilaAbelianSurfaces}. So 
\[
 B\subset\sigma_W\stab_{W(\R)}(\tilde{Z}).
\]

Fix a $\sigma_W\in B\cap\Gamma_W$ such that $p^\prime\pi(\sigma_W)\in\Theta_{V^\prime}(V^\prime(\R),M^{1/4})$. We have shown that $p^\prime\pi(B\cap\Gamma_W)$ contains at least $M^{\delta/8}/n_0$ elements of $\Theta_{V^\prime}(V^\prime(\R),M^{1/4})$. For any $\sigma_{V^\prime}\in p^\prime\pi(B\cap\Gamma)\cap\Theta_{V^\prime}(V^\prime(\R),M^{1/4})$, let $\sigma_W^\prime$ be one of its pre-images in $B\cap\Gamma_W$. Then $\gamma_W:=\sigma_W^{-1}\sigma_W^\prime$ is an element of $\Gamma_W\cap\stab_{W(\R)}(\tilde{Z})$ and $H(p^\prime\pi(\gamma_W))\ll M^{1/2}$. Therefore for $M\gg0$, 
$p^\prime\pi(\Gamma_W\cap\stab_{W(\R)}(\tilde{Z}))$ contains at least $M^{\delta/8}/n_0$ elements $\gamma_{V^\prime}$ such that $H(\gamma_{V^\prime})\leqslant M$. Therefore $\dim(p^\prime\pi(W_0))>0$ since it is an infinite set. So $p^\prime\pi(W_0)=V^\prime$ since $\dim(V^\prime)=1$. But $V^\prime$ is an arbitrary $1$-dimensional quotient of $V$ such that $p^\prime(V^\dagger)=V^\prime$. Therefore $V^\dagger<\pi(W_0)=V_0$.

\textbf{\textit{\underline{Step~II.}}} We prove in this step that $V_0$ is a $\MT(b)$-module. This implies that $W_0$ is a $\MT(b)$-subgroup of $W$ by Proposition~\ref{properties of irreducible mixed Shimura datum}(2).

By definition of $V^\dagger$, $\tilde{Z}_V\subset V^\dagger(\R)+\tilde{z}_V$. By definition of $V_0$, $V_0(\R)+\tilde{z}_V\subset\tilde{Z}_V$.
Now the conclusion of \textit{Step~I} implies $V_0=V^\dagger$ and $\tilde{Z}_V=V_0(\R)+\tilde{z}_V$. However $\tilde{Z}_V$ is complex, so $V_0(\R)$ is a complex subspace of $V(\R)$. Therefore by considering the complex structure of $V(\R)$, we get that $V_0(\R)$ is a $\MT(b)(\R)$-module. So $V_0$ is a $\MT(b)$-module.

\textbf{\textit{\underline{Step~III.}}} can be seen as an analogue to the proof of Theorem~\ref{conclusion}(1).
Consider a fibre of $\tilde{Z}$ over a point $v\in \pi(\tilde{Z})$ such that $\pi\colon W(\C)/F_b^0W(\C)\rightarrow\lie(A)$ is flat at $v$ (such a point exists by generic flatness). Let $\tilde{W}$ be an irreducible algebraic component of $\tilde{Z}_{v}$ such that $\dim(\tilde{Z}_{v})=\dim(\tilde{W})$, then since $\pi$ is flat at $v$, 
\[
\dim(\tilde{Z})=\dim(\pi(\tilde{Z}))+\dim(\tilde{Z}_{v})=\dim(\pi(\tilde{Z}))+\dim(\tilde{W}).
\]

Consider the set $\tilde{F}:=W_0(\R)U_0(\C)\tilde{W}$. It is semi-algebraic. The fact $\tilde{W}\subset\tilde{Z}$ implies that $\tilde{F}\subset\tilde{Z}$. So by \cite[Lemma~4.1]{PilaAbelianSurfaces}, there exists an irreducible algebraic subvariety of $W(\C)/F_b^0W(\C)$, say $\tilde{F}_{\mathrm{alg}}$, which contains $\tilde{F}$ and is contained in $\tilde{Z}$. Since
\[
\pi(\tilde{F})=\pi(W_0)(\R)+v=\bar{\pi(\tilde{Z})}
\]
and every fiber of $\pi|_{\tilde{F}_{\mathrm{alg}}}$ has $\R$-dimension at least $\dim_\R(\tilde{W})$,
we have
\[
\dim(\tilde{F}_{\mathrm{alg}})\geqslant\dim(\pi(\tilde{F}))+\dim(\tilde{W})=\dim(\pi(\tilde{Z}))+\dim(\tilde{W})=\dim(\tilde{Z}).
\]
So $\tilde{F}=\tilde{Z}$ since $\tilde{Z}$ is irreducible. In other words, $\tilde{Z}=W_0(\R)U_0(\C)\tilde{Z}_v$ and $\tilde{Z}_v$ is irreducible for any $v\in\pi(\tilde{Z})$.

Next for any $v\in\pi(\tilde{Z})$, let $\tilde{W}^\prime$ be an irreducible algebraic subvariety which contains $\tilde{Z}_v$ and is contained in $\unif^{-1}(Y)_v$, maximal for these properties. Then $\tilde{W}^\prime$ is weakly special by \textit{Case~ii}. Consider $\tilde{F}^\prime:=W_0(\R)U_0(\C)\tilde{W}^\prime$. Let $\tilde{Y}$ be the irreducible component of $\unif^{-1}(Y)$ which contains $\tilde{Z}$, then $\tilde{W}^\prime\subset\tilde{Y}$ and so $\tilde{F}^\prime\subset \tilde{Y}$ by Lemma~\ref{stabiliser of Z stabilises Y}. But $\tilde{F}^\prime$ is semi-algebraic, and hence by \cite[Lemma~4.1]{PilaAbelianSurfaces} there exists an irreducible algebraic subvariety of $W(\C)/F_b^0W(\C)$, say $\tilde{F}^\prime_{\mathrm{alg}}$, which contains $\tilde{F}^\prime$ and is contained in $\tilde{Y}$.
So $\tilde{Z}=W_0(\R)U_0(\C)\tilde{Z}_v\subset\tilde{F}^\prime_{\mathrm{alg}}\subset \unif^{-1}(Y)$, and hence $\tilde{Z}=\tilde{F}^\prime_{\mathrm{alg}}=\tilde{F}^\prime$ by the maximality of $\tilde{Z}$. So $\tilde{Z}_v=\tilde{W}^\prime$, i.e.
\begin{equation}\label{fiber still maximal algebraic}
\begin{array}{c}
\text{For any }v\in\pi(\tilde{Z}),~\tilde{Z}_v\text{ is a maximal irreducible algebraic} \\
\text{subvariety of }W(\C)/F^0W(\C)\text{ contained in }\unif^{-1}(Y)_v.
\end{array}
\end{equation}

Now that $\tilde{Z}_v=\tilde{W}^\prime$ is weakly special, we can write $\tilde{Z}_v=U^{\prime}(\C)+\tilde{z}$ with $U^{\prime}<U$ and $\tilde{z}\in\tilde{Z}_v$. Then $U_0<U^\prime$. The product $W^\prime:=W_0U^\prime$ is a subgroup of $W$, and hence
\[
\tilde{Z}=W_0(\R)U_0(\C)\tilde{Z}_v=W_0(\R)U^\prime(\C)\tilde{z}=W^\prime(\R)U^\prime(\C)\tilde{z}.
\]
So $W_0=W^\prime$ and $U_0=U^\prime$. In other words,
\begin{equation}\label{form of Z tilde}
\tilde{Z}=\tilde{E}=W_0(\R)U_0(\C)\tilde{z}
\end{equation}
for some point $\tilde{z}\in\tilde{Z}_{v}$.

\textbf{\textit{\underline{Step~IV.}}} Let us now conclude that $\tilde{Z}$ is weakly special.

First of all, $U_0\lhd P$ by Proposition~\ref{properties of irreducible mixed Shimura datum}(2). Consider $(P,\cX^+)\xrightarrow{\rho}(P,\cX^+)/U_0$, then by definition $\tilde{Z}$ is weakly special iff $\rho(\tilde{Z})$ is. Replace $(P,\cX^+)$ (resp. $W$, $\tilde{Z}$, $W_0$, $\tilde{z}$) by $(P,\cX^+)/U_0$ (resp. $W/U_0$, $\rho(\tilde{Z})$, $W_0/U_0=V_0$, $\rho(\tilde{z})$), then $V_0$ is a subgroup of $W$ and $\tilde{Z}=V_0(\R)\tilde{z}$. Use the notation of $\mathsection$\ref{structure of the underlying group} and $\mathsection$\ref{Realization of X} and suppose $\tilde{z}=(\tilde{z}_U,\tilde{z}_V)$. By Proposition~\ref{weakly special for semi-abelian}, $\tilde{Z}$ is weakly special iff $\tilde{z}_V\in(N_W(V_0)/U)(\R)$ iff $\Psi(V_0(\R),\tilde{z}_V)=0$. We shall prove the last claim.

Define $Z:=\unif(\tilde{Z})$, $z=\unif(\tilde{z})$ and $z_V=[\pi](z)\in A$, then $\pi(\tilde{Z})=V_0(\R)+\tilde{z}_V$ and $[\pi](Z)=A_0+z_V$ where $A_0=\Gamma_{V_0}\backslash V_0(\R)$ is an abelian subvariety of $A$. We can compute the fiber
\begin{equation}\label{fiber over abelian variety}
Z_{z_V}=\left(\unif(\Gamma_W\tilde{Z})\right)_{z_V}=\tilde{z}_U+\frac{1}{2}\Psi(\Gamma_{V},\tilde{z}_V)+\Gamma_U\quad\bmod\Gamma_U.
\end{equation}

We have $\Psi(V(\R),V(\R))\subset U(\R)$ since $\Psi$ is defined over $\Q$. Let us prove $\Psi(\Gamma_{V},\tilde{z}_V)\subset U(\Q)$. Fix an isomorphism $\Gamma_U\cong\Z^m$, which induces an isomorphism $U(\Q)\cong\Q^m$. Suppose that there exists a $u\in\Psi(\Gamma_{V},\tilde{z}_V)\setminus U(\Q)$, then at least one of the coordinates of $u$ is irrational. Without loss of generality we may assume that its first coordinate $u_1\in\R\setminus\Q$. Denote by $U_1$ the $\Q$-subgroup of $U$ corresponding to the first factor of $U(\Q)\cong\Q^m$, then
\[
\unif\big(\tilde{z}_U+U_1(\R)\big)\subset\bar{Z_{z_V}}
\]
since $\{lu_1\bmod\Z|l\in\Z\}$ is dense in $[0,1)$. So $\bar{Z_{z_V}}$ contains
\[
\unif\big(\tilde{z}_U+U_1(\C)\big),
\]
and so does $Y_{z_V}$ since $\bar{Z}\subset Y$. Let $v:=v_0+\tilde{z}_V\in V(\R)$, then $\tilde{z}_U+U_1(\C)\subset \unif^{-1}(Y)_v$. However $\tilde{Z}_{\tilde{z}_v}=\tilde{z}_U$ by \eqref{form of Z tilde} (recall that we have reduced to $W_0=V_0$ and $U_0=0$), which contradicts \eqref{fiber still maximal algebraic}. Hence $\Psi(\Gamma_{V},\tilde{z}_V)\subset U(\Q)$, and therefore $(1/2)\Psi(N\Gamma_{V},\tilde{z}_V)\subset\Gamma_U$ for some $N\gg0$ (since $\rank\Gamma_{V}<\infty$). Now we can construct a new lattice $\Gamma_W^\prime$ with $N\Gamma_V$ and $\Gamma_U$. $\Gamma_W^\prime$ is of finite index in $\Gamma_W$. Replacing $\Gamma_W$ by $\Gamma_W^\prime$ does not change the assumption or the conclusion of Ax-Lindemann-Weierstra{\ss}, so we may assume 
$(1/2)\Psi(\Gamma_{V},\tilde{z}_V)\subset\Gamma_U$. Now we can define $C^\infty$-morphisms
\[
\begin{diagram}
f\colon &A_0+z_V &\rTo &T \\
&a_0+z_V &\mapsto &\tilde{z}_U+(1/2)\Psi(v_0,\tilde{z}_V)\bmod\Gamma_U
\end{diagram}
\]
and
\[
\begin{diagram}
s\colon &A_0+z_V &\rTo &E|_{A_0+z_V} \\
&a_0+z_V &\mapsto &(\tilde{z}_U+(1/2)\Psi(v_0,\tilde{z}_V),a_0+z_V)\bmod\Gamma_W
\end{diagram}
\]
where $v_0$ is any point of $V_0(\R)$ such that $\unif_V(v_0)=a_0$. But $Z_a$ is a single point for all $a\in A_0+z_V$ by \eqref{fiber over abelian variety}, so $s$ is the inverse of $[\pi]|_Z$, and therefore $s$ is a holomorphic section of $E|_{A_0+z_V}\rightarrow A_0+z_V$. Locally on $U_i\subset A_0+z_V$, $s$ is represented by a holomorphic morphism $U_i\rightarrow T$, which must equal to $f|_{U_i}$ by definition. Hence $f$ is holomorphic since being holomorphic is a local condition. So $f$ is constant.

But $\Psi(0,\tilde{z}_V)=0$, and therefore $(1/2)\Psi(V_0(\R),\tilde{z}_V)\subset\Gamma_U$. But $\Psi(V_0(\R),\tilde{z}_V)$ is continuous and $\Psi(0,\tilde{z}_V)=0$, so $\Psi(V_0(\R),\tilde{z}_V)=0$. Hence we are done.
\end{proof}

\section{Consequence of Ax-Lindemann-Weierstra{\ss}}\label{Consequence of Ax-Lindemann}

\subsection{Weakly special subvarieties defined by a fixed $\Q$-subgroup}
Let $S=\Gamma\backslash\cX^+$ be a connected mixed Shimura variety associated with the connected mixed Shimura datum $(P,\cX^+)$ and let $\unif\colon \cX^+\rightarrow S$ be the uniformization. Suppose that $N$ is a connected subgroup of $P$ s.t. $N/(W\cap N)\hookrightarrow G$ is semi-simple. A subvariety of $S$ is said to be weakly special defined by $N$ if it is of the form $\unif(i(\varphi^{-1}(y^\prime)))$ under the notation of Definition \ref{definition of Pink} s.t. $N=\ker(\varphi)$.
Let $\mathfrak{F}(N)$ be the set of all weakly special subvarieties of $S$ defined by $N$. The goal of this subsection is to prove:
\begin{prop}\label{finiteness of strict special subvarieties}
If $\mathfrak{F}(N)\neq\emptyset$ and $N\ntriangleleft P$, then $\cup_{Z\in\mathfrak{F}(N)}Z$ is a finite union of proper special subvarieties of $S$.
\begin{proof} Take any $F\in\mathfrak{F}(N)$. Let $\cF$ be a fundamental domain for the action $\Gamma$ on $\cX^+$. Suppose that $x^\prime\in\cF$ is such that $F=\unif(N(\R)^+U_N(\C)x^\prime)$.
Consider $Q^\prime:=N_P(N)$, the normalizer of $N$ in $P$. By definition of weakly special subvarieties, there exists $(R^\prime,\cZ^+)\hookrightarrow(P,\cX^+)$ such that $h_{x^\prime}:\S_\C\rightarrow P_\C$ factors through $R^\prime_\C$ and $N\lhd R^\prime$. Hence $R^\prime<Q^\prime$. Define $G_{Q^\prime}:=Q^\prime/(W\cap Q^\prime)$. Then $G_{Q^\prime}/(Z(G)\cap G_{Q^\prime})$ is reductive by \cite[Lemma~4.3]{ClozelEquidistributio} or \cite[Proposition~3.28]{UllmoAutour-de-la-co}, and hence $G_{Q^\prime}$ is reductive. Write
\[
1\rightarrow W\cap Q^\prime\rightarrow Q^\prime\xrightarrow{\pi_{Q^\prime}} G_{Q^\prime}\rightarrow 1.
\]
The group $G_{Q^\prime}=Z(G_{Q^\prime})^\circ G_{Q^\prime}^{\mathrm{nc}}G_{Q^\prime}^{\mathrm{c}}$ is an almost-direct product, where $G_{Q^\prime}^{\mathrm{nc}}$ (resp. $G_{Q^\prime}^{\mathrm{c}}$) is the product of the $\Q$-simple factors whose set of $\R$-points is non-compact (resp. compact). Let $G_Q:=Z(G_{Q^\prime})^\circ G_{Q^\prime}^{\mathrm{nc}}$ and then define $Q:=\pi_{Q^\prime}^{-1}(G_Q)$, then $h_{x^\prime}$ factors through $Q_\C$ and $R^\prime<Q$ by Definition~\ref{connected mixed Shimura datum}(4). So $N\lhd Q$ and $(Q,\cY^+)$, where $\cY^+:=Q(\R)^+U_Q(\C)x^\prime$, is a connected mixed Shimura subdatum of $(P,\cX^+)$. But then $F\subset\unif(\cY^+)\subset\cup_{Z\in\mathfrak{F}(N)}Z$.

Define $\mathfrak{Y}_Q:=\{x\in\cX^+|h_x\text{ factors through }Q_{\C}\}$, then $Q(\R)^+U_Q(\C)\mathfrak{Y}_Q=\mathfrak{Y}_Q$. The discussion of last paragraph tells us that $F\subset\unif(\mathfrak{Y}_Q)$ for any $F\in\mathfrak{F}(N)$. On the other hand, for any $x\in\mathfrak{Y}_Q$, $(Q,\cY^+)$, where $\cY^+:=Q(\R)^+U_Q(\C)x$, is a connected mixed Shimura subdatum of $(P,\cX^+)$ and hence $\unif(N(\R)^+U_N(\C)x)\in\mathfrak{F}(N)$. Therefore $\unif(\mathfrak{Y}_Q)\subset\cup_{Z\in\mathfrak{F}(N)}Z$. To sum it up, $\cup_{Z\in\mathfrak{F}(N)}Z=\unif(\mathfrak{Y}_Q)$.

Now we are done if we can prove
\begin{claim} The set $\mathfrak{Y}_Q$ is a finite union of $Q(\R)^+U_Q(\C)$-conjugacy classes. In other words, $\mathfrak{Y}_Q$ is a finite union of connected mixed Shimura subdata of $(P,\cX^+)$.
\end{claim}
Fix a special point $x$ of $\cX^+$ contained in $\mathfrak{Y}_Q$. There exists by definition a torus $T_x\subset Q$ such that $h_x:\S_\C\rightarrow Q_\C$ factors through $T_{x,\C}$. Furthermore, we may and do assume that $T_{x,\C}$ is a maximal torus of $Q_\C$. Let $T$ be a maximal torus of $P_\C$ defined over $\Q$ such that $T>T_x$. Take a Levi decomposition $P=W\rtimes G$ such that $T<G<P$. Then the composite $\S_\C\xrightarrow{h_x}T_{x,\C}<P_\C\xrightarrow{\pi}G_\C<P_\C$ equals $h_x$ and is defined over $\R$ by Definition~\ref{connected mixed Shimura datum}(1).

For any other special point $y$ of $\cX^+$ contained in $\mathfrak{Y}_Q$, there exists $g\in Q(\C)$ such that $gT_{x,\C}g^{-1}=T_{y,\C}$. The number of the $Q(\R)$-conjugacy classes of maximal tori of $Q_\R$ defined over $\R$ is at most
\[
\#(\ker(H^1\left(\R,N_{Q(\R)}(T_{x,\R})\right)\rightarrow H^1(\R,Q)))<\infty,
\]
where $N_{Q(\R)}(T_{x,\R})$ is the normalizer of $T_{x,\R}$ in $Q(\R)$. So it is equivalent to prove the finiteness of the $Q(\R)^+U_Q(\C)$-conjugay classes in $\mathfrak{Y}_Q$ and to prove the finiteness of the $Q(\R)^+$-conjugacy classes of the morphisms $\S\rightarrow T_{x,\R}$. But $T_x<T<G$, so the $Q(\R)^+$-conjugacy classes of the morphisms $\S\rightarrow T_{x,\R}$ equals the $G_Q(\R)^+$-conjugacy classes of the morphisms $\S\rightarrow T_{x,\R}$. In otherwords, it suffices to prove the claim for $(G,\cX^+_G)$. Now the result follows from \cite[Lemma~4.4(ii)]{ClozelEquidistributio} (or \cite[2.4]{MoonenLinearity-prope} or \cite[Lemma~3.7]{UllmoGalois-orbits-a}).
\end{proof}
\end{prop}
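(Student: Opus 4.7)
The plan rests on showing that $\bigcup_{Z \in \mathfrak{F}(N)} Z$ coincides with the image under $\unif$ of a fixed ``Shimura locus'' in $\cX^+$ attached to the normalizer of $N$, and that this image is a finite union of connected mixed Shimura subvarieties, each proper because $N$ is not normal in $P$.

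First I would observe that if $F \in \mathfrak{F}(N)$ is defined by a diagram $(P,\cX^+) \leftarrow (R,\cZ^+) \to (R/N,\cZ'^+)$ with $N = \ker(R \to R/N)$, then at any lift $x' \in \unif^{-1}(F)$ the cocharacter $h_{x'}$ factors through $R_\C$, and $N \triangleleft R$ forces $R < Q' := N_P(N)$. Hence $F \subset \unif(\mathfrak{Y}_{Q'})$ where
\[
\mathfrak{Y}_{Q'} := \{x \in \cX^+ \mid h_x \text{ factors through } Q'_\C\}.
\]
However, $Q'$ need not itself underlie a connected mixed Shimura subdatum, since axiom~(4) of Definition~\ref{connected mixed Shimura datum} may fail. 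I would correct this by replacing $Q'$ by the preimage $Q$ in $Q'$ of $Z(G_{Q'})^\circ \cdot G_{Q'}^{\mathrm{nc}}$, where $G_{Q'} := Q'/(W \cap Q')$; the group $G_{Q'}$ is reductive by \cite[Lemma~4.3]{ClozelEquidistributio} or \cite[Proposition~3.28]{UllmoAutour-de-la-co}, and axiom~(4) then forces any Hodge cocharacter factoring through $Q'_\C$ to factor in fact through $Q_\C$. Consequently $\mathfrak{Y}_{Q'} = \mathfrak{Y}_Q$, and conversely every $Q(\R)^+U_Q(\C)$-orbit in $\mathfrak{Y}_Q$ is the homogeneous space of a connected mixed Shimura subdatum $(Q,\cY^+) \hookrightarrow (P,\cX^+)$ with $N \triangleleft Q$, which supplies an element of $\mathfrak{F}(N)$. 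Thus $\bigcup_{Z \in \mathfrak{F}(N)} Z = \unif(\mathfrak{Y}_Q)$.

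The core of the argument is the finiteness claim that $\mathfrak{Y}_Q$ is a finite union of $Q(\R)^+U_Q(\C)$-orbits. I would reduce this to the pure Shimura datum $(G_Q,\cX^+_{G_Q})$, using that each fibre of $\cX^+ \to \cX^+_G$ is a single $U(\C)W(\R)$-orbit so the unipotent radical contributes no extra orbits. For the pure case, fix a special point $x \in \mathfrak{Y}_Q$, which exists because special points are Zariski dense in the connected mixed Shimura subvariety cut out by any given $Q(\R)^+U_Q(\C)$-orbit. Its Mumford-Tate torus $T_x$ can be arranged to be a maximal torus of $Q_\C$. For any $y \in \mathfrak{Y}_Q$, the torus $T_y$ is $Q(\C)$-conjugate to $T_x$, and the $Q(\R)$-conjugacy classes of real forms are controlled by a finite subset of $H^1\bigl(\R, N_{Q(\R)}(T_{x,\R})\bigr)$; on each such real form the set of admissible Hodge cocharacters $\S \to T_{x,\R}$ falls into finitely many $Q(\R)^+$-conjugacy classes, by \cite[Lemma~4.4]{ClozelEquidistributio}, \cite[2.4]{MoonenLinearity-prope} or \cite[Lemma~3.7]{UllmoGalois-orbits-a}.

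Properness of each resulting special subvariety is then automatic: if some $Q(\R)^+U_Q(\C)$-orbit equalled $\cX^+$, then $Q = P$, whence $N_P(N) = P$ and $N \triangleleft P$, contradicting the hypothesis. The main obstacle I expect is the reduction step from $Q'$ to $Q$: one must ensure simultaneously that $Q$ carries a mixed Shimura structure and that no point of $\mathfrak{Y}_{Q'}$ is lost in the passage. Once this is done, the rest is a bookkeeping exercise assembling classical finiteness results for real tori and their Hodge cocharacters.
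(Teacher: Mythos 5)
Your plan coincides with the paper's proof: replace the normalizer $Q' = N_P(N)$ by the subgroup $Q$ obtained by discarding the compact $\Q$-factors of $G_{Q'}^{\ad}$, identify $\bigcup_{Z\in\mathfrak{F}(N)}Z$ with $\unif(\mathfrak{Y}_Q)$, and conclude via finiteness of $Q(\R)$-conjugacy classes of real maximal tori and of $G_Q(\R)^+$-conjugacy classes of Hodge cocharacters. The one place you gloss over is the reduction to the pure datum: your remark that ``each fibre of $\cX^+\to\cX^+_G$ is a single $W(\R)U(\C)$-orbit so the unipotent radical contributes no extra orbits'' does not by itself show that two points of $\mathfrak{Y}_Q$ with the same image in $\cX^+_G$ differ by an element of $W_Q(\R)U_Q(\C)$; the paper handles this by choosing a $\Q$-maximal torus $T\supset T_x$ and a compatible Levi $G$ so that $h_x$ literally lands in $G_\C$, which makes the passage to $(G_Q,\cX^+_{G_Q})$ legitimate.
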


\subsection{Consequence of Ax-Lindemann-Weierstra{\ss}}
Now we use the result of the previous subsection to prove the following theorem, which will be used in the next section to prove the Andr\'{e}-Oort Conjecture.
\begin{thm}\label{consequenceOfAx-Lindemann}
 Let $S=\Gamma\backslash\cX^+$ be a connected mixed Shimura variety associated with the connected mixed Shimura datum $(P,\cX^+)$. Let $Y$ be a Hodge generic irreducible subvariety of $S$. Then there exists an $N\lhd P$ (denote by $U_N:=U\cap N$) s.t. for the diagram
\begin{equation}\label{QuotientDiagram}
\begin{diagram}
(P,\cX^+) &\rTo^{\rho} &(P,\cX^{\prime+}):=(P,\cX^+)/N \\
\dTo_{\unif} & &\dTo_{\unif^\prime} \\
S &\rTo^{[\rho]} &S^\prime
\end{diagram},
\end{equation}
\begin{itemize}
\item 
the union of positive-dimensional weakly special subvarieties which are contained in $Y^\prime:=\bar{[\rho](Y)}$ is NOT Zariski dense in $Y^\prime$;
\item $Y=[\rho]^{-1}(Y^\prime)$.
\end{itemize}
\begin{proof}
Without any loss of generality, we assume that the union of positive-dimensional weakly special subvarieties which are contained in $Y$ is Zariski dense in $Y$.

Take a fundamental domain $\cF$ for the action of $\Gamma$ on $\cX^+$ s.t. $\unif|_{\cF}$ is definable. Such an $\cF$ exists by e.g. $\mathsection$\ref{Ax-Lindemann Part 2: Estimate}.1.

By Reduction Lemma (Lemma \ref{reduction lemma}), we may assume $
\begin{diagram}
(P,\cX^+) &\rInto^{\lambda} &(G_0,\cD^+)\times\prod_{i=1}^r(P_{2g},\cX^+_{2g})
\end{diagram}
$, i.e. replace $(P,\cX^+)$ by $(P^\prime,\cX^{\prime+})$ in the lemma if necessary. Identify $(P,\cX^+)$ with its image under $\lambda$.

Let $\cT$ be the set of the triples $(U^\prime,V^\prime,G^\prime)$ consisting of an $\R$-subgroup of $U_{\R}$, an $\R$-sub-Hodge structure of $V_{\R}$ and a connected $\R$-subgroup of $G_{\R}$ which is semi-simple and has no compact factors. Let
\[
\cG:=\G_m(\R)^r\times\GSp_{2g}(\R)\times G(\R),
\]
then $\cG$ acts on $\cT$ by $(g_U,g_V,g)\cdot(U^\prime,V^\prime,G^\prime):=(g_UU^\prime,g_VV^\prime,gG^\prime g^{-1})$. Also we define the action of a triple $(U^\prime(\R),V^\prime(\R),G^\prime(\R))$ on $\cX^+\cong U(\C)\times V(\R)\times\cX^+_G$ as \eqref{action of P on X}. This action is algebraic.

\begin{lemma}\label{finitely many real subgroups}
Up to the action of $\cG$ on $\cT$, there exist only finitely many such triples.
 \begin{proof}
  First of all by root system and Galois cohomology, there exist only finitely many semi-simple subgroups of $G_{\R}$ up to conjugation by $G(\R)$.
  
  Secondly, $V^\prime$ is by definition a symplectic subspace of $V_{\R}$. Hence a symplectic base of $V^\prime$ extends to a symplectic base of $V_{\R}=V_{2g,\R}$. But $\GSp_{2g}(\R)$ acts transitively on the set of symplectic bases of $V_{2g,\R}$, so there are only finitely many choices for $V^\prime$ up to the action of $\GSp_{2g}(\R)$ (in fact, there are only $g$ choices).
  
  Finally, observe that $\forall(\lambda_1,...,\lambda_r)\in\G_m(\R)^r$ and $(u_1,...,u_r)\in U\cong\oplus_{i=1}^rU_{2g}^{(i)}$,
\[
(\lambda_1,...,\lambda_r)\cdot(u_1,...,u_r)=(\lambda_1u_1,...,\lambda_ru_r)
\]
Now it is clear that $(u_1,...,u_r)$ and $(u_1^\prime,...,u_r^\prime)$ are under the same orbit of the action of $\G_m(\R)^r$ if and only if $u_iu_i^\prime\geqslant0$ with $u_iu_i^\prime=0\Rightarrow u_i=u_i^\prime=0$ for all $i=1,...,r$. Hence up to the action of $\G_m(\R)^r$, there are only finitely many $U^\prime$'s (in fact, there are $2\binom{r}{s}$ $U^\prime$'s of dimension $s$).
 \end{proof}
\end{lemma}

Let $\mathfrak{W}(Y)$ (resp. $\mathfrak{W}_l(Y)$) be the union of weakly special subvarieties of positive dimension (resp. of real dimension $l$) contained in $Y$.

For any $l$ s.t. $\mathfrak{W}_l(Y)\neq\emptyset$, there exist by definition (and Proposition \ref{particular choices of i and varphi}) a subgroup $N_l$ of $P^{\der}$ and a point $x_0\in\cF$ s.t. $\unif(N_l(\R)^+U_{N_l}(\C)x_0)$ is a weakly special subvariety of dimension $l$ contained in $Y$. Note that the triple $(U_{N_l,\R},V_{N_l,\R},G_{N_l,\R}^{+\mathrm{nc}})\in\cT$, where $G_{N_l,\R}^{+\mathrm{nc}}$ is the product of the $\R$-simple factors of $G_{N_l,\R}^+$ which are non-compact. We say that two such subgroups $N_l$, $N^\prime_l$ of $P$ are equivalent if $(U_{N_l,\R},V_{N_l,\R},G_{N_l,\R}^{+\mathrm{nc}})=(U_{N^\prime_l,\R},V_{N^\prime_l,\R},G_{N^\prime_l,\R}^{+\mathrm{nc}})$. By condition (4) of Definition \ref{connected mixed Shimura datum}, $\unif(N_l(\R)^+U_{N_l}(\C)x_0)=\unif(N_l^\prime(\R)^+U_{N_l^\prime}(\C)x_0)$ iff $N_l$ and $N_l^\prime$ are equivalent.

Define
\[
\begin{array}{lr}
 B(N_{l,\R},Y):= &\{(g_U,g_V,g,x)\in \cG\times\cF|~\unif((g_UU_{N_l}(\C),g_VV_{N_l}(\R),gG_{N_l}(\R)^{+\mathrm{nc}}g^{-1}) x)\subset Y \\
 &\text{ and is not contained in }\cup_{l^\prime>l}\mathfrak{W}_{l^\prime}(Y)\}.
\end{array}
\]
Then by analytic continuation,
\begin{equation}\label{set B definable}
\begin{array}{lr}
 B(N_{l,\R},Y)= &\{(g_U,g_V,g,x)\in \cG\times\cF|~pr|_{\cF}((g_UU_{N_l}(\R),g_VV_{N_l}(\R),gG_{N_l}(\R)^{+\mathrm{nc}}g^{-1}) x)\subset Y \\
 &\text{ and is not contained in }\cup_{l^\prime>l}\mathfrak{W}_{l^\prime}(Y)\}.
\end{array}
\end{equation}
 
\begin{lemma}\label{translate of weakly special still weakly special}
For any $(g_U,g_V,g,x)\in B(N_{l,\R},Y)$, define
\[
\tilde{Z}:=(g_UU_{N_l}(\C),g_VV_{N_l}(\R),gG_{N_l}(\R)^{+\mathrm{nc}}g^{-1})x.
\]
Then $\unif(\tilde{Z})$ is a weakly special subvariety of $Y$.
\begin{proof} The set $\tilde{Z}$ is a connected irreducible semi-algebraic subset of $\cX^+$ which is contained in $\unif^{-1}(Y)$. Let $\tilde{Z}^\dagger$ be a connected irreducible semi-algebraic subset of $\cX^+$ which is contained in $\unif^{-1}(Y)$ and which contains $\tilde{Z}$, maximal for these properties. By \cite[Lemma 4.1]{PilaAbelianSurfaces} (there is a typo in the proof: $\C^{2n}$ should be $\C^n$) and Ax-Lindemann-Weierstra{\ss} (Theorem \ref{Ax-Lindemann for type star}), $\tilde{Z}^\dagger$ is complex analytic and each of its complex analytic irreducible component is weakly special. But $\tilde{Z}$ is smooth, so $\tilde{Z}$ is contained in one complex analytic irreducible component of $\tilde{Z}^\dagger$ which we denote by $\tilde{Z}^\prime$. Now we have
\small
\begin{align*}
\dim(\tilde{Z})-\dim(N_l(\R)^+U_{N_l}(\C)x_0) &=\dim(gG_{N_l}(\R)^+g^{-1}\cdot x_G)-\dim(G_{N_l}(\R)^+x_{0,G}) \\
&=\dim(\stab_{G_{N_l}(\R)^+}(x_{0,G}))-\dim(\stab_{gG_{N_l}(\R)^+g^{-1}}(x_G)) \\
&\geqslant 0
\end{align*}
\normalsize
because $\stab_{gG_{N_l}(\R)^+g^{-1}}(x_G)$ is a compact subgroup of $gG_{N_l}(\R)^+g^{-1}$ and $\stab_{G_{N_l}(\R)^+}(x_{0,G})$ is a maximal compact subgroup of $G_{N_l}(\R)^+$. Hence
\[
\dim(\tilde{Z}^\prime)\leqslant l=\dim(N_l(\R)^+U_{N_l}(\C)x_0)\leqslant \dim(\tilde{Z})\leqslant \dim(\tilde{Z}^\prime)
\]
where the first inequality follows from the definition of $B(N_{l,\R},Y)$. Therefore $\tilde{Z}=\tilde{Z}^\prime$ is weakly special.
So $\unif(\tilde{Z})$ is weakly special.
\end{proof}
\end{lemma}

Define
\[
\begin{array}{lr}
C(N_{l,\R},Y):= &\{\underline{t}:=(g_UU_{N_l}(\R),g_VV_{N_l}(\R),gG_{N_l}(\R)^{+\mathrm{nc}}g^{-1})|(g_U,g_V,g)\in \cG\text{ s.t. }\exists x\in\cF\\
&\text{ with } \unif(\underline{t}\cdot x)\subset Y\text{ and is not contained in }\cup_{l^\prime>l}\mathfrak{W}_{l^\prime}(Y)\}
\end{array}.
\]
Let
\[
\begin{diagram}
\mbox{\footnotesize $B({N_{l,\R}},Y)$} &\rTo^{\psi_l} &\mbox{\footnotesize $(\G_m(\R)^r/\stab_{\G_m(\R)^r}U_{N_l}(\R))\times\GSp_{2g}(\R)/\stab_{\GSp_{2g}(\R)}V_{N_l}(\R)\times G(\R)/N_{G(\R)}G_{N_l}(\R)^{+\mathrm{nc}}$} \\
\normalsize
(g_U,g_V,g,x) &\mapsto &(g_UU_{N_l}(\R),g_VV_{N_l}(\R),gG_{N_l}(\R)^{+\mathrm{nc}}g^{-1})
\end{diagram},
\]
then there is a bijection between $\psi_l(B(N_{l,\R},Y))$ and $C(N_{l,\R},Y)$.
\begin{lemma}\label{the set is countable}
The set $C(N_{l,\R},Y)$ (hence $\psi_l(B(N_{l,\R},Y))$) is countable.
\begin{proof}
By Lemma \ref{translate of weakly special still weakly special}, $\unif((g_UU_{N_l}(\C),g_VV_{N_l}(\R),gG_{N_l}(\R)^{+\mathrm{nc}}g^{-1})\cdot x)$ is weakly special. Hence by Proposition \ref{particular choices of i and varphi} there exists a $\Q$-subgroup $N^\prime$ of $P^\der$ s.t.
\begin{equation}\label{new group over Q}
 (g_UU_{N_l}(\C),g_VV_{N_l}(\R),gG_{N_l}(\R)^{+\mathrm{nc}}g^{-1})=(U_{N^\prime}(\C),V_{N^\prime}(\R),G_{N^\prime}(\R)^{+\mathrm{nc}}).
\end{equation}
But $g_UU_{N_l}(\R)=g_UU_{N_l}(\C)\cap U(\R)$ and $U_{N^\prime}(\R)=U_{N^\prime}(\C)\cap U(\R)$, so
\[
 (g_UU_{N_l}(\R),g_VV_{N_l}(\R),gG_{N_l}(\R)^{+\mathrm{nc}}g^{-1})=(U_{N^\prime}(\R),V_{N^\prime}(\R),G_{N^\prime}(\R)^{+\mathrm{nc}}).
\]
So $C(N_{l,\R},Y)$, and therefore $\psi_l(B(N_{l,\R},Y))$ is countable.
\end{proof}
\end{lemma}

\begin{prop}\label{UnionOfWeaklySpecials} For any $l>0$ and $N_l$,
\begin{enumerate}
\item the set $C(N_{l,\R},Y)$ (hence $\psi_l(B(N_{l,\R},Y))$) is finite;
\item the set $\cup_{l^\prime\geqslant l}\mathfrak{W}_{l^\prime}(Y)$ is definable;
\end{enumerate}
\begin{proof}
We prove the two statements together by induction on $l$.

\underline{\textit{Step I.}} Let $d$ be the maximum of the dimensions of weakly special subvarieties of positive dimension contained in $Y$. For any $N_d$, $B(N_{d,\R},Y)$ is definable by \eqref{set B definable}, and hence $\psi_d(B(N_{d,\R},Y))$ is definable since $\psi_d$ is algebraic. So $\psi_d(B(N_{d,\R},Y))$, and therefore $C(N_{d,\R},Y)$, is finite by Lemma \ref{the set is countable}.

Consider all the triples 
\[
\begin{array}{lc}
\mathfrak{W}_d(Y,\cT):=\{(U^\prime,V^\prime,G^\prime)\in\cT| &\exists x\in\cF\text{ with }\unif((U^\prime(\C),V^\prime(\R),G^\prime(\R)^+)x) \\
 &\text{ weakly special of dimension }d\text{ contained in }Y\}.
\end{array}
\]
By Lemma \ref{finitely many real subgroups}, any $\underline{t}\in\mathfrak{W}_d(Y,\cT)$ is of the form $\underline{g}\cdot(U^\prime_i,V^\prime_i,G^\prime_i)$ for $\underline{g}\in\cG$ and $i=1,...,n$.
Furthermore, by Proposition \ref{particular choices of i and varphi}, we may assume
\[
(U^\prime_i,V^\prime_i,G^\prime_i)=(U_{N^\prime_i,\R},V_{N^\prime_i,\R},G_{N^\prime_i,\R}^{+\mathrm{nc}})
\]
for some $N^\prime_i<Q$ ($i=1,....,n$). But we just proved that $C(N^\prime_{i,\R},Y)$ is finite ($\forall i=1,...,n$). Hence $\mathfrak{W}_d(Y,\cT)$ is a finite set. Again by Propostition \ref{particular choices of i and varphi}, each triple of $\mathfrak{W}_d(Y,\cT)$ equals $(U_{N^\prime,\R},V_{N^\prime,\R},G_{N^\prime,\R}^{+\mathrm{nc}})$ for some $N^\prime<P$. We shall denote this triple by $N^\prime$ for simplicity.

Hence
\[
\mathfrak{W}_d(Y)=\bigcup_{N^\prime\in\mathfrak{W}_d(Y,\cT)}\bigcup_{\substack{(0,0,1,x)\\ \in B(N^\prime_{\R},Y)}}\unif((N^\prime(\R)^+U_{N^\prime}(\C)x)
\]
is definable.

\underline{\textit{Step II.}} For any $l$ and $N_l$, $B(N_{l,\R},Y)$ is definable by \eqref{set B definable} and induction hypothesis (2). Arguing as in the previous case we get that $C(N_{l,\R},Y)$ is finite. Define
\[
\begin{array}{lc}
\mathfrak{W}_l(Y,\cT):=\{(U^\prime,V^\prime,G^\prime)\in\cT| &\exists x\in\cF\text{ with }\unif((U^\prime(\C),V^\prime(\R),G^\prime(\R)^+)x) \text{ weakly special} \\
& \text{ of dimension }l\text{ contained in }Y\text{ but not contained in }\cup_{l^\prime>l}\mathfrak{W}_{l^\prime}(Y)\}.
\end{array}
\]
Arguing as in the previous case we can get that $\mathfrak{W}_l(Y,\cT)$ is a finite set and each element of it equals $(U_{N^\prime,\R},V_{N^\prime,\R},G_{N^\prime,\R}^{+\mathrm{nc}})$ for some $N^\prime<P$. Hence
\[
\bigcup_{l^\prime\geqslant l}\mathfrak{W}_{l^\prime}(Y)=\bigcup_{l^\prime>l}\mathfrak{W}_{l^\prime}(Y)\cup\bigcup_{N^\prime\in\mathfrak{W}_l(Y,\cT)}\bigcup_{\substack{(0,0,1,x)\\ \in B(N^\prime_{\R},Y)}}\unif(N^\prime(\R)^+U_{N^\prime}(\C)x)
\]
is definable by induction hypothesis (2).
\end{proof}
\end{prop}

From now on, for any connected subgroup $N^\dagger$ of $P$, we will denote by $\mathfrak{F}(N^\dagger)$ the set of all weakly special subvarieties of $S$ defined by the group $N^\dagger$ (see the beginning of this section) and $\mathfrak{F}(N^\dagger,Y):=\{Z\in\mathfrak{F}(N^\dagger)~s.t.~Z\subset Y\}$.
Remark that when proving Proposition \ref{UnionOfWeaklySpecials}, we have also given the following description of $\mathfrak{W}(Y)=\cup_{l=1}^d\mathfrak{W}_l(Y)$:
\begin{equation}\label{UnionOfPositiveDimensionalWeaklySpecial}
\mathfrak{W}(Y)=\bigcup_{N^\prime}\unif(N^\prime(\R)^+U_N^\prime(\C)\text{-orbits contained in }\unif^{-1}(Y))=\bigcup_{N^\prime}\bigcup_{Z\in\mathfrak{F}(N^\prime,Y)}Z
\end{equation}
which is a finite union on $N^\prime$'s and each $N^\prime$ is of positive dimension. We have assumed that $\mathfrak{W}(Y)$ is Zariski dense in $Y$ (otherwise there is nothing to prove). Therefore by \eqref{UnionOfPositiveDimensionalWeaklySpecial}, there exists an $N_1$ of positive dimension s.t.
\begin{equation}\label{NBigEnoughStep1}
\bigcup_{Z\in\mathfrak{F}(N_1,Y)}Z
\end{equation}
is Zariski dense in $Y$.

Prove now $N_1\lhd P$. If not, then by Proposition \ref{finiteness of strict special subvarieties}, $\cup_{Z\in\mathfrak{F}(N_1)}Z$ equals a finite union of proper special subvarieties of $S$. The intersection of this union and $Y$ is not Zariski dense in $Y$ since $Y$ is Hodge generic in $S$. This is a contradiction. Hence $N_1\lhd P$.

Consider the diagram
\begin{equation}\label{QuotientDiagramStep1}
\begin{diagram}
(P,\cX^+) &\rTo^{\rho_1} &(P_1,\cX^+_1):=(P,\cX^+)/N_1 \\
\dTo^{pr} & &\dTo^{\unif_1} \\
S &\rTo^{[\rho_1]} &S_1
\end{diagram}
\end{equation}
and let $Y_1:=\bar{[\rho_1](Y)}$, which is Hodge generic in $S_1$. Since $\dim(N_1)>0$, $\dim(S_1)<\dim(S)$. It is not hard to prove $[\rho]^{-1}(Y_1)=Y$ by the fact \eqref{NBigEnoughStep1}. If the union of positive-dimensional weakly special subvarieties contained in $Y_1$ is not Zariski dense in $Y_1$, then take $N=N_1$. Otherwise by the same argument, there exists a normal subgroup $N_{1,2}$ of $P_1$ s.t. $\dim(N_{1,2})>0$ and $\cup_{Z\in\mathfrak{F}(N_{1,2},Y_1)}Z$ is Zariski dense in $Y_1$. Let $N_2:=\rho_1^{-1}(N_{1,2})$, then $N_2\lhd P$. Draw the same diagram \eqref{QuotientDiagramStep1} with $N_2$ instead of $N_1$, then we get a mixed Shimura variety $S_2$ with $\dim(S_2)<\dim(S_1)$ and a Hodge generic subvariety $Y_2$ of $S_2$. Continue the process (if the union of positive-dimensional weakly special subvarieties contained in $Y_2$ is Zariski dense in $Y_2$).

Since $\dim(S)<\infty$, this process will end in a finite step. Hence there exists a number $k>0$ s.t. the union of positive-dimensional weakly special subvarieties contained in $Y_k$ is not Zariski dense in $Y_k$. Then $N:=N_k$ is the dezired subgroup of $P$.
\end{proof}
\end{thm}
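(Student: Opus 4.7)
The plan is to combine Ax-Lindemann-Weierstra\ss\ (Theorem~\ref{Ax-Lindemann for type star}) with o-minimality to prove that the union of positive-dimensional weakly special subvarieties contained in $Y$ can be realised as a \emph{finite} union parametrised by subgroups $N' \lhd P^{\der}$, and then to peel off a suitable normal subgroup at each step until the union is no longer Zariski dense.

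First I would reduce to the case where $\mathfrak{W}(Y)$, the union of positive-dimensional weakly special subvarieties contained in $Y$, is Zariski dense in $Y$ (otherwise take $N=\mathbf{1}$). By the Reduction Lemma (Lemma~\ref{reduction lemma}) I embed $(P,\cX^+) \hookrightarrow (G_0,\cD^+) \times \prod_{i=1}^r (P_{2g},\cX^+_{2g})$ and classify real subgroups underlying weakly specials by triples $(U',V',G') \in \cT$ as in Lemma~\ref{finitely many real subgroups}, which are finite modulo the natural action of $\cG$. Choose a fundamental set $\cF$ on which $\unif|_{\cF}$ is definable (as in $\mathsection\ref{Ax-Lindemann Part 2: Estimate}.1$). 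For each dimension $l>0$ and each base subgroup $N_l$ I would introduce the definable sets
\[
B(N_{l,\R},Y) := \{(g_U,g_V,g,x) \in \cG \times \cF : \unif((g_U U_{N_l}(\C), g_V V_{N_l}(\R), g G_{N_l}^{+\mathrm{nc}} g^{-1})x) \subset Y\}
\]
(modulo the $\mathfrak{W}_{l'>l}$ part, which is definable by induction thanks to Proposition~\ref{UnionOfWeaklySpecials}).

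The key step—where Ax-Lindemann enters crucially—is to prove that each such set parametrises only finitely many weakly special subvarieties. By Theorem~\ref{Ax-Lindemann for type star} together with \cite[Lemma~4.1]{PilaAbelianSurfaces}, any semi-algebraic $\tilde{Z} \subset \unif^{-1}(Y)$ of the prescribed form is contained in a weakly special subset; the dimension count involving maximal compact stabilisers shows $\tilde{Z}$ itself is weakly special (this is Lemma~\ref{translate of weakly special still weakly special}). Applying Proposition~\ref{particular choices of i and varphi} one realises the real triple as coming from a $\Q$-subgroup, whence the image set $C(N_{l,\R},Y)$ is countable; being also definable in the o-minimal structure $\R_{an,\exp}$, it must be finite. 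Running this downward induction on $l$ simultaneously establishes that $\mathfrak{W}(Y) = \bigcup_{N'} \bigcup_{Z \in \mathfrak{F}(N',Y)} Z$ is a finite union over finitely many $\Q$-subgroups $N' < P^{\der}$ of positive dimension.

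Now, assuming $\mathfrak{W}(Y)$ is Zariski dense in $Y$, some $N_1$ in this finite list satisfies $\bigcup_{Z \in \mathfrak{F}(N_1,Y)} Z$ Zariski dense in $Y$. If $N_1 \not\lhd P$, then by Proposition~\ref{finiteness of strict special subvarieties} this union lies in a finite union of \emph{proper} special subvarieties of $S$, whose intersection with the Hodge generic $Y$ cannot be Zariski dense—contradiction. Therefore $N_1 \lhd P$. Forming the quotient $(P_1,\cX^+_1) := (P,\cX^+)/N_1$ and setting $Y_1 := \overline{[\rho_1](Y)}$, one checks $[\rho_1]^{-1}(Y_1) = Y$ using that $\bigcup_{Z \in \mathfrak{F}(N_1,Y)} Z$ (whose Zariski closure equals $Y$) is a union of $N_1$-orbits. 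Since $\dim N_1 > 0$, we have $\dim S_1 < \dim S$; iterating (with $Y_1$ now Hodge generic in $S_1$) the process must terminate in finitely many steps at some $N = N_k \lhd P$, giving the desired $Y' \subset S' = S/N$.

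The main obstacle is the finiteness of $C(N_{l,\R},Y)$: o-minimality alone gives only that the parameter set is definable, and Ax-Lindemann alone gives only countability (via the rigidity of $\Q$-structures on weakly specials), so the subtle point is interlocking the downward induction on dimension $l$ with the bootstrapping of definability of $\bigcup_{l' \geq l} \mathfrak{W}_{l'}(Y)$ so that each $B(N_{l,\R},Y)$ is indeed definable. Once this is set up, the normality argument via Proposition~\ref{finiteness of strict special subvarieties} and the dimension-reduction iteration close the proof cleanly.
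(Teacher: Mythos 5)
Your proposal reproduces the paper's argument essentially step for step: the same reduction via Lemma~\ref{reduction lemma}, the same finiteness of real triples modulo $\cG$, the same definable-plus-countable-implies-finite mechanism for $C(N_{l,\R},Y)$ interlocked with the downward induction on $l$, the same use of Proposition~\ref{finiteness of strict special subvarieties} to force $N_1\lhd P$, and the same quotient-and-iterate conclusion. The approach is correct and matches the paper's own proof.
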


\section{From Ax-Lindemann-Weierstra{\ss} to Andr\'{e}-Oort}\label{From Ax-Lindemann to Andre-Oort}

For pure Shimura varieties, Ullmo and Pila-Tsimerman have explained separately in \cite[$\mathsection$5]{UllmoQuelques-applic} \cite[$\mathsection$7]{PilaAxLindemannAg} how to deduce the Andr\'{e}-Oort conjecture from the Ax-Lindemann-Weierstra{\ss} theorem with a suitable lower bound for Galois orbits of special points. In this section we first prove that in order to get a suitable lower bound for Galois orbits of special points for a mixed Shimura variety, it is enough to have one for its pure part. Then we show that the idea of Ullmo also works for mixed Shimura varieties.

\subsection{Lower bounds for Galois orbits of special points}
In this subsection, we will consider mixed Shimura data (resp. varieties) instead of only connected ones. See Definition \ref{connected mixed Shimura datum}.

Let $(P,\cX)$ be a mixed Shimura datum. Let $\pi\colon(P,\cX)\rightarrow(G,\cX_G)$ be the projection to its pure part. We use the notation of $\mathsection$\ref{structure of the underlying group}. In particular, we fix a Levi decomposition $P=W\rtimes G$ and an embedding $(G,\cX_G)\hookrightarrow(P,\cX)$ as in \cite[pp 6]{WildeshausThe-canonical-c}.

Let $K$ be an open compact subgroup of $P(\A_f)$ defined as follows: for $M>3$ even, $K_U:=MU(\hat{\Z})$, $K_V:=MV(\hat{\Z})$, $K_W:=K_U\times K_V$ with the group law as in $\mathsection$\ref{structure of the underlying group}, $K_G:=\{g\in G(\hat{\Z})|g\equiv 1\bmod M\}$ and $K:=K_W\rtimes K_G$.

Let $s$ be a special point of $M_K(P,\cX)$ which corresponds to a special point $x\in\cX$. The group $\MT(x)$ is of the form $wTw^{-1}$ for a torus $T\subset G$ and $w\in W(\Q)$. Let $\ord(w)\in\Z_{>0}$ be the smallest integer such that $\ord(w)w\in W(\Z)$. Define \textit{the order of $s$} to be $N(s):=\ord(w)$.
\begin{rmk}\label{SpecialPointOnTheFiberTorsionExplanation}
It is not hard to show that if the fiber of $S\xrightarrow{[\pi]}S_G$ is a semi-abelian variety, then $N(s)$ coincides with the order of $s$ as a torsion point on the fiber (up to a constant).
\end{rmk}

Attached to $(P,\cX)$ there is a number field $E=E(P,\cX)$ called the \textbf{reflex field} and $M_K(P,\cX)$ is defined over $E$ (cf. \cite[11.5]{PinkThesis}). We want a comparison of $|\gal(\bar{\Q}/E)s|$ and $|\gal(\bar{\Q}/E)[\pi](s)|$.

Define $(G^w,\cX_{G^w}):=(wGw^{-1},w^{-1}\cdot\cX_G)$, $K_{G^w}:=G^w(\A_f)\cap K$ and $K^\prime_G:=w^{-1}K_{G^w}w$, then
we have the following commutative diagram:
\[
\begin{diagram}
M_{K_{G^w}}(G^w,\cX_{G^w}) &\rInto &M_K(P,\cX) \\
\dTo^{\wr}_{[w^{-1}\cdot]} & &\dTo_{[\pi]} \\
M_{K^\prime_G}(G,\cX_G) &\rOnto^{\rho} &M_{K_G}(G,\cX_G)
\end{diagram}.
\]
All the morphisms in this diagram are defined over $E$ since the reflex field of $(P,\cX)$, $(G,\cX_G)$ and $(G^w,\cX_{G^w})$ are all $E$. Denote by $s^\prime:=[w^{-1}]\cdot s$. Let $T^w:=wTw^{-1}$. Let $K_T^\prime:=K\cap T^w(\A_f)$ and let $K_T:=K\cap T(\A_f)$. The following inequality follows essentially from \cite[$\mathsection$2.2]{UllmoGalois-orbits-a} (note that we do not need GRH for this inequality since \cite[Lemma 2.13, 2.14]{UllmoGalois-orbits-a} are not used!). We refer to \cite[Theorem 1(1)]{NoteGaloisOrbitOfPureShimuraVariety} for a more precise version.
\begin{equation}\label{ComparisonMixedPure}
|\gal(\bar{\Q}/E)s|=|\gal(\bar{\Q}/E)s^\prime|\geqslant B^{i(T)}|K_T/K_T^\prime||\gal(\bar{\Q}/E)\rho(s^\prime)|=B^{i(T)}|K_T/K_T^\prime||\gal(\bar{\Q}/E)[\pi](s)|
\end{equation}
for some $B\in(0,1)$ depending only on $(P,\cX)$.

Write $w=(u,v)$ under the identification $W\cong U\times V$ in $\mathsection$\ref{structure of the underlying group}. All elements of $w^{-1}Kw$ are of the form
\[
(-u,-v,1)(u^\prime,v^\prime,g^\prime)(u,v,1) \\
=(u^\prime-(u-g^\prime u)-\Psi(v,v^\prime),v^\prime-(v-g^\prime v),g^\prime)
\]
with $(u^\prime,v^\prime,g^\prime)\in K$. Since $K_T^\prime=w^{-1}K_{T^w}w=w^{-1}Kw\cap T(\A_f)$, this element is in $K_T^\prime$ iff
\begin{itemize}
\item $u^\prime=u-g^\prime u+\Psi(v,v^\prime)\in K_U$
\item $v^\prime=v-g^\prime v\in K_V$
\item $g^\prime\in T(\A_f)\cap K_G=K_T$.
\end{itemize}
So
\begin{align}
&t\in K_T; \notag\\
t\in w^{-1}K_{T^w}w\iff & v-tv\in K_V=MV(\hat{\Z}); \label{KT and KprimeT}\\
&u-tu+\Psi(v,v-tv)\in K_U=MU(\hat{\Z}). \notag
\end{align}

\begin{lemma}\label{difference between KT and wKTww}
$|K_T/K_T^\prime|\geqslant \ord(w)\prod_{p|\ord(w)}(1-\frac{1}{p})$.
\begin{proof}
Let $T^\prime$ be the image of $\G_{m,\R}\xrightarrow{\omega}\S\xrightarrow{w^{-1}\cdot x}G_{\R}$, then it is an algebraic torus defined over $\Q$ by Remark~\ref{sufficiently small congruence subgroup cotained in derivative}(1). We always have $T^\prime<T$. If $T^\prime$ is trivial, then $P=G$ is adjoint by reason of weight, and $\ord(w)=1$. If not, $T^\prime\cong \G_{m,\Q}$ and
\[
T^\prime(M):=\{t^\prime\in T^\prime(\hat{\Z})|t^\prime\equiv 1\bmod(M)\}\subset K_G\cap T(\A_f)=K_T.
\]
So
\[
T^\prime(M)/(T^\prime(M)\cap w^{-1}K_{T^w}w)\hookrightarrow K_T/w^{-1}K_{T^w}w.
\]
Hence it is enough to prove that LHS is of cardinality$\geqslant\ord(w)$.

Since $T^\prime$ acts on $V$ and $U$ via a scalar, $t^\prime\in T^\prime(M)\cap w^{-1}K_{T^w}w$ iff
\begin{enumerate}
\item $t^\prime\in T^\prime(M)$
\item $v-t^\prime v\in MV(\hat{\Z})$
\item $u-t^\prime u\in MU(\hat{\Z})$.
\end{enumerate}

Let $t^\prime\in T^\prime(M)\subset T^\prime(\hat{\Z})=\hat{\Z}^*$. Suppose $\ord(w)=\prod p^{n_p}$ and $M=\prod p^{m_p}$. If $n_p=0$, then condition (2) and (3) are automatically satisfied. If $n_p>0$, then condition (2) and (3) imply that
$t^\prime_p=1+a_{n_p+m_p}p^{n_p+m_p}+...\in\Z_p^*$, hence
\begin{equation}
|T^\prime(\Z_p)\cap T^\prime(M)/(T^\prime(\Z_p)\cap T^\prime(M)\cap w^{-1}K_{T^w,p}w)|=p^{n_p-1}(p-1).
\end{equation}
To sum up,
\begin{equation}
|T^\prime(M)/(T^\prime(M)\cap w^{-1}K_{T^w}w)|=\ord(w)\prod_{p|\ord(w)}(1-\frac{1}{p}).
\end{equation}
\end{proof}
\end{lemma}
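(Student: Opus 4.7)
The plan is to exploit the weight cocharacter to produce a small central subtorus of $T$ on which the three congruence conditions in \eqref{KT and KprimeT} become a single, scalar congruence, reducing the whole index computation to a prime-by-prime local calculation.

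First I would identify the weight torus. By Remark~\ref{sufficiently small congruence subgroup cotained in derivative}(1), the composite $\G_{m,\R}\xrightarrow{\omega}\S\xrightarrow{w^{-1}\cdot x}G_{\R}$ descends to a $\Q$-morphism; call its image $T'\subset T$. If $T'$ is trivial then $P=G$ is adjoint by reason of weight, hence $W=1$, $\ord(w)=1$ and the inequality is vacuous. Otherwise $T'\cong \G_{m,\Q}$, and crucially $T'$ acts on $U$ and $V$ through scalars (weights $-2$ and $-1$ respectively). Taking $T'(M):=\{t'\in T'(\hat\Z)\mid t'\equiv 1\bmod M\}$, we get $T'(M)\subset K_G\cap T(\A_f)=K_T$, so it suffices to bound $|T'(M)/(T'(M)\cap w^{-1}K_{T^w}w)|$ from below by $\ord(w)\prod_{p\mid\ord(w)}(1-1/p)$.

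Next I would decode \eqref{KT and KprimeT} on this subgroup. Since $t'\in T'(\hat\Z)$ acts by a scalar $\lambda(t')\in\hat\Z^\ast$ on both $U$ and $V$, the conditions $v-t'v\in MV(\hat\Z)$ and $u-t'u+\Psi(v,v-t'v)\in MU(\hat\Z)$ both become scalar divisibility conditions of the form $(1-\lambda(t'))\cdot w\in M\cdot W(\hat\Z)$ — the cross term $\Psi(v,v-t'v)$ is a polynomial multiple of $(1-\lambda(t'))$, so it is absorbed into the same single divisibility. Writing $\ord(w)=\prod p^{n_p}$ and $M=\prod p^{m_p}$ and working place-by-place, for $p\nmid\ord(w)$ the denominators in $(u,v)$ at $p$ are trivial and the condition $t'\equiv 1\bmod M$ already suffices, so $T'(\Z_p)\cap T'(M)$ sits entirely inside $w^{-1}K_{T^w,p}w$. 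For $p\mid\ord(w)$, the condition becomes $t'_p\equiv 1\bmod p^{n_p+m_p}$, giving local index $[T'(M)_p:T'(M)_p\cap w^{-1}K_{T^w,p}w]=p^{n_p-1}(p-1)$.

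Assembling the local indices yields $\prod_{p\mid\ord(w)}p^{n_p-1}(p-1)=\ord(w)\prod_{p\mid\ord(w)}(1-1/p)$, which is exactly the lower bound claimed. The only subtle point — and the one I would be careful about — is verifying that the mixed term $\Psi(v,v-t'v)$ genuinely lies in the same congruence class as the naive term, so that the three conditions of \eqref{KT and KprimeT} collapse to a single scalar congruence on $\lambda(t')$; this is the reason the argument passes through the central $\G_m$ rather than a general subtorus of $T$, where $\Psi$ would introduce mixing across weight components and obstruct the clean prime-by-prime computation.
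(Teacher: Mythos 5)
Your proposal follows essentially the same route as the paper: single out the weight torus $T'\cong\G_{m,\Q}$ using Remark~\ref{sufficiently small congruence subgroup cotained in derivative}(1), dispose of the trivial case, embed $T'(M)/(T'(M)\cap w^{-1}K_{T^w}w)$ into $K_T/K_T^\prime$, and compute the index prime by prime via the explicit congruence conditions from~\eqref{KT and KprimeT}. Two small imprecisions are worth fixing. First, $T'$ does not act by the same scalar on $U$ and on $V$: the weight $-1$ part $V$ is scaled by $\lambda$, while the weight $-2$ part $U$ is scaled by $\lambda^2$ (up to sign conventions), so the two conditions are $(1-\lambda)v\in MV(\hat\Z)$ and $(1-\lambda^2)u\in MU(\hat\Z)$, not uniformly ``$(1-\lambda)\cdot w\in MW(\hat\Z)$''. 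Second, and more to the point of the ``subtle point'' you flag at the end: you do not need to argue that $\Psi(v,v-t'v)$ is merely a multiple of $(1-\lambda)$ that can be ``absorbed''. Since $t'v=\lambda v$ is a scalar multiple of $v$ and $\Psi$ is alternating, one has $\Psi(v,v-t'v)=(1-\lambda)\Psi(v,v)=0$ identically. The cross term simply vanishes, which is exactly why the three conditions of~\eqref{KT and KprimeT} collapse into the clean scalar congruences and why the paper's proof can silently drop the $\Psi$-term in its list of conditions. Supplying that one-line observation would close the gap you yourself identify as needing care.
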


\begin{thm}\label{galois orbit for the fiber}
For any $\epsilon\in(0,1)$, there exist a positive constant $C_{\epsilon}$ (depending only on $(P,\cX)$ and $\epsilon$) such that
\[
|\gal(\bar{\Q}/E)s|\geqslant C_{\epsilon}N(s)^{1-\epsilon}|\gal(\bar{\Q}/E)[\pi](s)|.
\]
\begin{proof}
We have proved in Lemma~\ref{difference between KT and wKTww}
\begin{equation}
p|\ord(w)\iff K_{T,p}\neq K^\prime_{T,p}.
\end{equation}
Hence denoting by $\varsigma(M):=|\{p,p|M\}|$ for any $M\in\Z_{>0}$, we have by Lemma~\ref{difference between KT and wKTww} and \eqref{ComparisonMixedPure}
\[
|\gal(\bar{\Q}/E)s|\geqslant B^{\varsigma(N(s))}N(s)\prod_{p|N(s)}(1-\frac{1}{p})|\gal(\bar{\Q}/E)\rho(s^\prime)|.
\]
Now the theorem follows from the basic facts of elementary math:
\begin{equation}
\forall \epsilon\in(0,1)\text{, there exists }C_{\epsilon}>0\text{ such that }B^{\varsigma(N(s))}N(s)^\epsilon\geqslant C_{\epsilon}.
\end{equation}
\begin{equation}
\forall \epsilon\in(0,1)\text{, there exists }C^\prime_{\epsilon}>0\text{ such that }N(s)^\epsilon\prod_{p|N(s)}(1-\frac{1}{p})\geqslant C^\prime_{\epsilon}.
\end{equation}
\end{proof}
\end{thm}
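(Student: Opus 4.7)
The plan is essentially to combine the two preparatory results already established, namely the comparison inequality \eqref{ComparisonMixedPure} and Lemma \ref{difference between KT and wKTww}, and then to extract the desired asymptotic form by standard elementary estimates. Concretely, substituting the lower bound of Lemma \ref{difference between KT and wKTww} into \eqref{ComparisonMixedPure} should give
\[
|\gal(\bar{\Q}/E)s|\;\geqslant\; B^{i(T)}\cdot \ord(w)\prod_{p\mid \ord(w)}\!\!\left(1-\tfrac{1}{p}\right)\cdot|\gal(\bar{\Q}/E)[\pi](s)|,
\]
and one has $\ord(w)=N(s)$ by definition of the order of a special point.

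Next one must understand what the exponent $i(T)$ in \eqref{ComparisonMixedPure} really is. Inspecting the characterization \eqref{KT and KprimeT}, one sees that the local components $K_{T,p}$ and $K'_{T,p}$ coincide at every prime $p$ not dividing $N(s)=\ord(w)$ (the defining conditions are automatic there), so the set of ``bad'' primes for which $K_T$ differs from $K_T'$ is contained in the set of primes dividing $N(s)$. Thus one can take $i(T)=\varsigma(N(s))$, the number of distinct prime divisors of $N(s)$, yielding
\[
|\gal(\bar{\Q}/E)s|\;\geqslant\; B^{\varsigma(N(s))}\,N(s)\prod_{p\mid N(s)}\!\!\left(1-\tfrac{1}{p}\right)\,|\gal(\bar{\Q}/E)[\pi](s)|.
\]

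It then remains to absorb both correction factors into an $N(s)^{-\epsilon}$-loss. For the first, the classical bound $\varsigma(N)=O(\log N/\log\log N)$ gives $B^{\varsigma(N(s))}\geqslant N(s)^{-o(1)}$ as $N(s)\to\infty$, so $B^{\varsigma(N(s))}\gg_\epsilon N(s)^{-\epsilon/2}$. For the second, the Mertens-type estimate $\prod_{p\mid N}(1-1/p)\gg 1/\log\log N$ gives $N(s)^{\epsilon/2}\prod_{p\mid N(s)}(1-1/p)\gg_\epsilon 1$. Multiplying the two yields the theorem. The honest work is all in the preparatory inequality \eqref{ComparisonMixedPure} and in Lemma \ref{difference between KT and wKTww}; the step that requires a little care here is the identification of $i(T)$ with $\varsigma(N(s))$ via inspection of the local conditions \eqref{KT and KprimeT}, since otherwise one could not guarantee that the exponent of $B$ grows only like the number of prime divisors of $N(s)$ rather than with $\log N(s)$ itself (which would be fatal for the argument).
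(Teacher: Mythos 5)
Your proposal is correct and follows essentially the same route as the paper: substitute Lemma \ref{difference between KT and wKTww} into \eqref{ComparisonMixedPure}, identify the exponent of $B$ with $\varsigma(N(s))$ via the equivalence $p\mid\ord(w)\iff K_{T,p}\neq K'_{T,p}$, and absorb both correction factors into an $N(s)^{-\epsilon}$ loss. The only difference is that you spell out the two concluding estimates (via $\varsigma(N)=O(\log N/\log\log N)$ and Mertens) which the paper dismisses as ``basic facts of elementary math.''
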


\begin{cor}\label{result of Silverberg}
For $A$ an abelian variety over a number field $k\subset\C$ and $t$ a torsion point of $A(\C)$, denote by $N(t)$ its order and $k(t)$ the field of definition of $t$ over $k$.

Let $g\in\N_+$ and let $\epsilon\in(0,1)$. There exists $c>0$ such that for all number fields $k\subset\C$, all $g$-dimensional CM abelian varieties $A$ with definition field $k$ and all torsion points $t$ in $A(\C)$,
\[
[k(t):k]\geqslant cN(t)^{1-\epsilon}.
\]
\begin{proof} (compare with \cite{SilverbergTorsion-points-})
By Zarhin's trick, it suffices to give a proof for $A$ principally polarized. Such an $A$ can be realized as a fiber of $\mathfrak{A}_g(4)\rightarrow\cA_g(4)$, and any torsion point $t$ of $A$ is a special point of $\mathfrak{A}_g(4)$. Now this result is a direct consequence of Proposition~\ref{galois orbit for the fiber}.
\end{proof}
\end{cor}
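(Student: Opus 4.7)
First I would reduce to the principally polarized case via Zarhin's trick: $(A\times A^\vee)^4$ is principally polarized, remains of CM type, and is defined over the same field $k$; a torsion point $t\in A$ embeds into $(A\times A^\vee)^4$ with the same order and the same field of definition over $k$. Hence, at the cost of replacing $g$ by $8g$, I may assume $A$ is principally polarized. Next, choosing a full symplectic level-$4$ structure $\alpha$ on $A_{\bar{\Q}}$, the pair $(A,\alpha)$ becomes a point of $\cA_g(4)(\bar{\Q})$, and the triple $s:=(A,\alpha,t)$ is a special point of the mixed Shimura variety $\mathfrak{A}_g(4)$ attached to the Shimura datum of Siegel type $(P_{2g,\mathrm{a}},\cX^+_{2g,\mathrm{a}})$. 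By Remark \ref{SpecialPointOnTheFiberTorsionExplanation}, the invariant $N(s)$ attached to this special point agrees with the order $N(t)$ of $t$ up to a bounded multiplicative constant depending only on $g$.

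Then I would apply Theorem \ref{galois orbit for the fiber} to $s$. The reflex field of $(P_{2g,\mathrm{a}},\cX^+_{2g,\mathrm{a}})$ is $E=\Q$, so for any $\epsilon'\in(0,1)$ the theorem produces a constant $C_{\epsilon'}>0$ with
\[
[E(s):E([\pi](s))]\;\geqslant\;C_{\epsilon'}\,N(s)^{1-\epsilon'}.
\]
Since every element of $\gal(\bar{\Q}/E([\pi](s)))$ fixes the base $(A,\alpha)$, its action on $s$ is via the $t$-coordinate alone, and hence the left-hand side equals $[E([\pi](s))(t):E([\pi](s))]$, the size of the Galois orbit of $t$ over the field of definition of $(A,\alpha)$.

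Finally, I would transfer this lower bound from the orbit over $E([\pi](s))$ to the orbit $[k(t):k]$ over the given field of definition $k$ of $A$. Interpreting $k$ as the minimal field of definition of $A$, one identifies $E([\pi](s))$ with $k(\alpha)$, and since there are at most $|\GSp_{2g}(\Z/4\Z)|$ symplectic level-$4$ structures on $A_{\bar{\Q}}$ one has $[k(\alpha):k]\leqslant D$ for a constant $D=D(g)$ independent of $A$ and $k$. A standard comparison of Galois orbits of $t$ over $k$ and over its finite extension $k(\alpha)$ then yields $[k(t):k]\geqslant D^{-1}\,[k(\alpha)(t):k(\alpha)]$, which combined with the previous paragraph and with $N(s)\asymp_g N(t)$ gives $[k(t):k]\gg_{g,\epsilon}N(t)^{1-\epsilon}$ after choosing $\epsilon'<\epsilon$ and absorbing constants. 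The main obstacle will be this final bookkeeping step: one must check that, with $k$ the minimal field of definition of $A$, the only discrepancy between $E([\pi](s))$ and $k$ is the finite choice of level structure, so that the implicit constant is genuinely uniform in both $k$ and $A$, which is the improvement over Silverberg's original argument where the constant was allowed to depend on $k$.
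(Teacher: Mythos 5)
Your proof is correct and follows the same route as the paper's: Zarhin's trick to reduce to the principally polarized case, realization of $(A,t)$ as a special point of the universal family $\mathfrak{A}_{8g}(4)\rightarrow\cA_{8g}(4)$, and an application of Theorem \ref{galois orbit for the fiber} together with Remark \ref{SpecialPointOnTheFiberTorsionExplanation}. The paper compresses the entire level-structure and field-of-definition bookkeeping into the phrase ``direct consequence,'' so your final paragraph merely makes explicit what the paper leaves implicit --- including the necessary reading of $k$ as a (minimal) field of definition, without which a constant uniform in $k$ could not hold.
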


\begin{rmk}\label{lower bounds for Galois orbits}
The lower bound of the Galois orbit of a special point for pure Shimura varieties is given by \cite[Conjecture 2.7]{UllmoQuelques-applic}.
It has been proved under the Generalized Riemann Hypothesis by Ullmo-Yafaev \cite{UllmoGalois-orbits-a}. For the case of $\cA_g$, it is equivalent to the following conjectural lower bound (suggested and proved for $g=2$ by Edixhoven \cite{EdixhovenOpen-problems-i, EdixhovenOn-the-Andre-Oo}): suppose that $x\in\cA_g$ is a special point. Let $A_x$ denote the CM abelian variety parametrised by $x$ and let $R_x$ be the center of $\End(A_x)$, then there exists $\delta(g)>0$ such that
\begin{equation}\label{lower bound Siegel}
|\gal(\bar{\Q}/\Q)x|\gg_g |\disc(R_x)|^{\delta(g)}.
\end{equation}
For their equivalence see \cite[Theorem~7.1]{TsimermanBrauer-Siegel-f}. The best unconditional result is given by Tsimerman \cite[Theorem~1.1]{TsimermanBrauer-Siegel-f}: \eqref{lower bound Siegel} is true when $g\leqslant6$ (and for $g\leqslant3$ by a similar method in \cite{UllmoNombre-de-class}).

Hence for a mixed Shimura variety of Siegel type of genus $g$ and any special point $x$, Theorem~\ref{galois orbit for the fiber} tells us that if \cite[Conjecture 2.7]{UllmoQuelques-applic} is verified for the pure part, then for any $\epsilon\in(0,1)$, there exists $\delta(g)>0$ such that
\[
|\gal(\bar{\Q}/\Q)x|\gg_{g,\epsilon}N(x)^{1-\epsilon}|\disc(R_{[\pi](x)})|^{\delta(g)}.
\]
\end{rmk}

\subsection{Andr\'{e}-Oort}
\begin{thm}\label{Andre-Oort for L6}
Let $S$ be a connected mixed Shimura variety of abelian type (i.e. its pure part is of abelian type). Let $Y$ be a closed irreducible subvariety of $S$ containing a Zariski-dense set of special points. If \eqref{lower bound Siegel} holds for the pure part of $S$, then $Y$ is special.

In particular, by the main result of \cite{TsimermanBrauer-Siegel-f}, the Andr\'{e}-Oort conjecture holds unconditionally for any mixed Shimura variety whose pure part is a subvariety of $\cA_6^n$.
\begin{proof} Suppose $S$ is associated with $(P,\cX^+)$. Replacing $\Gamma$ by a neat subgroup does not change the assumption or the conclusion, so we may assume that $\Gamma=\{\gamma\in P(\Z)|\gamma\equiv1\bmod N\}$ for some $N>3$ even. Replacing $S$ by the smallest connected mixed Shimura subvariety does not change the assumption or the conclusion, so we may assume that $Y$ is Hodge generic in $S$.  Since $Y$ contains a Zariski-dense set of special points, we may assume that $Y$ is defined over a number field $k$. Suppose that $Y$ is not special.

If the set of positive-dimensional weakly special subvarieties of $Y$ is Zariski dense in $Y$, then let $N$ be the normal subgroup $P$ as in Theorem \ref{consequenceOfAx-Lindemann}.
Consider the diagram \eqref{QuotientDiagram}, then $Y$ is special iff $Y^\prime:=\bar{[\rho](Y)}$ is. It is clear that $S^\prime$ is again of abelian type. Replacing $(S,Y)$ by $(S^\prime,Y^\prime)$, we may assume that the set of positive-dimensional special subvarieties of $Y$ is not Zariski dense in $Y$.

Now we are left prove that the set of special points of $Y$ which do not lie in any positive-dimensional special subvariety is finite.

By definition, there exists a Shimura morphism $(G,\cX^+_G)\rightarrow\prod_{i=1}^r(\GSp_{2g}^{(i)},\H^{+(i)}_g)$ (the upper-index $(i)$ is to distinguish different factors) s.t. $G\rightarrow \prod_{i=1}^r\GSp_{2g}^{(i)}$ has a finite kernel (contained in the center) and $\cX^+_G\hookrightarrow\prod_{i=1}^r\H^{+(i)}_g$. Therefore under Proposition \ref{realization of the uniformizing space}, we can identify $\cX^+$ as a subspace of $U(\C)\times V(\R)\times\H_g^{+r}$. Then any special point in contained in $U(\Q)\times V(\Q)\times(\H_g^{+r}\cap M_{2g}(\bar{\Q})^r)$ and hence we can define its height (for $\bar{\Q}$-points, see \cite[Definition 1.5.4 multiplicative height]{BombieriHeights-in-diop}).

Now take $\cF$ as in $\mathsection$10.1. For any special point $x\in S$, take a representative $\tilde{x}\in \unif^{-1}(x)$ in $\cF$,  then by \cite[Theorem 3.1]{PilaAbelianSurfaces}, $H(\tilde{x}_{G,i})\ll |\disc(R_{[\pi](x)_i})|^{B_g}$ for a constant $B_g$ ($\forall i=1,...,r$). By choice of $\cF$, $H(\tilde{x}_V),~H(\tilde{x}_U)\ll N(x)$ (see Remark \ref{GoodFundamentalDomainForUniversalFamily}). If \eqref{lower bound Siegel} holds, then by Proposition \ref{galois orbit for the fiber}
\[
\#(\gal(\bar{\Q}/k)x)\gg_g H(\tilde{x})^{\epsilon(g)}
\]
for some $\epsilon(g)>0$. Hence for $H(\tilde{x})\gg 0$, Pila-Wilkie \cite[3.2]{PilaO-minimality-an} implies that $\exists\sigma\in\gal(\bar{\Q}/k)$ s.t. $\tilde{\sigma(x)}$ is contained in a semi-algebraic subset of $\unif^{-1}(Y)\cap\cF$ of positive dimension. But by \cite[pp6, last line]{UllmoQuelques-applic},
\[
\bigcup_{\substack{\tilde{Z}\subset \unif^{-1}(Y)\text{ semi-algebraic}\\ \dim(\tilde{Z})>0}}\tilde{Z}=\bigcup_{\substack{\tilde{Z}\subset \unif^{-1}(Y)\text{ irreducible algebraic}\\ \dim(\tilde{Z})>0}}\tilde{Z}
\]
So $\tilde{\sigma(x)}$ is contained in some maximal algebraic subset $\tilde{Z}$ of $\unif^{-1}(Y)$ of positive dimension. Theorem \ref{Ax-Lindemann for type star} tells us that $\tilde{Z}$ is weakly special. Hence $\sigma^{-1}(Z)$ ($Z:=\unif(\tilde{Z})$) is weakly special containing a special point $x$. Hence $\sigma^{-1}(Z)$ is special of positive dimension. To sum up, the heights of the elements of
\[
\{\tilde{x}\in \unif^{-1}(Y)\cap\cF\text{ special and }\unif(\tilde{x})\text{ is not contained in a positive-dimensional special subvariety}\}
\]
is uniformly bounded, and hence this set is finite by Northcott's theorem \cite[Theorem 1.6.8]{BombieriHeights-in-diop}.
\end{proof}
\end{thm}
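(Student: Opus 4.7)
The plan is to follow the Pila--Zannier strategy, combining the Ax--Lindemann--Weierstrass theorem (Theorem~\ref{Ax-Lindemann for type star}), its consequence Theorem~\ref{consequenceOfAx-Lindemann}, the Galois orbit lower bound (Proposition~\ref{galois orbit for the fiber}), and Tsimerman's unconditional version of~\eqref{lower bound Siegel} for $g\leqslant 6$ \cite{TsimermanBrauer-Siegel-f}. First I would carry out the routine reductions: pass to a neat congruence subgroup $\Gamma$ of some even level $N>3$ (which does not change the hypotheses or conclusion), replace $S$ by the smallest connected mixed Shimura subvariety containing $Y$ so that $Y$ is Hodge generic, and note that $Y$ is defined over some number field $k$ since it has a Zariski-dense set of special points. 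Assume for contradiction that $Y$ is not special. If the positive-dimensional weakly special subvarieties of $Y$ are Zariski dense in $Y$, apply Theorem~\ref{consequenceOfAx-Lindemann} to obtain $N\lhd P$; replacing $(S,Y)$ by $(S/N,\bar{[\rho](Y)})$ preserves abelian type and the truth of the statement, so we may assume the positive-dimensional weakly special subvarieties of $Y$ are \emph{not} Zariski dense in $Y$. It then suffices to show that the set of special points of $Y$ not lying in any positive-dimensional special subvariety is finite.

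Next I would set up the height estimate. Using the Reduction Lemma~\ref{reduction lemma}, embed $(P,\cX^+)$ into a product with $(P_{2g},\cX^+_{2g})^r$ and identify $\cX^+$ semi-algebraically with a subset of $U(\C)\times V(\R)\times\H_g^{+r}$ via Proposition~\ref{realization of the uniformizing space}. Choose a fundamental set $\cF\subset\cX^+$ for $\Gamma$ such that $\unif|_\cF$ is definable, as constructed in $\mathsection$\ref{Ax-Lindemann Part 2: Estimate}.1, and compatible with the product structure so that on each factor one has the ``natural'' fundamental domain of Remark~\ref{GoodFundamentalDomainForUniversalFamily}. A special point $x\in S$ then has a representative $\tilde{x}=(\tilde{x}_U,\tilde{x}_V,\tilde{x}_G)\in\cF$ whose $\H_g^{+r}$-coordinate satisfies the Pila--Tsimerman bound $H(\tilde{x}_{G,i})\ll|\disc(R_{[\pi](x)_i})|^{B_g}$ (cf.\ \cite[Theorem~3.1]{PilaAbelianSurfaces}), while the unipotent coordinates satisfy $H(\tilde{x}_U),H(\tilde{x}_V)\ll N(x)$ by the choice of $\cF$. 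Combining Proposition~\ref{galois orbit for the fiber} with~\eqref{lower bound Siegel} applied to $[\pi](x)\in\cA_g^r$ yields
\[
\#(\gal(\bar\Q/k)\cdot x)\gg_g H(\tilde{x})^{\epsilon(g)}
\]
for some $\epsilon(g)>0$.

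Finally I would run Pila--Wilkie \cite[3.2]{PilaO-minimality-an} on the definable set $\unif^{-1}(Y)\cap\cF$: if $H(\tilde{x})$ is sufficiently large, some $\sigma\in\gal(\bar\Q/k)$ produces $\tilde{\sigma(x)}$ lying in a positive-dimensional connected semi-algebraic subset of $\unif^{-1}(Y)\cap\cF$. By \cite[pp.~6]{UllmoQuelques-applic} this is contained in a positive-dimensional irreducible algebraic subset $\tilde{Z}\subset\unif^{-1}(Y)$, which I take maximal; Ax--Lindemann--Weierstrass (Theorem~\ref{Ax-Lindemann for type star}) then forces $Z:=\unif(\tilde{Z})$ to be weakly special, and since the special point $\sigma(x)$ lies on $Z$, Proposition~\ref{relation between special and weakly special} makes $Z$ (hence $\sigma^{-1}(Z)\subset Y$) a positive-dimensional special subvariety containing $x$, a contradiction. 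Thus heights of representatives in $\cF$ of special points of $Y$ outside positive-dimensional special subvarieties are uniformly bounded, and Northcott's theorem gives finiteness. Together with the Zariski non-density of positive-dimensional special subvarieties of $Y$ established in the first paragraph, this contradicts the hypothesis that the special points of $Y$ are Zariski dense.

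The main obstacle is the calibration of exponents and fundamental domains needed for the height estimate to beat the Galois orbit bound: one must choose $\cF$ simultaneously definable, compatible with the product decomposition of $\cX^+$ so that $H(\tilde{x})$ factors through $H(\tilde{x}_U)$, $H(\tilde{x}_V)$, $H(\tilde{x}_G)$ in the expected way, and small enough for Pila--Wilkie to apply; and one must combine the Pila--Tsimerman bound on the pure part with the torsion bound on the unipotent part (requiring Proposition~\ref{galois orbit for the fiber} in the form Theorem~\ref{galois orbit for the fiber}, where the loss is only $N(x)^\epsilon$) into a single polynomial estimate $\#(\gal\cdot x)\gg H(\tilde{x})^{\epsilon(g)}$. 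Once this is in place, the rest is essentially formal.
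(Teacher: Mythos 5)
Your proposal is correct and follows essentially the same route as the paper: the same reductions, the same use of Theorem \ref{consequenceOfAx-Lindemann} to kill the dense weakly special locus, the same height setup via the Reduction Lemma and the fundamental set of $\mathsection$\ref{Ax-Lindemann Part 2: Estimate}.1, the same combination of the Pila--Tsimerman bound on the pure part with Proposition \ref{galois orbit for the fiber} on the unipotent part, and the same Pila--Wilkie/Ax--Lindemann--Weierstrass/Northcott endgame. No gaps.
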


\section{Appendix}
We prove here Theorem \ref{Ax-Lindemann for complex semi-abelian varieties} when $E=T$ is an algebraic torus over $\C$ (which corresponds to the case $W=U$) and when $E=A$ is a complex abelian variety (which corresponds to the case $W=V$). The proof is a rearrangement of existing proofs (combine the point counting of Pila-Zannier \cite{PilaRational-points} and volume calculation of Ullmo-Yafaev \cite{UllmoThe-Hyperbolic-}). Use notation in $\mathsection$\ref{Ax-Lindemann Part 3: The unipotent part}.

\textit{\boxed{Case~i:~$E=A$.}}
In this case, $W=V$ and $\Gamma_V=\oplus_{i=1}^{2n}\Z e_i\subset\lie(A)=\C^n=\R^{2n}$ is a lattice. Denote by $univ\colon\lie(A)\rightarrow A$. Let $\cF_V:=\Sigma_{i=1}^{2n}(-1,1)e_i$, then $\cF_V$ is a fundamental set for the action of $\Gamma_V$ on $\lie(A)$ s.t. $univ|_{\cF_V}$ is definable. Define the norm of $z=(x_1,y_1,...,x_n,y_n)\in\lie(A)=\R^{2n}$ to be
\[
\parallel z\parallel :=\Max(|x_1|,|y_1|,...,|x_n|,|y_n|).
\]
It is clear that $\forall z\in\lie(A)$ and $\forall \gamma_V\in\Gamma_V$ s.t. $\gamma_V z\in\cF_V$,
\begin{equation}\label{height and norm V part}
H(\gamma_V)\ll\parallel x_V\parallel .
\end{equation}

Let $\omega_V:=dz_1\wedge d\bar{z}_1+...+dz_n\wedge d\bar{z}_n$ be the canonical $(1,1)$-form of $\lie(A)=\C^n$. Let $p_i$ ($i=1,...,n$) be the $n$ natural projections of $\lie(A)=\C^n$ to $\C$. Let $C$ be an algebraic curve of $\tilde{Z}$ and define $C_M:=\{z\in C|\parallel z\parallel \leqslant M\}$. We have
\begin{align*}
 \int_{C\cap\cF_V}\omega_V
 &\leqslant d\sum_{i=1}^n\int_{p_i(C\cap\cF_V)}dz_i\wedge d\bar{z}_i \numberthis \label{volume 3}\\
 &\leqslant d\sum_{i=1}^n\int_{p_i(\cF_V)}dz_i\wedge d\bar{z}_i=d\cdot O(1)
\end{align*}
and
\begin{equation}\label{volume 4}
\int_{C_M}\omega_V\geqslant O(M^2)
\end{equation}
with $d=\deg(C)$ by \cite[Theorem 0.1]{HwangVolumes-of-comp}.

By \eqref{height and norm V part}
\[
C_M\subset\bigcup_{\gamma_V\in\Theta(\tilde{Z},M)}(C\cap\gamma^{-1}\cF).
\]
Integrating both side w.r.t. $\omega_V$ we have
\[
M^2\ll \#\Theta(\tilde{Z},M)
\]
by \eqref{volume 3} and \eqref{volume 4}.

Let $\stab_V(\tilde{Z}):=\bar{\Gamma_V\cap\stab_{V(\R)}(\tilde{Z})}^{\Zar}$. Now by Pila-Wilkie \cite[Theorem 3.4]{UllmoThe-Hyperbolic-}, the exists an semi-algebraic block $B\subset\Sigma(\tilde{Z})$ of positive dimension containing arbtrarily many points $\gamma_V\in\Gamma_V$. We have $B\tilde{Z}\subset univ^{-1}(Y)$ since $\Sigma(\tilde{Z})\tilde{Z}\subset univ^{-1}(Y)$ by definition. Hence for any $\gamma_V\in\Gamma_V\cap B$, $\tilde{Z}\subset\gamma_V^{-1}B\tilde{Z}\subset univ^{-1}(Y)$, and therefore $\tilde{Z}=\gamma_V^{-1}B\tilde{Z}$ by maximality of $\tilde{Z}$. So $\gamma_V^{-1}(B\cap\Gamma_V)\subset\stab_V(\tilde{Z})(\Q)$, and hence $\dim(\stab_V(\tilde{Z}))>0$. For any point $\tilde{z}\in\tilde{Z}$, $\stab_V(\tilde{Z})(\R)+\tilde{z}\subset\tilde{Z}$. By \cite[Lemma 2.3]{PilaRational-points}, $\stab_V(\tilde{Z})(\R)$ is full and complex. Define $V^\prime:=V/\stab_V(\tilde{Z})$ and $\Gamma_{V^\prime}:=\Gamma_V/(\Gamma_V\cap\stab_V(\tilde{Z})(\Q))$, and then $A^\prime:=V^\prime(\R)/\Gamma_{V^\prime}$ is a quotient abelian variety of $A$. Let $Y^\prime$ (resp. $\tilde{Z}^\prime$) be the Zariski closure of the projection of $Y$ (resp. $\tilde{Z}$) in $A^\prime$ (resp. $V^\prime(\R)$). We prove that the image of $\tilde{Z}^\prime$ is a point. If not, then proceeding as before for the triple $(A^\prime,Y^\prime,\tilde{Z}^\prime)$ can we prove $\dim(\stab_{V^\prime}(\tilde{Z}^\prime))>0$. This contradicts to the definition (maximality) of $\stab_V(\tilde{Z})$. Hence $\tilde{Z}$ is a translate of $\stab_V(\tilde{Z})(\R)$. So $\tilde{Z}$ is weakly special.

\textit{\boxed{Case~ii:~$E=T$.}}
Define the norm of $x_U=(x_{U,1},x_{U,2},...,x_{U,m})\in U(\C)$ to be
\[
\parallel x_U\parallel :=\Max(\parallel x_{U,1}\parallel ,\parallel x_{U,2}\parallel ,...,\parallel x_{U,m}\parallel ).
\]
It is clear that $\forall x_U\in U(\C)$ and $\forall \gamma_U\in\Gamma_U$ s.t. $\gamma_U x_U\in\cF_U$,
\begin{equation}\label{height and norm U part}
H(\gamma_U)\ll\parallel x_U\parallel .
\end{equation}

Let $\omega|_T=dz_1\wedge d\bar{z}_1+...+dz_m\wedge d\bar{z}_m$ be the canonical $(1,1)$-form of $U(\C)\cong\C^m$. Let $p_i$ ($i=1,...,m$) be the $m$ natural projections of $U(\C)\cong\C^m$ to $\C$. Let $C$ be an algebraic curve of $\tilde{Z}$ and define $C_M:=\{x\in C|\parallel x\parallel \leqslant M\}$. We have
\begin{align*}
 \int_{C_M\cap\cF_U}\omega|_T 
 &\leqslant d\sum_{i=1}^m\int_{p_i(C_M\cap\cF_U)}dz_i\wedge d\bar{z}_i \numberthis \label{volume 1}\\
 &\leqslant d\sum_{i=1}^m\int_{\{s\in\C|-1<\Re(s)<1,\parallel s\parallel \leqslant M\}}dz_i\wedge d\bar{z}_i=d\cdot O(M)
\end{align*}
where $d:=\deg(C)$. On the other hand by \cite[Theorem 0.1]{HwangVolumes-of-comp},
\begin{equation}\label{volume 2}
\int_{C_M}\omega|_T \geqslant O(M^2).
\end{equation}

By \eqref{height and norm U part}
\[
C_M\subset\bigcup_{\gamma\in\Theta(\tilde{Z},M)}(C_M\cap\gamma^{-1}\cF).
\]
Integrating both side w.r.t. $\omega|_T$ and taking into account that 
\[
\gamma\cdot C_M\subset(\gamma C)_{2M}\qquad\text{if }H(\gamma)\leqslant M,
\]
we have
\[
M^2\ll \#\Theta(\tilde{Z},M)\cdot M
\]
by \eqref{volume 1} and \eqref{volume 2}. Hence $\#\Theta(\tilde{Z},M)\gg M$.

Let $\stab_U(\tilde{Z}):=\bar{\Gamma_U\cap\stab_{U(\C)}(\tilde{Z})}^{\Zar}$. Now by Pila-Wilkie \cite[Theorem 3.6]{PilaO-minimality-an}, the exists an semi-algebraic subset $B\subset\Sigma(\tilde{Z})$ of positive dimension containing arbtrarily many points $\gamma_U\in\Gamma_U$. We have $B\tilde{Z}\subset univ^{-1}(Y)$ since $\Sigma(\tilde{Z})\tilde{Z}\subset univ^{-1}(Y)$ by definition. Hence for any $\gamma_U\in\Gamma_U\cap B$, $\tilde{Z}\subset\gamma_U^{-1}B\tilde{Z}\subset univ^{-1}(Y)$, and therefore $\tilde{Z}=\gamma_U^{-1}B\tilde{Z}$ by maximality of $\tilde{Z}$. So $\gamma_U^{-1}(B\cap\Gamma_U)\subset\stab_U(\tilde{Z})(\Q)$, and hence $\dim(\stab_U(\tilde{Z}))>0$. Let $U^\prime:=U/\stab_U(\tilde{Z})$, $\Gamma_{U^\prime}:=\Gamma_U/(\Gamma_U\cap\stab_U(\tilde{Z})(\Q))$ and $T^\prime:=U^\prime(\C)/\Gamma_{U^\prime}$. $T^\prime$ is an algebraic torus over $\C$. Let $Y^\prime$ (resp. $\tilde{Z}^\prime$) be the Zariski closure of the projection of $Y$ (resp. $\tilde{Z}$) in $T^\prime$ (resp. $U^\prime(\C)$). We prove that $\tilde{Z}^\prime$ is a point. If not, then proceeding as before for the triple $(T^\prime,Y^\prime,\tilde{Z}^\prime)$ we can prove $\dim(\stab_{U^\prime}(\tilde{Z}^\prime))>0$. This contradicts the definition (maximality) of $\stab_U(\tilde{Z})$. Hence $\tilde{Z}$ is a translate of $\stab_U(\tilde{Z})(\C)$. So $\tilde{Z}$ is weakly special.

\end{document}